\providecommand*{\twoheadrightarrowfill@}{%
	\arrowfill@\relbar\relbar\twoheadrightarrow
}
\providecommand*{\twoheadleftarrowfill@}{%
	\arrowfill@\twoheadleftarrow\relbar\relbar
}
\providecommand*{\xtwoheadrightarrow}[2][]{%
	\ext@arrow 0579\twoheadrightarrowfill@{#1}{#2}%
}
\providecommand*{\xtwoheadleftarrow}[2][]{%
	\ext@arrow 5097\twoheadleftarrowfill@{#1}{#2}%
}
\newcommand{\C}{{\mathfrak C}}
\newcommand{\M}{{\mathcal M}}
\newcommand{\R}{{\mathbb{R}}}
\newcommand{\fcolon}{\colon}
\DeclareMathOperator{\tp}{{tp}}
\DeclareMathOperator{\Th}{{Th}}
\DeclareMathOperator{\gal}{{Gal}}
\DeclareMathOperator{\cl}{{cl}}
\DeclareMathOperator{\id}{{id}}
\DeclareMathOperator{\aut}{{Aut}}
\DeclareMathOperator{\autf}{{Autf}}
\DeclareMathOperator{\ext}{{ext}}
\DeclareMathOperator{\Sl}{{SL}}
\DeclareMathOperator{\acl}{{acl}}
\DeclareMathOperator{\fix}{{fix}}
\DeclareMathOperator{\im}{{Im}}
\DeclareMathOperator{\pr}{{proj}}
\DeclareMathOperator{\stab}{{Stab}}
\DeclareMathOperator{\alt}{{Alt}}
\DeclareMathOperator{\qftp}{{qftp}}
\DeclareMathOperator{\val}{{val}}
\DeclareMathOperator{\sym}{{Sym}}
\newtheorem{thm}{Theorem}[section]
\newtheorem{ques}[thm]{Question}
\newtheorem{problem}[thm]{Problem}
\newtheorem{lem}[thm]{Lemma}
\newtheorem{fct}[thm]{Fact}
\newtheorem{cor}[thm]{Corollary}
\newtheorem{prop}[thm]{Proposition}
\theoremstyle{remark}
\newtheorem{rem}[thm]{Remark}
\theoremstyle{definition}
\newtheorem{dfn}[thm]{Definition}
\newtheorem{clm*}{Claim}
\newtheorem{ex}[thm]{Example}
\newcounter{claimcounter}[thm]
\newtheorem*{rep@theorem}{\rep@title}
\newcommand{\newreptheorem}[2]{%
\newenvironment{rep#1}[1]{%
 \def\rep@title{#2 \ref{##1}}%
 \begin{rep@theorem}}%
 {\end{rep@theorem}}}
\title[Boundedness and absoluteness of some dynamical invariants]{Boundedness and absoluteness of some dynamical invariants in model theory}
\author{Krzysztof Krupi\'nski}
\email[K.\ Krupi\'{n}ski]{kkrup@math.uni.wroc.pl}
\address[K.\ Krupi\'nski]{
	Instytut Matematyczny, Uniwersytet Wroc\l awski\\
	pl. Grunwaldzki 2/4\\
	50-384 Wroc\l aw, Poland}
\thanks{The first and second authors are supported by the Narodowe Centrum Nauki grant no. 2015/19/B/ST1/01151; the first author is also supported by the Narodowe Centrum Nauki grant no. 2016/22/E/ST1/00450}
\author{Ludomir Newelski}
\email[L.\ Newelski]{newelski@math.uni.wroc.pl}
\address[L.\ Newelski]{
	Instytut Matematyczny, Uniwersytet Wroc\l awski\\
	pl. Grunwaldzki 2/4\\
	50-384 Wroc\l aw, Poland}
\thanks{}
\author{Pierre Simon}
\email[P.\ Simon]{simon@math.berkeley.edu }
\address[P.\ Simon]{University of California, Berkeley\\
Mathematics Department, Evans Hall\\
Berkeley, CA, 94720-3840, USA}
\thanks{The third author was partially supported by ValCoMo (ANR-13-BS01-0006).}
\keywords{Group of automorphisms, Ellis group, minimal flow, boundedness, absoluteness}
\subjclass[2010]{03C45, 54H20, 37B05}
\date{}
\begin{document}
	
	\begin{abstract}
Let $\C$ be a monster model of an {\em arbitrary} theory $T$, let $\bar \alpha$ be any (possibly infinite) tuple of bounded length of elements of $\C$, and let $\bar c$ be an enumeration of all elements of $\C$ (so a tuple of unbounded length). By $S_{\bar \alpha}(\C)$ we denote the compact space of all complete types over $\C$ extending $\tp(\bar \alpha/\emptyset)$, and $S_{\bar c}(\C)$ is defined analogously. Then $S_{\bar \alpha}(\C)$ and $S_{\bar c}(\C)$ are naturally $\aut(\C)$-flows (even $\aut(\C)$-ambits). We show that the Ellis groups of both these flows are of {\em bounded} size (i.e. smaller than the degree of saturation of $\C$), providing an explicit bound on this size. Next, we prove that these Ellis groups do not depend (as groups equipped with the so-called $\tau$-topology) on the choice of the monster model $\C$; thus, we say that these Ellis groups are {\em absolute}. We also study minimal left ideals (equivalently subflows) of the Ellis semigroups of the flows $S_{\bar \alpha}(\C)$ and $S_{\bar c}(\C)$. We give an example of a NIP theory in which the minimal left ideals are of unbounded size. Then we show that in each of these two cases, boundedness of a minimal left ideal (equivalently, of all the minimal left ideals) is an absolute property (i.e. it does not depend on the choice of $\C$) and that whenever such an ideal is bounded, then in some sense its isomorphism type is also absolute.

Under the assumption that $T$ has NIP, we give characterizations (in various terms) of when a minimal left ideal of the Ellis semigroup of $S_{\bar c}(\C)$ is bounded. Then we adapt the proof of \cite[Theorem 5.7]{ChSi} to show that whenever such an ideal is bounded, a certain natural epimorphism (described in \cite{KrPiRz}) from the Ellis group of the flow $S_{\bar c}(\C)$ to the Kim-Pillay Galois group $\gal_{KP}(T)$  is an isomorphism (in particular, $T$ is G-compact). 
We also obtain some variants of these results, formulate some questions, and explain differences (providing a few counter-examples) which occur when the flow $S_{\bar c}(\C)$ is replaced by $S_{\bar \alpha}(\C)$.
	\end{abstract}

	\maketitle
	
	\section{Introduction}
Some methods and ideas of topological dynamics were introduced to
model theory by the second author in \cite{Ne1,Ne2,Ne3}. For a group $G$ definable in a first order structure $M$ the fundamental object in this approach is the $G$-flow $S_G(M)$ (the space of complete types over $M$ concentrated on $G$) or rather $S_{G,\ext}(M)$ (the space of all external types over $M$ concentrated on $G$). Then the Ellis semigroup $E(S_{G,\ext}(M))$ of this flow, minimal left ideals of this semigroup, and the Ellis group play the key role in this approach. The motivation for these considerations is the fact that various notions and ideas from topological dynamics lead to new interesting objects and phenomena in model theory. In particular, this allows us to extend some aspects of the theory of stable groups to a much more general context. Another context to apply ideas of topological dynamics to model theory is via looking at the action of the group $\aut(\C)$ of automorphisms of a monster model $\C$ of a given (arbitrary) theory $T$ on various spaces of types over $\C$. Some aspects of this approach appeared already in \cite{NePe}. More recently, this approach was developed and applied in \cite{KrPiRz} to prove very general theorems about spaces of arbitrary strong types (i.e. spaces of classes of bounded invariant equivalence relations refining type), answering in particular some questions from earlier papers. So this is an example where the topological dynamics approach to model theory not only leads to a development of a new theory, but can also be used to solve existing problems in model theory which were not accessible by previously known methods.

From the model-theoretic perspective, an important kind of question
(raised and considered by the second author and later also by
Chernikov and the third author in \cite{ChSi}) is to what extent various dynamical invariants have a model-theoretic nature, e.g. in the case of a group $G$ definable in $M$, what are the relationships between the minimal left ideals of the Ellis semigroups, or between the Ellis groups, of the flow $S_{G,\ext}(M)$ when the ground model $M$ varies: are some of these objects invariant under changing $M$, do there exist epimorphisms from objects computed for a bigger model onto the corresponding objects computed for a smaller model? This turned out to be difficult to settle in general and there are only some partial results in \cite{Ne2,Ne3}; a quite comprehensive understanding of these issues appears in \cite{ChSi} in the context of definably amenable groups in NIP theories, e.g. in this context the Ellis group does not depend on the choice of the ground model $M$ and is isomorphic to the so-called definable Bohr compactification of $G$. 

In this paper, we study these kind of questions for the second context mentioned above, namely for the group $\aut(\C)$ acting on some spaces of complete types over $\C$, and, in contrast with the case of a definable group $G$, here we answer most of these questions in full generality. Some of these questions were formulated at the end of Section 2 of \cite{KrPiRz}.

In the rest of this introduction, we will state precisely in which questions we are interested in and what our main results are. For an exposition of basic notions and facts on topological dynamics which are essential in this paper the reader is referred to \cite[Subsection 1.1]{KrPiRz}. 

Let $\C$ be a monster model of a theory $T$, and let $\bar c$ be an enumeration of all elements of $\C$. Recall that a monster model is a $\kappa$-saturated and strongly $\kappa$-homogeneous model for a ``big enough'' strong limit cardinal $\kappa$. A cardinal is said to be bounded (with respect to $\C$) if it is less than $\kappa$. 

Then $S_{\bar c}(\C):=\{ \tp(\bar a/\C): \bar a \equiv \bar c\}$ is an $\aut(\C)$-ambit. By $EL$ we denote the Ellis semigroup of this flow. Let ${\mathcal M}$ be a minimal left ideal of $EL$ and $u \in {\mathcal M}$ an idempotent; so $u{\mathcal M}$ is the Ellis group of the flow $S_{\bar c}(\C)$. This group played a fundamental role in \cite{KrPiRz}, where a continuous epimorphism $f \colon u{\mathcal M} \to \gal_{L}(T)$ (where $\gal_L(T)$ is the Lascar Galois group of $T$) was found and used to understand the descriptive set-theoretic complexity of $\gal_L(T)$ and of spaces of arbitrary strong types.
The group $u{\mathcal M}$ can be equipped with the so-called $\tau$-topology, and then $u{\mathcal M}/H(u{\mathcal M})$ is a compact (Hausdorff) topological group, where $H(u{\mathcal M})$ is the intersection of the $\tau$-closures of the $\tau$-neighborhoods of $u$. In fact, an essential point in \cite{KrPiRz} was that $f$ factors through $H(u{\mathcal M})$.

We will say that an object or a property defined in terms of $\C$ is {\em absolute} if it does not depend (up to isomorphism, if it makes sense) on the choice of $\C$. One can think that absoluteness means that an object or a property is a model-theoretic invariant of the theory in question.
In Section \ref{section 3}, we study Ellis groups.

\begin{ques}\label{question: main question 1}
i) Does the Ellis group $u{\mathcal M}$ of the flow $S_{\bar c}(\C)$ have bounded size?\\
ii) Is this Ellis group independent of the choice of $\C$ as a group equipped with the $\tau$-topology?\\
iii) Is the compact topological group $u{\mathcal M}/H(u{\mathcal M})$ independent (as a topological group) of the choice of $\C$? 
\end{ques}

The positive answer to (ii) clearly implies that the answer to (iii) is also positive.
One can formulate the same questions also with $\bar c$ replaced by a
tuple $\bar \alpha$ of elements of $\C$, of bounded length (we call
such a tuple $\alpha$ {\em short}). We will prove that the answers to all these questions are positive (with no assumptions on the theory $T$). In fact, we will deduce it from a more general theorem. Namely, consider any product $S$  of 
sorts $S_i$, $i \in I$ (possibly with unboundedly many factors, with repetitions allowed), and let $\bar x = (x_i)_{i \in I}$ be the corresponding tuple of variables (i.e. $x_i$ is from the sort $S_i$). By $S_S(\C)$ or $S_{\bar x}(\C)$ we denote the space of all complete types over $\C$ of tuples from $S$. Then $S_S(\C)$ is an $\aut(\C)$-flow. 
In this paper, by a  $\emptyset$-type-definable subset $X$ of $S$ we will mean a partial $\emptyset$-type in the variables $\bar x$, and when $S$ has only boundedly many factors, $X$ will be freely identified with the actual subset $X(\C)$ of $S$ computed in $\C$. The space $S_X(\C)$ of complete types over $\C$ concentrated on $X$ is an $\aut(\C)$-subflow of the flow $S_S(\C)$. Of course, it makes sense to consider $S_X(\C')$ for an arbitrary monster model $\C'$.    One of our main results is

\begin{thm}\label{theorem: main theorem 1}
Let $X$ be any $\emptyset$-type-definable subset of $S$. Then the Ellis group of the $\aut(\C)$-flow $S_X(\C)$ is of bounded size and does not depend (as a group equipped with the $\tau$-topology) on the choice of the monster model $\C$.
\end{thm}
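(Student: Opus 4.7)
My plan is to address boundedness and absoluteness as two separate claims, exploiting the ambit structure of the flow $S_X(\C)$.

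For boundedness, the natural starting point is to fix a realization $\bar\alpha$ of the partial type $X$ in $\C$ and set $p_0 := \tp(\bar\alpha/\C)$. This makes $(S_X(\C), p_0)$ an $\aut(\C)$-ambit, and the key construction is the evaluation map $u\M \to S_X(\C)$ given by $\eta \mapsto \eta(p_0)$. I would first argue that this map is injective: two elements of the Ellis group that agree on the base point of an ambit must coincide, by a standard Ellis-semigroup argument exploiting that the $\aut(\C)$-orbit of $p_0$ is dense and that left multiplication by an element of $u\M$ is determined up to the idempotent $u$ by its value on a single dense-orbit representative. Then the task reduces to bounding $|u\M \cdot p_0|$. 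For this, I would argue that each type in $u\M \cdot p_0$ is a limit of types of the form $\tp(\sigma(\bar\alpha)/\C)$ for $\sigma \in \aut(\C)$, and so is controlled, up to boundedly many possibilities, by its Lascar strong type over $\emptyset$ (its restriction to $\bdd(\emptyset)$). A counting argument using saturation of $\C$ should then yield an explicit bound of the form $|u\M| \le 2^{2^{|T|+|\bar x|}}$.

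For absoluteness, given two monster models $\C$ and $\C^*$ I would pass to a common extension to assume $\C \prec \C^*$. Using the continuous surjective restriction map $r \colon S_X(\C^*) \to S_X(\C)$, combined with (non-canonical) choices of extensions of elements of $\aut(\C)$ to $\aut(\C^*)$, I would construct a semigroup homomorphism $EL(\C^*) \to EL(\C)$ that is in fact canonical, because the non-canonical choices get absorbed by closure operations in the Ellis semigroup. Restricting to Ellis groups, I expect surjectivity to come from lifting automorphisms through $\C \prec \C^*$ and injectivity from the boundedness bound above combined with the compatibility of $r$ with the evaluation maps introduced in the first part.

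The main obstacle I foresee is showing that this group isomorphism is a homeomorphism for the $\tau$-topology. The $\tau$-topology is defined via $\tau$-closures taken inside the ambient flow, which is different for $\C$ and $\C^*$; to compare $\tau$-convergence across monster models, I would look for an intrinsic characterization of $\tau$-convergence in terms of nets in $\aut(\C)$ (or other model-theoretic data preserved by restriction and extension), so that the comparison becomes manifest. This is likely where the bulk of the work lies, and it probably uses that $\tau$-closures are determined by behavior over small subsets of $\C$, which is precisely the data preserved by $r$ and by our extension maps $\aut(\C) \hookrightarrow \aut(\C^*)$.
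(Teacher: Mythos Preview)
Your boundedness argument has a genuine gap: the evaluation map $u\M \to S_X(\C)$, $\eta \mapsto \eta(p_0)$, is \emph{not} injective in general. The ``standard Ellis-semigroup argument'' you allude to does not exist: from $\eta_1(p_0)=\eta_2(p_0)$ you would need $\eta_1(\sigma p_0)=\eta_2(\sigma p_0)$ for all $\sigma\in\aut(\C)$, i.e.\ $(\eta_1\sigma)(p_0)=(\eta_2\sigma)(p_0)$, but $\eta_i\sigma$ need not lie in $u\M$ (minimal ideals are left, not right, ideals), so there is no induction. Concretely, the image $\M\cdot p_0$ is a minimal subflow of $S_X(\C)$, and whenever this subflow is strictly smaller than $|u\M|$ the evaluation collapses. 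Example~\ref{example: counter-example to the variant of the theorem from ChSi} in the paper exhibits exactly this: there $|u\M|=2$, yet $u(p_0)=p_1$ is an $\aut(\C)$-invariant type, so $\M\cdot p_0=\{p_1\}$ and both elements of the Ellis group send $p_0$ to $p_1$. Your fallback bound on $|u\M\cdot p_0|$ is also empty: every type in $S_X(\C)$ is a limit of types $\tp(\sigma(\bar\alpha)/\C)$ (the orbit of $p_0$ is dense), so this property cannot cut the set down to bounded size.

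The paper's route is quite different: rather than evaluating at a single point, it finds a \emph{bounded closed set} $R\subseteq S_X(\C)$ and an idempotent $u\in\M$ with $\im(u)\subseteq R$; then $h\mapsto h|_R$ embeds $u\M$ into $R^R$ (injectivity is immediate from $hu=h$). The existence of such $R$ is the substantive step and uses the notion of \emph{content} of a tuple of types (Definition~\ref{definition: content} and Proposition~\ref{proposition: content determines orbit}), which controls when one tuple of types lies in the $EL$-orbit closure of another. For absoluteness, your proposed semigroup map $EL(S_X(\C^*))\to EL(S_X(\C))$ does not obviously exist: an element of $EL(S_X(\C^*))$ is a limit of automorphisms of $\C^*$, which need not preserve $\C$, so there is no reason for it to descend along the restriction $r$. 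The paper instead produces matched bounded sets $R_{\C_1},R_{\C_2}$ (again via content) on which the restriction map is a homeomorphism, and transfers the Ellis-group action across this bridge; the $\tau$-topology comparison then goes through a characterization of $\tau$-closure in terms of nets restricted to these sets (Lemma~\ref{lemma: description of tau-closures}).
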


\noindent
Moreover, we will see that 
$\beth_5(|T|)$ is an absolute bound on the size of this Ellis group. (Where $\beth_0(\kappa)=\kappa$ and $\beth_{n+1}(\kappa)=2^{\beth_n(\kappa)}$.)
From this theorem, we will easily deduce that the answer to Question \ref{question: main question 1}, and also to its counterpart for $\bar c$ replaced by a short tuple $\bar \alpha$, is positive, and, moreover, that 
$\beth_5(|T|)$ is an absolute bound on the size of the Ellis group in both these cases.

Note that there is no counterpart of this behavior in the case of a
group $G$ definable in $M$, namely it may happen that the sizes of the
Ellis groups of the flows $S_{G,\ext}(M)$ are getting arbitrarily
large when $M$ varies. For example, it is the case whenever the
component ${G^*}^{00}$ does not exist. Indeed, in this case the groups
$G^*/{G^*_M}^{00}$ are getting arbitrarily large when $M$ grows, and
by \cite{Ne1} they are homomorphic images of the Ellis groups of $S_{G,\ext}(M)$.

The fundamental tool in the proof Theorem \ref{theorem: main theorem 1} is the notion of content of a sequence of types introduced in Section \ref{section2.5}. This gives a model-theoretic characterization of when a type is in the orbit closure of another one. It also allows us to define an independence relation which makes sense in any theory. The properties of this relation will be studied in a future work.

In Section \ref{section 4}, we focus on minimal left ideals in Ellis semigroups. As above, let ${\mathcal M}$ be a minimal left ideal of the Ellis semigroup $EL$ of the flow $S_{\bar c}(\C)$. Recall that all minimal left ideals of $EL$ are isomorphic as $\aut(\C)$-flows (so they are of the same size), but they need not be isomorphic as semigroups. 

\begin{ques}\label{question: main question 2}
i) Is the size of ${\mathcal M}$ bounded?\\
ii) Is the property that ${\mathcal M}$ is of bounded size absolute?\\
iii) If ${\mathcal M}$ is of bounded size and the answer to (ii) is positive, what are the relationships between the minimal left ideals when $\C$ varies?
\end{ques}

As above, one has the same question for $\bar c$ replaced by any short tuple $\bar \alpha$. We give an easy example showing that the answer to (i) is negative (see Example \ref{example: unbounded minimal left ideal}). Then we answer positively  (ii) and give an answer to (iii) for both $\bar c$ and $\bar \alpha$. As above, we will deduce these things from the following, more general result.

\begin{thm}\label{theorem: main theorem 2}
Let $X$ be any $\emptyset$-type-definable subset of $S$.\\
i) The property that a minimal left ideal of the Ellis semigroup of the flow $S_X(\C)$ is of bounded size is absolute.\\
ii) Assume that a minimal left ideal of the Ellis semigroup of $S_X(\C)$ is of bounded size. Let $\C_1$ and $\C_2$ be two monster models. Then for every minimal left ideal $\mathcal{M}_{1}$ of the Ellis semigroup of the flow $S_X(\C_1)$ there exists a minimal left ideal ${\mathcal M}_{2}$ of the Ellis semigroup of the flow $S_X(\C_{2})$ which is isomorphic to ${\mathcal M}_{1}$ as a semigroup.
\end{thm}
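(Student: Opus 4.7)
My first step is a reduction. Any minimal left ideal decomposes as $\M = \bigsqcup_{v \in J(\M)} v\M$, where $J(\M)$ is the set of idempotents of $\M$ and each $v\M$ is a group isomorphic to the Ellis group $u\M$. By Theorem~\ref{theorem: main theorem 1}, $|u\M|$ is absolutely bounded (by $\beth_5(|T|)$). Hence $|\M|$ is bounded iff $|J(\M)|$ is bounded, which reduces (i) to showing that boundedness of $J(\M)$ is absolute. Moreover, the semigroup structure on $\M$ is packaged by (a) the group $u\M$, (b) the set $J(\M)$, and (c) the left-translation maps $v\colon u\M \to v\M$ for $v \in J(\M)$; so to produce the semigroup isomorphism required in (ii) it will suffice to match these data across monster models.

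To compare two monsters $\C_1,\C_2$, I would pass to a common monster $\C_3 \succeq \C_1, \C_2$ of much higher saturation, writing $EL_i := E(S_X(\C_i))$. The restriction maps $\pi_i \colon S_X(\C_3) \to S_X(\C_i)$ are continuous surjections, equivariant along the restriction homomorphism $\aut(\C_3) \to \aut(\C_i)$; they induce continuous semigroup epimorphisms $\tilde\pi_i \colon EL_3 \twoheadrightarrow EL_i$ that send minimal left ideals surjectively onto minimal left ideals. For (i), boundedness of a minimal ideal of $EL_3$ descends immediately to $EL_i$ via $\tilde\pi_i$. Conversely, if $\M_i \subseteq EL_i$ is bounded, I would invoke the content machinery of Section~\ref{section2.5} to show that an element of the preimage $\tilde\pi_i^{-1}(\M_i)$ lying in a minimal sub-ideal of $EL_3$ is determined, up to an absolutely bounded number of choices, by its content over $\C_i$, forcing that minimal sub-ideal to be bounded.

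For (ii), assume boundedness and fix $\M_1 \subseteq EL_1$. Pick a minimal left ideal $\M_3 \subseteq EL_3$ with $\tilde\pi_1(\M_3) = \M_1$; it is bounded by the argument above. The same content analysis shows $\tilde\pi_1\restr\M_3$ is injective, so it is a semigroup isomorphism $\M_3 \cong \M_1$. Now set $\M_2 := \tilde\pi_2(\M_3) \subseteq EL_2$; by the symmetric argument $\tilde\pi_2\restr\M_3$ is also bijective, giving $\M_3 \cong \M_2$. Composing yields $\M_1 \cong \M_2$ as semigroups, as required.

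The crucial step --- and, I expect, the main obstacle --- is establishing injectivity of $\tilde\pi_i$ on a bounded minimal left ideal of $EL_3$. A priori, $EL_3$ is taken in a much larger function space than $EL_i$, and many elements of $EL_3$ could a priori collapse under restriction. The content framework from Section~\ref{section2.5}, which characterizes orbit-closure membership in model-theoretic terms, should rule this out inside a minimal ideal of absolutely bounded size; this works because the absoluteness of the Ellis group (Theorem~\ref{theorem: main theorem 1}, both as a set and with its $\tau$-topology) already encodes the essential ``almost periodic'' data at the small monster, leaving no room for genuinely new elements upstairs.
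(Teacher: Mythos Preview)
Your plan has a fundamental gap at the second step. You assert that the restriction maps $\pi_i\colon S_X(\C_3)\to S_X(\C_i)$ are ``equivariant along the restriction homomorphism $\aut(\C_3)\to\aut(\C_i)$'' and that they ``induce continuous semigroup epimorphisms $\tilde\pi_i\colon EL_3\twoheadrightarrow EL_i$''. But there is no restriction homomorphism $\aut(\C_3)\to\aut(\C_i)$: a general automorphism of $\C_3$ need not fix $\C_i$ setwise, so it does not restrict to anything on $\C_i$. Consequently there is no natural way to turn an element $\eta\in EL_3$ (a self-map of $S_X(\C_3)$, approximated by $\aut(\C_3)$) into a self-map of $S_X(\C_i)$; any attempt to lift $q\in S_X(\C_i)$ to $S_X(\C_3)$, apply $\eta$, and restrict will depend on the choice of lift. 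So $\tilde\pi_i$ simply does not exist, and everything built on it (the descent of boundedness, the injectivity argument for (ii)) collapses. Your first reduction via $|\M|=|J(\M)|\cdot|u\M|$ is fine, but it does not help with the transfer between monsters.

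The paper's route is quite different and avoids this obstacle. For (i), it gives a characterization of boundedness of $\M$ in terms of Lascar-invariant types: $\M$ is bounded iff some (equivalently every) $\eta\in\M$ has image contained in the set $I_L$ of global Lascar-invariant types (Proposition~\ref{proposition: characterization of boundedness of minimal ideals}). This is then rephrased (Corollary~\ref{corollary: consequence of the characterization of boundedness}) as a first-order property of $\C$ that transfers between monsters directly. For (ii), the key observation is that when $\M$ is bounded, the restriction map $\M\to EL(I_L)$ is a semigroup isomorphism onto its image (Proposition~\ref{proposition: F for minimal flows}), and the restriction $I_{L,\C_1}\to I_{L,\C_2}$ is a homeomorphism, inducing a semigroup isomorphism $EL(I_{L,\C_1})\cong EL(I_{L,\C_2})$. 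Thus $I_L$ plays the role of a common small flow through which both $\M_1$ and $\M_2$ factor faithfully; this is the correct substitute for your nonexistent $\tilde\pi_i$.
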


\noindent
Moreover, we show that if a minimal left ideal of the Ellis semigroup of the flow $S_X(\C)$ is of bounded size, then this size is bounded by $\beth_3(|T|)$.

The above theorem together with Example \ref{example: unbounded minimal left ideal} leads to the following problem which we study mostly in Section \ref{section 6}. (In between,  in Section \ref{section 5}, we easily describe the minimal left ideals, the Ellis group, etc. in the case of stable theories.) 

\begin{problem}\label{problem: characterizations of boundedness of minimal left ideals}
i) Characterize when a minimal left ideal of the Ellis semigroup of the flow $S_X(\C)$ is of bounded size.\\
ii) Do the same for the flow $S_{\bar c}(\C)$.\\
iii) Do the same for the flow $S_{\bar \alpha}(\C)$, where $\bar \alpha$ is a short tuple from $\C$.
\end{problem}

We will easily see that that a solution of (i) would yield solutions of (ii) and (iii). As to (i), we observe in Section \ref{section 4} that such an ideal is of bounded size if and only if there is an element in the Ellis semigroup of $S_X(\C)$ which maps all types from $S_X(\C)$ to Lascar invariant types. Then, in Section \ref{section 6}, under the additional assumption that $T$ has NIP, we give several characterizations (in various terms) of when a minimal left ideal of the Ellis semigroup of the flow $S_{\bar c}(\C)$ is of bounded size (i.e. we solve Problem \ref{problem: characterizations of boundedness of minimal left ideals}(ii) for NIP theories). This is done in Theorem \ref{theorem: main characterization theorem under NIP}. A part of this theorem is the following.

\begin{prop}\label{proposition: characterization of boundedness in NIP theories}
Assume that $T$ has NIP. Then a minimal left ideal of the Ellis semigroup of the flow $S_{\bar c}(\C)$ is of bounded size if and only if $\emptyset$ is an extension base (i.e. every type over $\emptyset$ does not fork over $\emptyset$).
\end{prop}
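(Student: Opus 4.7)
Here is the plan. I use the criterion from Section~\ref{section 4} that a minimal left ideal of $EL := E(S_{\bar c}(\C))$ has bounded size if and only if there is some $\eta \in EL$ such that $\eta(S_{\bar c}(\C))$ is contained in the set $\mathrm{LInv}(\C)$ of Lascar-invariant global types. Both directions of the proposition reduce to this criterion.

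For the forward direction, which does not require NIP: given $\eta$ as in the criterion, set $p := \eta(u_0)$ where $u_0 = \tp(\bar c/\C)$; then $p$ is a Lascar-invariant global type extending $\tp(\bar c/\emptyset)$. Now let $r(\bar y) \in S(\emptyset)$ be any complete type over $\emptyset$ in a finite tuple of variables $\bar y$. Realize $r$ in $\C$ by a tuple $\bar b$, which is possible by saturation. Since $\bar c$ enumerates all of $\C$, the tuple $\bar b$ appears as a subtuple $\bar c_{\bar i}$ of $\bar c$ at some index sequence $\bar i$. The restriction $p \restr \bar x_{\bar i}$ is a global Lascar-invariant type whose restriction to $\emptyset$ equals $r$. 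Because any Lascar-invariant global type does not fork over $\emptyset$, we conclude that $r$ has a non-forking global extension; hence $\emptyset$ is an extension base.

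For the reverse direction, assume NIP and that $\emptyset$ is an extension base. Then $\tp(\bar c/\emptyset)$ has a global non-forking extension $p^* \in S_{\bar c}(\C)$; NIP upgrades this to saying that $p^*$ is invariant under $\aut(\C/\acl^{eq}(\emptyset))$ (in particular Lascar-invariant) and Borel-definable over $\acl^{eq}(\emptyset)$. Choose a net $(\sigma_i)$ in $\aut(\C)$ with $\sigma_i \cdot u_0 \to p^*$ and let $\eta \in EL$ be any cluster point of $(\sigma_i)$ in the product topology on $S_{\bar c}(\C)^{S_{\bar c}(\C)}$; then $\eta(u_0) = p^*$. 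The key step, and the main obstacle of the proof, is to show that $\eta(q) \in \mathrm{LInv}(\C)$ for \emph{every} $q \in S_{\bar c}(\C)$ and not only for $q = u_0$. I would establish this by tracking how the Borel definition scheme of $p^*$ over $\acl^{eq}(\emptyset)$ transfers through the limit to a Borel definition scheme for $\eta(q)$ over the same set, so that each $\eta(q)$ is $\aut(\C/\acl^{eq}(\emptyset))$-invariant and hence Lascar-invariant. Concretely I would adapt the Morley-product/limit argument in the proof of \cite[Theorem 5.7]{ChSi}, as announced in the introduction, since in the NIP setting that computation is exactly what converts invariance of a single type into the uniform invariance required by the criterion.
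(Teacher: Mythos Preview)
Your forward direction is correct and essentially matches the paper's argument (the chain $(v)\Rightarrow(vi)\Rightarrow(i)$ in Theorem~\ref{theorem: main characterization theorem under NIP}): a Lascar-invariant global extension of $\tp(\bar c/\emptyset)$ restricts to non-forking extensions of every type over~$\emptyset$.

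The reverse direction has a genuine gap. You correctly isolate the obstacle --- showing that $\eta(q)$ is Lascar-invariant for \emph{every} $q$, not just for $u_0$ --- but your proposed resolution does not work. The Borel definition scheme of $p^*$ over $\acl^{eq}(\emptyset)$ tells you about $p^*$ only; it says nothing about the limits $\eta(q)=\lim_i \sigma_i(q)$ for other $q$, since the net $(\sigma_i)$ was chosen merely to push $u_0$ to $p^*$ and carries no control over its action on arbitrary types. There is no mechanism by which the definability of $p^*$ ``transfers through the limit'' to $\eta(q)$. Your appeal to \cite[Theorem~5.7]{ChSi} is also misplaced: that theorem (adapted here as Theorem~\ref{theorem theorem from ChSi}) \emph{assumes} boundedness of the minimal ideal and proves that the Ellis group maps isomorphically onto $\gal_{KP}(T)$; it does not establish boundedness.

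What is actually needed is the Chernikov--Kaplan machinery (Fact~\ref{fact: forking equals dividing from CaKa}): over an extension base, forking equals dividing, and moreover there is a single indiscernible sequence witnessing dividing for \emph{all} formulas simultaneously. This lets one construct, for any finite collection of types $q_1,\dots,q_n$ and forking formulas $\varphi_1,\dots,\varphi_n$, an automorphism $\sigma$ with $\sigma(\varphi_i)\notin q_i$ for all $i$ (compare Proposition~\ref{proposition: easy characterization with many items} and Proposition~\ref{proposition: Im(u) contained in invariant types}). A limit of such automorphisms is then an $\eta\in EL$ whose entire image consists of non-forking, hence Lascar-invariant, types. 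The paper reaches the same conclusion via the chain $(i)\Rightarrow(ii)\Rightarrow\cdots\Rightarrow(v)$ in Theorem~\ref{theorem: main characterization theorem under NIP}, or alternatively via Corollary~\ref{corollary: bounded = weakly invariant do not fork}; in every route the Chernikov--Kaplan input is essential, and your argument omits it.
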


\noindent
We also get some characterizations of boundedness of the minimal left ideals of the Ellis semigroup of  $S_X(\C)$, which lead us to natural questions (see Questions \ref{question: bounded = forking equals dividing} and \ref{question: characterization for bar alpha}).

In the second subsection of Section \ref{section 6}, we get a better bound on the size of the Ellis group of the flow $S_X(\C)$ under the NIP assumption, namely 
$\beth_3(|T|)$. In the case of the Ellis groups of the flows $S_{\bar c}(\C)$ and $S_{\bar \alpha}(\C)$, we get a yet smaller bound, namely ${2^{2^{|T|}}}$.

In the last subsection of Section \ref{section 6}, we recall from
\cite{KrPiRz} a natural epimorphism $F$ from the Ellis group of the
flow $S_{\bar c}(\C)$ to the Kim-Pillay Galois group $\gal_{KP}(T)$. We adapt the proof of Theorem  5.7 from \cite{ChSi} to show:

\begin{thm}\label{theorem theorem from ChSi}
Assume that $T$ has NIP. If a minimal  left ideal of the Ellis
semigroup of the flow $S_{\bar c}(\C)$ is of bounded size, then the
epimorphism $F$ mentioned above is an isomorphism.
\end{thm}

Since the map $F$ is the composition of some natural homomoprhism  from the Ellis group of the flow $S_{\bar c} (\C)$ to $\gal_L(T)$ with the natural map $\gal_L(T)\to \gal_{KP}(T)$, as an immediate corollary we get that under the assumtpions of
Theorem \ref{theorem theorem from ChSi}, $T$ is G-compact. Alternatively, G-compactness follows from Proposition \ref{proposition: characterization of boundedness in NIP theories} and \cite[Corollary 2.10]{HrPi}.
On the other hand, taking any non G-compact theory, although by Theorem \ref{theorem: main theorem 1} the Ellis group of the flow $S_{\bar c}(\C)$ is always bounded, we get that $F$ need not be an isomorphism (even in the NIP context, as there exist non G-compact NIP theories). This shows that the assumption that a minimal left ideal is of bounded size is essential in the last theorem. Using some non-trivial results from \cite{KrPiRz}, we can conclude more, namely that the natural epimorphism from the Ellis group onto $\gal_{L}(T)$ need not be an isomorphism, even in the NIP context.

We also describe an epimorphism from the Ellis group of the flow $S_{\bar \alpha}(\C)$ to the new Kim-Pillay Galois group introduced in \cite{DoKiLe} and denoted by $\gal_{KP}^{\fix,1}(p)$, where $p =\tp(\bar \alpha / \emptyset)$. This is a natural counterpart of the epimorphism considered in Theorem \ref{theorem theorem from ChSi}. However, we give an example showing that the obvious counterpart of Theorem \ref{theorem theorem from ChSi} does not hold for this new epimorphism, namely assuming that $T$ has NIP and  a minimal left ideal of the Ellis semigroup of the flow $S_{\bar \alpha}(\C)$ is of bounded size, the new epimorphism need not be an isomorphism.

\section{Some preliminaries}\label{section 1}

For the relevant notions and facts from topological dynamics the reader is referred to \cite[Subsection 1.1]{KrPiRz}. (We do not include it here, because this would be the exact copy.)
Let us only say that the Ellis semigroup of a flow $(G,X)$ will be
denoted by $EL(X)$, a minimal left ideal in $EL(X)$ will be usually
denoted by ${\mathcal M}$ (sometimes with some indexes) and
idempotents in ${\mathcal M}$ will be denoted by $u$ or $v$. We should
emphasize that all minimal left ideals (equivalently, minimal
subflows) of $EL(X)$ are isomorphic as $G$-flows \cite[Proposition
  I.2.5]{Gl} but not necessarily
as semigroups. The fact that they are isomorphic as $G$-flows implies that they are of the same size, and so, in the whole paper, the statement ``a minimal left ideal of a given Ellis semigroup is of bounded size'' is equivalent to the statement ``the minimal left ideals of the given Ellis semigroup are of bounded size''.

In this paper, we will often consider nets indexed by some formulas $\varphi(\bar x, \bar b)$. Formally, this means that on the set of formulas treated as elements of the Lidenbaum algebra (i.e. equivalent formals are identified) we take the natural directed order: $\varphi \leq \psi$ if and only if $\psi \vdash \varphi$. But throughout this paper, we will just talk about formulas, not always mentioning that we are working in the Lidenbaum algebra. The same remark concerns nets indexed by finite sequences of formulas (where the order is formally the product of orders on the Lindenbaum algebras in every coordinate). In fact, we could work just with formulas and with nets indexed by preorders, but we find more elegant to use orders.

Now, we give a few details on Galois groups in model theory. For more
detailed expositions the reader is referred to \cite[Subsection
  1.3]{KrPiRz} or \cite[Subsection 4.1]{KrPi_recent}. If the reader is
interested in yet more details and proofs, he or she may consult
fundamental papers around this topic, e.g. \cite{LaPi, Zi} or \cite{CLPZ}.
 
Let $\C$ be a monster model of a theory $T$.

\begin{dfn}$\,$
		\begin{enumerate}[label=\roman{*}),nosep]
			\item
			{\em The group of Lascar strong automorphisms}, denoted by $\autf_L(\C)$, is the subgroup of $\aut(\C)$ generated by all automorphisms fixing a small submodel of $\C$ pointwise, i.e.\ $\autf_L(\C)=\langle \sigma : \sigma \in \aut(\C/M)\;\, \mbox{for a small}\;\, M\prec \C\rangle$.
			\item
			{\em The Lascar Galois group of $T$}, denoted by $\gal_L(T)$, is the quotient group $\aut(\C)/\autf_L(\C)$ (which makes sense, as $\autf_L(\C)$ is a normal subgroup of $\aut(\C)$).
		\end{enumerate}
	\end{dfn}
The orbit equivalence relation of $\autf_L(\C)$ acting on any given product $S$ of boundedly many sorts of $\C$ is usually denoted by $E_L$. It turns out that this is the finest bounded (i.e. with boundedly many classes), invariant equivalence relation on $S$; and the same is true after the restriction to the set of realizations of any type in $S_S(\emptyset)$. The classes of $E_L$ are called {\em Lascar strong types}. 

 Now, we recall the logic topology on $\gal_L(T)$. 
	Let $\nu\fcolon \aut(\C) \to \gal_L(T)$ be the quotient map. Choose a small model $M$, and let $\bar m$ be its enumeration. By $S_{\bar m}(M)$ we denote $\{ \tp(\bar n/M): \bar n \equiv \bar m\}$. Let $\nu_1\fcolon \aut(\C) \to S_{\bar m}(M)$ be defined by $\nu_1(\sigma)=\tp(\sigma(\bar m)/M)$, and $\nu_2\fcolon S_{\bar m}(M) \to \gal_L(T)$ by $\nu_2(\tp(\sigma(\bar m)/M))=\sigma /\autf_L(\C)$. Then $\nu_2$ is a well-defined surjection, and $\nu=\nu_2 \circ \nu_1$. Thus, $\gal_L(T)$ becomes the quotient of the space $S_{\bar m}(M)$ by the relation of lying in the same fiber of $\nu_2$, and so we can define a topology on $\gal_L(T)$ as the quotient topology. In this way, $\gal_L(T)$ becomes a quasi-compact (so not necessarily Hausdorff) topological group. This topology does not depend on the choice of the model $M$. 

Next, define $\gal_0(T)$ as the closure of the identity in $\gal_L(T)$. We put $\autf_{KP}(T):=\nu^{-1}[\gal_0(T)]$, and finally $\gal_{KP}(T): = \aut(\C)/\autf_{KP}(\C)$. Then  $\gal_{KP}(T) \cong \gal_L(T)/\gal_0(T)$ becomes a compact (Hausdorff) topological group (with the quotient topology). We also have the obvious continuous epimorphism $h \colon \gal_L(T) \to \gal_{KP}(T)$. We say that $T$ is {\em G-compact} if $h$ is an isomorphism; equivalently, if $\gal_0(T)$ is trivial.

Taking $M$ of cardinality $|T|$, since $\gal_L(T)$ is the image of $S_{\bar m}(M)$ by $\nu_2$, we get that $|\gal_L(T)| \leq 2^{|T|}$.

Finally, we recall some results from \cite{KrPiRz}. As usual, $\bar c$ is an enumeration of $\C$. Let $EL=EL(S_{\bar c}(\C))$, ${\mathcal M}$ be a minimal left ideal in $EL$, and $u \in {\mathcal M}$ an idempotent. Let $\C' \succ \C$ be a monster model with respect to $\C$. 

We define $\hat{f}\colon EL \to \gal_L(T)$ by
	\[
		\hat{f}(\eta)=\sigma'/ \autf_L(\C'),
	\]
	where $\sigma' \in \aut(\C')$ is such that $\sigma'(\bar c)
        \models \eta(\tp(\bar c/\C))$. It turns out that this is a
        well-defined semigroup epimorphism. Its restriction $f$ to
        $u{\mathcal M}$ is a group epimorphism from $u{\mathcal M}$ to
        $\gal_L(T)$ \cite[Proposition 2.4 and Corollary 2.6]{KrPiRz}. 

\begin{fct}[Theorem 2.7 from \cite{KrPiRz}]\label{fact: main fact 1 from KrPiRz}
		Equip $u\M$ with the $\tau$-topology and $u\M/H(u\M)$ with the induced quotient topology. Then:
		\begin{enumerate}[nosep]
			\item $f$ is continuous.
			\item $H(u\M) \leq \ker(f)$.
			\item The formula $p/H(u\M) \mapsto f(p)$ yields a well-defined continuous epimorphism $\bar f$ from $u\M/H(u\M)$ to $\gal_L(T)$.
		\end{enumerate}
		In particular, we get the following sequence of continuous epimorphisms:
\begin{equation}
			u\M\twoheadrightarrow u\M/H(u\M)\xtwoheadrightarrow{\bar f}{}\gal_L(T)\xtwoheadrightarrow{h}{}\gal_{KP}(T).
		\end{equation}
		
	\end{fct}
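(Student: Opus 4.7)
My plan is to verify the three claims in sequence, after first confirming the preparatory facts already asserted in the excerpt: that $\hat{f}$ is well-defined (any two $\sigma_1',\sigma_2' \in \aut(\C')$ with $\sigma_i'(\bar{c}) \models \eta(\tp(\bar{c}/\C))$ differ by an element of $\aut(\C'/\C)$, which sits inside $\autf_L(\C')$ because $\C$ is small relative to $\C'$) and that $\hat{f}$ is a semigroup epimorphism. For (1), I first show $\hat{f}\colon EL \to \gal_L(T)$ is continuous by exhibiting it as a composition. Fix a small submodel $M \prec \C$ with enumeration $\bar{m}$ taken as a subtuple of $\bar{c}$; then
\[
\hat{f}\,\colon\, EL \hookrightarrow S_{\bar{c}}(\C) \xrightarrow{\mathrm{restr}} S_{\bar{m}}(M) \xrightarrow{\nu_2} \gal_L(T),
\]
where the embedding is $\eta \mapsto \eta(\tp(\bar{c}/\C))$, the middle map restricts variables and parameters, and the last map is continuous by the definition of the topology on $\gal_L(T)$. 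Since $u$ is idempotent in $EL$ and $\gal_L(T)$ is a group, $\hat{f}(u) = e$. For the $\tau$-continuity of $f = \hat{f}|_{u\M}$: if $p_i \to p$ in the $\tau$-topology, lift to $q_i \to q$ in $EL$ with $p_i = u q_i$ and $p = uq$, and compute
\[
f(p_i) = \hat{f}(u)\hat{f}(q_i) = \hat{f}(q_i) \to \hat{f}(q) = \hat{f}(u)\hat{f}(q) = f(p).
\]

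For (2), the weak version $f(H(u\M)) \subseteq \gal_0(T)$ is obtained topologically: the composition $h \circ f\colon u\M \to \gal_{KP}(T)$ is a $\tau$-continuous homomorphism into a compact Hausdorff topological group, and by the Ellis--Newelski theorem such a map annihilates $H(u\M)$. The heart of (2), and the place I expect the main obstacle, is the upgrade to $f(H(u\M)) = \{e\}$. Since $\gal_L(T)$ is only quasi-compact and $\gal_0(T)$ can be nontrivial (precisely when $T$ fails to be $G$-compact), this strengthening is not purely topological and requires a model-theoretic input. My approach would be to exploit the explicit description of $f$ via realizations in $\C'$ together with a carefully chosen base of $\tau$-neighborhoods of $u$ (for instance, those corresponding to fixing the restriction of $u$ to a small submodel): given $p \in H(u\M)$ and approximants $p_i \to p$ drawn from ever smaller such neighborhoods, one analyzes the associated automorphisms $\sigma_i' \in \aut(\C')$ with $\sigma_i'(\bar{c}) \models p_i$ and uses the infinitesimal nature of the approximation to conclude that the automorphism $\sigma' \in \aut(\C')$ associated to a realization $\bar{c}' \models p$ lies in $\autf_L(\C')$, i.e.\ $f(p) = e$.

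Once (1) and (2) are in hand, (3) is formal: by (2), $f$ is constant on $H(u\M)$-cosets and so descends to a group homomorphism $\bar{f}\colon u\M/H(u\M) \to \gal_L(T)$; continuity of $\bar{f}$ follows from the $\tau$-continuity of $f$ combined with the universal property of the quotient topology on $u\M/H(u\M)$, and surjectivity is inherited from surjectivity of $f$ (equivalently, of $\hat{f}$).
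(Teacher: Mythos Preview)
This statement is recorded in the paper as a \emph{Fact} quoted from \cite{KrPiRz} (Theorem 2.8 there), and the present paper gives no proof of it. So there is no in-paper argument to compare your proposal against; any assessment must be on the internal merits of your sketch.

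On those merits, your outline for (1) and (3) is broadly reasonable, though the passage from continuity of $\hat f$ (in the ambient $EL$-topology) to $\tau$-continuity of $f$ is not as simple as ``lift $p_i\to p$ to $q_i\to q$ in $EL$ with $p_i=uq_i$''. The $\tau$-topology is defined via the circle operation: $p\in\cl_\tau(A)$ means there are nets $(\sigma_i)$ in $\aut(\C)$ with $\sigma_i\to u$ in $EL$ and $(a_i)$ in $A$ with $\sigma_i a_i\to p$. To show $f^{-1}(C)$ is $\tau$-closed for $C\subseteq\gal_L(T)$ closed, one must control $\hat f(\sigma_i a_i)=\hat f(\sigma_i)\,f(a_i)$, where $\hat f(\sigma_i)\to e$ but need not equal $e$; since $\gal_L(T)$ is only quasi-compact, limits are not unique, so one needs the specific description of closed sets via $\nu_2$ and $S_{\bar m}(M)$ rather than a bare convergence argument.

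The genuine gap is at (2), and you correctly flag it as the crux. The weak conclusion $f[H(u\M)]\subseteq\gal_0(T)$ follows from general topological-dynamics facts about maps into compact Hausdorff groups, but the actual claim $H(u\M)\leq\ker(f)$ (equivalently $f[H(u\M)]=\{e\}$) is strictly stronger whenever $T$ is not $G$-compact, and your proposed approach (``analyze the associated automorphisms $\sigma_i'$ and use the infinitesimal nature of the approximation'') is not an argument but a hope. The actual proof in \cite{KrPiRz} requires a concrete model-theoretic computation linking $\tau$-neighborhoods of $u$ to Lascar distance; nothing in your sketch supplies that mechanism. As it stands, (2) is unproved in your proposal.
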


\begin{fct}[Theorem 2.9 from \cite{KrPiRz}]\label{fact: main fact 2 from KrPiRz}
For $Y:=\ker(\bar f)$ let $\cl_\tau(Y)$ be the closure of $Y$ inside $u\M/H(u\M)$. Then $\bar f[\cl_\tau(Y)]=\gal_0(T)$, so $\bar f$ restricted to $\cl_\tau(Y)$ induces an isomorphism between $\cl_\tau(Y)/Y$ and $\gal_0(T)$.
\end{fct}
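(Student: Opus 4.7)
The plan is to establish the two inclusions $\bar f[\cl_\tau(Y)] \subseteq \gal_0(T)$ and $\gal_0(T) \subseteq \bar f[\cl_\tau(Y)]$; the isomorphism $\cl_\tau(Y)/Y \cong \gal_0(T)$ then follows by the first isomorphism theorem, since $\ker(\bar f \restr \cl_\tau(Y)) = \cl_\tau(Y)\cap Y = Y$. The first inclusion is immediate from continuity of $\bar f$ together with $\bar f[Y] = \{e\}$: we get $\bar f[\cl_\tau(Y)] \subseteq \cl(\bar f[Y]) = \cl(\{e\}) = \gal_0(T)$, using the very definition of $\gal_0(T)$.

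For the reverse inclusion, set $Z := \bar f^{-1}[\gal_0(T)]$. Since $\gal_0(T)$ is closed in $\gal_L(T)$ and $\bar f$ is continuous, $Z$ is a closed normal subgroup of $G := u\mathcal{M}/H(u\mathcal{M})$ that contains $Y$. The composition $h \circ \bar f \colon G \to \gal_{KP}(T)$ is a continuous surjective homomorphism of compact Hausdorff topological groups, so it is closed, hence a quotient map, and it induces a topological isomorphism $G/Z \cong \gal_{KP}(T)$; in particular $\bar f[Z] = \gal_0(T)$. Thus it suffices to prove $Z \subseteq \cl_\tau(Y)$, i.e.\ that $Y$ is $\tau$-dense in $Z$. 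Given $z \in Z$ and a $\tau$-open neighborhood $V$ of $z$, the goal is to produce $y \in V \cap Y$. The approach I would take uses the explicit description of $\bar f$ (sending $p \in u\mathcal{M}$ to the class of some $\sigma \in \aut(\C')$ with $\sigma(\bar c) \models p$) together with the fact that $\bar f(z) \in \gal_0(T) = \cl(\{e\})$: every logic-open neighborhood of $\bar f(z)$ in $\gal_L(T)$ contains $e$, and via $\nu_2 \colon S_{\bar m}(M) \to \gal_L(T)$ this gives types over $M$ whose representatives are Lascar strong automorphisms; one then lifts these back through the $\tau$-topology on $u\mathcal{M}$ (which is defined via nets of formulas) to produce the required $y$.

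The main obstacle is exactly this final lifting step. Because $\gal_L(T)$ need not be Hausdorff, the natural continuous bijection $G/Y \to \gal_L(T)$ is not automatically a homeomorphism and $\bar f$ is not automatically open, so ``logic-open'' neighborhoods in $\gal_L(T)$ cannot be translated into ``$\tau$-open'' neighborhoods in $G$ by a purely formal quotient argument. One must instead work directly with the concrete model-theoretic definitions of both topologies and use the construction of $\bar f$ in terms of representative automorphisms of a bigger monster $\C'$ to pass from the logic-topology approximation of $\bar f(z)$ by $e$ to an actual $\tau$-topology approximation of $z$ by elements of $Y$. This topological bridging is where the real technical work of the argument lies.
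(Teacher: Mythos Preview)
This statement is a \emph{cited fact} in the present paper (Theorem~2.10 of \cite{KrPiRz}); there is no proof of it here, only a reference. So there is nothing to compare your argument against in this paper --- the work is done in \cite{KrPiRz}.

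That said, your reduction is correct: the inclusion $\bar f[\cl_\tau(Y)]\subseteq\gal_0(T)$ is immediate from continuity, and the reverse inclusion is equivalent (via the argument you give) to showing that $Y$ is $\tau$-dense in $Z:=\bar f^{-1}[\gal_0(T)]$, which in turn is equivalent to $\bar f$ being a topological quotient map onto $\gal_L(T)$. You correctly identify that this cannot be obtained by abstract nonsense: $\gal_L(T)$ is only quasi-compact, so the usual ``continuous bijection from compact to Hausdorff is a homeomorphism'' argument does not apply to $G/Y\to\gal_L(T)$.

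The genuine gap is that this last step \emph{is} the theorem. Everything you have written down is the formal wrapper; the entire substantive content of Theorem~2.10 of \cite{KrPiRz} is precisely the ``topological bridging'' you describe in your final paragraph and leave undone. In \cite{KrPiRz} this is carried out by a concrete analysis relating the $\tau$-topology on $u\mathcal{M}$ (defined via the $\bullet$-operation and limits of nets of automorphisms) to the logic topology on $\gal_L(T)$ (defined via $S_{\bar m}(M)$), using the explicit description of $\hat f$ and $f$ in terms of realizations in a larger monster $\C'$. So your outline is sound as a reduction, but what remains is not a detail --- it is the proof.
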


These facts imply that $\gal_L(T)$ can be presented as the quotient of a compact Hausdorff group by some subgroup, and $\gal_0(T)$ is such a quotient but by a dense subgroup.

\begin{cor}\label{corollary: f iso implies G-compactness}
If $\bar f$ is an isomorphism, then $\gal_0(T)$ is trivial (i.e. $T$ is G-compact). In particular, if $f$ is an isomorphism, then $T$ is G-compact.
\end{cor}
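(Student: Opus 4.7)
The approach is to derive both assertions as immediate consequences of Fact \ref{fact: main fact 2 from KrPiRz}, whose identity $\bar f[\cl_\tau(Y)]=\gal_0(T)$ with $Y=\ker(\bar f)$ is essentially all one needs once the hypothesis forces $Y$ to collapse to the trivial subgroup.

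First I would handle the main statement. Assume $\bar f$ is an isomorphism, so $Y=\ker(\bar f)=\{e\}$. Since $u\M/H(u\M)$ is a compact Hausdorff topological group (as recalled in the introduction), singletons are $\tau$-closed, hence $\cl_\tau(Y)=Y=\{e\}$. Applying Fact \ref{fact: main fact 2 from KrPiRz}, $\gal_0(T)=\bar f[\cl_\tau(Y)]=\{e\}$, which by definition means that $T$ is G-compact.

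For the ``in particular'' clause, I would reduce to the previous case. If $f\colon u\M\to\gal_L(T)$ is an isomorphism, then $\ker(f)$ is trivial; by Fact \ref{fact: main fact 1 from KrPiRz}(2) this forces $H(u\M)=\{e\}$, so the quotient map $u\M\to u\M/H(u\M)$ is an isomorphism, and under this identification $\bar f$ coincides with $f$ (by the formula $\bar f(p/H(u\M))=f(p)$ of Fact \ref{fact: main fact 1 from KrPiRz}(3)). Hence $\bar f$ is also an isomorphism, and the first part applies.

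There is essentially no obstacle here: the corollary is a two-line consequence of Fact \ref{fact: main fact 2 from KrPiRz}. The only mild point worth highlighting is that Hausdorffness of $u\M/H(u\M)$ is precisely what keeps the $\tau$-closure of the trivial subgroup trivial, thereby converting the abstract identity $\bar f[\cl_\tau(Y)]=\gal_0(T)$ into the concrete triviality of $\gal_0(T)$.
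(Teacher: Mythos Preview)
Your proof is correct and follows essentially the same route as the paper: both argue that $\bar f$ being an isomorphism makes $Y=\ker(\bar f)$ a singleton, hence closed, and then invoke Fact~\ref{fact: main fact 2 from KrPiRz} to conclude $\gal_0(T)$ is trivial. Your justification for closedness of the singleton (Hausdorffness of $u\M/H(u\M)$) is arguably more precise than the paper's appeal to the $\tau$-topology on $u\M$ being $T_1$, and you spell out the ``in particular'' reduction via $H(u\M)\leq \ker(f)$, which the paper leaves implicit.
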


\begin{proof}
If $\bar f$ is an isomorphism, then $Y$ is a singleton. But the $\tau$-topology on $u{\mathcal M}$ is $T_1$, so $Y$ is $\tau$-closed, i.e. $\cl_\tau(Y)=Y$ has only one element. Hence, by Fact \ref{fact: main fact 2 from KrPiRz}, $\gal_0(T)=\bar f[\cl_\tau(Y)]$ is trivial.
\end{proof}

\section{A few reductions}\label{section 2}

We explain here some basic issues which show that Theorems \ref{theorem: main theorem 1} and \ref{theorem: main theorem 2} yield answers to Questions \ref{question: main question 1} and \ref{question: main question 2}, and that it is enough to prove these theorems assuming that the number of factors in the product $S$ is bounded.

Let $\C$ be a monster model of an arbitrary theory $T$.
Recall that $\bar c$ is an enumeration of $\C$, and $\bar \alpha$ is a short tuple of elements of $\C$.
Whenever we talk about realizations of types over $\C$, we choose them from a bigger monster model $\C' \succ \C$.

If $p(\bar y)$ is a type and $\bar x\subseteq \bar y$ is a subsequence of variables, we let $p|_{\bar x}$ denote the restriction of $p$ to the variables $\bar x$.

The following remark is easy to check.
%
%

\begin{rem}\label{remark: very easy 2}
Let $\bar d$ be a tuple of all elements of $\C$ (with repetitions) such that $\bar \alpha$ is a subsequence of $\bar d$. Let $r \colon S_{\bar d}(\C) \to S_{\bar \alpha}(\C)$ be the restriction to the appropriate coordinates. Then $r$ is an epimorphism of $\aut(\C)$-ambits which induces an epimorphism $\hat{r} \colon EL(S_{\bar d}(\C)) \to EL(S_{\bar \alpha}(\C))$ of semigroups. In particular, each minimal left ideal in $EL(S_{\bar d}(\C))$ maps via $\hat{r}$ onto a minimal left ideal in $EL(S_{\bar \alpha}(\C))$, and similarly for the Ellis groups.
\end{rem}

\begin{cor}\label{corollary: the size for alpha bounded by the size for c}
The size of a minimal left ideal in $EL(S_{\bar c}(\C))$ is greater than or equal to the size of a minimal left ideal in $EL(S_{\bar \alpha}(\C))$, and the size of the Ellis group of the flow $S_{\bar c}(\C)$ is greater than or equal to the size of the Ellis group of the flow $S_{\bar \alpha}(\C)$. 
\end{cor}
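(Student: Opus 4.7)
The plan is to deduce this corollary directly from Remark \ref{remark: very easy 2} after matching the enumeration $\bar d$ appearing there with the fixed enumeration $\bar c$ of $\C$. Concretely, I would first choose a tuple $\bar d$ enumerating all elements of $\C$ (with repetitions) and containing $\bar \alpha$ as a subsequence; for instance, take $\bar d := \bar c \frown \bar \alpha$, or insert $\bar \alpha$ into $\bar c$ at any chosen position. Since $\bar c$ already lists every element of $\C$, the tuple $\bar d$ is then an enumeration of $\C$ in the same sense as $\bar c$, just with some elements repeated.

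Next I would argue that the flows $S_{\bar c}(\C)$ and $S_{\bar d}(\C)$ are canonically isomorphic as $\aut(\C)$-ambits, and therefore their Ellis semigroups are canonically isomorphic as well. The key point is that whenever $\bar a \equiv \bar d$, the ``$\bar c$-part'' of $\bar a$ determines the whole of $\bar a$: each additional coordinate records the value of some element of $\C$, and consistency with $\tp(\bar d/\C)$ forces it to coincide with the appropriate coordinate of the $\bar c$-part. Thus the restriction map $S_{\bar d}(\C)\to S_{\bar c}(\C)$ is a homeomorphism of $\aut(\C)$-flows, and the induced map of Ellis semigroups is a semigroup isomorphism sending minimal left ideals to minimal left ideals and the Ellis group to the Ellis group.

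Composing this isomorphism with the semigroup epimorphism $\hat{r}\colon EL(S_{\bar d}(\C))\twoheadrightarrow EL(S_{\bar \alpha}(\C))$ furnished by Remark \ref{remark: very easy 2}, I obtain a semigroup epimorphism $EL(S_{\bar c}(\C))\twoheadrightarrow EL(S_{\bar \alpha}(\C))$ which, by that same remark, carries each minimal left ideal of $EL(S_{\bar c}(\C))$ onto a minimal left ideal of $EL(S_{\bar \alpha}(\C))$ and the Ellis group of $S_{\bar c}(\C)$ onto the Ellis group of $S_{\bar \alpha}(\C)$. The claimed size inequalities then follow at once, since a surjection cannot increase cardinality. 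I do not anticipate a genuine obstacle; the only point requiring care is the verification that the canonical bijection between $S_{\bar c}(\C)$ and $S_{\bar d}(\C)$ really is an isomorphism of $\aut(\C)$-flows, but this is a straightforward check about tuples of variables and their realizations.
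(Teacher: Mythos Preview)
Your proposal is correct and matches the paper's intended argument: the corollary is stated immediately after Remark \ref{remark: very easy 2} with no separate proof, so the paper is implicitly relying on exactly the observation you spell out, namely that for any enumeration $\bar d$ of $\C$ (with repetitions) the restriction $S_{\bar d}(\C)\to S_{\bar c}(\C)$ is an isomorphism of $\aut(\C)$-ambits, after which Remark \ref{remark: very easy 2} yields the surjections onto minimal left ideals and Ellis groups. Your justification of this isomorphism via the equality formulas in $\tp(\bar d/\emptyset)$ is the right one.
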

%

Let $S,S'$ be two products of sorts, with possibly an unbounded number of factors and repetitions allowed, with associated variables $\bar x$ and $\bar y$ respectively. Let $X$ [resp. $Y$] be a $\emptyset$-type-definable subset of $S$ [resp. $S'$]. Say that $X$ and $Y$ have the same {\em finitary content} if for every finite $\bar x'\subseteq \bar x$ and $p\in S_X(\C)$ there is a $\bar y'\subseteq \bar y$ of the same size (and associated with the same sorts) and $q\in S_Y(\C)$ such that $p|_{\bar x'} = q|_{\bar y'}$ and conversely, switching the roles of $X$ and $Y$ (formally, this equality denotes equality after the identification of the corresponding variables, but we will usually ignore this). The above notion of finitary content has nothing to do with the notion of content of a type which will be introduced in the next section.

\begin{prop}\label{proposition: same content isomorphic ellis groups}
	Let $S,S'$, $X$ and $Y$ be as above and assume that $X$ and $Y$ have the same finitary content. Then $EL(S_X(\C)) \cong EL(S_Y(\C))$ as semigroups and as $\aut(\C)$-flows. In particular, the corresponding minimal left ideals of these Ellis semigroups are isomorphic, and the Ellis groups of the flows $S_X(\C)$ and $S_Y(\C)$ are isomorphic as groups equipped with the $\tau$-topology.
\end{prop}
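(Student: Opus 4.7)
The plan is to construct a continuous, $\aut(\C)$-equivariant semigroup isomorphism $\Phi \colon EL(S_X(\C)) \to EL(S_Y(\C))$ that is the identity on the common dense subgroup $\aut(\C) \hookrightarrow EL$. Once this is done, all remaining conclusions follow automatically: the $\aut(\C)$-flow structure on each Ellis semigroup is given by left multiplication by $\aut(\C)$; a continuous semigroup isomorphism maps minimal left ideals to minimal left ideals; and the $\tau$-topology on the Ellis group is defined in terms of the semigroup operation together with the closure in $EL$, hence is preserved by such a $\Phi$.

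The key idea is to encode elements of $EL(S_X(\C))$ by their actions on finite restrictions. For each finite sort-labeled sequence $\bar s$, put
\[
\mathrm{Ctnt}_X(\bar s) \;:=\; \{\, p|_{\bar x'} \mathrel{:} p \in S_X(\C),\ \bar x' \subseteq \bar x \text{ of sort } \bar s\,\} \;\subseteq\; S_{\bar s}(\C),
\]
and analogously $\mathrm{Ctnt}_Y(\bar s)$. For any $\eta \in EL(S_X(\C))$, writing $\eta = \lim_i \sigma_i$ with $\sigma_i \in \aut(\C)$, the value $\eta(p)|_{\bar x'} = \lim_i \sigma_i(p|_{\bar x'})$ depends only on $p|_{\bar x'}$, giving a well-defined map $\bar\eta_{\bar s} \colon \mathrm{Ctnt}_X(\bar s) \to \mathrm{Ctnt}_X(\bar s)$. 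These maps are coherent under variable inclusion, determine $\eta$ uniquely (since types are determined by their finite restrictions), and satisfy $\overline{\eta_1 \eta_2}_{\bar s} = \bar\eta_{1,\bar s} \circ \bar\eta_{2,\bar s}$.

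The finitary content hypothesis is precisely the equality $\mathrm{Ctnt}_X(\bar s) = \mathrm{Ctnt}_Y(\bar s)$ for every $\bar s$. Hence the same coherent families of finitary maps arise from $EL(S_X(\C))$ and from $EL(S_Y(\C))$ (both being exactly those arising as pointwise limits on the common $\mathrm{Ctnt}(\bar s)$'s of nets from $\aut(\C)$), and the map $\Phi$ defined by sending $\eta$ to the unique element of $EL(S_Y(\C))$ with the same finitary family is a bijective semigroup homomorphism. $\aut(\C)$-equivariance and continuity are immediate from the construction: $\Phi$ is the identity on $\aut(\C)$, and $\eta_i \to \eta$ in $EL$ is equivalent to pointwise convergence of the finitary families on each $\mathrm{Ctnt}(\bar s)$, a condition shared by both sides.

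The main obstacle I anticipate is bookkeeping rather than conceptual. One has to carefully formalize the sort-labeled identification between subtuples of $\bar x$ and of $\bar y$ so that $\mathrm{Ctnt}_X(\bar s)$ and $\mathrm{Ctnt}_Y(\bar s)$ sit literally as the same subset of a common space of fresh-variable types. One also needs to verify that $\Phi(\eta)$ genuinely lands in $EL(S_Y(\C))$: that the coherent family of finite restrictions assembles into a function $S_Y(\C) \to S_Y(\C)$, and that this function is in the Ellis closure of $\aut(\C)$. Both points are in fact easy once the formalism is set up, since $\Phi(\eta)(q)$ is a pointwise limit of the types $\sigma_i(q) \in S_Y(\C)$ and $S_Y(\C)$ is closed in $S_{\bar y}(\C)$.
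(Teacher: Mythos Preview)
Your proposal is correct and follows essentially the same approach as the paper. The paper's proof is terser but rests on the identical observation: for $\eta \in EL(S_X(\C))$ and finite subtuples $\bar x_0,\bar x_1 \subseteq \bar x$ of the same sort, $p|_{\bar x_0}=q|_{\bar x_1}$ implies $\eta(p)|_{\bar x_0}=\eta(q)|_{\bar x_1}$, which is precisely your well-definedness of $\bar\eta_{\bar s}$; the paper then defines the isomorphism by $\eta'(q)|_{\bar y'}:=\eta(p)|_{\bar x'}$ for any $p,\bar x'$ with $p|_{\bar x'}=q|_{\bar y'}$ and notes that the symmetrically defined map is an inverse, exactly your $\Phi$.
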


\begin{proof}
	Take $\eta\in EL(S_X(\C))$, $p(\bar x),q(\bar x)\in S_X(\C)$. Let $\bar x_0, \bar x_1\subseteq \bar x$ be two finite tuples of the same size. Assume that $p|_{\bar x_0}=q|_{\bar x_1}$. Then $\eta(p)|_{\bar x_0}=\eta(q)|_{\bar x_1}$. This allows us to define $f \colon EL(S_{X}(\C)) \to EL(S_Y(\C))$ by putting $f(\eta)=\eta'$, where for every $q(\bar y)\in S_Y(\C)$ and finite $\bar y'\subseteq \bar y$, $\eta'(q)|_{\bar y'}=\eta(p)|_{\bar x'}$, where $p\in S_X(\C)$ and $\bar x'\subseteq \bar x$ are such that $p|_{\bar x'}=q|_{\bar y'}$.
	
	Now, $f$ is a morphism of semigroups and of
        $\aut(\C)$-flows. Furthermore, the map $g:EL(S_{Y}(\C)) \to
        EL(S_X(\C))$ defined in the same way as $f$ switching the
        roles of $X$ and $Y$, is an inverse of $f$. Therefore, $f$ is
        an isomorphism of semigroups and $\aut(\C)$-flows. Also, $f$
        respects the $\tau$-topology. This follows immediately from 
the definition of the $\tau$-topology (see  
\cite[Definitions 1.3 and 1.4]{KrPiRz}) and the fact that $f$ maps $\id$ to 
$\id$.
\end{proof}

\begin{prop}\label{proposition: invariants for c isomorphic to invariants for S}
Let $S$ be the product of all the sorts of the language such that each sort is repeated $\aleph_0$ times. Then $EL(S_{\bar c}(\C)) \cong EL(S_S(\C))$ as semigroups and as $\aut(\C)$-flows. In particular, the corresponding minimal left ideals of these Ellis semigroups are isomorphic, and the Ellis groups of the flows $S_{\bar c}(\C)$ and $S_{S}(\C)$ are isomorphic as groups equipped with the $\tau$-topology.
\end{prop}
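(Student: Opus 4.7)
The plan is to recognize this as an immediate consequence of Proposition \ref{proposition: same content isomorphic ellis groups}. Let $X$ be the $\emptyset$-type-definable set (in the variables $\bar x$ indexing $\bar c$) defined by the complete type $\tp(\bar c/\emptyset)$, so that $S_X(\C) = S_{\bar c}(\C)$, and let $Y$ be the trivial partial type in the variables $\bar y$ indexing $S$, so that $S_Y(\C) = S_S(\C)$. It then suffices to verify that $X$ and $Y$ have the same finitary content, since the desired conclusions about minimal left ideals and about the Ellis groups equipped with the $\tau$-topology will all follow from Proposition \ref{proposition: same content isomorphic ellis groups}.

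For the direction from $X$ to $Y$, given a finite $\bar x' \subseteq \bar x$ and $p \in S_{\bar c}(\C)$, I will pick $\bar y' \subseteq \bar y$ of the same length and sequence of sorts (which is possible because every sort is repeated $\aleph_0$ times in $\bar y$), take a realization $\bar a \models p$ in a bigger monster $\C' \succ \C$, set $\bar d := \bar a|_{\bar x'}$, and extend $\bar d$ arbitrarily to a full $\bar y$-tuple $\bar e$ by filling in the remaining coordinates with any elements of $\C$ of the appropriate sorts. Then $q := \tp(\bar e/\C) \in S_S(\C)$ satisfies $q|_{\bar y'} = \tp(\bar d/\C) = p|_{\bar x'}$, as required.

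The converse direction is the main (though still routine) obstacle, because here we must realize arbitrary finitary data on the $\bar y$-side by some actual subtuple of the fixed enumeration $\bar c$. Given a finite $\bar y' \subseteq \bar y$ and $q \in S_S(\C)$, I will fix $\bar b \models q|_{\bar y'}$ in $\C'$. Saturation of $\C$ provides some $\bar c' \in \C$ realizing $\tp(\bar b/\emptyset)$; since $\bar c$ enumerates $\C$, the tuple $\bar c'$ occupies specific positions inside $\bar c$, and those positions give the required $\bar x' \subseteq \bar x$ with $\bar c|_{\bar x'} = \bar c'$. Strong homogeneity of $\C'$ then furnishes $\sigma \in \aut(\C')$ with $\sigma(\bar c') = \bar b$, and setting $\bar a := \sigma(\bar c)$ produces a tuple with $\bar a \equiv \bar c$ and $\bar a|_{\bar x'} = \bar b$. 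Taking $p := \tp(\bar a/\C) \in S_{\bar c}(\C)$ yields $p|_{\bar x'} = \tp(\bar b/\C) = q|_{\bar y'}$, and Proposition \ref{proposition: same content isomorphic ellis groups} closes the argument.
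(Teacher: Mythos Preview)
Your proof is correct and follows exactly the same approach as the paper: both reduce to Proposition~\ref{proposition: same content isomorphic ellis groups} by verifying that $S_{\bar c}(\C)$ and $S_S(\C)$ have the same finitary content (namely, all finitary types over $\C$). The paper's own proof is a single sentence to this effect, whereas you have spelled out the verification in detail.
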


\begin{proof}This follows at once from Proposition \ref{proposition: 
same content isomorphic ellis groups}, because $EL(S_{\bar c}(\C)) \cong 
EL(S_{\bar d}(\C))$, and  $\tp(\bar d/\emptyset)$ and $S$ have the same 
finitary content, where $\bar d$ is the tuple of all elements $\C$ such 
that each element is repeated $\aleph_0$ times.
\end{proof}

Proposition \ref{proposition: same content isomorphic ellis groups} also implies that $EL(S_{\bar c}(\C))$ is isomorphic with $EL(S_{\bar \beta}(\C))$ for a suitably chosen short tuple $\bar \beta$. Namely, we have 

\begin{prop}\label{proposition: short tuple instead of the monster model}
Let $\bar m$ be an enumeration of an $\aleph_0$-saturated model (of bounded size). Then $EL(S_{\bar c}(\C)) \cong EL(S_{\bar m}(\C))$ as semigroups and as $\aut(\C)$-flows, and we have all the further conclusions as in Proposition \ref{proposition: invariants for c isomorphic to invariants for S}.
\end{prop}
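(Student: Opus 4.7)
The strategy is to derive this from Proposition \ref{proposition: same content isomorphic ellis groups} by verifying that $X := \tp(\bar c/\emptyset)$ and $Y := \tp(\bar m/\emptyset)$, viewed as $\emptyset$-type-definable subsets of the corresponding products of sorts, have the same finitary content; once this is done, all the clauses of the statement follow directly from the ``in particular'' part of that proposition. By saturation of $\C$ we may assume $\bar m \subseteq \C$, so each entry of $\bar m$ occurs as some $c_i$ in the enumeration $\bar c$. Throughout I will work inside a bigger monster $\C' \succ \C$ whenever I need to realize types over $\C$.

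One inclusion is essentially tautological. Given $q \in S_{\bar m}(\C)$ and a finite subtuple $\bar y' \subseteq \bar y$, I would pick $\bar x' \subseteq \bar x$ with $\bar c|_{\bar x'} = \bar m|_{\bar y'}$, fix a realization $\bar a_0$ of $q|_{\bar y'}$ in $\C'$ (which has the same $\emptyset$-type as $\bar c|_{\bar x'}$), and use strong homogeneity of $\C'$ to extend the map $\bar c|_{\bar x'} \mapsto \bar a_0$ to some $\sigma \in \aut(\C')$; then $p := \tp(\sigma(\bar c)/\C) \in S_{\bar c}(\C)$ satisfies $p|_{\bar x'} = q|_{\bar y'}$. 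For the reverse direction, given $p \in S_{\bar c}(\C)$ and a finite $\bar x' \subseteq \bar x$, the type $p_0 := \tp(\bar c|_{\bar x'}/\emptyset)$ is a complete finitary type over $\emptyset$, hence by $\aleph_0$-saturation of $M$ it is realized by some finite subtuple $\bar m|_{\bar y'}$ of $\bar m$ of matching sorts; a realization $\bar a_0 \models p|_{\bar x'}$ in $\C'$ has the same $\emptyset$-type $p_0$ as $\bar m|_{\bar y'}$, so strong homogeneity of $\C'$ delivers $\tau \in \aut(\C')$ with $\tau(\bar m|_{\bar y'}) = \bar a_0$, and then $q := \tp(\tau(\bar m)/\C) \in S_{\bar m}(\C)$ fulfils $q|_{\bar y'} = p|_{\bar x'}$.

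The only non-trivial input is $\aleph_0$-saturation of $M$, which is exactly what ensures that every complete finitary type over $\emptyset$ realized somewhere in $\C$ is also realized by a finite subtuple of $\bar m$; without this hypothesis the second inclusion would fail. The rest is a routine manipulation with strong homogeneity of the auxiliary monster $\C'$, and I do not anticipate any further obstacle.
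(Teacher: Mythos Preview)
Your proof is correct and follows exactly the approach the paper intends: the paper leaves this proposition without an explicit proof, merely indicating that it follows from Proposition~\ref{proposition: same content isomorphic ellis groups} (just as Proposition~\ref{proposition: invariants for c isomorphic to invariants for S} does), since both $\tp(\bar c/\emptyset)$ and $\tp(\bar m/\emptyset)$ have as finitary content the set of all finitary types over $\emptyset$. You have simply spelled out carefully the two inclusions that verify this, and your use of $\aleph_0$-saturation of $M$ is precisely the point.
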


To complete the picture, we finish with the following proposition which shows that in most of our results without loss of generality one can assume that the product of sorts in question has only boundedly many factors. 
Actually, the next proposition is ``almost'' a generalization of Proposition \ref{proposition: invariants for c isomorphic to invariants for S}. Note, however, that the bound on the number of factors in the obtained product is smaller in Proposition \ref{proposition: invariants for c isomorphic to invariants for S}, and an important information in Proposition \ref{proposition: invariants for c isomorphic to invariants for S} is that the obtained product of sorts does not depend on the choice of $\C$ and its enumeration $\bar c$ (although the product of sorts to which $\bar c$ belongs is getting arbitrarily long  when $\C$ is getting bigger). 

\begin{prop}\label{proposition: unboundedly many sorts}
Let $S$ be a product of some sorts of the language with repetitions allowed so that the number of factors may be unbounded, and let $X$ be a $\emptyset$-type-definable subset of this product. Then there exists a product $S'$ of some sorts with a bounded number (at most $2^{|T|}$) of factors and a $\emptyset$-type-definable subset $Y$ of $S'$ such that $EL(S_X(\C)) \cong EL(S_Y(\C))$ as semigroups and as $\aut(\C)$-flows. In particular, the corresponding minimal left ideals of these Ellis semigroups are isomorphic, and the Ellis groups of the flows $S_{X}(\C)$ and $S_{Y}(\C)$ are isomorphic as groups equipped with the $\tau$-topology. Moreover, $S'$ and $Y$ can be chosen independently of the choice of $\C$ (for the given $S$ and $X$).
\end{prop}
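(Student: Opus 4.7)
The plan is to reduce to a bounded subtuple using Proposition \ref{proposition: same content isomorphic ellis groups}. Namely, I will exhibit a subtuple $\bar x'' \subseteq \bar x$ of length $\leq 2^{|T|}$, independent of $\C$, such that the restriction $Y := X\restr_{\bar x''}$ (the partial $\emptyset$-type of all finitary $\emptyset$-consequences of $X$ in variables $\bar x''$) has the same finitary content as $X$; then $S'$ is the product of the sorts of $\bar x''$.

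The key computation is: writing $D \subseteq S_{\bar x}(\emptyset)$ for the closed set cut out by $X$ (so that $S_X(\C) = r^{-1}(D)$ for the restriction $r \colon S_{\bar x}(\C) \to S_{\bar x}(\emptyset)$), saturation of $\C$ yields the identity
\[
\Pi_{X,\bar x'} := \{p\restr_{\bar x'} : p \in S_X(\C)\} = r^{-1}\bigl(\pi_{\bar x'}(D)\bigr)
\]
for every finite subtuple $\bar x' \subseteq \bar x$. Hence, after the sort-preserving identification of variables, $\Pi_{X,\bar x'}$ is determined by the closed subset $\pi_{\bar x'}(D)$ of $S_{\bar z}(\emptyset)$, where $\bar z$ carries the same sorts $\bar s$ as $\bar x'$. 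Since $S_{\bar z}(\emptyset)$ is a Stone space whose Boolean algebra has $\leq |T|$ elements, the family $\mathcal{C}_{\bar s} := \{\pi_{\bar x'}(D) : \bar x' \subseteq \bar x\text{ of sorts }\bar s\}$ has cardinality $\leq 2^{|T|}$; summing over the $\leq |T|$ finite sort-tuples $\bar s$ gives at most $2^{|T|}$ pairs $(\bar s, C)$.

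For each such pair I then fix a finite witness $\bar x_{\bar s,C} \subseteq \bar x$ of sorts $\bar s$ with $\pi_{\bar x_{\bar s,C}}(D) = C$, and set $\bar x'' := \bigcup_{(\bar s,C)} \bar x_{\bar s,C}$, so that $|\bar x''| \leq 2^{|T|}$. This choice depends only on $(S, X)$ and the fixed witnesses, giving the ``moreover'' clause about independence from $\C$.

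To verify that $X$ and $Y := X\restr_{\bar x''}$ have the same finitary content, one inclusion uses saturation: any realization $\bar a''$ of $Y$ in $\C$ extends to a realization of $X$, because the partial type $X(\bar x) \cup \{\bar x\restr_{\bar x''} = \bar a''\}$ is finitely consistent. For the other inclusion, given $p \in S_X(\C)$ and $\bar x' \subseteq \bar x$ of sorts $\bar s$, put $C := \pi_{\bar x'}(D)$; the identity $\Pi_{X,\bar x'} = r^{-1}(C) = \Pi_{X,\bar x_{\bar s,C}}$ (after sort identification) supplies $p' \in S_X(\C)$ with $p'\restr_{\bar x_{\bar s,C}} = p\restr_{\bar x'}$, and $q := p'\restr_{\bar x''} \in S_Y(\C)$ is the required witness. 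Proposition \ref{proposition: same content isomorphic ellis groups} then yields the stated isomorphism together with the ``in particular'' assertions. I expect the main technical point to be a careful verification of the identity $\Pi_{X,\bar x'} = r^{-1}(\pi_{\bar x'}(D))$ using saturation; everything else is cardinal bookkeeping.
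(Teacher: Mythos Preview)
Your proposal is correct and follows essentially the same approach as the paper's own proof: both select a bounded subtuple $\bar x''$ of $\bar x$ by choosing, for each finite sort-sequence $\bar s$, finitely many witnesses realizing every possible projection $\pi_{\bar x'}[X]$ with $\bar x'$ of sorts $\bar s$, then take $Y$ to be the projection of $X$ to $\bar x''$ and invoke Proposition~\ref{proposition: same content isomorphic ellis groups}. Your write-up is in fact more explicit than the paper's about the key identity $\{p\restr_{\bar x'} : p \in S_X(\C)\} = S_{\pi_{\bar x'}[X]}(\C)$ (your $r^{-1}(\pi_{\bar x'}(D))$), which the paper simply asserts; your remark that this is where saturation enters is exactly right.
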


\begin{proof}	
Let $S=\prod_{i<\lambda} S_i$ and let $\bar x = (x_i)_{i< \lambda}$ be the variables of types in $S_S(\C)$. For any two finite subsequences $\bar y$ and $\bar z$ of $\bar x$ of the same length and associated with the same sorts, any two types $q(\bar z)$ and $r(\bar y)$ will be identified if and only if they are equal after the identification of $\bar z$ and $\bar y$.

For any subsequence  $\bar y = (\bar x_i)_{i \in I}$ of $\bar x$, let $\pi_{\bar y}$ denote the projection from $X$ to $\prod_{i \in I} S_i$. Then $\pi_{\bar y}[X]$ is a $\emptyset$-type-definable subset of $\prod_{i \in I} S_i$. Moreover, the restriction map $r_{\bar y}: S_X(\C) \to S_{\prod_{i \in I} S_i}(\C)$ (i.e. $r_{\bar y}(p) := p|_{\bar y}$) maps $S_X(\C)$ onto $S_{\pi_{\bar y}[X]}(\C)$.

Now, there exists $I \subseteq \lambda$ (independent of the choice of $\C$) of cardinality at most $2^{|T|}$ such that for any finite sequence of sorts $\bar P$ the collection of all sets $\pi_{\bar z}[X]$, where $\bar z$ is a finite subsequence of $\bar x$ of variables associated with the sorts $\bar P$, coincides with the collection of such sets with $\bar z$ ranging over all finite subsequences (of variables associated with the sorts $\bar P$) of the tuple $\bar y := (\bar x)_{i \in I}$. Let $Y=\pi_{\bar y}[X]$ be the projection of $X$ to the product of sorts $S':=\prod_{i \in I} S_i$.  Then $Y$ is a $\emptyset$-type-definable subset of $S'$. By construction, $Y$ has the same finitary content as $X$, hence by Proposition \ref{proposition: same content isomorphic ellis groups}, $EL(S_X(\C))$ and $EL(S_Y(\C))$ are isomorphic as semigroups and $\aut(\C)$-flows.
\end{proof}

\section{Content of types and subflows of $S_X(\C)$}\label{section2.5}

In this section, we define the content of a type (or a tuple of types) and use it to understand subflows of the space of types.

As usual, we work in a monster model $\C$ of an arbitrary theory $T$.

\begin{dfn}\label{definition: content}
Let $A\subseteq B$.
\begin{enumerate}
\item[i)] 	Let $p(\bar x)\in S(B)$. Define the \emph{content} of $p$ over $A$ as
	\[c_A(p)=\{(\varphi(\bar x,\bar y),q(\bar y)) : \varphi(\bar x,\bar b)\in p(\bar x) \text{ for some }\bar b\text{ such that }q(\bar y)=\tp(\bar b/A)\},\]
where $\varphi(\bar x,\bar y)$ are formulas with parameters from $A$.
	When $A=\emptyset$, we write simply $c(p)$ and call it the {\em content} of $p$.
\item[ii)] If the pair $(\varphi(\bar x,\bar y),q(\bar y))$ is in $c_A(p)$, then we say that it is {\em represented} in $p(\bar x)$.
\item[iii)]	Likewise, for a sequence of types $p_1(\bar x),\ldots,p_n(\bar x)\in S(B)$, $q(\bar y)\in S(A)$ and a sequence $\varphi_1(\bar x,\bar y),\ldots,\varphi_n(\bar x, \bar y)$ of formulas with parameters from $A$, we say that $(\varphi_1,\ldots,\varphi_n,q)$ is {\em represented} in $(p_1,\ldots,p_n)$ if $\varphi_1(\bar x,\bar b)\in p_1,\ldots,\varphi_n(\bar x,\bar b)\in p_n$ for some $\bar b\models q$. We define the contents $c_A(p_1,\ldots,p_n)$ and $c(p_1,\ldots,p_n)$ accordingly.
\end{enumerate}
\end{dfn}

The relation $c(p)\subseteq c(q)$ is similar to the fundamental order as defined by Lascar and Poizat (\cite{Poizat_book}) as an alternative approach to forking in stable theories. More precisely, the relation of inclusion of content is a refinement of the usual fundamental order. With this analogy in mind, we define analogue notions of heirs and coheirs.
\begin{dfn}
	Let $M\subseteq A$ and $p(\bar x)\in S(A)$. 
\begin{enumerate}
\item[i)] We say that $p(\bar x)$ is a {\em strong heir} over $M$ if for every finite $\bar m\subset M$ and $\varphi(\bar x,\bar a)\in p(\bar x)$, $\bar a$ finite, there is $\bar a'\subset M$ with $\varphi(\bar x,\bar a')\in p(\bar x)$ such that $\tp(\bar a/\bar m)=\tp(\bar a'/\bar m)$.
\item[ii)] We say that $p(\bar x)\in S(A)$ is a {\em strong coheir} over $M$ if for every finite $\bar m\subset M$ and every $\varphi(\bar x',\bar a)\in p(\bar x)$, where $\bar x'\subseteq \bar x$ is a finite subsequence of variables, there is a $\bar b\subset M$ realizing $\varphi(\bar x',\bar a)$, with $\tp(\bar b/\bar m)=p|_{\bar m}(\bar x')$.
\end{enumerate}
\end{dfn}
In this definition $\varphi(\bar{x},\bar{y})$ denotes a formula over
$\emptyset$, but equivalently we may assume that $\varphi(\bar{x},\bar{y})$
is over $M$. 

Note that those notions are dual:
\[\tp(\bar a/M\bar b)\text{ is a strong heir over }M \iff \tp(\bar b/M\bar a)\text{ is a strong coheir over }M.\]

\begin{lem}
	Let $M\subseteq A$, where $M$ is $\aleph_0$-saturated. Assume that $p(\bar x)\in S(M)$. Then $p(\bar x)$ has an extension $p'(\bar x)\in S(A)$ which is a strong coheir [resp. strong heir] over $M$.
\end{lem}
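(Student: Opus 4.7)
The plan is to prove the strong coheir case by a Zorn's lemma argument and then deduce the strong heir case from it by the duality stated just before the lemma.

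For the strong coheir case, I consider the family of partial types $q(\bar x) \supseteq p(\bar x)$ with parameters in $A$ satisfying the condition $(\ast)$: for every finite $\bar x' \subseteq \bar x$, every finite $\bar m \subset M$, and every $\varphi(\bar x', \bar a) \in q$ with $\bar a \subset A$ finite, there is $\bar b \subset M$ such that $\bar b \models p|_{\bar m}(\bar x')$ and $\models \varphi(\bar b, \bar a)$. First, $p$ itself satisfies $(\ast)$: given $\varphi(\bar x', \bar a) \in p$ (so $\bar a \subset M$) and finite $\bar m \subset M$, the type $p|_{\bar m\bar a}(\bar x')$ is a complete type over the finite set $\bar m\bar a \subset M$, hence realized in $M$ by $\aleph_0$-saturation, and any realization witnesses $(\ast)$ for $\varphi$. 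The family is closed under unions of chains, and I may assume the partial types are deductively closed since a witness for a stronger formula automatically witnesses any consequence; by Zorn's lemma there is a maximal such $q$.

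The key step is to show that this maximal $q$ is complete. Suppose for contradiction that both $\psi(\bar x', \bar a)$ and $\neg\psi(\bar x', \bar a)$ lie outside $q$. By maximality, neither $q \cup \{\psi\}$ nor $q \cup \{\neg\psi\}$ satisfies $(\ast)$, so there exist $\chi_1(\bar x'', \bar a_1), \chi_2(\bar x'', \bar a_2) \in q$ (after embedding everything into a common enlarged finite tuple of variables $\bar x''$ containing $\bar x'$) and finite $\bar m_1, \bar m_2 \subset M$ such that $\chi_1 \wedge \psi$ has no witness in $M$ over $\bar m_1$ and $\chi_2 \wedge \neg\psi$ has no witness in $M$ over $\bar m_2$. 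Setting $\chi := \chi_1 \wedge \chi_2 \in q$ and $\bar m := \bar m_1 \cup \bar m_2$, applying $(\ast)$ to $\chi$ gives $\bar b \subset M$ with $\bar b \models p|_{\bar m}(\bar x'')$ and $\models \chi(\bar b, \bar a_1\bar a_2)$; this $\bar b$ satisfies either $\psi(\bar b, \bar a)$ or $\neg\psi(\bar b, \bar a)$, which contradicts one of the two failures. Hence $q \in S(A)$, and by $(\ast)$ it is a strong coheir extension of $p$.

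For the strong heir case, I invoke the duality. Working in a bigger monster $\C' \succ \C$, fix $\bar c \models p$; applying the already proven strong coheir case to $\tp(A/M) \in S(M)$ over the base $M \subseteq M\bar c$ (and noting $M$ remains $\aleph_0$-saturated) yields a tuple $A^*$ in $\C'$ with $A^* \equiv_M A$ and $\tp(A^*/M\bar c)$ a strong coheir over $M$. Using strong homogeneity of $\C'$, pick $\sigma \in \aut(\C'/M)$ with $\sigma(A^*) = A$ and set $\bar c^* := \sigma(\bar c) \models p$. Then $\tp(A/M\bar c^*)$, being the $\sigma$-image of $\tp(A^*/M\bar c)$, is still a strong coheir over $M$, so by the duality $p' := \tp(\bar c^*/A)$ is a strong heir over $M$ extending $p$. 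The main obstacle is the completeness argument for the maximal $q$ in the coheir case, where two separate failure witnesses must be combined into a single contradiction via $(\ast)$ applied to their conjunction.
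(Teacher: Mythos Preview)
Your proof is correct. The strong heir part via duality is exactly what the paper does. For the strong coheir part, however, the paper takes a different and shorter route: it picks an ultrafilter $U$ on $M^{|\bar x|}$ containing all the sets $p|_{\bar m}(\bar x')(M)$ for finite $\bar m\subset M$ and finite $\bar x'\subseteq \bar x$ (these have the finite intersection property by $\aleph_0$-saturation of $M$), and then defines $p'$ as the $U$-average, i.e.\ $\varphi(\bar x)\in p'$ iff $\varphi(\C)\cap M^{|\bar x|}\in U$. This yields a complete type in one stroke, so the completeness argument you identify as the ``main obstacle'' simply does not arise. Your Zorn argument is the more elementary, syntactic version of the same idea---your maximal $q$ corresponds to an ultrafilter refining the same filter base---and it has the virtue of making the finite-character of the strong coheir condition explicit, at the cost of having to verify completeness by hand.
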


\begin{proof}
	Choose an ultrafilter $U$ on $M^{|\bar x|}$ containing all sets of the form $p|_{\bar m}(\bar x')(M)$, where $\bar m\subset M$ and $\bar x'\subseteq \bar x$ are finite. Define $p'(\bar x)$ as the set of formulas $\varphi(\bar x)$ with parameters from $A$ such that $\varphi(\C)\cap M^{|\bar x|} \in U$. Then $p'$ extends $p$ and is a strong coheir over $M$.
	
	To construct a strong heir of $p$, we dualize the
        argument. Let $\bar b\models p(\bar x)$ and let $\bar a$
        enumerate $A$. By the previous paragraph, let $\bar
        a'\equiv_M\bar a$ be such that $\tp(\bar a'/M\bar b)$ is a strong coheir over $M$. Then $\tp(\bar b/M\bar a')$ is a strong heir over $M$. Take $f\in \aut(\C/M)$ mapping $\bar a'$ to $\bar a$. Then $r(\bar x):=\tp(f(\bar b)/M\bar a)$ is a strong heir over $M$ extending $p(\bar x)$.
\end{proof}

Let $S$ be a product of some sorts of the language, possibly unboundedly many with repetitions allowed, and  $X$ a $\emptyset$-type-definable subset of $S$. Let $l_S$ be the number of factors in $S$.

Directly from the definition of the content of a tuple of types we get

\begin{rem}\label{remark: bound of the number of configurations}
For every natural number $n$ there are only boundedly many possibilities for the content $c({p_1,\dots,p_n})$ of types $p_1,\dots,p_n \in S_S(\C)$. More precisely, the number of possible contents is bounded by $2^{l_S+ 2^{|T|}}$.
\end{rem}


The relation with the Ellis semigroup is given by the following

\begin{prop}\label{proposition: content determines orbit}
Let  $(q_1,\dots,q_n)$ and $(p_1,\dots,p_n)$ be tuples of types from $S_X(\C)$. 
Then $c({q_1,\dots,q_n}) \subseteq c({p_1,\dots,p_n})$ if and only if there is $\eta \in EL(S_X(\C))$ such that $\eta(p_i)=q_i$ for all $i=1,\dots,n$.
\end{prop}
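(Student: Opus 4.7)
The plan is to work directly from the standard description of the Ellis semigroup $EL(S_X(\C))$ as the closure of the image of $\aut(\C)$ inside $S_X(\C)^{S_X(\C)}$ equipped with the product topology. Under this description, the existence of some $\eta \in EL(S_X(\C))$ with $\eta(p_i)=q_i$ for all $i=1,\ldots,n$ is equivalent to the tuple $(q_1,\ldots,q_n)$ lying in the closure of the $\aut(\C)$-orbit of $(p_1,\ldots,p_n)$ in the product flow $S_X(\C)^n$. So both directions reduce to an explicit manipulation of basic neighborhoods.

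For the easy direction ($\Leftarrow$), I would pick a net $(\sigma_j) \subset \aut(\C)$ with $\sigma_j(p_i) \to q_i$ for each $i$. Given $(\varphi_1,\ldots,\varphi_n,q)$ represented in $(q_1,\ldots,q_n)$, fix a witness $\bar b \models q$ with $\varphi_i(\bar x,\bar b) \in q_i$ for all $i$. Eventually in the net, $\varphi_i(\bar x,\bar b) \in \sigma_j(p_i)$ for every $i$, so $\varphi_i(\bar x,\sigma_j^{-1}(\bar b)) \in p_i$; since $\sigma_j^{-1}(\bar b) \models q$, this shows $(\varphi_1,\ldots,\varphi_n,q) \in c(p_1,\ldots,p_n)$.

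For the harder direction ($\Rightarrow$), I would verify that $(q_1,\ldots,q_n)$ lies in the closure of the $\aut(\C)$-orbit of $(p_1,\ldots,p_n)$ in $S_X(\C)^n$. A basic open neighborhood of $(q_1,\ldots,q_n)$ is specified by finitely many formulas $\varphi_i^k(\bar x,\bar b_k) \in q_i$ for $i \le n$ and $k \le m$. Concatenate the parameters into a single tuple $\bar b = \bar b_1\cdots\bar b_m$, set $\psi_i(\bar x,\bar y) := \bigwedge_{k} \varphi_i^k(\bar x,\bar y_k)$ and $q(\bar y) := \tp(\bar b/\emptyset)$; then $(\psi_1,\ldots,\psi_n,q)$ belongs to $c(q_1,\ldots,q_n)$, and hence by hypothesis to $c(p_1,\ldots,p_n)$. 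Choose $\bar b' \models q$ such that $\psi_i(\bar x,\bar b') \in p_i$ for all $i$, and by strong homogeneity of $\C$ pick $\sigma \in \aut(\C)$ sending $\bar b'$ to $\bar b$; then $\sigma(p_i)$ contains every $\varphi_i^k(\bar x,\bar b_k)$, as required. Note that $\sigma(p_i) \in S_X(\C)$ because $X$ is $\emptyset$-type-definable.

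The only delicate point to watch is that $\bar x$ may be of unbounded length (since $S$ may have unboundedly many factors), but this causes no trouble: the topology on $S_X(\C)$ is still generated by formulas involving only finitely many of the variables $\bar x$, which is precisely the setup encoded by the finitary clauses in Definition \ref{definition: content}. There is no real obstacle in either direction; the whole statement is essentially a translation between the orbit-closure description of $EL$ and the combinatorial data recorded in the content.
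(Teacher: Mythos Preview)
Your proposal is correct and follows essentially the same approach as the paper's proof: in both, the $(\Leftarrow)$ direction approximates $\eta$ by automorphisms and pulls back the witnessing parameter tuple, while the $(\Rightarrow)$ direction uses the content inclusion to produce, for each basic neighborhood determined by formulas $\varphi_i(\bar x,\bar b)\in q_i$, a tuple $\bar b'\equiv_\emptyset \bar b$ with $\varphi_i(\bar x,\bar b')\in p_i$ and an automorphism sending $\bar b'$ to $\bar b$, then passes to a limit in $EL$. Your version is slightly more explicit about combining several formulas into one via conjunction and about the orbit-closure reformulation, but the substance is identical.
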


\begin{proof}
 ($\rightarrow$) Consider any $\varphi_1(\bar x,\bar b) \in q_1,\dots, \varphi_n(\bar x,\bar b) \in q_n$. By assumption, there is a tuple $\bar b' \equiv_\emptyset \bar b$ such that $\varphi_i(\bar x, \bar b') \in p_i$ for all $i=1,\dots,n$. Take $\sigma_{\varphi_1(\bar x, \bar b),\dots,\varphi_n(\bar x, \bar b)} \in \aut(\C)$ mapping $\bar b'$ to $\bar b$. 
Choose a subnet $(\sigma_j)$ of the net  $(\sigma_{\varphi_1(\bar x, \bar b),\dots,\varphi_n(\bar x, \bar b)})$ which converges to some $\eta \in EL$. Then $\eta(p_i)=q_i$ for all $i$.\\[1mm]
 ($\leftarrow$) Consider any $(\varphi_1(\bar x, \bar y), \dots, \varphi_n(\bar x, \bar y),q(\bar y))\in c(q_1,\ldots,q_n)$. Then there is $\bar b \in q(\C)$ such that $\varphi_i(\bar x, \bar b) \in q_i$ for all $i=1,\dots,n$. By the fact that $\eta$ is approximated by automorphisms of $\C$, we get $\sigma \in \aut(\C)$ such that $\varphi_i(\bar x, \bar b) \in \sigma(p_i)$, and so $\varphi_i(\bar x, \sigma^{-1}(\bar b)) \in p_i$, holds for all $i=1,\dots,n$. This shows that $(\varphi_1(\bar x, \bar y), \dots, \varphi_n(\bar x, \bar y),q(\bar y))\in c(p_1,\ldots,p_n)$.
\end{proof}

This allows us to give a description of all point-transitive subflows of $S_X(\C)$. Given a subflow $Y\subseteq S_X(\C)$ and $q\in Y$ with dense orbit, let $c_Y=c(q)$. By the previous proposition, this does not depend on the choice of $q$. The mapping $Y\mapsto c_Y$ is injective and preserves inclusion. In particular, we deduce that there are boundedly many subflows of $S_X(\C)$ (at most $\beth_2(l_S+ 2^{|T|})$). Likewise, there are boundedly many subflows of $S_X(\C)^n$ (the $n$-th Cartesian power of $S_X(\C)$).

Analogous to the way non-forking can be defined in stable theories as extensions maximal in the fundamental order, we can use the content to define a notion of free extension.

\begin{dfn}
	Let $p(\bar x)\in S(A)$. Say that an extension $p'(\bar x)\in S(\C)$ of $p$ is free if $c_A(p')$ is minimal among $c_A(r)$ for $r(\bar x)\in S_p(\C):=\{ q(\bar x) \in S(\C): p(\bar x) \subseteq q(\bar x)\}$.
	
\end{dfn}

\begin{lem}
	Let $p(\bar x)\in S(A)$. Then $p$ has a free extension $p'(\bar x)\in S(\C)$.
\end{lem}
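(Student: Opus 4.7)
The plan is to apply Zorn's lemma to the nonempty compact space $S_p(\C)$ equipped with the partial order $r \preceq r'$ iff $c_A(r) \supseteq c_A(r')$; a $\preceq$-maximal element is by definition a free extension of $p$.

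To verify the chain condition, let $(r_i)_{i \in I}$ be a chain in $\preceq$, and for each $i$ set $U_i = \{ r \in S_p(\C) : c_A(r) \subseteq c_A(r_i) \}$. Unfolding Definition \ref{definition: content}, the failure $c_A(r) \not\subseteq c_A(r_i)$ means that some pair $(\varphi(\bar x, \bar y), q(\bar y))$ outside $c_A(r_i)$ is represented in $r$, i.e.\ some $\bar b \models q$ has $\varphi(\bar x, \bar b) \in r$. Using completeness of $r$, I can therefore rewrite $U_i$ as the intersection of the clopen subsets $\{ r \in S_p(\C) : \neg \varphi(\bar x, \bar b) \in r \}$ ranging over all pairs $(\varphi, q) \notin c_A(r_i)$ and all $\bar b \models q$. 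Thus each $U_i$ is closed in $S_p(\C)$, and nonempty since $r_i \in U_i$.

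The family $(U_i)_{i \in I}$ has the finite intersection property: given $i_1, \dots, i_n$, a maximum $j$ of these indices in the chain satisfies $c_A(r_j) \subseteq c_A(r_{i_k})$ for each $k$, so $r_j \in \bigcap_k U_{i_k}$. By compactness of $S_p(\C)$, $\bigcap_i U_i$ is nonempty, and any element of this intersection is a $\preceq$-upper bound for the chain; Zorn's lemma then yields a $\preceq$-maximal $p' \in S_p(\C)$, which is the desired free extension. The argument is entirely formal, and I do not anticipate any real obstacle; the one point worth verifying carefully is the equivalent reformulation of ``$c_A(r) \subseteq c_A(r_i)$'' as a conjunction of conditions of the form $\neg \varphi(\bar x, \bar b) \in r$, which is immediate from the definition of content together with completeness of $r$.
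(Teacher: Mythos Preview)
Your proof is correct. The key point---that $\{r \in S_p(\C) : c_A(r) \subseteq c_A(r_i)\}$ is closed---is exactly as you say: the complement is a union of basic clopen sets $[\varphi(\bar x,\bar b)]$ over all $(\varphi,q)\notin c_A(r_i)$ and $\bar b\models q$. Zorn's lemma then applies directly.

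The paper takes a different route: it adds constants for $A$ (so that $c_A$ becomes $c$), invokes the existence of a minimal $\aut(\C)$-subflow $Y\subseteq S_p(\C)$, and uses Proposition~\ref{proposition: content determines orbit} (and the discussion after it) to conclude that any $p'\in Y$ has minimal content. Your argument is more self-contained and elementary---it needs nothing about flows or the Ellis semigroup, only the definition of content and compactness of the type space. The paper's argument, on the other hand, situates free extensions inside the dynamical framework that the rest of the paper is built on: free extensions are exactly the almost periodic points of $S_p(\C)$, which is a conceptually useful identification even if it is not strictly needed for the bare existence statement. Both ultimately rest on Zorn's lemma (the existence of minimal subflows is itself a Zorn argument), so there is no real difference in logical strength.
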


\begin{proof}
	Expand the language by constants for the elements of $A$. Let $Y$ be a minimal subflow of $S_p(\C)$ and take $p'\in Y$ (so $p'$ is an almost periodic type of the $\aut(\C)$-flow $S_p(\C)$). Then, by the discussion above, $c_Y$ is minimal among $c_Z$, $Z\subseteq S_p(\C)$ a point-transitive subflow, and hence $p'$ is a free extension of $p$.
\end{proof}

This provides us with a notion of freeness that is well-defined and satisfies existence in any theory. 
One can check (using definability of types) that in the case of a
stable theory it coincides with non-forking (we will not use this in
this paper).
Its properties will be investigated in a future work.

\section{Boundedness and absoluteness of the Ellis group}\label{section 3}

This section is devoted to the proof of Theorem \ref{theorem: main theorem 1}: in the first part, we will prove boundedness of the Ellis group and we will give an explicit bound on its size; in the second part, we will prove absoluteness. The key tool in both parts is the notion of content of a sequence of types introduced in the previous section. Boundedness will follow easily. The proof of absoluteness is more technical.

Let $\C$ be a monster model of an arbitrary theory $T$, and $\kappa$
the degree of saturation of $\C$. Let $S$ be a product of some number
sorts (posibly unbounded, with repetitions allowed), and let $X$ be a $\emptyset$-type-definable subset of $S$. 
In this section, by $EL$ we will denote the Ellis semigroup $EL(S_X(\C))$. Let $l_S$ be the length of $S$ (i.e. the number of factors in the product $S$).

\medskip
We first prove boundedness of the Ellis group. By Remark \ref{remark: bound of the number of configurations}, we can find a subset $P \subseteq \bigcup_{n \in \omega} S_X(\C)^n$ of size $\leq 2^{l_S+ 2^{|T|}}$) such that for every $n$,
\[\{ c(p_1,\dots,p_n): (p_1,\dots,p_n) \in P\} = \{ c(p_1,\dots,p_n): (p_1,\dots,p_n) \in S_X(\C)^n\}.\]
Define $R$ to be the closure of the collection $P_{\pr}$ of all types $p\in S_X(\C)$ for which there is $(p_1,\dots,p_n) \in P$ such that $p=p_i$ for some $i$. Note that $|R| \leq \beth_2({|P_{\pr}|}) \leq \beth_3(l_S+ 2^{|T|})$.

\begin{lem}\label{lemma: Im(eta) contained in R} 
Assume $S\subseteq S_X(\C)$ is closed and for every finite tuple $\bar
p=(p_1,\dots,p_n)$ of types form $S_X(\C)$ there is a finite tuple
$\bar q=(q_1,\dots,q_n)$ of types from $S$ with $c(\bar q)\subseteq
c(\bar p)$. Then there exists $\eta\in EL$ with $\im(\eta)\subseteq
S$. In particular, there is $\eta \in EL$ with $\im(\eta) \subseteq R$. 
\end{lem}

\begin{proof}
By Proposition \ref{proposition: content determines orbit}, for every
finite tuple $\bar p =(p_1,\dots,p_n)$ of types from $S_X(\C)$ there
exists $\eta_{\bar p} \in EL$ such that $\eta_{\bar p}(p_i) \in S$ for
all $i=1,\dots,n$. The collection of all finite tuples of types from
$S_X(\C)$ forms a directed set (where $\bar p \leq \bar q$ if and only
if $\bar p$ is a subtuple of $\bar q$). So the elements $\eta_{\bar
  p}$ (where $\bar p$ ranges over all finite tuples of types from
$S_X(\C)$) form a net which has a subnet convergent to some $\eta \in
EL$. Then $\im (\eta) \subseteq S$, because $S$ is closed. The last
part of the lemma follows by the choice of $R$.
\end{proof}

From now on, let ${\mathcal M}$ be a minimal left ideal of $EL$. So
$\mathcal M$ is partitioned into groups of the form $u\mathcal M$,
where $u\in\mathcal M$ ranges over idempotents of $\mathcal M$. The
next lemma clarifies the nature of this partition.

\begin{lem}\label{lemma: hM=uM}
Assume $u,u'\in\mathcal M$ are idempotents and
$I=\im(u),\ I'=\im(u')$.
\begin{itemize}
\item[(1)] $u{\mathcal M} = \{ h\in{\mathcal M}: \im(h)=I\}$.
\item[(2)] If $u\neq u'$, then $I\not\subseteq I'$.
\item[(3)] For every $h\in u{\mathcal M}$, $h|_I$ is a permutation of
  $I$. Let $\sym(I)$ denote the group of permutations of $I$. The
  function $F\colon u{\mathcal M}\to\sym(I)$ mapping $h$ to $h|_I$ is a
  group monomorphism.
\item[(4)] For every $h\in{\mathcal M}$ there is a unique idempotent
  $u\in\mathcal M$ wuch that $h{\mathcal M}=u{\mathcal M}$. In
  particular, $h\mathcal M$ is the Ellis group of the flow $S_X(\C)$.
\end{itemize}
\end{lem} 

\begin{proof}
First notice that for $h\in u{\mathcal M}$ we have $h=uh$ and $u=hh'$,
where $h'\in u\mathcal M$ is the group inverse of $H$. Hence,
$I=\im(u)\subseteq \im(h)\subseteq I$ and we get $\subseteq$ in (1).

For (2), suppose for a contradiction that $I\subseteq I'$. Then $u'u=u$
belongs to the group $u'\mathcal M$, a contradiction.

For $\supseteq$ in (1) notice that if $h\in{\mathcal M}\setminus
u\mathcal M$, then by (2) and $\subseteq$ in (1), $\im(h)\neq I$.

(3) Let $h\in u\mathcal M$. Then there is $h'\in u\mathcal M$ with
$u=hh'=h'h$. Hence, 
$$\id_I=u|_I=h|_I\circ h'|_I=h'|_I\circ h|_I.$$
Thus, $h_I\in\sym(I)$. Obviously, $F$ is a group homomorphism. It is
injective, since $u$ is the only idempotent in $u\mathcal M$.

(4) is immediate, since $h{\mathcal M}=hu{\mathcal M}=u\mathcal M$,
  where $u\in \mathcal M$ is the unique idempotent with $h\in
  u\mathcal M$.
\end{proof}



\begin{lem}\label{lemma: u in place of eta}
For every $\eta \in EL$ there is an idempotent $u \in {\mathcal M}$ such that $\im (u) \subseteq \im (\eta)$.
\end{lem}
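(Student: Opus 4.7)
The plan is a direct application of Remark \ref{remark: hM=uM}. The idea is to first produce some $h \in \M$ whose image is already contained in $\im(\eta)$, and then transfer this image-containment to the idempotent attached to $h$ by Remark \ref{remark: hM=uM}. The point is that Remark \ref{remark: hM=uM} gives an idempotent $u \in \M$ inside $h\M$, and any element of $h\M$ automatically has image inside $\im(h)$.

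Concretely, I would fix any idempotent $u_0 \in \M$ (which exists in any Ellis semigroup, or via the Ellis--Numakura lemma applied to a minimal left ideal) and set $h := \eta u_0$. Since $\M$ is a left ideal of $EL$ and $u_0 \in \M$, we have $h = \eta u_0 \in EL \cdot \M \subseteq \M$. Moreover, for every $p \in S_X(\C)$, $h(p) = \eta(u_0(p)) \in \im(\eta)$, so $\im(h) \subseteq \im(\eta)$.

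Applying Remark \ref{remark: hM=uM} to $h \in \M$ then yields an idempotent $u \in \M$ with $h\M = u\M$. Since $u$ is idempotent, $u = u \cdot u \in u\M = h\M$, so $u = hm$ for some $m \in \M$, which gives $\im(u) = \{h(m(p)) : p \in S_X(\C)\} \subseteq \im(h) \subseteq \im(\eta)$. This $u$ is the required idempotent.

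I do not foresee a genuine obstacle: the argument uses only that $\M$ is a left ideal of $EL$ and the purely algebraic content of Remark \ref{remark: hM=uM}; no continuity or topological closure property is invoked. The only conceptual point is the observation that images of compositions only shrink, so once one lands in $\eta \cdot \M$, the image-bound by $\im(\eta)$ is automatic, and Remark \ref{remark: hM=uM} does the rest.
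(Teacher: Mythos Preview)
Your proof is correct and follows essentially the same route as the paper's: take $h := \eta h'$ for some $h' \in \M$ (you take $h'$ to be an idempotent, but any element works), note $h \in \M$ with $\im(h) \subseteq \im(\eta)$, and then use Remark \ref{remark: hM=uM} to find an idempotent $u \in h\M$, whence $\im(u) \subseteq \im(h) \subseteq \im(\eta)$. Your extra justification that $u = u \cdot u \in u\M = h\M$ is a nice unpacking of why the idempotent from Remark \ref{remark: hM=uM} actually lies in $h\M$.
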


\begin{proof}
Take any $h' \in {\mathcal M}$. Then $h:= \eta h'$ belongs to ${\mathcal M}$ and satisfies $\im (h) \subseteq \im (\eta)$. By Lemma \ref{lemma: hM=uM}(4), there is an idempotent $u \in h{\mathcal M}$. Then $u \in {\mathcal M}$ and $\im(u) \subseteq \im (\eta)$.
\end{proof}

We consider the set of functions $R^R$ as a semigroup, with
composition of functions. By Lemmas \ref{lemma: Im(eta) contained in
  R} and \ref{lemma: u in place of eta}, we can find an idempotent $u\in\mathcal M$
with $\im(u)\subseteq R$. Let $I=\im(u)$. By Lemma \ref{lemma: hM=uM}(3),
we get the following corollary, which yields the first part of Theorem \ref{theorem: main theorem 1}, namely to the boundedness of the Ellis group. 



\begin{cor}\label{corollary: first part of main theorem 1}
The function $F:u{\mathcal M}\to \sym(I)$ given by $F(h)=h|_I$ is a
group monomorphism. In particular, the size of the Ellis group of the
flow $S_X(\C)$ is bounded by $|\sym(I)|\leq |R|^{|R|}$, which in turn
  is bounded by $\beth_4(l_S+ 2^{|T|})$. In the case where $l_S \leq 2^{|T|}$, this bound equals $\beth_5(|T|)$.
\end{cor}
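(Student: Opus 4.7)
The plan is to establish that $F$ is a well-defined, injective group homomorphism from $u\mathcal{M}$ into $R^R$, and then to compute the cardinal bounds; none of the individual steps should be hard, since all the real work has been done in the preceding lemmas.

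First, I would verify that $F$ is well-defined. By the choice made just before the corollary, the idempotent $u$ satisfies $\im(u)\subseteq R$. Any $h\in u\mathcal{M}$ can be written $h=uh'$ for some $h'\in\mathcal{M}$, so $\im(h)\subseteq\im(u)\subseteq R$. In particular $h$ sends $R$ into $R$, so $h|_R\in R^R$, and $F$ lands in $R^R$.

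Next, I would check the homomorphism property. For $h_1,h_2\in u\mathcal{M}$ and $r\in R$, we have $h_2(r)\in R$ by the previous paragraph, hence
\[
(h_1h_2)(r)=h_1(h_2(r))=h_1|_R\bigl(h_2|_R(r)\bigr),
\]
so $F(h_1h_2)=F(h_1)\circ F(h_2)$ in $R^R$. The key step is injectivity, and it exploits the fact that $u$ is the identity of the group $u\mathcal{M}$: for every $h\in u\mathcal{M}$ and every $p\in S_X(\C)$,
\[
h(p)=(hu)(p)=h(u(p)),
\]
and $u(p)\in\im(u)\subseteq R$. Thus $h$ is completely determined by its restriction to $R$, so $F(h_1)=F(h_2)$ forces $h_1(p)=h_1(u(p))=h_2(u(p))=h_2(p)$ for every $p$, i.e.\ $h_1=h_2$.

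Finally, I would read off the cardinal bounds. Since $F$ is an injection, $|u\mathcal{M}|\le |R|^{|R|}$. Using the estimate $|R|\le\beth_3(\max(l_S,2^{|T|}))$ recorded just before the corollary together with $|R|^{|R|}\le 2^{|R|}$ (for infinite $|R|$), we get $|u\mathcal{M}|\le\beth_4(\max(l_S,2^{|T|}))$. When $l_S\le 2^{|T|}$, we have $\max(l_S,2^{|T|})=2^{|T|}=\beth_1(|T|)$, so the bound collapses to $\beth_4(\beth_1(|T|))=\beth_5(|T|)$. There is no genuine obstacle here: the content machinery of Section \ref{section2.5} and Lemmas \ref{lemma: Im(eta) contained in R} and \ref{lemma: u in place of eta} have already produced the ``small'' invariant set $R$ together with an idempotent $u\in\mathcal{M}$ with $\im(u)\subseteq R$, and the identity law $hu=h$ in the group $u\mathcal{M}$ does the rest.
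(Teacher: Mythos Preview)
Your proposal is correct and follows essentially the same approach as the paper's proof: both use $\im(u)\subseteq R$ to see that $F$ lands in $R^R$, and both deduce injectivity from the identity $h=hu$ together with $\im(u)\subseteq R$. You have simply written out in a bit more detail what the paper dismisses as ``clear'' or defers to the preceding paragraph.
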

\begin{proof}
This first part follows directly from Lemma \ref{lemma: hM=uM}(3)
The precise bound on the size of the Ellis group follows from the bound on $|R|$ computed before Lemma \ref{lemma: Im(eta) contained in R}. 
\end{proof}

By Proposition  
\ref{proposition: unboundedly many sorts} and Corollary \ref{corollary: first part of main theorem 1}, we get the following corollary which in particular contains the first part of Theorem \ref{theorem: main theorem 1} and answers Question \ref{question: main question 1}(i).

\begin{cor}\label{corollary: final corollary concerning the size of the Ellis group}
Let $S$ be a product of an arbitrary (possibly unbounded) number of sorts (with repetitions allowed) and $X$ be a $\emptyset$-type-definable subset of $S$. Let ${\bar c}$ be an enumeration of $\C$ and $\bar \alpha$ a short tuple in $\C$. Then the sizes of the Ellis groups of the flows $S_X(\C)$, $S_{\bar c}(\C)$ and $S_{\bar \alpha}(\C)$ are all bounded by $\beth_5(|T|)$. 
\end{cor}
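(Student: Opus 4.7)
The plan is to simply assemble the two previous results, Proposition \ref{proposition: unboundedly many sorts} and Corollary \ref{corollary: first part of main theorem 1}. Before doing so, I would first observe that the flows $S_{\bar c}(\C)$ and $S_{\bar \alpha}(\C)$ are themselves instances of the general construction $S_X(\C)$: take $X$ to be the partial $\emptyset$-type $\tp(\bar c/\emptyset)$ (respectively $\tp(\bar \alpha/\emptyset)$) inside the product of sorts associated with the tuple $\bar c$ (respectively $\bar \alpha$). In the case of $\bar c$ this product has unboundedly many factors, which is precisely the situation Proposition \ref{proposition: unboundedly many sorts} is designed to handle. Hence it suffices to establish the announced bound for $S_X(\C)$, where $X$ is any $\emptyset$-type-definable subset of an arbitrary product $S$ of sorts.

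To do so, I would apply Proposition \ref{proposition: unboundedly many sorts} to $S$ and $X$, obtaining a product $S'$ of at most $2^{|T|}$ sorts and a $\emptyset$-type-definable subset $Y \subseteq S'$ with $EL(S_X(\C)) \cong EL(S_Y(\C))$ as semigroups and as $\aut(\C)$-flows. In particular, the Ellis groups of the flows $S_X(\C)$ and $S_Y(\C)$ are isomorphic as abstract groups, and therefore have the same cardinality. Now the number of factors of $S'$ satisfies $l_{S'} \leq 2^{|T|}$, so Corollary \ref{corollary: first part of main theorem 1} applies to $S_Y(\C)$ and yields that its Ellis group has size bounded by $\beth_5(|T|)$. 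Transporting along the isomorphism gives the same bound for the Ellis group of $S_X(\C)$, and specializing to the two cases mentioned above gives the bound also for the Ellis groups of $S_{\bar c}(\C)$ and $S_{\bar \alpha}(\C)$.

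Since this is a straightforward assembly, there is no real obstacle: the two pieces fit together mechanically. The only thing worth double-checking is that Proposition \ref{proposition: unboundedly many sorts} indeed delivers an isomorphism of Ellis groups (not merely of Ellis semigroups), but this is immediate from the fact that a semigroup isomorphism between Ellis semigroups sends minimal left ideals to minimal left ideals and idempotents to idempotents, hence restricts to a group isomorphism between the respective Ellis groups.
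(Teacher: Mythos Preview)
Your proposal is correct and follows essentially the same approach as the paper, which simply cites Proposition \ref{proposition: unboundedly many sorts} and Corollary \ref{corollary: first part of main theorem 1} without further elaboration. Your added remarks (that $S_{\bar c}(\C)$ and $S_{\bar \alpha}(\C)$ are special cases of $S_X(\C)$, and that a semigroup isomorphism of Ellis semigroups restricts to a group isomorphism of Ellis groups) are correct and make explicit what the paper leaves implicit.
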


The following problem is left for the future.

\begin{problem}
Find the optimal (i.e. smallest) upper bound on the size of the Ellis groups of the flows of the form $S_X(\C)$.
\end{problem}

In Subsection \ref{subsection 6.2}, we will see that under the assumption that $T$ has NIP, the set $R$ in Corollary \ref{corollary: first part of main theorem 1} can be replaced by the set of types invariant over a fixed small model, which gives us a smaller bound on the size of the Ellis group, namely $\beth_3(|T|)$. On the other hand, we know that the optimal bound is at least $2^{|T|}$, as by the material recalled in the final part of Section \ref{section 1}, the size of the Ellis group of $S_{\bar c}(\C)$ is at least $|\gal_L(T)|$ which can be equal to $2^{|T|}$ (e.g. for the theory of countably many independent equivalence relations each of which has two classes).

\medskip

Now, we turn to the second, more complicated, part of Theorem \ref{theorem: main theorem 1}, namely the absoluteness of the isomorphism type of the Ellis group of $S_X(\C)$. So from now on, fix any monster models $\C_1 \succ \C_2$ of the theory $T$. Let $EL_1=EL(S_X(\C_1))$ and $EL_2=EL(S_X(\C_2))$, and let ${\mathcal M}_1$ and ${\mathcal M}_2$ be minimal left ideals of $EL_1$ and $EL_2$, respectively. Let $\pi_{12}\colon S_X(\C_1) \to S_X(\C_2)$ be the restriction map.

The general idea is to find 
idempotents $u \in {\mathcal M}_1$ and $v \in {\mathcal M}_2$ such that $\im(u)$ is of bounded size and $\pi_{12}$ restricted to $\im (u)$ is a homeomorphism onto $\im (v)$. Having this, we will easily show that the restriction maps $F_1 \colon u{\mathcal M_1} \to \sym(\im (u))$ and $F_2 \colon v{\mathcal M_2} \to \sym(\im(v))$ are group monomorphisms, and that $\pi_{12}$ induces an isomorphism between $\im(F_1)$ and $\im(F_2)$. This implies that $u{\mathcal M}_1 \cong v{\mathcal M}_2$. In Subsection \ref{subsection 6.2}, we will see that under NIP such idempotents $u$ and $v$ may be chosen so that their images lie in the set of types invariant over a given model $M \prec \C$. In general, we have to find an abstract substitute of the set of invariant types, which will be a more carefully chosen set $R$ as above. This is done in Corollary \ref{corollary: corollary of technical lemmas} which requires proving some technical lemmas concerning content. If the reader wishes first to see the main steps of the proof of the main result, he or she can go directly to the statement of Corollary \ref{corollary: corollary of technical lemmas} and continue reading from that point on.

For types $p_1,\dots,p_n \in S_S(M)$, in order to emphasize the model in which we are working, the content $c(p_1,\dots,p_n)$ of the tuple $(p_1,\dots,p_n)$ will be denoted by $c^M({p_1,\dots,p_n})$.

\begin{lem}\label{lemma: technical lemma 1}
Let $M \prec N$ be any models of $T$ and assume that $M$ is $\aleph_0$-saturated. Then each type $p \in S_S(M)$ can be extended to a type $p^N \in S_S(N)$ so that for every $p_1,\dots,p_n \in S_S(M)$ one has $c^M({p_1,\dots,p_n}) = c^N(p_1^N,\dots, p_n^N)$.

\end{lem}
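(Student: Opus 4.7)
The only nontrivial direction is $c^N({p_1}_N, \ldots, {p_n}_N) \subseteq c^M(p_1, \ldots, p_n)$, which, unwound, requires: whenever $\bar b \in N$ and $\varphi_i(\bar x, \bar b) \in {p_i}_N$ for all $i = 1, \ldots, n$, there is a \emph{single} $\bar b' \in M$ with $\tp(\bar b'/\emptyset) = \tp(\bar b/\emptyset)$ and $\varphi_i(\bar x, \bar b') \in p_i$ for all $i$ simultaneously. Strong heirs of the individual $p_i$'s give such a $\bar b'$ for each $i$ separately, but not a common witness. The plan is to extract this uniformity from a single application of the preceding lemma to one large ``global'' type in disjoint copies of $\bar x$.

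Specifically, for each $p \in S_S(M)$ introduce a fresh copy $\bar x_p$ of the variables $\bar x$, form the big tuple $\bar x_* = (\bar x_p)_{p \in S_S(M)}$, and consider the consistent partial type
\[
P(\bar x_*) \;:=\; \bigcup_{p \in S_S(M)} p(\bar x_p)
\]
over $M$. Let $\tilde{P}(\bar x_*) \in S(M)$ be any completion of $P$, apply the previous lemma to $\tilde{P}$ to obtain an extension $\tilde{P}_N \in S(N)$ which is a strong heir over $M$, and set $p_N := \tilde{P}_N \restr \bar x_p$, identifying $\bar x_p$ with $\bar x$. Since $p$ is complete over $M$, the restriction $\tilde{P} \restr \bar x_p$ equals $p$, so $p_N$ does extend $p$.

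To verify the content equality, take $(\varphi_1, \ldots, \varphi_n, q) \in c^N({p_1}_N, \ldots, {p_n}_N)$, witnessed by some $\bar b \in N$. Then each $\varphi_i(\bar x_{p_i}, \bar b)$ lies in $\tilde{P}_N$, so the single formula $\psi(\bar x_{p_1}, \ldots, \bar x_{p_n}, \bar b) := \bigwedge_{i=1}^n \varphi_i(\bar x_{p_i}, \bar b)$ belongs to $\tilde{P}_N$. The strong heir property of $\tilde{P}_N$ applied with $\bar m = \emptyset$ furnishes $\bar b' \subset M$ with $\tp(\bar b'/\emptyset) = \tp(\bar b/\emptyset) = q$ and $\psi(\bar x_{p_1}, \ldots, \bar x_{p_n}, \bar b') \in \tilde{P}_N$. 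As $\bar b' \in M$ this formula is over $M$, hence lies in $\tilde{P}_N \restr M = \tilde{P}$; restricting to $\bar x_{p_i}$ yields $\varphi_i(\bar x, \bar b') \in p_i$ for every $i$, as required. The reverse inclusion is immediate since any witness in $M$ is also a witness in $N$.

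The only real delicacy is that the variable tuple $\bar x_*$ has length potentially much larger than $|M|$, so one must check that the preceding lemma still applies. Inspecting its proof: the ultrafilter lives on $M^{|\bar x_*|}$, the required finite intersection property for the cylinders $p \restr \bar m(\bar x')(M)$ (for finite $\bar m$ and finite $\bar x' \subset \bar x_*$) uses only the $\aleph_0$-saturation of $M$ over finite parameter sets, and the dualization to strong heirs runs through an auxiliary coheir whose variable tuple is $\bar a$ enumerating $N$, which is of bounded length. Thus nothing in the construction is sensitive to the size of $\bar x_*$, and this is the main (and only mild) obstacle in the argument.
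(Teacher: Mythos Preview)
Your proof is correct and is essentially the same as the paper's: the paper enumerates all types in $S_S(M)$, realizes them simultaneously by a tuple $\bar a$, and takes $\tp(\bar a'/N)$ to be a strong heir extension of $\tp(\bar a/M)$, defining ${p_i}_N := \tp(\bar a_i'/N)$ --- this is exactly your $\tilde P$ and $\tilde P_N$ in realization language rather than variable language. Your explicit verification of the content inclusion via the conjunction $\psi$ and your remark on the size of $\bar x_*$ are details the paper leaves implicit, but the underlying idea is identical.
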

\begin{proof}
	Let $\bar p=(p_i)_{i\in I}$ be an enumeration of the types in $S_S(M)$ and let $\bar a\subset \C$ realize $\bar p$. Choose $\bar a'$ so that $\tp(\bar a'/N)$ extends $\tp(\bar a/M)$ and is a strong heir over $M$. Then the types $p_i^N := \tp(\bar a_i'/N)$ have the required property.
\end{proof}

From now on, in this section, we assume that $S$ is a product of
boundedly many sorts, unless stated otherwise.

\begin{lem}\label{lemma: technical lemma 2}
Let $M$ be any small (i.e. of cardinality less than $\kappa$), $\aleph_0$-saturated model of $T$, and let $Y$ be any subset of $S_S(M)$. Then there exists a small, 
$\aleph_0$-saturated 
model $N \succ M$ and an extension $p^N \in S_S(N)$ of each $p \in Y$ such that: 
\begin{enumerate}
\item for every $p_1,\dots,p_n \in Y$ one has $c^M({p_1,\dots,p_n}) = c^N(p_1^N,\dots, p_n^N)$, and
\item for every $\aleph_0$-saturated $N' \succ N$ and for every
  $p^{N'} \in S_S(N')$ extending $p^N$ in such a way that $c^M(p_1,\dots,p_n) = c^{N'}(p_1^{N'},\dots, p_n^{N'})$ for all $p_1,\dots,p_n \in Y$, the restriction map $\pi \colon \cl(\{ p^{N'}: p \in Y\}) \to  \cl(\{ p^{N}: p \in Y\})$ is a homeomorphism.
\end{enumerate}
\end{lem}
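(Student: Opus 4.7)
The plan is to construct $N$ inductively as the endpoint of a transfinite chain $(N_\alpha)_{\alpha<\lambda}$ of small $\aleph_0$-saturated models, together with coherent choices of strong heir extensions, absorbing at each successor step a witness to the potential failure of condition (2). The termination and correctness of the construction are driven by the bounded number of possible tuple contents furnished by Remark \ref{remark: bound of the number of configurations}.

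I would begin with $N_0 = M$ and $(p_i)_{N_0} = p_i$, and then build the chain recursively. At each successor stage $\alpha+1$, if condition (2) already holds at $N_\alpha$ I stop and set $N := N_\alpha$. Otherwise, I fix a witness to its failure: an $\aleph_0$-saturated $N'_\alpha \succ N_\alpha$, a content-preserving extension $(p_i)_{N'_\alpha}$ of $(p_i)_{N_\alpha}$, and two distinct types $q_1^\alpha, q_2^\alpha \in \cl(\{(p_i)_{N'_\alpha} : p_i \in Y\})$ with $q_1^\alpha|_{N_\alpha} = q_2^\alpha|_{N_\alpha}$. I then let $N_{\alpha+1}$ be a small $\aleph_0$-saturated elementary submodel of $N'_\alpha$ containing $N_\alpha$ together with a parameter tuple $\bar b_\alpha \in N'_\alpha \setminus N_\alpha$ extracted from a formula $\psi_\alpha(\bar x, \bar b_\alpha) \in q_1^\alpha \setminus q_2^\alpha$, and take $(p_i)_{N_{\alpha+1}}$ to be the restriction of $(p_i)_{N'_\alpha}$ to $N_{\alpha+1}$. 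The sandwich $c^M \subseteq c^{N_{\alpha+1}} \subseteq c^{N'_\alpha} = c^M$ shows that condition (1) persists, and the pair is now split by a formula whose parameters lie in $N_{\alpha+1}$. The existence of strong heir extensions at each stage is guaranteed by Lemma \ref{lemma: technical lemma 1}, and limit stages are handled by directed unions; condition (1) is preserved at limits because any given formula uses only finitely many parameters.

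The main obstacle is showing that this chain stabilizes at some bounded ordinal $\lambda$. By construction, each resolved pair $(q_1^\alpha, q_2^\alpha)$ has tuple content contained in the fixed set $\bigcup_{i,j \in I} c^M(p_i, p_j)$, which by Remark \ref{remark: bound of the number of configurations} has cardinality at most $2^{\max(l_S, 2^{|T|})}$. One then argues that each non-stabilizing step must introduce a genuinely new ``configuration'' — more precisely, a new equivalence class of pairs in $\cl(\{(p_i)_{N_{\alpha+1}}\})^2$ under equality of tuple content together with matching restriction data — and that only boundedly many such configurations can appear, forcing termination. The delicate point is defining ``configurations'' precisely enough that the count is indeed bounded and that each successor step strictly refines the configuration structure; an alternative is a Zorn-style argument on admissible triples $(N, ((p_i)_N)_{p_i \in Y})$ ordered by extension, with ascending chains again controlled by these bounded tuple-content considerations. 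Setting $N := N_\lambda$ at the stabilization point then yields the required model.
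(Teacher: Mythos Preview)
Your inductive chain is essentially the same construction the paper carries out (phrased there as a proof by contradiction), and the content-preservation at successor and limit stages is handled correctly. The gap is entirely in the termination argument.

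You try to bound the length of the chain by counting ``configurations'' built out of tuple contents, appealing to Remark \ref{remark: bound of the number of configurations}. But content is the wrong invariant here: by your own condition (1), the contents $c^{N_\alpha}((p_{i_1})_{N_\alpha},\dots,(p_{i_n})_{N_\alpha})$ are \emph{constant} along the chain, so no new content ever appears among the designated types $(p_i)_{N_\alpha}$. Your witness pairs $q_1^\alpha,q_2^\alpha$ lie in the \emph{closure}, not among the designated types, and there is no reason for their contents to lie in $\bigcup_{i,j} c^M(p_i,p_j)$ (a limit of types $(p_{i_k})_{N_\alpha}$ can have content not contained in any single $c^M(p_i)$, and pair-contents are worse). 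So the proposed counting collapses, and the Zorn alternative fails too: one can always extend by Lemma \ref{lemma: technical lemma 1}, so there are no maximal admissible triples.

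The invariant that actually grows is the cardinality of $\cl(\{(p_i)_{N_\alpha}:p_i\in Y\})$. At each successor step the restriction map $\pi_\alpha$ is a non-injective continuous surjection, so the splitting pair $(q_1^\alpha,q_2^\alpha)$ has $\text{split-level}$ exactly $\alpha$; lifting all these pairs to any later stage gives $|\alpha|$ pairwise distinct unordered pairs in the closure, hence $|\cl(\{(p_i)_{N_\alpha}\})|\geq |\alpha|$. But this closure is the closure of a set of size at most $|S_S(M)|$ inside a compact Hausdorff space, so has cardinality at most $\beth_2(|S_S(M)|)\leq \beth_3(l_S+|T|+|M|)$. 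Running the chain past this bound yields the contradiction. This is the missing idea; once you have it, the proof is short.
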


\begin{proof}
Suppose this is not the case. Then
we can construct sequences $(N_\alpha)_{\alpha< \kappa}$ and $(p_{\alpha})_{\alpha < \kappa}$ for each $p \in Y$, where $N_\alpha \succ M$ 
is $\aleph_0$-saturated 
and $p \subseteq p_{\alpha} \in S_S(N_\alpha)$, such that:
\begin{enumerate}
\item[(i)] $|N_\alpha| \leq |\alpha| +2^{|T|}+|M|$,
\item[(ii)] $\alpha < \beta$ implies $N_\alpha \prec N_\beta$,
\item[(iii)] $\alpha < \beta$ implies $p_{\alpha} \subseteq p_{\beta}$,
\item[(iv)] for every $\alpha$ one has  $c^{N_\alpha}({{p_1}_\alpha,\dots,{p_n}_\alpha}) = c^M({p_1,\dots, p_n})$ for all $p_1,\dots,p_n \in Y$,
\item[(v)] for every $\alpha$ the restriction map $\pi_\alpha \colon \cl(\{ p_{\alpha+1}: p \in Y\}) \to  \cl(\{ p_\alpha: p \in Y\})$ is not injective.
\end{enumerate}
Then $|\cl(\{ p_\alpha : p \in Y\})| \geq |\alpha|$. Taking $\alpha$
with $|\alpha| > \beth_3({l_S+ |T|+|M|})$, we get a contradiction to the fact that  $|\cl(\{ p_\alpha : p \in Y\})| \leq \beth_2({|S_S(M)|}) \leq \beth_3({l_S+ |T|+|M|})$.
\end{proof}
 
The next remark follows from  Lemma  \ref{lemma: technical lemma 1}.

\begin{rem}\label{remark: completion of technical lemma 2}
If an ($\aleph_0$-saturated) model $N$ satisfies the conclusion of Lemma \ref{lemma: technical lemma 2}, then so does every $\aleph_0$-saturated elementary extension $N'$ of $N$; this is witnessed by arbitrarily chosen $p^{N'} \in S_S(N')$ for $p \in Y$ so that $p^{N} \subseteq p^{N'}$ and $c^M({p_1,\dots,p_n}) = c^{N'}({{p_1}^{N'},\dots, {p_n}^{N'}})$ for all $p, p_1,\dots,p_n \in Y$.
It follows in particular that any $|N|$-saturated model extending $M$ satisfies the conclusion of the lemma.
\end{rem}

\begin{lem}\label{lemma: technical lemma 3}
Let $N$ be any small, $\aleph_0$-saturated model of $T$, and let $Z$ be any subset of $S_S(N)$. Then there exists a small, $\aleph_0$-saturated model $N' \succ N$ and an extension $p^{N'} \in S_S(N')$ of each $p \in Z$ such that:
\begin{enumerate}
\item for every $p_1,\dots,p_n \in Z$ one has $c^N({p_1,\dots,p_n}) = c^{N'}(p_1^{N'},\dots, p_n^{N'})$, and
\item for every $\aleph_0$-saturated $N'' \succ N'$ and for every $p^{N''} \in S_S(N'')$ extending $p^{N'}$ (where $p \in Z$) in such a way that $c^N({p_1,\dots,p_n}) = c^{N''}(p_1^{N''},\dots, p_n^{N''})$ for all $p_1,\dots,p_n \in Z$, for every $q_1,\dots,q_n \in  \cl(\{ p^{N''}: p \in Z\})$ one has  $c^{N''}({q_1,\dots,q_n}) = c^{N'}({{q_1}|_{N'},\dots, {q_n}|_{N'}})$.
\end{enumerate}
\end{lem}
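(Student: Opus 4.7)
The plan is to mimic the cardinality argument from Lemma \ref{lemma: technical lemma 2} but to replace the bookkeeping of single points in the closure by a finer bookkeeping of \emph{contents of tuples} of closure points. First I would apply Lemma \ref{lemma: technical lemma 2} to the given $N$ and $Z$ to obtain a small $\aleph_0$-saturated $N_0\succeq N$ together with extensions $p_{N_0}\in S_S(N_0)$ of each $p\in Z$ satisfying both conclusions of that lemma. By Remark \ref{remark: completion of technical lemma 2}, for every $\aleph_0$-saturated $N_\alpha\succeq N_0$ and every choice of extensions $p_\alpha\in S_S(N_\alpha)$ of $p$ satisfying the content-preservation condition $c^N(p_1,\ldots,p_n)=c^{N_\alpha}({p_1}_\alpha,\ldots,{p_n}_\alpha)$ for all $p_1,\ldots,p_n\in Z$, the restriction map $\pi_\alpha\colon X_\alpha:=\cl(\{p_\alpha:p\in Z\})\to X_0$ is a homeomorphism, so each finite tuple $\bar r\in X_0^n$ has a unique \emph{lift} $\bar s_\alpha\in X_\alpha^n$ with $\bar s_\alpha|_{N_0}=\bar r$. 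The claim is that $N_0$, together with the extensions $p_{N_0}$, already witnesses Lemma \ref{lemma: technical lemma 3}.

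Suppose for contradiction that it does not. I would then build by transfinite recursion an increasing chain $(N_\alpha,(p_\alpha)_{p\in Z})_{\alpha<\lambda}$ of small $\aleph_0$-saturated extensions of $N_0$ with content-preserving extensions. At a successor step, the failure of condition (2) applied to $(N_\alpha,(p_\alpha)_{p\in Z})$ yields a small $\aleph_0$-saturated $N''_\alpha\succeq N_\alpha$, content-preserving extensions $\tilde p\in S_S(N''_\alpha)$ of $p_\alpha$, a tuple $(q_1,\ldots,q_n)\in\cl(\{\tilde p:p\in Z\})^n$, formulas $\varphi_1,\ldots,\varphi_n$, a type $q(\bar y)\in S(\emptyset)$ and a finite tuple $\bar b\in q(N''_\alpha)$ with each $\varphi_i(\bar x,\bar b)\in q_i$ but no analogous witness for $(\varphi_1,\ldots,\varphi_n,q)$ inside $N_\alpha$. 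I would then let $N_{\alpha+1}$ be any small $\aleph_0$-saturated model with $N_\alpha\cup\bar b\subseteq N_{\alpha+1}\preceq N''_\alpha$ and set $p_{\alpha+1}:=\tilde p|_{N_{\alpha+1}}$; content preservation at $\alpha+1$ then follows by a squeezing argument, since the contents of $(\tilde p_1,\ldots,\tilde p_n)$ computed at $N_\alpha$ and at $N''_\alpha$ both equal $c^N(p_1,\ldots,p_n)$, and the content at $N_{\alpha+1}$ is pinched between them. Limit stages are handled by unions.

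To derive the contradiction, for each $\bar r\in X_0^{<\omega}$ put $C_\alpha(\bar r):=c^{N_\alpha}(\bar s_\alpha)$, where $\bar s_\alpha$ is the unique lift of $\bar r$ to $X_\alpha^{<\omega}$; then $\alpha\mapsto C_\alpha(\bar r)$ is monotonically non-decreasing, since any witness in a smaller model remains a witness in a larger one. By construction, at each successor step $\alpha\to\alpha+1$ there is some $\bar r^\alpha\in X_0^{<\omega}$, namely the image under $\pi_{\alpha+1}$ of $(q_1|_{N_{\alpha+1}},\ldots,q_n|_{N_{\alpha+1}})$, for which $C_\alpha(\bar r^\alpha)\subsetneq C_{\alpha+1}(\bar r^\alpha)$. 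By Remark \ref{remark: bound of the number of configurations}, at each fixed arity $n$ the range of $C_\alpha$ on $X_0^n$ is contained in a set of size at most $2^{\max(l_S,2^{|T|})}$, hence the total number of strict jumps along the chain is bounded by $|X_0|^{<\omega}\cdot 2^{\max(l_S,2^{|T|})}$, which is below $\kappa$ because $|N_0|$ is small and $\kappa$ is a strong limit. Choosing $\lambda$ strictly above this bound then produces the desired contradiction while ensuring that every $N_\alpha$ remains small.

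The main conceptual obstacle is locating the correct invariant on which to perform the cardinality count: the closures $X_\alpha$ may themselves acquire unbounded cardinality as $\alpha$ grows, and content of a single type is too coarse to register the failure of (2). The trick is to first apply Lemma \ref{lemma: technical lemma 2} so that every $X_\alpha$ is canonically identified with the fixed bounded space $X_0$, and then to track the increasing data through contents of finite tuples indexed by $X_0^{<\omega}$, whose range is absolutely bounded by Remark \ref{remark: bound of the number of configurations}. Once this indexing is in place, the remaining cardinal arithmetic is routine.
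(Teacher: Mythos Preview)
Your counting argument via jumps in $C_\alpha(\bar r)$ is sound, but the logical framing has a genuine gap. You assume only that $N_0$ fails condition (2), yet at each successor step you invoke ``the failure of condition (2) applied to $(N_\alpha,(p_\alpha))$'' for arbitrary $\alpha$. Nothing rules out that some later $N_\alpha$ satisfies (2) even though $N_0$ does not --- indeed there is no reason to expect that the model produced by Lemma~\ref{lemma: technical lemma 2} already satisfies the conclusion of the present lemma (the paper's Corollary~\ref{corollary: technical corollary} applies the two lemmas in succession precisely because neither subsumes the other). So your recursion, as written, may simply stall before reaching $\lambda$, and you obtain no contradiction to ``$N_0$ fails''; in particular your specific claim that $N_0$ itself witnesses the lemma is not established. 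The fix is easy: either assume for contradiction that the lemma fails for \emph{every} choice of $N'$ and extensions (which is what the paper does), or run the recursion with the proviso that you halt and output $N_\alpha$ the moment it satisfies (2). In either formulation your jump-count shows the chain cannot have length exceeding $|X_0|^{<\omega}\cdot 2^{\max(l_S,2^{|T|})}<\kappa$, and you are done.

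With that correction, your argument and the paper's are close cousins differing only in where the bounded index set for the pigeonhole comes from. You front-load Lemma~\ref{lemma: technical lemma 2} so that every $X_\alpha$ is canonically identified with a fixed bounded $X_0$, giving unique lifts and a clean monotone-jump count indexed by $X_0^{<\omega}$. The paper instead builds the chain directly, passes to the union $\bigcup_\alpha N_\alpha$ with $p'=\bigcup_\alpha p_\alpha$, and uses $\cl(\{p':p\in Z\})$ (of size $\le \beth_2(|Z|)<\kappa$) as the index set: since restriction to each $N_{\alpha+1}$ maps this closure onto $\cl(\{p_{\alpha+1}:p\in Z\})$, every bad tuple at stage $\alpha$ lifts (non-uniquely) to the union, and a pigeonhole forces a single lifted tuple to jump at more than $2^{\max(l_S,2^{|T|})}$ stages, contradicting Remark~\ref{remark: bound of the number of configurations}. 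Your route buys cleaner bookkeeping at the cost of quoting Lemma~\ref{lemma: technical lemma 2}; the paper's is self-contained but needs the union-and-pigeonhole step.
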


\begin{proof}
Suppose this lemma is false.  Then
we can construct sequences $(N_\alpha)_{\alpha< \kappa}$ and $(p_{\alpha})_{\alpha < \kappa}$ for each $p \in Z$, where $N_\alpha \succ N$ 
is $\aleph_0$-saturated and $p \subseteq p_{\alpha} \in S_S(N_\alpha)$, such that:
\begin{enumerate}
\item[(i)] $|N_\alpha| \leq |\alpha| +2^{|T|}+|N|$,
\item[(ii)] $\alpha < \beta$ implies $N_\alpha \prec N_\beta$,
\item[(iii)] $\alpha < \beta$ implies $p_{\alpha} \subseteq p_{\beta}$,
\item[(iv)] for every $\alpha$ one has $c^{N_\alpha}({{p_1}_\alpha,\dots,{p_n}_\alpha}) = c^N({p_1,\dots, p_n})$ for all $p_1,\dots,p_n \in Z$,
\item[(v)] for every $\alpha$ there is $n$ and there are types $q_1,\dots,q_n \in \cl(\{ p_{\alpha+1}: p \in Z\})$ such that  $ c^{N_\alpha}({{q_1}|_{N_\alpha},\dots, {q_n}|_{N_\alpha}}) \subsetneq c^{N_{\alpha+1}}({q_1,\dots, q_n})$.
\end{enumerate}
For each $p \in Z$, let $p' = \bigcup_\alpha p_\alpha \in S_S(\bigcup_\alpha N_\alpha)$. Since $|\cl(\{ p' : p \in Z\})| \leq \beth_2(|Z|) \leq \beth_3(l_S+ |T|+|N|)< \kappa$ and the restriction map from $S_S(\bigcup_\alpha N_\alpha)$ to $S_S(N_\alpha)$ maps $\cl(\{ p' : p \in Z\})$ onto $\cl(\{ p_\alpha : p \in Z\})$, we can find a natural number $n$ and types $q_1,\dots,q_n \in \cl(\{ p' : p \in Z\})$ such that $ c^{N_\alpha}({{q_1}|_{N_\alpha},\dots, {q_n}|_{N_\alpha}}) \subsetneq c^{N_{\alpha+1}}({q_1|_{N_{\alpha+1}},\dots, q_n|_{N_{\alpha+1}}})$ 
for more than $2^{l_S+ 2^{|T|}}$ ordinals $\alpha$. But this contradicts Remark \ref{remark: bound of the number of configurations}. 
\end{proof}

The next remark follows from Lemma \ref{lemma: technical lemma 1}.

\begin{rem}\label{remark: completion of technical lemma 3}
If an ($\aleph_0$-saturated) model $N'$ satisfies the conclusion of Lemma \ref{lemma: technical lemma 3}, then so does every $\aleph_0$-saturated elementary extension of $N'$. 
\end{rem}

By Lemmas \ref{lemma: technical lemma 2}, \ref{lemma: technical lemma 3} and Remark \ref{remark: completion of technical lemma 2}, we get

\begin{cor}\label{corollary: technical corollary}
Let $M$ be any small, $\aleph_0$-saturated model of $T$, and let $Y$ be any subset of $S_S(M)$. Then there exists a small, $\aleph_0$-saturated
model $N' \succ M$ and an extension $p^{N'} \in S_S(N')$ of each $p \in Y$ such that: 
\begin{enumerate}
\item for every $p_1,\dots,p_n \in Y$ one has $c^M({p_1,\dots,p_n}) = c^{N'}({p_1^{N'},\dots, p_n^{N'}})$,
\item for every $\aleph_0$-saturated $N'' \succ N'$ and for every $p^{N''} \in S_S(N'')$ extending $p^{N'}$ in such a way that $c^M({p_1,\dots,p_n}) = c^{N''}(p_1^{N''},\dots, p_n^{N''})$ for all $p_1,\dots,p_n \in Y$, the restriction map $\pi \colon \cl(\{ p^{N''}: p \in Y)\}) \to  \cl(\{ p^{N'}: p \in Y\})$ is a homeomorphism,
\item for every $\aleph_0$-saturated $N'' \succ N'$ and for every $p^{N''} \in S_S(N'')$ extending $p^{N'}$ in such a way that $c^M({p_1,\dots,p_n}) = c^{N''}(p_1^{N''},\dots, p_n^{N''})$ for all $p_1,\dots,p_n \in Y$, for every $q_1,\dots,q_n \in  \cl(\{ p^{N''}: p \in Y\})$ one has  $c^{N''}({q_1,\dots,q_n}) = c^{N'}({{q_1}|_{N'},\dots, {q_n}|_{N'}})$.
\end{enumerate}
\end{cor}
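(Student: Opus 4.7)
The plan is to apply the two technical lemmas in sequence, using Remark \ref{remark: completion of technical lemma 2} to lift the conclusion of Lemma \ref{lemma: technical lemma 2} across the second application.

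First I would apply Lemma \ref{lemma: technical lemma 2} to $M$ and $Y$ to obtain a small, $\aleph_0$-saturated model $N\succ M$ together with extensions $p_N\in S_S(N)$ of each $p\in Y$ satisfying clauses (1) and (2) of that lemma. Set $Z:=\{p_N:p\in Y\}\subseteq S_S(N)$, and apply Lemma \ref{lemma: technical lemma 3} to $N$ and $Z$ to obtain a small, $\aleph_0$-saturated model $N'\succ N$ and extensions $(p_N)_{N'}\in S_S(N')$ of each $p_N$ satisfying the two clauses of Lemma \ref{lemma: technical lemma 3}. Define the desired extensions by $p_{N'}:=(p_N)_{N'}$.

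Condition (1) of the corollary is immediate by chaining the content-preservation clauses of the two lemmas: $c^M(p_1,\dots,p_n)=c^N(p_{1,N},\dots,p_{n,N})=c^{N'}(p_{1,N'},\dots,p_{n,N'})$. For condition (2), note that $N'$ is itself an $\aleph_0$-saturated elementary extension of $N$, and by Remark \ref{remark: completion of technical lemma 2} the conclusion of Lemma \ref{lemma: technical lemma 2} then transfers to $N'$, witnessed by the extensions $p_{N'}$; hence for any $\aleph_0$-saturated $N''\succ N'$ and any content-preserving $p_{N''}$ extending $p_{N'}$, the restriction map $\pi\colon \cl(\{p_{N''}:p\in Y\})\to\cl(\{p_{N'}:p\in Y\})$ is a homeomorphism.

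For condition (3), the only thing to check is that content preservation \emph{from $M$} to $N''$ along $p_{N''}$ implies content preservation \emph{from $N$} to $N''$ along the corresponding extensions of $Z$, so that the conclusion of Lemma \ref{lemma: technical lemma 3} can be invoked. This is the only subtle point, but it is forced by the sandwich inclusions $c^M(p_1,\dots,p_n)\subseteq c^N(p_{1,N},\dots,p_{n,N})\subseteq c^{N''}(p_{1,N''},\dots,p_{n,N''})$ together with the assumed equality of the outer terms; hence $c^N(p_{1,N},\dots,p_{n,N})=c^{N''}(p_{1,N''},\dots,p_{n,N''})$ for all tuples from $Y$. Applying the second clause of Lemma \ref{lemma: technical lemma 3} to $N,Z,N'$ and the extensions $p_{N''}$ then yields clause (3) of the corollary. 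I do not anticipate a real obstacle beyond this bookkeeping; the main work has already been done in the two lemmas.
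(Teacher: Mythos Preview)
Your proposal is correct and follows essentially the same approach as the paper: apply Lemma \ref{lemma: technical lemma 2}, then Lemma \ref{lemma: technical lemma 3} to $N$ and $Z=\{p_N:p\in Y\}$, and invoke Remark \ref{remark: completion of technical lemma 2} to lift condition (2) to $N'$. You are in fact slightly more explicit than the paper about the sandwich argument needed for condition (3), which the paper leaves implicit.
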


\begin{proof}
First, we apply Lemma \ref{lemma: technical lemma 2} to get an $\aleph_0$-saturated model $N \succ M$ and types $p^N \in S_S(N)$ (for all $p \in Y$) satisfying the conclusion of Lemma \ref{lemma: technical lemma 2}. Next, we apply Lemma \ref{lemma: technical lemma 3} to this model $N$ and the set $Z:=\{p^N : p \in Y\}$, and we obtain an $\aleph_0$-saturated model $N'\succ N$ and types $p^{N'} \in S_S(N')$ (for all $p \in Y$) satisfying the conclusion of Lemma \ref{lemma: technical lemma 3}; in particular, $p^{N} \subseteq p^{N'}$ and $c^M(p_1,\dots,p_n) = c^{N'}(p_1^{N'},\dots, p_n^{N'})$ for all $p, p_1,\dots,p_n \in Y$. By Remark \ref{remark: completion of technical lemma 2}, $N'$ also satisfies the conclusion of Lemma \ref{lemma: technical lemma 2} which is witnessed by the types $p^{N'}$ for $p \in Y$. Therefore, the model $N'$ and the types $p^{N'}$ (for $p \in Y$) satisfy the conclusion of Corollary \ref{corollary: technical corollary}.
\end{proof}

As above, Lemma \ref{lemma: technical lemma 1} yields

\begin{rem}\label{remark: completion of technical corollary}
If an ($\aleph_0$-saturated) model $N'$ satisfies the conclusion of Corollary \ref{corollary: technical corollary}, then so does every $\aleph_0$-saturated elementary extension of $N'$. 
\end{rem}

The above technical lemmas and remarks are needed only to prove the following corollary. Recall that $X$ is a $\emptyset$-type-definable subset of $S$.

\begin{cor}\label{corollary: corollary of technical lemmas}
There exist closed, bounded (more precisely, of cardinality less than the degree of saturation of $\C_2$) subsets $R_{\C_1} \subseteq S_X(\C_1)$ and $R_{\C_2} \subseteq S_X(\C_2)$ with the following properties.
\begin{enumerate}
\item The restriction map $\pi_{12} \colon S_X(\C_1) \to S_X(\C_2)$ induces a homeomorphism from $R_{\C_1}$ onto $R_{\C_2}$, which will also be denoted by $\pi_{12}$.
\item For every $n$, $\{c^{\C_2}({p_1,\dots,p_n}) : p_1,\dots, p_n \in S_X(\C_2)\} = \{c^{\C_2}({p_1,\dots,p_n}) : p_1,\dots, p_n \in R_{\C_2}\}$.
\item  For every $n$, $c^{\C_1}({p_1,\dots,p_n}) = c^{\C_2}({\pi_{12}(p_1),\dots,\pi_{12}(p_n)})$ for all $p_1,\dots,p_n \in R_{\C_1}$.
\item For every $n$ and for every $p_1,\dots, p_n \in S_X(\C_1)$ there exist $q_1,\dots,q_n \in R_{\C_1}$ such that $c^{\C_1}({q_1,\dots,q_n}) \subseteq c^{\C_1}({p_1,\dots,p_n})$.
\end{enumerate}
\end{cor}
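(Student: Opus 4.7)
The plan is to reduce Corollary \ref{corollary: corollary of technical lemmas} to a single application of Corollary \ref{corollary: technical corollary} via a careful choice of the input pair $(M, Y)$, and then to lift the extensions up to both monster models.

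First, by Remark \ref{remark: bound of the number of configurations}, I would pick a bounded-size set $Z \subseteq S_X(\C_2)$ such that, for every $n$, every content $c^{\C_2}(p_1,\dots,p_n)$ with $p_1,\dots,p_n \in S_X(\C_2)$ is already realized by some tuple in $Z^n$. For each tuple $(q_1,\dots,q_n) \in Z^n$ and each $(\varphi_1,\dots,\varphi_n,q) \in c^{\C_2}(q_1,\dots,q_n)$, I fix one witness $\bar b \in \C_2$ with $\tp(\bar b/\emptyset) = q$ and $\varphi_i(\bar x, \bar b) \in q_i$ for all $i$. The collection $B \subseteq \C_2$ of all such witnesses is bounded. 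Let $M \prec \C_2$ be a small $\aleph_0$-saturated elementary submodel containing $B$. The crucial feature of $M$ is then
\[
c^M(q_1|_M,\dots,q_n|_M) = c^{\C_2}(q_1,\dots,q_n) \quad \text{for every } (q_1,\dots,q_n) \in Z^n,
\]
the nontrivial containment being witnessed by the fixed $\bar b$'s, which now lie in $M$.

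Next, set $Y := \{q|_M : q \in Z\} \subseteq S_X(M)$ and apply Corollary \ref{corollary: technical corollary} to $(M, Y)$, obtaining a small $\aleph_0$-saturated $N' \succ M$ with extensions $p_{N'} \in S_X(N')$ of each $p \in Y$ satisfying its three conclusions. By saturation of $\C_2$, embed $N'$ elementarily into $\C_2$ over $M$; so we may assume $M \prec N' \prec \C_2 \prec \C_1$. Using Lemma \ref{lemma: technical lemma 1} together with Remark \ref{remark: completion of technical corollary}, I extend each $p_{N'}$ further to content-preserving extensions $p_{\C_2} \in S_X(\C_2)$ and $p_{\C_1} \in S_X(\C_1)$, so that both $\C_2$ and $\C_1$ continue to witness the conclusions of Corollary \ref{corollary: technical corollary} relative to $(M, Y)$. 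Finally define
\[
R_{\C_1} := \cl(\{p_{\C_1} : p \in Y\}) \quad \text{and} \quad R_{\C_2} := \cl(\{p_{\C_2} : p \in Y\});
\]
these are closed, and of size at most $\beth_2(|Y|)$, which is less than the degree of saturation of $\C_2$.

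The four verifications are then routine applications of Corollary \ref{corollary: technical corollary}. For (1), part (2) of that corollary applied with $N'' = \C_2$ and with $N'' = \C_1$ shows that each of the restrictions $R_{\C_2} \to \cl(\{p_{N'} : p \in Y\})$ and $R_{\C_1} \to \cl(\{p_{N'} : p \in Y\})$ is a homeomorphism; since $\pi_{12}$ composed with restriction to $N'$ equals restriction from $\C_1$ to $N'$, the map $\pi_{12}|_{R_{\C_1}}$ is a homeomorphism onto $R_{\C_2}$. For (2), given an $n$-tuple in $S_X(\C_2)^n$, match its content with a tuple from $Z^n$, restrict to $M$ (content preserved by the choice of $M$), and transport the resulting tuple from $Y^n$ into $R_{\C_2}^n$ via parts (1) and (3) of Corollary \ref{corollary: technical corollary} with $N'' = \C_2$. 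For (3), apply part (3) of Corollary \ref{corollary: technical corollary} twice, with $N'' = \C_1$ and $N'' = \C_2$, observing that $\pi_{12}(s)|_{N'} = s|_{N'}$ for every $s \in R_{\C_1}$. For (4), restrict the given tuple in $S_X(\C_1)^n$ to $\C_2$ (content can only shrink), apply (2) to realize that content by a tuple in $R_{\C_2}^n$, and pull it back via the homeomorphism of (1), using (3) to identify the contents in $\C_1$ and $\C_2$. The main obstacle is the setup in the first paragraph: arranging that $M$ is rich enough for restrictions of tuples in $Z$ to preserve content. Once the witnesses are added to $M$, the rest of the argument is book-keeping on top of the technical lemmas already proved.
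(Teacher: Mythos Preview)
Your proposal is correct and follows essentially the same approach as the paper: choose a bounded set $Z$ (the paper's $P_{\pr}$) of types in $S_X(\C_2)$ realizing all contents, find a small $\aleph_0$-saturated $M$ over which restrictions of tuples from $Z$ preserve content, set $Y = \{q|_M : q \in Z\}$, apply Corollary~\ref{corollary: technical corollary}, and lift. The only presentational difference is that the paper, after obtaining $N'$, immediately invokes Remark~\ref{remark: completion of technical corollary} to replace $N'$ by $\C_2$ itself, so that properties (1) and (3) follow from a single application of parts (2) and (3) of Corollary~\ref{corollary: technical corollary} with $N'' = \C_1$; you instead keep the original $N'$ and factor through it, applying parts (2) and (3) twice (once for $\C_2 \succ N'$, once for $\C_1 \succ N'$), which is slightly more bookkeeping but equally valid.
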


\begin{proof}
As at the beginning of Section \ref{section 3}, by Remark \ref{remark: bound of the number of configurations},
we can find a subset $P \subseteq \bigcup_{n \in \omega} S_X(\C_2)^n$ of bounded size (in fact, of size $\leq 2^{l_S+ 2^{|T|}}$) such that for every $n$
$$\{ c^{\C_2}(p_1,\dots,p_n): (p_1,\dots,p_n) \in P\} = \{ c^{\C_2}(p_1,\dots,p_n): (p_1,\dots,p_n) \in S_X(\C_2)^n\}.$$
Let $P_{\pr}$ be the collection of all types $p\in S_X(\C_2)$ for which there is $(p_1,\dots,p_n) \in P$ such that $p=p_i$ for some $i$.

Since $P_{\pr}$ is of bounded size, there is a small, $\aleph_0$-saturated $M \prec \C_2$ such that for every $n$ and for every $p_1,\dots,p_n \in P_{\pr}$, $c^{\C_2}({p_1,\dots,p_n}) = c^{M}({{p_1}|_M,\dots,{p_n}|_M})$. Put 
$$Y := \{ p|_M : p \in P_{\pr}\}\subseteq S_X(M).$$

Now, take a small, $\aleph_0$-saturated model $N'\succ M$ provided by Corollary \ref{corollary: technical corollary}. We can assume that $N' \prec \C_2$.
By Remark \ref{remark: completion of technical corollary}, the model $\C_2$ in place of $N'$ also satisfies the properties in Corollary \ref{corollary: technical corollary} (here $\C_2$ is small with respect to $\C_1$). So we get types $p^{\C_2} \in S_X(\C_2)$ (for all $p \in Y$) satisfying properties (1)-(3) from Corollary \ref{corollary: technical corollary} and such that $p \subseteq p^{\C_2}$ for all $p \in Y$. In particular:
\begin{enumerate}
\item[(i)] for every $p_1,\dots,p_n \in Y$ one has $c^M(p_1,\dots,p_n) = c^{\C_2}(p_1^{\C_2},\dots, p_n^{\C_2})$,
\item[(ii)] for every $p^{\C_1} \in S_S(\C_1)$ extending $p^{\C_2}$ in such a way that $c^M({p_1,\dots,p_n}) = c^{\C_1}(p_1^{\C_1},\dots, p_n^{\C_1})$ for all $p_1,\dots,p_n \in Y$, the restriction map $\pi \colon \cl(\{ p^{\C_1}: p \in Y)\}) \to  \cl(\{ p^{\C_2}: p \in Y\})$ is a homeomorphism,
\item[(iii)] for every $p^{\C_1} \in S_S(\C_1)$ extending $p^{\C_2}$ in such a way that $c^M({p_1,\dots,p_n}) = c^{\C_1}(p_1^{\C_1},\dots, p_n^{\C_1})$ for all $p_1,\dots,p_n \in Y$, for every $q_1,\dots,q_n \in  \cl(\{ p^{\C_1}: p \in Y\})$ one has  $c^{\C_1}({q_1,\dots,q_n}) = c^{\C_2}({q_1}|_{\C_2},\dots, {q_n}|_{\C_2})$.
\end{enumerate}

By (i) and Lemma \ref{lemma: technical lemma 1} applied to the models $\C_2 \prec \C_1$, for each $p \in Y$ we can find $p^{\C_1} \in S_{X}(\C_1)$ extending $p^{\C_2}$ and such that $c^M({p_1,\dots,p_n}) = c^{\C_1}(p_1^{\C_1},\dots,p_n^{\C_1})$ for all $p_1,\dots,p_n \in Y$.
Define 
$$R_{\C_1}:= \cl (\{ p^{\C_1} : p \in Y\}) \;\, \mbox{and}\,\; R_{\C_2}:= \cl (\{ p^{\C_2} : p \in Y\}).$$
We will check now that these sets have the desired properties.

Properties (1) and (3) follow from (ii) and (iii), respectively. Property (2) follows from (i) and the choice of $P$ and $M$. Property (4) follows from (1)-(3), but we will check it. Consider any $p_1,\dots,p_n \in S_X(\C_1)$. Clearly, $c^{\C_1}({p_1,\dots,p_n}) \supseteq c^{\C_2}({{p_1}|_{\C_2},\dots, {p_n}|_{\C_2}})$. By (2), we can find $r_1,\dots,r_n \in R_{\C_2}$ such that $c^{\C_2}({{p_1}|_{\C_2},\dots, {p_n}|_{\C_2}}) = c^{\C_2}({r_1,\dots,r_n})$. By (1), define $q_1 :=\pi_{12}^{-1}(r_1), \dots, q_n :=\pi_{12}^{-1}(r_n)$. Then, by (3), $c^{\C_2}({r_1,\dots,r_n}) = c^{\C_1}({q_1,\dots,q_n})$. We conclude that $c^{\C_1}({q_1,\dots,q_n}) \subseteq c^{\C_1}({p_1,\dots,p_n})$.
\end{proof}

In the rest of this section we take the notation from Corollary \ref{corollary: corollary of technical lemmas}. Property (1) will be used many times without mentioning. From now on, $\pi_{12}$ denotes the homeomorphism from $R_{\C_1}$ to $R_{\C_2}$.

\begin{lem}\label{lemma: for every f there is g}
For every $f \in EL_1$ such that $f[R_{\C_1}] \subseteq R_{\C_1}$ there exists $g \in EL_2$ such that $g[R_{\C_2}] \subseteq R_{\C_2}$ and $\pi_{12} \circ f|_{R_{\C_1}} = g \circ \pi_{12}|_{R_{\C_1}}$.
\end{lem}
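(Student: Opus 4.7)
The plan is to transport $f|_{R_{\C_1}}$ to a map $g_0\colon R_{\C_2}\to R_{\C_2}$ via the homeomorphism $\pi_{12}$, check that $g_0$ agrees on finite tuples with the action of some element of $EL_2$, and then obtain $g$ as a limit of such elements along a net.

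First, since $f[R_{\C_1}]\subseteq R_{\C_1}$ and $\pi_{12}\colon R_{\C_1}\to R_{\C_2}$ is a homeomorphism, the formula
\[g_0(p) := \pi_{12}\bigl(f(\pi_{12}^{-1}(p))\bigr)\]
defines a map $g_0\colon R_{\C_2}\to R_{\C_2}$. The main computation is then to verify that for every finite tuple $p_1,\dots,p_n\in R_{\C_2}$,
\[c^{\C_2}(g_0(p_1),\dots,g_0(p_n))\subseteq c^{\C_2}(p_1,\dots,p_n).\]
Setting $p_i' := \pi_{12}^{-1}(p_i)\in R_{\C_1}$, the tuples $(p_1',\dots,p_n')$ and $(f(p_1'),\dots,f(p_n'))$ both lie in $R_{\C_1}$, so property (3) of Corollary \ref{corollary: corollary of technical lemmas} converts the two contents on the $\C_2$ side into their $\C_1$ counterparts. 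The containment then reduces to
\[c^{\C_1}(f(p_1'),\dots,f(p_n'))\subseteq c^{\C_1}(p_1',\dots,p_n'),\]
which holds by Proposition \ref{proposition: content determines orbit} applied to $f\in EL_1$.

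Second, from the content inclusion above, Proposition \ref{proposition: content determines orbit} (this time applied in $\C_2$) supplies, for each finite tuple $\bar p = (p_1,\dots,p_n)$ in $R_{\C_2}$, an element $\eta_{\bar p}\in EL_2$ such that $\eta_{\bar p}(p_i) = g_0(p_i)$ for $i=1,\dots,n$. Order the finite tuples from $R_{\C_2}$ by inclusion; this is a directed set, and compactness of $EL_2\subseteq S_X(\C_2)^{S_X(\C_2)}$ yields a subnet of $(\eta_{\bar p})_{\bar p}$ converging to some $g\in EL_2$.

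Finally, for any fixed $p\in R_{\C_2}$, eventually $p$ appears among the coordinates of $\bar p$, so $\eta_{\bar p}(p) = g_0(p)$ eventually along the subnet, and hence $g(p)=g_0(p)$ by Hausdorffness of $S_X(\C_2)$. This immediately gives $g[R_{\C_2}] = g_0[R_{\C_2}]\subseteq R_{\C_2}$, and for every $p'\in R_{\C_1}$ we obtain
\[g(\pi_{12}(p')) = g_0(\pi_{12}(p')) = \pi_{12}(f(p')),\]
which is the required identity. The only conceptual step is the content transfer in the first paragraph; everything after that is a standard compactness-of-$EL$ limit argument, so no serious obstacle is expected.
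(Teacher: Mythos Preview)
Your proof is correct and follows essentially the same route as the paper: both transport $f|_{R_{\C_1}}$ through $\pi_{12}$, use property~(3) of Corollary~\ref{corollary: corollary of technical lemmas} to transfer content between $\C_1$ and $\C_2$, and take a limit over finite tuples to produce $g\in EL_2$. The only difference is presentational: the paper unfolds the argument at the level of individual formulas and automorphisms of $\C_2$ (indexing the net by pairs $(\bar p,\bar\varphi_{\bar a})$), whereas you package that step into two clean applications of Proposition~\ref{proposition: content determines orbit}, which makes your write-up slightly more modular.
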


\begin{proof}
Consider any pairwise distinct types $p_1,\dots, p_n \in R_{\C_2}$ and any formulas
\begin{equation*}
\varphi_1(\bar x, \bar a) \in \pi_{12} \circ f \circ \pi_{12}^{-1} (p_1), \dots, \varphi_n(\bar x, \bar a) \in   \pi_{12} \circ f \circ \pi_{12}^{-1} (p_n).
\end{equation*}
Let $\bar p =(p_1,\dots,p_n)$, $\bar \varphi = (\varphi_1(\bar x, \bar y),\dots, \varphi_n(\bar x, \bar y))$, and $\bar \varphi_{\bar a} = (\varphi_1(\bar x, \bar a),\dots, \varphi_n(\bar x, \bar a))$.

We claim that it is enough to find $\sigma_{\bar p, \bar \varphi_{\bar a}} \in \aut(\C_2)$ such that 
$$\varphi_1(\bar x, \bar a) \in \sigma_{\bar p, \bar \varphi_{\bar a}}(p_1), \dots, \varphi_n(\bar x, \bar a) \in \sigma_{\bar p, \bar \varphi_{\bar a}}(p_n).$$ 
Indeed, taking the partial directed order on all such pairs $(\bar p, \bar \varphi_{\bar a})$ (where formulas are treated as elements of the Lindenbaum algebra) given by 
$$\begin{array}{ll}
(\bar p, \bar \varphi_{\bar a}) \leq (\bar q, \bar \psi_{\bar b})  \iff & \\
  \bar p\;\,  \mbox{is a subsequence of}\;\,  \bar q \;\,  \mbox{and}\;\, (\forall i,j)(p_i=q_j \Rightarrow\; \models \psi_j(\bar x, \bar b) \rightarrow \varphi_i(\bar x, \bar a)), & 
\end{array}
$$
the net $(\sigma_{\bar p, \bar \varphi_{\bar a}})$ has a subnet convergent to some $g \in EL_2$ which satisfies the conclusion.

Now, we will explain how to find $\sigma_{\bar p, \bar \varphi_{\bar a}}$. By the choice of the $\varphi_i$'s, there exists $\tau_{\bar p, \bar \varphi_{\bar a}} \in \aut(\C_1)$ such that
$$\varphi_1(\bar x, \tau_{\bar p, \bar \varphi_{\bar a}}(\bar a)) \in \pi_{12}^{-1}(p_1), \dots, \varphi_n(\bar x, \tau_{\bar p, \bar \varphi_{\bar a}}(\bar a)) \in  \pi_{12}^{-1}(p_n).$$
This implies that for $q:=\tp(\bar a/\emptyset)$ one has $(\varphi_1,\ldots,\varphi_n,q)\in c^{\C_1}(\pi_{12}^{-1}(\bar p))$. 
Hence, by Property (3) in Corollary \ref{corollary: corollary of technical lemmas}, $(\varphi_1,\ldots,\varphi_n,q)\in c^{\C_2}(\bar p)$. This means that there exists $\sigma_{\bar p, \bar \varphi_{\bar a}}' \in \aut(\C_2)$ such that
$$\varphi_1(\bar x, \sigma_{\bar p, \bar \varphi_{\bar a}}'(\bar a)) \in p_1, \dots, \varphi_n(\bar x, \sigma_{\bar p, \bar \varphi_{\bar a}}'(\bar a)) \in p_n.$$
So, $\sigma_{\bar p, \bar \varphi_{\bar a}}:= \sigma_{\bar p, \bar \varphi_{\bar a}}'^{-1}$ is as required.
\end{proof}

\begin{lem}\label{lemma: for every g there is f}
For every $g \in EL_2$ such that $g[R_{\C_2}] \subseteq R_{\C_2}$ there exists $f \in EL_1$ such that $f[R_{\C_1}] \subseteq R_{\C_1}$ and $\pi_{12} \circ f|_{R_{\C_1}} = g \circ \pi_{12}|_{R_{\C_1}}$.
\end{lem}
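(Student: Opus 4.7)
The plan is to mirror the construction of Lemma \ref{lemma: for every f there is g}, but with the roles of $\C_1$ and $\C_2$ interchanged in the net. Concretely, I want to build $f \in EL_1$ whose restriction to $R_{\C_1}$ is forced to be the map $p \mapsto \pi_{12}^{-1}(g(\pi_{12}(p)))$. This is well-defined because $g[R_{\C_2}] \subseteq R_{\C_2}$ and $\pi_{12}\colon R_{\C_1} \to R_{\C_2}$ is a bijection by Property (1) of Corollary \ref{corollary: corollary of technical lemmas}. Once $f$ has this behavior, both $f[R_{\C_1}] \subseteq R_{\C_1}$ and $\pi_{12} \circ f|_{R_{\C_1}} = g \circ \pi_{12}|_{R_{\C_1}}$ are immediate.

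First, I would set up the same kind of net as in the previous lemma. For pairwise distinct $p_1,\ldots,p_n \in R_{\C_1}$ and formulas $\varphi_i(\bar x, \bar a) \in \pi_{12}^{-1}(g(\pi_{12}(p_i)))$ (so now $\bar a \in \C_1$), I want to exhibit $\sigma_{\bar p, \bar\varphi_{\bar a}} \in \aut(\C_1)$ such that $\varphi_i(\bar x, \bar a) \in \sigma_{\bar p, \bar\varphi_{\bar a}}(p_i)$ for every $i$. Taking a subnet convergent in $EL_1$ (with respect to the same directed order on pairs $(\bar p, \bar\varphi_{\bar a})$ used in Lemma \ref{lemma: for every f there is g}) yields the desired $f$.

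The existence of $\sigma_{\bar p,\bar\varphi_{\bar a}}$ amounts to showing $(\varphi_1,\ldots,\varphi_n,q) \in c^{\C_1}(p_1,\ldots,p_n)$, where $q = \tp(\bar a/\emptyset)$. The key step is therefore the content inclusion
\[
c^{\C_1}\bigl(\pi_{12}^{-1}(g(\pi_{12}(p_1))),\ldots,\pi_{12}^{-1}(g(\pi_{12}(p_n)))\bigr) \subseteq c^{\C_1}(p_1,\ldots,p_n).
\]
This is obtained from the chain
\[
c^{\C_1}\bigl(\pi_{12}^{-1}(g(\pi_{12}(p_i)))\bigr)_i = c^{\C_2}\bigl(g(\pi_{12}(p_i))\bigr)_i \subseteq c^{\C_2}\bigl(\pi_{12}(p_i)\bigr)_i = c^{\C_1}(p_1,\ldots,p_n),
\]
where the two equalities come from Property (3) of Corollary \ref{corollary: corollary of technical lemmas} (applied to the tuples $\pi_{12}^{-1}(g(\pi_{12}(p_i)))$ and $p_i$, both in $R_{\C_1}$), and the middle inclusion is the easy direction of Proposition \ref{proposition: content determines orbit} applied to $g \in EL_2$ sending $\pi_{12}(p_i)$ to $g(\pi_{12}(p_i))$.

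There is no genuine obstacle here beyond making sure the content inclusion runs in the correct direction; the main subtlety — and the point I would double-check — is that the parameters $\bar a$ in the formulas now live in $\C_1$ (possibly outside $\C_2$), so one really does need Property (3) of Corollary \ref{corollary: corollary of technical lemmas} to transfer witnesses from $\C_1$ to $\C_2$ and back. Everything else is a direct analogue of Lemma \ref{lemma: for every f there is g}.
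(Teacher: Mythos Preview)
Your proposal is correct and takes essentially the same approach as the paper: both build a net of automorphisms of $\C_1$ indexed by finite tuples of types in $R$ together with formulas in their target types, and pass to a convergent subnet. The only difference is packaging: where the paper unfolds the argument by first using Property (3) to move the parameter $\bar a$ into $\C_2$ and then explicitly invoking that $g$ is approximated by automorphisms of $\C_2$, you collapse these two steps into a single appeal to Proposition \ref{proposition: content determines orbit} (together with two applications of Property (3)), which is a clean shortcut but not a genuinely different route.
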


\begin{proof}
We proceed  as in Lemma \ref{lemma: for every f there is g}. Consider
any pairwise distinct types $p_1,\dots,p_n\in R_{\C_1}$ and any
formulas 
$$\varphi_1(\bar x,\bar a)\in\pi_{12}^{-1}\circ g\circ
\pi_{12}(p_1),\dots,\varphi_n(\bar x,\bar a)\in\pi_{12}^{-1}\circ
g\circ\pi_{12}(p_n).$$
As in Lemma \ref{lemma: for every f there is g}, it is enough to find
$\sigma=\sigma_{\bar p,{\bar \varphi}_{\bar a}}\in\aut(\C_1)$ such
that $\varphi_i(\bar x,\bar a)\in\sigma(p_i)$ for $i=1,\dots,n$, where
$\bar p,\ {\bar \varphi}_{\bar a}$ are as in the proof there.

By property (3) from Corollary \ref{corollary: corollary of technical
  lemmas}, $c^{\C_2}(g\circ\pi_{12}(\bar
p))=c^{\C_1}(\pi_{12}^{-1}\circ g\circ \pi_{12}(\bar p))$, hence we
find ${\bar a}'\subset\C_2$ with ${\bar a}'\equiv_{\emptyset}\bar a$
and $\varphi_i(\bar x,{\bar a}')\in g\circ\pi_{12}(p_i)$ for
$i=1,\dots,n$. Then there is $\tau\in \aut(\C_2)$ with
$\varphi_i(\bar x,{\bar a}')\in\tau(\pi_{12}(p_i))$ for $i=1,\dots,n$.
Let ${\bar a}''=\tau^{-1}({\bar a}')$. Hence, ${\bar
  a}''\equiv_{\emptyset}{\bar a}'\equiv_{\emptyset}\bar a$ and
$\varphi_i(\bar x,{\bar a}'')\in\pi_{12}(p_i)\subseteq p_i$. Therefore,
there is $\sigma\in \aut(\C_1 )$ with $\sigma({\bar a}'')=\bar
a$. Clearly, $\sigma$ satisfies our demands.
\end{proof}

Recall that ${\mathcal M}_1$ and ${\mathcal M}_2$ are minimal left ideals of $EL_1$ and $EL_2$, respectively. 
By Properties (4) and (2) in Corollary \ref{corollary: corollary of technical lemmas} and Lemmas \ref{lemma: Im(eta) contained in R} and \ref{lemma: u in place of eta}, we get idempotents $u \in {\mathcal M}_1$ and $v' \in {\mathcal M}_2$ such that $\im (u) \subseteq R_{\C_1}$ and $\im (v') \subseteq R_{\C_2}$. Note that for any such idempotents $\im (u) = \im (u|_{R_{\C_1}})$ and $\im (v')=\im (v'|_{R_{\C_2}})$.

\begin{lem}\label{lemma: existence of v such that Im(u)=Im(v)}
There exists an idempotent $v \in {\mathcal M}_2$ such that $\pi_{12}[\im(u)]=\im(v)$.
\end{lem}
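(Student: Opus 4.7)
The plan is to construct the required $v$ in two phases: first exhibit some idempotent in ${\mathcal M}_2$ whose image is contained in $R := \pi_{12}[\im(u)]$, and then upgrade this containment to an equality by a round-trip through $\C_1$ that exploits the group structure of $u{\mathcal M}_1$. Since $u[R_{\C_1}] = \im(u) \subseteq R_{\C_1}$, I would first apply Lemma \ref{lemma: for every f there is g} to $u$ to produce some $g \in EL_2$ with $g[R_{\C_2}] \subseteq R_{\C_2}$ and $\pi_{12} \circ u|_{R_{\C_1}} = g \circ \pi_{12}|_{R_{\C_1}}$. Unfolding the formulas and using $u|_{\im(u)} = \id$, this $g$ will fix $R$ pointwise and satisfy $g[R_{\C_2}] = R$. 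Then I would look at $gv' \in {\mathcal M}_2$ (the product stays in the left ideal); its image equals $g[\im(v')] \subseteq g[R_{\C_2}] = R$, so the obvious variant of Lemma \ref{lemma: u in place of eta} applied inside ${\mathcal M}_2$ yields an idempotent $v \in {\mathcal M}_2$ with $\im(v) \subseteq \im(gv') \subseteq R$, disposing of one inclusion.

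For the reverse inclusion, note that $\im(v) \subseteq R \subseteq R_{\C_2}$ forces $v[R_{\C_2}] \subseteq R_{\C_2}$, so Lemma \ref{lemma: for every g there is f} produces $f \in EL_1$ with $f[R_{\C_1}] \subseteq R_{\C_1}$ and $\pi_{12} \circ f|_{R_{\C_1}} = v \circ \pi_{12}|_{R_{\C_1}}$. Transporting the idempotence of $v|_{R_{\C_2}}$ across the homeomorphism $\pi_{12}|_{R_{\C_1}}$ shows that $f^2 = f$ on $R_{\C_1}$, and a direct image calculation gives $f[\im(u)] = \pi_{12}^{-1}(v[R]) \subseteq \pi_{12}^{-1}(\im(v)) \subseteq \im(u)$. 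With these two facts in hand, plus $u|_{\im(u)} = \id$, one checks by straightforward unwinding that $u \cdot fu = fu$ and $(fu)^2 = fu$. Thus $fu$ is an idempotent of ${\mathcal M}_1$ lying in the group $u{\mathcal M}_1$; but the only idempotent of a group is its identity, so $fu = u$. Translating this equality back through $\pi_{12}$ gives $v|_R = \id_R$, hence $R \subseteq \im(v)$.

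I expect the decisive step to be the very last one: recognizing that the correct framework for comparing the two a priori distinct idempotents $u$ and $fu$ of ${\mathcal M}_1$ with nested images is to place them both inside the same maximal subgroup $u{\mathcal M}_1$, whose only idempotent is the identity. The steps before that are essentially bookkeeping about how the correspondences of Lemmas \ref{lemma: for every f there is g} and \ref{lemma: for every g there is f} interact with images and idempotence, but the nonobvious ingredient is the choice of the combination $fu$ (rather than, say, $uf$ or $ufu$) so that the resulting map is forced to land in $u{\mathcal M}_1$.
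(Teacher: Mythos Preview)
Your proof is correct and follows essentially the same route as the paper: apply Lemma~\ref{lemma: for every f there is g} to $u$ to get $g$, pass to $gv'\in{\mathcal M}_2$ and extract an idempotent $v$ with $\im(v)\subseteq R:=\pi_{12}[\im(u)]$, then apply Lemma~\ref{lemma: for every g there is f} to $v$ to get $f$ and analyze $fu\in{\mathcal M}_1$.

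The only difference is in the endgame for the inclusion $R\subseteq\im(v)$. The paper introduces an auxiliary idempotent $u'\in fu{\mathcal M}_1$ via Remark~\ref{remark: hM=uM} and compares $\bigcup_{\alpha\in u'{\mathcal M}_1}\im(\alpha)$ with $\bigcup_{\alpha\in u{\mathcal M}_1}\im(\alpha)$, using $uu'{\mathcal M}_1=u{\mathcal M}_1$ to squeeze $\pi_{12}^{-1}[\im(v)]=\im(u)$. You instead verify directly that $fu$ is itself an idempotent lying in the group $u{\mathcal M}_1$ (via $ufu=fu$ and $(fu)^2=fu$), forcing $fu=u$ and hence $v|_R=\id_R$. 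Your argument is slightly more direct and yields the marginally stronger conclusion that $v$ fixes $R$ pointwise, not just that $\im(v)=R$; the paper's version avoids checking the idempotence of $fu$ by hand at the cost of one more invocation of Remark~\ref{remark: hM=uM}.
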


\begin{proof}
By Lemma \ref{lemma: for every f there is g}, there is $g \in EL_2$ such that  $\pi_{12} \circ u|_{R_{\C_1}} = g \circ \pi_{12}|_{R_{\C_1}}$. Then $\pi_{12}[\im(u)]=\im(g|_{R_{\C_2}})$. Let $h=gv' \in {\mathcal M}_2$. By Lemma \ref{lemma: hM=uM}(4), there is an idempotent $v \in h{\mathcal M}_2$ such that $h{\mathcal M}_2 = v{\mathcal M}_2$. We will show that  $\pi_{12}[\im(u)]=\im(v)$.

We clearly have
\begin{equation}\label{equation 3}
\im(v)=\im(h)\subseteq\im(g|_{R_{\C_2}})=\pi_{12}[\im(u)]\subseteq
R_{\C_2},
\end{equation}
hence also
\begin{equation}\label{equation 4}
\pi_{12}^{-1}[\im(v)]\subseteq\im(u).
\end{equation}
By Lemma \ref{lemma: for every g there is f}, there is $f\in EL_1$ such
that $f|_{R_{\C_1}}=\pi_{12}^{-1}\circ v\circ
\pi_{12}|_{R_{\C_1}}$. Then
$\im(f|_{R_{\C_1}})=\pi_{12}^{-1}[\im(v)]$. Choose an idempotent
$u'\in{\mathcal M}_1$ with $fu\in u'{\mathcal M}_1$. Then, using
(\ref{equation 4}) and Lemma \ref{lemma: hM=uM}, we have
\begin{equation}\label{equation 5}
\im(u')=\im(fu)\subseteq\im(f|_{R_{\C_1}})=\pi_{12}^{-1}[\im(v)]\subseteq\im(u).
 \end{equation}
By Lemma \ref{lemma: hM=uM}, $u=u'$ and
$\im(u)=\im(u')=\pi_{12}^{-1}[\im(v)]$. Hence, $\im(v)=\pi_{12}[\im(u)]$. 
\end{proof}



Take $v$ from the above lemma. Let $I_1:=\im(u)\subseteq R_{\C_1}$ and
$I_2:=\im(v) \subseteq R_{\C_2}$. Then, for every $f \in u{\mathcal
  M}_1$ and $g \in v{\mathcal M}_2$ we have $\im(f)=I_1$ and
$\im(g)=I_2$. By Lemma \ref{lemma: hM=uM}(3), we get group
monomorphisms $F_1\colon u{\mathcal M}_1\to\sym(I_1)$ and $F_2 \colon v{\mathcal
  M}_2\to\sym(I_2)$. By the choice of $v$ in Lemma \ref{lemma: existence of v such that Im(u)=Im(v)}, $\pi_{12}$ induces an isomorphism of
topological groups  $\pi_{12}'\colon \sym(I_1)\to\sym(I_2)$.


\begin{lem}\label{lemma: the image of Im(F1) equals Im(F2)}
$\pi_{12}'[\im(F_1)]=\im(F_2)$.
\end{lem}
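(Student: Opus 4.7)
The plan is to prove both inclusions, using Lemmas~\ref{lemma: for every f there is g} and~\ref{lemma: for every g there is f} to transport elements between $EL_1$ and $EL_2$, and then correcting the transported elements by the idempotents $u,v$ so that they land in $u{\mathcal M}_1$ and $v{\mathcal M}_2$ respectively.

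For the inclusion $\pi_{12}'[\im(F_1)]\subseteq \im(F_2)$, take $f\in u{\mathcal M}_1$. Since $f=uf$, we have $\im(f)\subseteq I_1\subseteq R_{\C_1}$, so in particular $f[R_{\C_1}]\subseteq R_{\C_1}$, and Lemma~\ref{lemma: for every f there is g} supplies $\tilde g\in EL_2$ with $\tilde g[R_{\C_2}]\subseteq R_{\C_2}$ and $\pi_{12}\circ f|_{R_{\C_1}} = \tilde g\circ \pi_{12}|_{R_{\C_1}}$. Reading this on $I_2\subseteq R_{\C_2}$ gives $\tilde g|_{I_2}=\pi_{12}'(F_1(f))$. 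Now set $g:=v\tilde g v$. Since $\mathcal M_2$ is a left ideal, $\tilde g v\in\M_2$, so $g=v(\tilde g v)\in v\M_2$. For $p\in I_2=\im(v)$ we have $v(p)=p$, hence $g(p)=v(\tilde g(p))$, and since $\tilde g(p)=\pi_{12}(f(\pi_{12}^{-1}(p)))$ lies in $\pi_{12}[I_1]=I_2$, we get $v(\tilde g(p))=\tilde g(p)$. Therefore $g|_{I_2}=\tilde g|_{I_2}$, and consequently $F_2(g)=\pi_{12}'(F_1(f))$.

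The opposite inclusion $\im(F_2)\subseteq \pi_{12}'[\im(F_1)]$ is proved symmetrically. Given $g\in v\M_2$, we have $\im(g)\subseteq I_2\subseteq R_{\C_2}$, so $g[R_{\C_2}]\subseteq R_{\C_2}$, and Lemma~\ref{lemma: for every g there is f} yields $\tilde f\in EL_1$ with $\tilde f[R_{\C_1}]\subseteq R_{\C_1}$ and $\pi_{12}\circ \tilde f|_{R_{\C_1}}=g\circ \pi_{12}|_{R_{\C_1}}$. Replace $\tilde f$ by $f:=u\tilde f u\in u\M_1$ (well-defined since $\tilde f u\in\M_1$). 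For $p\in I_1$ one has $u(p)=p$ and $\tilde f(p)=\pi_{12}^{-1}(g(\pi_{12}(p)))\in \pi_{12}^{-1}[I_2]=I_1$, so $u(\tilde f(p))=\tilde f(p)$; hence $f|_{I_1}=\tilde f|_{I_1}$ and therefore $\pi_{12}'(F_1(f))=g|_{I_2}=F_2(g)$.

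The only subtle point is the correction step: given that the transported elements $\tilde g$ and $\tilde f$ live in the full Ellis semigroups and not in the distinguished minimal left ideals and groups, one must check that the sandwiches $v\tilde g v$ and $u\tilde f u$ realise the same maps on $I_2$ and $I_1$ as the originals. This reduces to the observation that $\tilde g$ (resp.\ $\tilde f$) sends $I_2$ into $I_2$ (resp.\ $I_1$ into $I_1$), which in turn is immediate from $\im(f)\subseteq I_1$ and $\pi_{12}[I_1]=I_2$ (the equality from Lemma~\ref{lemma: existence of v such that Im(u)=Im(v)}). No serious obstacle is expected, as all the structural work has already been done in the preceding lemmas.
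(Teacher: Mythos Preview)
Your proof is correct and follows essentially the same approach as the paper: in each direction you transport via Lemmas~\ref{lemma: for every f there is g} and~\ref{lemma: for every g there is f}, then sandwich with the appropriate idempotent ($v\tilde g v$ or $u\tilde f u$) to land in the Ellis group, using $\pi_{12}[I_1]=I_2$ from Lemma~\ref{lemma: existence of v such that Im(u)=Im(v)} to see that the sandwich does not change the restriction to $I_2$ (resp.\ $I_1$). The paper's argument is the same, only with slightly terser bookkeeping.
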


\begin{proof}
($\subseteq$) Take  $f \in u{\mathcal M}_1$. By Lemma \ref{lemma: for every f there is g}, we can find $g \in EL_2$ such that $\pi_{12} \circ f|_{R_{\C_1}} = g \circ \pi_{12}|_{R_{\C_1}}$. Hence,  $\pi_{12} \circ f|_{I_1} = g \circ \pi_{12}|_{I_1}$. By the choice of $v$ in Lemma \ref{lemma: existence of v such that Im(u)=Im(v)} and the fact that $v|_{I_2}=\id_{I_2}$, this implies  that $\pi_{12} \circ f|_{I_1} = vgv \circ \pi_{12}|_{I_1}$. Note that $vgv \in v{\mathcal M}_2$, and we get
$$\pi_{12}'(F_1(f)) = \pi_{12}'(f|_{I_1})=\pi_{12} \circ f \circ \pi_{12}^{-1}|_{I_2}=vgv|_{I_2}=F_2(vgv).$$
($\supseteq$) Take $g \in v{\mathcal M}_2$.  By Lemma \ref{lemma: for every g there is f}, we can find $f \in EL_1$ such that $f[R_{\C_1}] \subseteq R_{\C_1}$ and $\pi_{12} \circ f|_{R_{\C_1}} = g \circ \pi_{12}|_{R_{\C_1}}$. Hence,  $\pi_{12} \circ f|_{I_1} = g \circ \pi_{12}|_{I_1}$. By the choice of $v$ in Lemma \ref{lemma: existence of v such that Im(u)=Im(v)}, the fact that $u|_{I_1}=\id_{I_1}$, and the fact that $\pi_{12} \colon R_{\C_1} \to R_{\C_2}$ is a bijection, this implies that $\pi_{12} \circ ufu|_{I_1} = g \circ \pi_{12}|_{I_1}$. Note that $ufu \in u{\mathcal M}_1$, and we get  
$$\pi_{12}'(F_1(ufu)) = \pi_{12}'(ufu|_{I_1})=\pi_{12} \circ ufu \circ \pi_{12}^{-1}|_{I_2}=g|_{I_2}=F_2(g).\qedhere$$
\end{proof}

As a conclusion, we get 
absoluteness of the isomorphism type of the Ellis group.

\begin{cor}\label{corollary: absoluteness of the Ellis group}
The map $F_2^{-1} \circ \pi_{12}' \circ F_1 \colon u{\mathcal M}_1 \to v{\mathcal M}_2$ is a group isomorphism. Thus the Ellis group of the flow $S_X(\C)$ does not depend (up to isomorphism) on the choice of the monster model $\C$.
\end{cor}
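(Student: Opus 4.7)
The plan is to simply assemble the pieces that have just been proved. By Corollary \ref{corollary: first part of main theorem 1} (applied in both $\C_1$ and $\C_2$), the maps $F_1 \colon u\mathcal{M}_1 \to I_1^{I_1}$ and $F_2 \colon v\mathcal{M}_2 \to I_2^{I_2}$ are injective group homomorphisms (with respect to composition of functions) onto $\im(F_1)$ and $\im(F_2)$, respectively. The homeomorphism $\pi_{12}' \colon I_1^{I_1} \to I_2^{I_2}$ induced by $\pi_{12}$ preserves composition of functions, as was already noted just before Lemma \ref{lemma: the image of Im(F1) equals Im(F2)}, and by that lemma it restricts to a bijection $\im(F_1) \to \im(F_2)$. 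Hence this restriction is itself a group isomorphism, and composing yields that $F_2^{-1} \circ \pi_{12}' \circ F_1$ is a group isomorphism $u\mathcal{M}_1 \to v\mathcal{M}_2$.

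For the second sentence of the corollary, one needs to pass from the special situation $\C_1 \succ \C_2$ (under which all the work of this section was carried out) to arbitrary pairs of monster models $\C$ and $\C'$. The standard trick is to take any monster model $\C''$ which is a common elementary extension of both $\C$ and $\C'$ (equivalently, $\C''$ is sufficiently saturated and strongly homogeneous), and apply the first part twice: once to $\C'' \succ \C$ and once to $\C'' \succ \C'$. This gives isomorphisms between the Ellis groups of $S_X(\C)$, $S_X(\C'')$ and $S_X(\C')$, hence between those of $S_X(\C)$ and $S_X(\C')$.

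There is no real obstacle left: every nontrivial assertion was absorbed into the earlier lemmas, in particular into Lemmas \ref{lemma: existence of v such that Im(u)=Im(v)} and \ref{lemma: the image of Im(F1) equals Im(F2)}, which in turn rested on the technical Corollary \ref{corollary: corollary of technical lemmas}. The only thing to be mildly careful about is to verify that the composition $F_2^{-1} \circ \pi_{12}' \circ F_1$ is defined as a map of groups (rather than merely of sets), which is immediate from the fact that $\pi_{12}'$ preserves the operation of composition of functions, together with the image-matching statement $\pi_{12}'[\im(F_1)] = \im(F_2)$ of the previous lemma. Thus the proposed proof is essentially a two-line verification followed by a short remark reducing the general absoluteness statement to the already-handled case $\C_1 \succ \C_2$.
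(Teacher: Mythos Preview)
Your proposal is correct and matches the paper's approach: the corollary is stated there without a separate proof, as it is immediate from the preceding setup (the injectivity of $F_1,F_2$ via Corollary~\ref{corollary: first part of main theorem 1}, the fact that $\pi_{12}'$ preserves composition, and Lemma~\ref{lemma: the image of Im(F1) equals Im(F2)}). Your explicit reduction of the general absoluteness statement to the case $\C_1\succ\C_2$ via a common monster elementary extension is the standard step the paper leaves implicit.
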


By Proposition \ref{proposition: unboundedly many sorts} this implies the second part of Theorem \ref{theorem: main theorem 1}.

\begin{cor}\label{corollary: absoluteness of the Ellis group for unboundedly many sorts}
Here, let $S$ be a product of an arbitrary (possibly unbounded) number of sorts and $X$ be a $\emptyset$-type-definable subset of $S$. Then the Ellis group of the flow $S_X(\C)$ does not depend (up to isomorphism) on the choice of the monster model $\C$.
\end{cor}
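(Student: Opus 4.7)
The plan is to deduce this corollary directly from the bounded-factor case (Corollary \ref{corollary: absoluteness of the Ellis group}) by passing through the reduction already established in Proposition \ref{proposition: unboundedly many sorts}. The essential input is that Proposition \ref{proposition: unboundedly many sorts} produces a product $S'$ of at most $2^{|T|}$ sorts and a $\emptyset$-type-definable subset $Y \subseteq S'$ such that $EL(S_X(\C)) \cong EL(S_Y(\C))$ as semigroups and $\aut(\C)$-flows, with the induced isomorphism between their Ellis groups respecting the $\tau$-topology, and crucially $S'$ and $Y$ depend only on $S$ and $X$, not on the choice of $\C$.

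So fix any two monster models $\C_1$ and $\C_2$ of $T$. First I would apply Proposition \ref{proposition: unboundedly many sorts} to obtain a bounded product of sorts $S'$ and a $\emptyset$-type-definable subset $Y \subseteq S'$, the same $S'$ and $Y$ working for both $\C_1$ and $\C_2$. This yields, for each $i \in \{1,2\}$, an isomorphism of the Ellis group of $S_X(\C_i)$ with the Ellis group of $S_Y(\C_i)$ as groups (even as groups equipped with the $\tau$-topology). Next, since $Y$ lives inside a product of boundedly many sorts, Corollary \ref{corollary: absoluteness of the Ellis group} applies and gives an isomorphism between the Ellis group of $S_Y(\C_1)$ and the Ellis group of $S_Y(\C_2)$. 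Composing the three isomorphisms yields an isomorphism between the Ellis groups of $S_X(\C_1)$ and $S_X(\C_2)$, proving absoluteness.

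There is essentially no obstacle left at this stage: all of the technical work — the content calculus, the choice of saturated auxiliary models, and the construction of the homeomorphism $\pi_{12}$ between the sets $R_{\C_1}$ and $R_{\C_2}$ — was carried out in the bounded-factor case. The only point that deserves emphasis is that one must invoke the ``moreover'' clause of Proposition \ref{proposition: unboundedly many sorts}, namely that the choice of $(S',Y)$ is made uniformly in $\C$; otherwise one would end up comparing Ellis groups of different flows $S_{Y_1}(\C_1)$ and $S_{Y_2}(\C_2)$, and the bounded-factor absoluteness result would not directly apply. Since Proposition \ref{proposition: unboundedly many sorts} was stated precisely with this uniformity built in, no additional argument is required.
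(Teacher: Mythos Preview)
Your proposal is correct and follows exactly the same approach as the paper: reduce to the bounded-factor case via Proposition \ref{proposition: unboundedly many sorts} (using its uniformity clause) and then invoke Corollary \ref{corollary: absoluteness of the Ellis group}. The paper states this reduction in a single sentence, and your emphasis on the necessity of the ``moreover'' clause is precisely the point.
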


By virtue of Corollaries \ref{corollary: first part of main theorem 1} and \ref{corollary: absoluteness of the Ellis group} together with Proposition \ref{proposition: unboundedly many sorts}, in order to complete the proof of Theorem \ref{theorem: main theorem 1}, it remains to show 

\begin{prop}\label{proposition: topological absoluteness} 
$F_2^{-1} \circ \pi_{12}' \circ F_1 \colon u{\mathcal M}_1 \to v{\mathcal M}_2$ is a homeomorphism, where $u{\mathcal M}_1$ and $v{\mathcal M}_2$ are equipped with the $\tau$-topology.
\end{prop}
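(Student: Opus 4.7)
The plan is to prove $\tau$-continuity of $\Phi$; the continuity of $\Phi^{-1}$ then follows by the symmetric argument, since swapping $\C_1,\C_2$ and replacing $\pi_{12}$ by $\pi_{12}^{-1}$ gives an analogous setup (Corollary \ref{corollary: corollary of technical lemmas} and Lemmas \ref{lemma: for every f there is g}--\ref{lemma: for every g there is f} are symmetric in the two monster models, and the whole construction of $R_{\C_1}, R_{\C_2}, u, v$ is symmetric). Recall that $\cl_\tau^1(A)=u\cdot\cl_{EL_1}(A)$ and $\cl_\tau^2(B)=v\cdot\cl_{EL_2}(B)$, so the task reduces to the following: for every net $(f_\lambda)\subseteq u\M_1$ with $f_\lambda\to q$ pointwise in $EL_1$ and $p:=uq\in u\M_1$, exhibit an $h\in\cl_{EL_2}(\{\Phi(f_\lambda):\lambda\})$ such that $\Phi(p)=vh$.

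First I would apply Lemma \ref{lemma: for every f there is g} to each $f_\lambda$ and to $u$ to produce $g_\lambda,g_u\in EL_2$ with values in $R_{\C_2}$, intertwining with $\pi_{12}$. Since $\im(f_\lambda)\subseteq I_1$, $\im(u)=I_1$, and $\pi_{12}(I_1)=I_2$ by Lemma \ref{lemma: existence of v such that Im(u)=Im(v)}, one checks that $g_\lambda$ and $g_u$ in fact map $R_{\C_2}$ into $I_2$; combined with $v|_{I_2}=\id_{I_2}$, the formula from the proof of Lemma \ref{lemma: the image of Im(F1) equals Im(F2)} simplifies to $\Phi(f_\lambda)=g_\lambda v$ (and $\Phi(u)=g_u v=v$). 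By compactness of $EL_2$, after passing to a subnet I may assume $g_\lambda\to g_\infty$ in $EL_2$; by right-continuity of multiplication, also $\Phi(f_\lambda)=g_\lambda v\to g_\infty v=:h$, so $h\in\cl_{EL_2}(\{\Phi(f_\lambda)\})$. Using closedness of $R_{\C_1}$ (Corollary \ref{corollary: corollary of technical lemmas}) to get $q[R_{\C_1}]\subseteq R_{\C_1}$, and pointwise continuity of $\pi_{12}$ on $R_{\C_1}$, one obtains the intertwining $\pi_{12}\circ q|_{R_{\C_1}}=g_\infty\circ\pi_{12}|_{R_{\C_1}}$. Combining this with $p=uq$ and the intertwining for $u$ produces the formula $\Phi(p)=g_u g_\infty v$.

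The hard part will be checking $vh=\Phi(p)$, that is, $v\cdot g_\infty v=g_u\cdot g_\infty v$ in $EL_2$, equivalently that $v$ and $g_u$ act identically on the set $g_\infty[I_2]\subseteq\cl(I_2)\subseteq R_{\C_2}$. Because left multiplication by $v$ in $EL_2$ is not continuous, this cannot be read off by taking limits; it has to be forced by the structural data in Corollary \ref{corollary: corollary of technical lemmas}. My strategy is to use the content-preservation property~(3) of Corollary \ref{corollary: corollary of technical lemmas} together with Proposition \ref{proposition: content determines orbit}: for any $w\in R_{\C_2}$, the pair $(w,v(w))\in R_{\C_2}\times I_2$ has the same content as $(\pi_{12}^{-1}(w),\pi_{12}^{-1}(v(w)))$ in $\C_1$, and similarly for $(w,g_u(w))$; the intertwining identifies $\pi_{12}^{-1}(g_u(w))$ with $u(\pi_{12}^{-1}(w))\in I_1$. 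The goal is to show that both pairs then correspond to the same $EL_1$-orbit on the $\C_1$-side, forcing $v(w)=g_u(w)$. This may require a careful refinement of the choice of $v$ made in Lemma \ref{lemma: existence of v such that Im(u)=Im(v)} (or a parallel refinement of $g_u$ via Lemma \ref{lemma: for every f there is g}) so that $v|_{R_{\C_2}}=g_u|_{R_{\C_2}}$ holds by construction rather than just on $I_2$. Once this identification is in place, $vh=v g_\infty v=g_u g_\infty v=\Phi(p)$, giving $\Phi(p)\in\cl_\tau^2(\{\Phi(f_\lambda)\})$ and completing the proof of $\tau$-continuity.
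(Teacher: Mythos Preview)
Your proposal contains a fundamental error at the very start: the formula $\cl_\tau^1(A)=u\cdot\cl_{EL_1}(A)$ is \emph{false}. The $\tau$-closure on $u\M_1$ is defined via the circle operation: $\cl_\tau(D)=(u\bullet D)\cap u\M_1$, where $u\bullet D$ is the set of limits $\lim_i \sigma_i d_i$ with $\sigma_i\in\aut(\C_1)$, $\sigma_i\to u$, and $d_i\in D$ (the nets of automorphisms and of elements of $D$ vary \emph{simultaneously}). This is genuinely different from $u\cdot\cl_{EL_1}(D)$, precisely because left multiplication by $u$ is not continuous --- a fact you yourself invoke later. So the reduction you propose (``for every net $f_\lambda\to q$ in $EL_1$ with $p=uq$, find $h\in\cl_{EL_2}(\{\Phi(f_\lambda)\})$ with $\Phi(p)=vh$'') does not capture $\tau$-continuity. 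Even if you completed that reduction, it would not prove the proposition.

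Independently of this, your ``hard part'' is a real gap that your sketch does not close. You want $v|_{R_{\C_2}}=g_u|_{R_{\C_2}}$, but Lemma~\ref{lemma: for every f there is g} only pins down $g_u$ on $R_{\C_2}$ via the intertwining, and $v$ is produced in Lemma~\ref{lemma: existence of v such that Im(u)=Im(v)} as an idempotent in $g_u v'\M_2$ for some auxiliary idempotent $v'$; there is no reason these agree on $R_{\C_2}\setminus I_2$, and the content argument you gesture at only controls orbit closures, not individual values. Saying ``this may require a careful refinement'' is not a proof.

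The paper's route is different and avoids both problems. It first proves (Lemma~\ref{lemma: description of tau-closures}) an intrinsic description of $F_1[\cl_\tau(D)]$: it is exactly the set of $f\in\im(F_1)$ for which there are nets $(\sigma_i)$ in $\aut(\C_1)$ and $(d_i)$ in $D$ with $\lim\sigma_i'=\id_{I_1}$ (restriction to $I_1$) and $\lim\sigma_i(F_1(d_i))=f$. This uses the correct circle-operation definition and the key identity $hu=u$ whenever $h|_{I_1}=\id_{I_1}$. With this description in hand, the paper shows directly that $\pi_{12}'[\cl_\tau^1(D)]=\cl_\tau^2(\pi_{12}'[D])$ by constructing, from such nets in $\aut(\C_1)$, corresponding nets in $\aut(\C_2)$ (and vice versa), using the content-transfer property~(3) of Corollary~\ref{corollary: corollary of technical lemmas} exactly as in the proofs of Lemmas~\ref{lemma: for every f there is g} and~\ref{lemma: for every g there is f}, but now tracking two conditions simultaneously (the ``$\varphi$'' formulas for the limit of $\sigma_i d_i$ and the ``$\psi$'' formulas for $\sigma_i'\to\id$). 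No refinement of $u$ or $v$ is needed, and left multiplication by $u$ or $v$ never has to be passed through a limit.
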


The proof will consist of two lemmas.
Let $F_1' \colon {\mathcal M}_1 \to S_X(\C_1)^{I_1}$ be the
restriction map (so it is an extension of $F_1$ to a bigger
domain). As in the case of  $F_1$, we easily get that $F_1'$ is
injective: if $F_1'(f)=F_1'(g)$ for some $f,g \in {\mathcal M}_1$
(which implies that $f=fu$ and $g=gu$), then $f|_{I_1} = g|_{I_1}$,
so, since $\im (u) = I_1$, we get $f=fu=gu=g$.

$\aut(\C_1)$ acts on $EL_1$ and on ${\mathcal M}_1$; this induces an
action of $\aut(\C_1)$ also on $\im(F_1')\subseteq
S_X(\C_1)^{I_1}$: for $\sigma\in\aut(\C_1)$ and
$d\in{\mathcal M}_1$ define $\sigma(F_1'(d))$ as $F_1'(\sigma
d)=(\sigma d)|_{I_1}$.

\begin{lem}\label{lemma: description of tau-closures}
Let $D \subseteq u{\mathcal M}_1$. Then $F_1[\cl_\tau (D)]$ is the set of all $f \in \im (F_1)$ for which there exist nets $(\sigma_i)_i$ in $\aut(\C_1)$ and $(d_i)_i$ in $D$ such that $\lim \sigma_i'= \id_{I_1}$ (where $\sigma_i'$ is the element of $S_X(\C_1)^{I_1}$ induced by $\sigma_i$) and $\lim \sigma_i(F_1(d_i)) = f$. The same statement also holds for $\C_2$, $v{\mathcal M}_2$, $I_2$ and $F_2$ in place of $\C_1$, $u{\mathcal M}_1$, $I_1$, and $F_1$, respectively.
\end{lem}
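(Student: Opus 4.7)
My plan is to unfold a standard net-theoretic description of the $\tau$-topology and translate the convergence conditions through the restriction map $F_1$. Recall the description: $\tilde f \in \cl_\tau(D)$ if and only if there exist nets $(\sigma_i)$ in $\aut(\C_1)$ and $(d_i)$ in $D$ such that $\sigma_i u \to u$ in $EL_1$ and $\sigma_i d_i \to \tilde f$ in $EL_1$. The crucial observation is that, because $u$ is idempotent with $\im u = I_1$, the pointwise convergence $\sigma_i u \to u$ on $S_X(\C_1)$ is equivalent to $\sigma_i(q) \to q$ for every $q \in I_1$, i.e.\ to $\sigma_i' \to \id_{I_1}$ in $S_X(\C_1)^{I_1}$. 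Similarly, restricting $\sigma_i d_i \to \tilde f$ to $I_1$ gives $\sigma_i(F_1(d_i)) \to F_1(\tilde f)$.

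For the $(\subseteq)$ inclusion, given $\tilde f \in \cl_\tau(D)$ with witnessing nets as above and setting $f := F_1(\tilde f)$, restriction to $I_1$ immediately yields $\sigma_i' \to \id_{I_1}$ and $\sigma_i(F_1(d_i)) \to f$. So this direction is a direct unwinding.

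For the $(\supseteq)$ inclusion, assume we are given $f \in \im(F_1)$ and nets $(\sigma_i), (d_i)$ with $\sigma_i' \to \id_{I_1}$ and $\sigma_i(F_1(d_i)) \to f$. Using compactness of $EL_1$ (a closed subset of the compact space $S_X(\C_1)^{S_X(\C_1)}$), I would pass to subnets so that $\sigma_i \to \eta$ and $\sigma_i d_i \to p$ in $EL_1$. The hypothesis gives $\eta|_{I_1} = \id_{I_1}$, which forces $\eta u = u$ (for any $r$, $u(r) \in I_1$ is fixed by $\eta$), and hence $\sigma_i u \to \eta u = u$ by right-continuity of multiplication. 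Since ${\mathcal M}_1$ is a closed left ideal containing each $\sigma_i d_i$, we have $p \in {\mathcal M}_1$, so $\tilde f := u p \in u{\mathcal M}_1$. The equality $F_1(\tilde f) = f$ is then immediate: for $q \in I_1$, $(up)(q) = u(p(q)) = u(f(q)) = f(q)$, using $f(q) \in I_1$ (as $\im F_1 \subseteq I_1^{I_1}$) and $u|_{I_1} = \id_{I_1}$.

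The final and most delicate point is to confirm that $\tilde f = up$ actually lies in $\cl_\tau(D)$. The nets $(\sigma_i), (d_i)$ themselves satisfy the two clauses of the $\tau$-description with limit $p$ rather than $up$, so when $p \not\in u{\mathcal M}_1$ we must modify the witnesses. I would use that $\aut(\C_1)$ maps densely into $EL_1$ so that $u$ is approximated by elements of $\aut(\C_1)$, together with the factorizations $d_i = u d_i$ and $\eta u = u$, to build a (possibly diagonal) net in $\aut(\C_1) \times D$ whose product converges to $up$ while its first coordinate still multiplied by $u$ tends to $u$. I expect this step to be the principal obstacle, because multiplication in $EL_1$ is only separately continuous; the rescue is that we need only exploit right-continuity against the fixed factors $u$ and the individual $d_i$, and that all pointwise values $d_i(q)$ already lie in $I_1$, where $u$ acts trivially.
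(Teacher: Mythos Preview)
Your $(\subseteq)$ direction is fine and matches the paper. The gap is in $(\supseteq)$: you correctly identify the ``final and most delicate point'' but do not actually carry it out, and the diagonal-net sketch you offer would be awkward to make rigorous precisely because of the separate-continuity issue you flag.

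Two observations dissolve the difficulty, and both are things you set up but then do not use. First, you never exploit the injectivity of the restriction map $F_1'\colon {\mathcal M}_1 \to S_X(\C_1)^{I_1}$ (established in the paper just before this lemma). After passing to subnets so that $\sigma_i\to \eta$ and $\sigma_i d_i\to p$ in $EL_1$, you already have $F_1'(p)=f$ (your own computation shows $p(q)=f(q)$ for $q\in I_1$). Since $f\in \im(F_1)=F_1'[u{\mathcal M}_1]$ and $F_1'$ is injective on ${\mathcal M}_1$, it follows that $p\in u{\mathcal M}_1$ outright. There is no need to replace $p$ by $up$, and hence no need to repair the witnessing nets.

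Second, to see that $p\in\cl_\tau(D)$ one should work with the circle operation $\bullet$ from the definition of the $\tau$-closure rather than trying to manufacture new nets. One has $p\in \eta\bullet D$ tautologically; since $D\subseteq u{\mathcal M}_1$ gives $D\subseteq u\bullet D$, monotonicity yields $\eta\bullet D\subseteq \eta\bullet(u\bullet D)\subseteq(\eta u)\bullet D$. Your computation $\eta u=u$ then gives $p\in u\bullet D$, and combined with $p\in u{\mathcal M}_1$ this is exactly $p\in\cl_\tau(D)$. A minor related point: your opening ``recall'' states the $\tau$-closure via the condition $\sigma_i u\to u$, but the standard description uses $\sigma_i\to u$; the two are equivalent, yet proving that equivalence is essentially the content of the step you left open.
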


\begin{proof}
($\subseteq$) Take any $g \in \cl_\tau (D)$ and $f=F_1(g)$. By the definition of the $\tau$-closure (see \cite[Subsection 1.1]{KrPiRz}), there exist nets $(\sigma_i)_i$ in $\aut(\C_1)$ and $(d_i)_i$ in $D$ such that $\lim \sigma_i = u$ and $\lim \sigma_i(d_i)=g$. Since $u|_{I_1} = \id_{I_1}$ and $F_1'$ is continuous with respect to the 
product topologies on the domain and on the target, we see that $\lim \sigma_i' = \id_{I_1}$ and $\lim\sigma_i(F_1(d_i))=\lim\sigma_i(F_1'(d_i))=\lim F_1'(\sigma_i(d_i)) = F_1'(g)=f$.\\[1mm]
($\supseteq$) Consider any $f \in \im(F_1)$  for which there exist nets $(\sigma_i)_i$ in $\aut(\C_1)$ and $(d_i)_i$ in $D$ such that $\lim \sigma_i'= \id_{I_1}$ and $\lim \sigma_i(F_1(d_i)) = f$.

There is a subnet $(\tau_j)_j$ of $(\sigma_i)_i$ such that $h:=\lim \tau_j$ exists in $EL_1$ and for the corresponding subnet $(e_j)_j$ of $(d_i)_i$ the limit $g:=\lim_j \tau_j(e_j)$ also exists (and belongs to ${\mathcal M}_1$). 

Now, we will use the circle operation $\circ$ from the definition of the $\tau$-closure (see \cite[Definition 1.3]{KrPiRz}). To distinguish it from composition of functions (for which the symbol $\circ$ is reserved in this paper), the circle operation will be denoted by $\bullet$.

We have that $g \in h \bullet D \subseteq h \bullet (u \bullet D) \subseteq (hu) \bullet D$. But $h|_{I_1} = \lim \tau_j' = \lim \sigma_i' = \id_{I_1}$ and $\im (u)=I_1$, so $hu=u$. Hence, $g \in u \bullet D$. We also see that $f=\lim_i \sigma_i(F_1(d_i)) = \lim_j \tau_j(F_1(e_j)) = \lim_j F_1'(\tau_j(e_j)) = F_1'(g)$. But $f \in \im(F_1) = F_1'[u{\mathcal M}_1]$ and $F_1'$ is injective. Therefore, $g \in (u \bullet D) \cap u{\mathcal M}_1 =: \cl_\tau(D)$. Hence, $f=F_1(g) \in F_1[\cl_\tau(D)]$.  
\end{proof}

Equip $\im(F_1)$ and $\im(F_2)$ with the topologies induced by the isomorphisms $F_1$ and $F_2$ from the $\tau$-topologies on $u{\mathcal M}_1$ and $v{\mathcal M}_2$, respectively. The corresponding closure operators will be denoted by $\cl_\tau^1$ and $\cl_\tau^2$. They are given by the right hand side of Lemma \ref{lemma: description of tau-closures}. The next lemma will complete the proof of Proposition \ref{proposition: topological absoluteness}. 

\begin{lem}
For any $D \subseteq \im (F_1)$, $\pi_{12}'[\cl_\tau^1(D)] = \cl_\tau^2(\pi_{12}'[D])$.
\end{lem}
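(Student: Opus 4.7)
The plan is to verify both inclusions through Lemma \ref{lemma: description of tau-closures}, which reduces the task to a finite-approximation calculation: I must show that for each finite piece of data in $\C_2$ witnessing membership in $\cl_\tau^2(\pi_{12}'[D])$ there is a corresponding approximation in $\C_1$ witnessing membership in $\cl_\tau^1(D)$, and vice versa. The bridge between the two sides will be property (3) of Corollary \ref{corollary: corollary of technical lemmas} (content is preserved by $\pi_{12}$ on tuples of types in $R_{\C_1}$), together with strong homogeneity of the relevant monster model. Once a single approximation is produced, a standard subnet construction (of the kind already used in the proofs of Lemmas \ref{lemma: for every f there is g} and \ref{lemma: for every g there is f}) assembles them into a convergent net.

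For $\pi_{12}'[\cl_\tau^1(D)] \subseteq \cl_\tau^2(\pi_{12}'[D])$, I fix $f \in \cl_\tau^1(D)$ with witnessing nets $(\sigma_i)$ in $\aut(\C_1)$ and $(d_i)$ in $D$, and I must produce, for each finite family of pairs $(q_l,\psi_l(\bar x,\bar b_l))$ and $(p_k,\varphi_k(\bar x,\bar c_k))$ with $q_l,p_k\in I_2$, parameters from $\C_2$, $\psi_l\in q_l$ and $\varphi_k\in\pi_{12}'(f)(p_k)$, an automorphism $\tau\in\aut(\C_2)$ and an element $e\in\pi_{12}'[D]$ realizing these conditions. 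Since $\C_2\prec\C_1$, the pulled-back data (with $\pi_{12}^{-1}(q_l),\pi_{12}^{-1}(p_k)\in I_1$ and the same parameters) is met by some $\sigma\in\aut(\C_1)$ and $d\in D$ coming from the $\C_1$-net. Setting $\bar a_l:=\sigma^{-1}(\bar b_l)$ and $\bar d_k:=\sigma^{-1}(\bar c_k)$ exhibits the content pair $\bigl((\psi_l)_l,(\varphi_k)_k,\tp((\bar b_l,\bar c_k)/\emptyset)\bigr)$ as represented in the tuple $(\pi_{12}^{-1}(q_l),d(\pi_{12}^{-1}(p_k)))$ of types in $R_{\C_1}$. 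Property (3) transports this representation to the $\pi_{12}$-image $(q_l,\pi_{12}'(d)(p_k))$ via some $\bar a'_l,\bar d'_k\in\C_2$ satisfying $\bar a'_l,\bar d'_k\equiv_\emptyset\bar b_l,\bar c_k$; strong homogeneity of $\C_2$ then yields $\tau\in\aut(\C_2)$ sending $\bar a'_l,\bar d'_k$ to $\bar b_l,\bar c_k$, and $e:=\pi_{12}'(d)$ completes the approximation.

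For the reverse inclusion, the argument is parallel but the parameters $\bar b_l,\bar c_k$ in the $\C_1$-side approximation need not lie in $\C_2$. Starting with nets $(\tau_j,e_j)$ witnessing $h\in\cl_\tau^2(\pi_{12}'[D])$ and given finite data in $\C_1$, I apply property (3) in the other direction: the content $\bigl((\psi_l)_l,(\varphi_k)_k,\tp((\bar b_l,\bar c_k)/\emptyset)\bigr)$ represented in the $R_{\C_1}$-tuple $(q_l,(\pi_{12}')^{-1}(h)(p_k))$ is equally represented in its $R_{\C_2}$-image $(\pi_{12}(q_l),h(\pi_{12}(p_k)))$ by some $\bar b'_l,\bar c'_k\in\C_2$ with $\bar b'_l,\bar c'_k\equiv_\emptyset\bar b_l,\bar c_k$. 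The $\C_2$-side net then furnishes $\tau\in\aut(\C_2)$ and $e\in\pi_{12}'[D]$ for the translated condition; since $\tau^{-1}(\bar b'_l),\tau^{-1}(\bar c'_k)\equiv_\emptyset\bar b_l,\bar c_k$, strong homogeneity of $\C_1$ produces $\sigma\in\aut(\C_1)$ identifying them with $\bar b_l,\bar c_k$, and $\sigma$ together with $d:=(\pi_{12}')^{-1}(e)\in D$ solves the $\C_1$-approximation.

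The main obstacle I expect is organizational rather than conceptual: one has to set up the directed order on the finite approximation data (as in Lemma \ref{lemma: for every f there is g}) so that the two requirements on the net, namely $\tau_j'|_{I_2}\to\id_{I_2}$ and $\tau_j\cdot e_j\to\pi_{12}'(f)$, are enforced simultaneously, and similarly on the $\C_1$-side. Once the indexing is set up correctly, the argument is forced by property (3) and strong homogeneity, which act symmetrically on $R_{\C_1}$ and $R_{\C_2}$; that symmetry is what makes both inclusions go through by essentially the same scheme.
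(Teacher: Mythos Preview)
Your proposal is correct and follows essentially the same approach as the paper: both directions reduce via Lemma \ref{lemma: description of tau-closures} to a finite-approximation problem, which is solved by transferring a content pair across $\pi_{12}$ using property (3) of Corollary \ref{corollary: corollary of technical lemmas} and then invoking strong homogeneity of the appropriate monster model. One small point you leave implicit in the $(\supseteq)$ direction: to apply the characterization of $\cl_\tau^1$ from Lemma \ref{lemma: description of tau-closures} you must first know that $(\pi_{12}')^{-1}(h)\in\im(F_1)$, which follows from $h\in\cl_\tau^2(\pi_{12}'[D])\subseteq\im(F_2)$ together with Lemma \ref{lemma: the image of Im(F1) equals Im(F2)} and the injectivity of $\pi_{12}'$; the paper makes this step explicit at the end of its argument.
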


The proof of this lemma will be an elaboration on the proofs of Lemmas \ref{lemma: for every f there is g} and \ref{lemma: for every g there is f}.

\begin{proof}
($\subseteq$) Take any $f \in \cl_\tau^1 (D)$. By Lemma \ref{lemma: description of tau-closures}, there exist nets $(\sigma_i)_i$ in $\aut(\C_1)$ and $(d_i)_i$ in $D$ such that $\lim \sigma_i'= \id_{I_1}$ (where $\sigma_i'$ is the element of $S_X(\C_1)^{I_1}$ induced by $\sigma_i$) and $\lim \sigma_i(d_i) = f$.

Consider any pairwise distinct $p_1,\dots, p_n \in I_2$ and any formulas
$$
\begin{array}{l}
\varphi_1(\bar x, \bar a) \in \pi_{12}'(f) (p_1), \dots, \varphi_n(\bar x, \bar a) \in   \pi_{12}'(f) (p_n),\\
\psi_1(\bar x, \bar a) \in p_1,\dots, \psi_n(\bar x, \bar a) \in p_n.
\end{array}
$$

Let $\bar p =(p_1,\dots,p_n)$, $\bar \varphi = (\varphi_1(\bar x, \bar y),\dots, \varphi_n(\bar x, \bar y))$, $\bar \psi=(\psi_1(\bar x, \bar y),\dots, \psi_n(\bar x, \bar y))$, and $\bar \varphi_{\bar a} = (\varphi_1(\bar x, \bar a),\dots, \varphi_n(\bar x, \bar a))$, $\bar \psi_{\bar a}=(\psi_1(\bar x, \bar a),\dots, \psi_n(\bar x, \bar a))$.

We claim that it is enough to find $\sigma_{\bar p, \bar \varphi_{\bar a}, \bar \psi_{\bar a}} \in \aut(\C_2)$ and an index $i_{\bar p, \bar \varphi_{\bar a},\bar \psi_{\bar a}}$ such that 
$$
\begin{array}{l}
\varphi_1(\bar x, \bar a) \in \sigma_{\bar p, \bar \varphi_{\bar a},\bar \psi_{\bar a}}(\pi_{12}'(d_{i_{\bar p, \bar \varphi_{\bar a},\bar \psi_{\bar a}}})(p_1)), \dots, \varphi_n(\bar x, \bar a) \in \sigma_{\bar p, \bar \varphi_{\bar a},\bar \psi_{\bar a}}(\pi_{12}'(d_{i_{\bar p, \bar \varphi_{\bar a},\bar \psi_{\bar a}}})(p_n)),\\
\psi_1(\bar x, \bar a) \in \sigma_{\bar p, \bar \varphi_{\bar a}, \bar \psi_{\bar a}}(p_1), \dots, \psi_n(\bar x, \bar a) \in \sigma_{\bar p, \bar \varphi_{\bar a}, \bar \psi_{\bar a}}(p_n).
\end{array}
$$ 
Indeed, taking the partial directed order on all triples $(\bar p, \bar \varphi_{\bar a}, \bar \psi_{\bar a})$ defined as in the proof of Lemma \ref{lemma: for every f there is g}, 
the net $(\sigma_{\bar p, \bar \varphi_{\bar a}, \bar \psi_{\bar a}})$ satisfies $\lim \sigma_{\bar p, \bar \varphi_{\bar a}, \bar \psi_{\bar a}}' = \id_{I_2}$ and $\lim \sigma_{\bar p, \bar \varphi_{\bar a}, \bar \psi_{\bar a}}(\pi_{12}'(d_{i_{\bar p, \bar \varphi_{\bar a},\bar \psi_{\bar a}}})) = \pi_{12}'(f)$, which by Lemmas \ref{lemma: description of tau-closures} and \ref{lemma: the image of Im(F1) equals Im(F2)} implies that $\pi_{12}'(f) \in \cl_\tau^2(\pi_{12}'[D])$.

Now, we will explain how to find $\sigma_{\bar p, \bar \varphi_{\bar a}, \bar \psi_{\bar a}}$. By the fact that $\lim \sigma_i'= \id_{I_1}$ and $\lim \sigma_i(d_i) = f$ and by the choice of $\bar \varphi_{\bar a}$ and $\bar \psi_{\bar a}$, there is an index $i_{\bar p, \bar \varphi_{\bar a},\bar \psi_{\bar a}}$ for which  
$$
\begin{array}{l}
\varphi_1(\bar x, \sigma_{i_{\bar p, \bar \varphi_{\bar a}, \bar \psi_{\bar a}}}^{-1}(\bar a)) \in d_{i_{\bar p, \bar \varphi_{\bar a},\bar \psi_{\bar a}}}(\pi_{12}^{-1}(p_1)), \dots, \varphi_n(\bar x, \sigma_{i_{\bar p, \bar \varphi_{\bar a}, \bar \psi_{\bar a}}}^{-1}(\bar a)) \in d_{i_{\bar p, \bar \varphi_{\bar a},\bar \psi_{\bar a}}}(\pi_{12}^{-1}(p_n)),\\
\psi_1(\bar x, \sigma_{i_{\bar p, \bar \varphi_{\bar a}, \bar \psi_{\bar a}}}^{-1}(\bar a)) \in \pi_{12}^{-1}(p_1), \dots, \psi_n(\bar x, \sigma_{i_{\bar p, \bar \varphi_{\bar a}, \bar \psi_{\bar a}}}^{-1}(\bar a)) \in \pi_{12}^{-1}(p_n).
\end{array}
$$

Since $\pi_{12}^{-1}(p_1),\dots, \pi_{12}^{-1}(p_n) \in R_{\C_1}$ and $d_{i_{\bar p, \bar \varphi_{\bar a},\bar \psi_{\bar a}}}(\pi_{12}^{-1}(p_1)), \dots, d_{i_{\bar p, \bar \varphi_{\bar a},\bar \psi_{\bar a}}}(\pi_{12}^{-1}(p_n)) \in R_{\C_1}$, the existence of  $\sigma_{\bar p, \bar \varphi_{\bar a}, \bar \psi_{\bar a}} \in \aut(\C_2)$ with the required properties follows easily from Property (3) in Corollary \ref{corollary: corollary of technical lemmas} (similarly to the final part of the proof of Lemma \ref{lemma: for every f there is g}).\\[1mm]
($\supseteq$) Assume $g\in \cl^2_{\tau}(\pi_{12}'[D])$. Our goal is to
prove that the function $f:=\pi_{12}'^{-1}(g)=\pi_{12}^{-1}\circ
g\circ\pi_{12}|_{I_1}$ belongs to $\cl_{\tau}^1(D)$.

By Lemma \ref{lemma: description of tau-closures} there exist nets
$(\sigma_i)_i$ in $\aut(\C_2)$ and $(d_i)_i$ in $D$ such that  $\lim
\sigma_i'=\id_{I_2}$ (where $\sigma_i'\in S_X(\C_2)^{I_2}$ is induced
by $\sigma_i$) and $\lim \sigma_i(\pi_{12}'(d_i))=g$. Consider any pairwise
distinct $p_1,\dots,p_n\in I_1$ and any formulas
$$\varphi_1(\bar x,\bar a)\in f(p_1),\dots,\varphi_n(\bar x,\bar a)\in
f(p_n),$$
$$\psi_1(\bar x,\bar a)\in p_1,\dots,\psi_n(\bar x,\bar a)\in p_n.$$
As in the proof of ($\subseteq$), it is enough to find
$\sigma=\sigma_{\bar p,{\bar\varphi}_{\bar a},{\bar\psi}_{\bar
    a}}\in\aut(\C_1)$ and $i^*=i_{\bar p,{\bar\varphi}_{\bar
    a},{\bar\psi}_{\bar a}}$ such that $\varphi_j(\bar x,\bar
a)\in\sigma(d_{i^*}(p_j))$ and $\psi_j(\bar x,\bar a)\in\sigma(p_j)$
for $j=1,\dots,n$. We explain how to find $\sigma$ and $i^*$.

We have that $p_j$ and $f(p_j)$ belong to $I_1\subseteq R_{\C_1}$ and
$\pi_{12}(f(p_j))=g(\pi_{12}(p_j))$. Hence, by Property (3) in
Corollary \ref{corollary: corollary of technical lemmas}, we get that
$$c^{\C_1}(f(\bar p),\bar p)=c^{\C_2}(\pi_{12}(f(\bar
    p)),\pi_{12}(\bar p))=c^{\C_2}(g(\pi_{12}(\bar p)),\pi_{12}(\bar
    p)).$$
Hence, there is ${\bar a}'$ in $\C_2$ with ${\bar
  a}'\equiv_{\emptyset}\bar a$ such that $\varphi_j(\bar x,{\bar
  a}')\in g(\pi_{12}(p_j))$ and $\psi_j(\bar x,{\bar
  a}')\in\pi_{12}(p_j)$ for $j=1,\dots,n$. By the fact that $\lim
\sigma_i'=\id_{I_2}$ and $\lim\sigma_i(\pi_{12}'(d_i))=g$, there is
an index $i^*$ such that $\varphi_j(\bar x,{\bar a}')\in
\sigma_{i^*}(\pi_{12}'(d_{i^*}))(\pi_{12}(p_j))$ and $\psi_j(\bar x,{\bar
  a}')\in\sigma_{i^*}(\pi_{12}(p_j))$ for $j=1,\dots,n$.

Let ${\bar a}''=\sigma_{i^*}^{-1}({\bar a}')$. Hence, $\varphi_j(\bar
x,{\bar a}'')\in\pi_{12}(d_{i^*}(p_j))\subseteq d_{i^*}(p_j)$ and
$\varphi_j(\bar x,{\bar a}'')\in \pi_{12}(p_j) \subseteq p_j$ for $j=1,\dots,n$, and also
${\bar a}''\equiv_{\emptyset}{\bar a}'\equiv_{\emptyset}\bar
a$. Therefore, there is $\sigma\in\aut(\C_1)$ with $\sigma({\bar
  a}'')=\bar a$. Clearly, $\sigma$ satisfies our demands. 
\end{proof}

The proof of Proposition \ref{proposition: topological absoluteness} has been completed.  As was mentioned before, Corollaries \ref{corollary: first part of main theorem 1}, \ref{corollary: absoluteness of the Ellis group}, Proposition \ref{proposition: topological absoluteness}, and Proposition \ref{proposition: unboundedly many sorts} give us Theorem \ref{theorem: main theorem 1}, which implies immediately the following

\begin{cor}
Here, let $S$ be a product of an arbitrary (possibly unbounded) number of sorts and $X$ be a $\emptyset$-type-definable subset of $S$. Then, the quotient of the Ellis group $u{\mathcal M}$ of the flow $S_X(\C)$ by the intersection $H(u{\mathcal M})$ of the $\tau$-closures of the $\tau$-neighborhoods of the identity does not depend (as a compact topological group) on the choice of $\C$.
\end{cor}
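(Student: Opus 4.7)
The plan is to derive this corollary directly from Theorem \ref{theorem: main theorem 1}, since the corollary is flagged as an immediate consequence. First, I would invoke Theorem \ref{theorem: main theorem 1} (together with Proposition \ref{proposition: unboundedly many sorts}, which reduces the unbounded-product case to the case of boundedly many sorts) to obtain, for any two monster models $\C_1, \C_2$ with chosen minimal left ideals and idempotents $u_i \in {\mathcal M}_i$, an isomorphism $\Phi \colon u_1{\mathcal M}_1 \to u_2{\mathcal M}_2$ of groups which is moreover a homeomorphism with respect to the $\tau$-topologies on both sides.

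Second, I would observe that $H(u{\mathcal M})$ is a purely topological-group invariant of $(u{\mathcal M},\tau)$: by definition, $H(u{\mathcal M}) = \bigcap \{\cl_\tau(U) : U \text{ a $\tau$-neighborhood of } u\}$, and this is characterized entirely by the $\tau$-topology together with the choice of the identity $u$. Since $\Phi$ is a group isomorphism it sends $u_1$ to $u_2$, and since it is a $\tau$-homeomorphism it sends $\tau$-neighborhoods of $u_1$ to $\tau$-neighborhoods of $u_2$ and commutes with $\tau$-closure. Therefore $\Phi[H(u_1{\mathcal M}_1)] = H(u_2{\mathcal M}_2)$.

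Third, $\Phi$ descends to a group isomorphism $\bar\Phi \colon u_1{\mathcal M}_1/H(u_1{\mathcal M}_1) \to u_2{\mathcal M}_2/H(u_2{\mathcal M}_2)$. Each quotient carries the quotient topology induced from $\tau$, and under this topology it is a compact Hausdorff topological group (this is the standard fact, used already in Fact \ref{fact: main fact 1 from KrPiRz} and in the preceding discussion of the paper, that $u{\mathcal M}/H(u{\mathcal M})$ is a compact Hausdorff group). Since $\Phi$ is continuous, open, and maps $H(u_1{\mathcal M}_1)$ onto $H(u_2{\mathcal M}_2)$, the induced map $\bar\Phi$ is automatically continuous and open, hence a homeomorphism, so an isomorphism of compact topological groups.

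There is essentially no obstacle here; the only point worth being careful about is that one really does need $\Phi$ to be a $\tau$-homeomorphism (not merely a $\tau$-continuous group isomorphism) in order to transport $H(u{\mathcal M})$ in both directions, but this is exactly what Proposition \ref{proposition: topological absoluteness} provides.
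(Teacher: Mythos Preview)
Your argument is correct and matches the paper's approach exactly: the paper states this corollary as an immediate consequence of Theorem~\ref{theorem: main theorem 1} (which in turn is assembled from Corollary~\ref{corollary: absoluteness of the Ellis group}, Proposition~\ref{proposition: topological absoluteness}, and Proposition~\ref{proposition: unboundedly many sorts}) without further proof. Your spelled-out justification that a $\tau$-homeomorphic group isomorphism carries $H(u{\mathcal M})$ to $H(v{\mathcal M})$ and hence descends to an isomorphism of the compact Hausdorff quotients is precisely the intended one-line reasoning.
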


Theorem \ref{theorem: main theorem 1} together with Proposition \ref{proposition: invariants for c isomorphic to invariants for S} imply that the answer to Question \ref{question: main question 1} is positive.

\begin{cor}
The Ellis groups of the flows $S_{\bar c}(\C)$ and $S_{\bar \alpha}(\C)$ (where $\bar c$ is an enumeration of $\C$ and $\bar \alpha$ is a short tuple in $\C$) are of bounded size and do not depend (as groups equipped with the $\tau$-topology) on the choice of the monster model $\C$. Thus, the quotients of these groups by the intersections of the $\tau$-closures of the $\tau$-neighborhoods of the identity are also absolute as topological groups.
\end{cor}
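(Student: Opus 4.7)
The plan is to derive this as an essentially immediate consequence of Theorem \ref{theorem: main theorem 1} combined with the reduction results from Section \ref{section 2}, so the work is really just bookkeeping.

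First I would handle the case of the full enumeration $\bar c$. By Proposition \ref{proposition: invariants for c isomorphic to invariants for S}, there is an isomorphism $EL(S_{\bar c}(\C)) \cong EL(S_S(\C))$ of semigroups and $\aut(\C)$-flows, where $S$ is the product of all sorts of the language with each sort repeated $\aleph_0$ times. Under this isomorphism, minimal left ideals correspond to minimal left ideals, and idempotents correspond to idempotents, so the Ellis group of $S_{\bar c}(\C)$ is isomorphic to the Ellis group of $S_S(\C)$ as a group equipped with the $\tau$-topology. Crucially, the product $S$ is an intrinsic object of the language and does not depend on the choice of $\C$. Applying Theorem \ref{theorem: main theorem 1} to $X = S$ then gives both boundedness and absoluteness of the Ellis group of $S_S(\C)$ as a group with $\tau$-topology, which transfers to $S_{\bar c}(\C)$.

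For the short tuple $\bar \alpha$, I would take $X$ to be the $\emptyset$-type-definable set $\tp(\bar \alpha/\emptyset)$ inside the (boundedly many factors) product of sorts $S$ corresponding to the variables of $\bar\alpha$. Then $S_{\bar \alpha}(\C) = S_X(\C)$, and Theorem \ref{theorem: main theorem 1} applies directly, giving boundedness and absoluteness of the $\tau$-topological Ellis group. Again, both $S$ and $X$ are intrinsic to $T$ (and to the type $\tp(\bar\alpha/\emptyset)$, which is fixed independently of $\C$).

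For the final sentence, I would observe that the group $H(u\mathcal{M})$ is defined purely in terms of the $\tau$-topology on $u\mathcal{M}$, namely as the intersection of the $\tau$-closures of the $\tau$-neighborhoods of the identity. Therefore any isomorphism of $\tau$-topological groups $u\mathcal{M}_1 \to u\mathcal{M}_2$ obtained above sends $H(u\mathcal{M}_1)$ to $H(u\mathcal{M}_2)$ and induces an isomorphism of the quotient compact Hausdorff groups $u\mathcal{M}/H(u\mathcal{M})$.

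There is no real obstacle here; the only thing worth being careful about is to verify that the isomorphism provided by Proposition \ref{proposition: invariants for c isomorphic to invariants for S} is not just an abstract group isomorphism but respects the $\tau$-topology, which follows from the fact that it is an isomorphism of $\aut(\C)$-ambits (so it preserves the dynamical structure that defines the $\tau$-topology on the Ellis group).
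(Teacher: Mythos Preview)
Your proposal is correct and follows essentially the same route as the paper: the paper simply states that the corollary follows from Theorem \ref{theorem: main theorem 1} together with Proposition \ref{proposition: invariants for c isomorphic to invariants for S}, and you have spelled out exactly this reduction (with $X=S$ for $\bar c$ and $X=\tp(\bar\alpha/\emptyset)$ for $\bar\alpha$). Your final remark about the $\tau$-topology being preserved is already part of the conclusion of Proposition \ref{proposition: invariants for c isomorphic to invariants for S}, so no extra verification is needed there.
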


In order for the proof of Theorem \ref{theorem: main theorem 1} to
proceed smoothly, we assumed that our monster models are
$\kappa$-saturated and strongly $\kappa$-homogeneous, where $\kappa$
was a strong limit cardinal larger than $|T|$. A closer inspection of
the proof shows we may relax the restriction on $\kappa$ to assume
just that $\kappa>\beth_5(|T|)$.

\section{Boundedness and absoluteness of minimal left ideals}\label{section 4}

We will start from an example where the minimal left ideals in the
Ellis semigroup of the flow $S_X(\C)$ (even of $S_\alpha(\C)$ where
$\alpha \in \C$) are of unbounded size, answering negatively Question
\ref{question: main question 2}(i). Then we give a characterization in
terms of Lascar invariant types of when these ideals are of bounded
size, and, if it is the case, we find an explicit bound on the
size. The main part of this section is devoted to the proof of Theorem
\ref{theorem: main theorem 2}. For this we will use strong heir extensions and the aforementioned characterization in terms of Lascar invariant types.

\begin{ex}\label{example: unbounded minimal left ideal}
Consider $M=(S^1, R(x,y,z))$, where $S^1$ is the unit circle on the plane, and $R(x,y,z)$ is the circular order, i.e. $R(a,b,c)$ holds if and only if $a,b,c$ are pairwise distinct and $b$ comes before $c$ going around the circle clockwise starting at $a$. Let $\C \succ M$ be a monster model. It is well-known that this structure has quantifier elimination, so there is a unique complete 1-type over $\emptyset$. Moreover, $\Th(M)$ has NIP.

$S_1(\C)$ is a point-transitive $\aut(\C)$-flow, consisting of
extensions of the unique type in $S_1(\emptyset)$.
Let $NA$  be the set of all non-algebraic types in $S_1(\C)$ (they
correspond to cuts in the non-standard $S^1(\C)$). Let
$C$ be the set of all constant functions $S_1(\C)\to S_1(\C)$, with
values in $NA$. We claim that:
\begin{enumerate}
\item $C\subseteq EL(S_1(\C))$.
\item $EL(S_1(\C))\eta=C$ for every $\eta\in C$.
\item $C$ is the unique minimal left ideal in $EL(S_1(\C))$, and it is
  unbounded.
\item The Ellis group of $S_1(\C)$ is trivial (so bounded).
\item The minimal left ideals in $EL(S_{\bar c}(\C))$ are of unbounded
  size, where $\bar c$ is an enumeration of $\C$.
\end{enumerate}
\begin{proof} (1) For example let $q\in NA$ be the right cut at
  $1$. So $q$ is generated by formulas $\varphi(x,a)=R(1,x,a)$, where
  $a\in S^1(\C)\setminus\{1\}$. Consider any $p_1,\dots,p_n\in
  S_1(\C)$, let $\bar p=(p_1,\dots,p_n)$ and let $a\in
  S^1(\C)\setminus\{1\}$.  By quantifier elimination and the strong
  homogeneity of $\C$, one can easily find $\sigma_{\bar
    p,\varphi(x,a)} \in \aut(\C)$ such that $\varphi(x,a) \in
  \sigma_{\bar p,\varphi(x,a)}(p_i)$ for all $i=1,\dots,n$. Then
  $(\sigma_{\bar p,\varphi(x,a)})$ is a net (with the obvious ordering
  on the indexes) which converges to the function $\eta \in
  EL(S_{1}(\C))$ constantly equal to $q$.

(2) ($\subseteq$) is clear, and ($\supseteq$) follows from (1).

(3) $C$ is a minimal left ideal by (2). It is unbounded as $NA$ is
  unbounded. For every $\eta\in C$ and $\eta'\in EL(S_1(\C))$ we have
  $\eta\circ\eta'=\eta$, hence $C\subseteq EL(S_1(\C))\eta'$, and so $C$ is
  a unique minimal left ideal.

(4) Every element of $C$ is an idempotent.

(5) follows from (3) and Corollary \ref{corollary: the size for alpha
    bounded by the size for c}.
\end{proof}
\end{ex}

Knowing that minimal left ideals may be of unbounded size, a natural goal is to understand when they are of bounded size, and whether boundedness is absolute. So these are our next goals (as to the first goal, a more satisfactory answer is obtained in Section \ref{section 6} under the NIP assumption).

As usual, let $\C$ be a monster model of an arbitrary theory $T$. Let
$S$ be a product of some number of sorts (possibly unbounded, with repetitions allowed), and let $X$ be a $\emptyset$-type-definable subset of $S$. In this section, by $EL$ we will denote the Ellis semigroup $EL(S_X(\C))$. Let ${\mathcal M}$ be a minimal left ideal in $EL$. By $l_S$ denote the length of $S$ (i.e. the number of factors in the product $S$). Let $I_L$ denote the set of all Lascar invariant types in $S_X(\C)$, i.e. types which are invariant under $\autf_L(\C)$.  Note that $I_L$ is either empty or an $\aut(\C)$-subflow of $S_X(\C)$ (in particular,  it is closed) and it is of bounded size. 

\begin{prop}\label{proposition: boundedness of the index implies AutfL(C) is contained}
If $G$ is a closed, bounded index subgroup of $\aut(\C)$ (with $\aut(\C)$ equipped with the pointwise convergence topology), then $\autf_L(\C) \leq G$.
\end{prop}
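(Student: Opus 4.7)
The plan is to convert $G$ into a bounded invariant equivalence relation on tuples and then invoke the recalled universal property of $E_L$: it is the finest bounded invariant equivalence relation on any product of boundedly many sorts.

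First I would reduce to the case where $G$ is normal, by passing to its normal core $N := \bigcap_{\tau \in \aut(\C)} \tau G \tau^{-1}$. Conjugation is a homeomorphism of $\aut(\C)$ in the pointwise convergence topology, so each conjugate $\tau G \tau^{-1}$ is closed, hence so is $N$. The distinct conjugates of $G$ are indexed by cosets of the normalizer of $G$ (which contains $G$), so there are at most $[\aut(\C):G]$ of them, and $\aut(\C)/N$ embeds diagonally in the product of the bounded-size quotients $\aut(\C)/\tau G \tau^{-1}$, showing that $[\aut(\C):N]$ is still bounded. Since proving $\autf_L(\C) \leq N$ would imply $\autf_L(\C) \leq G$, I may assume from now on that $G$ itself is normal.

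Next, define a relation $\sim_G$ on any product $S$ of boundedly many sorts by $\bar a \sim_G \bar b$ iff there is $g \in G$ with $g(\bar a) = \bar b$. Normality of $G$ makes $\sim_G$ invariant under $\aut(\C)$: given $g(\bar a) = \bar b$ and $\tau \in \aut(\C)$, we have $\tau g \tau^{-1} \in G$ and it sends $\tau(\bar a)$ to $\tau(\bar b)$. The number of $\sim_G$-classes on $S$ is bounded by $|S_S(\emptyset)| \cdot [\aut(\C):G]$, since each $\aut(\C)$-orbit on $S$ splits into at most $[\aut(\C):G]$ many $G$-orbits. By the universal property of $E_L$ recalled in Section~\ref{section 1}, it follows that $E_L$ refines $\sim_G$ on every such $S$.

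Finally, take any $\sigma \in \autf_L(\C)$ and any finite tuple $\bar a$ from $\C$. Since $\sigma \in \autf_L(\C)$, the tuples $\bar a$ and $\sigma(\bar a)$ lie in the same $E_L$-class, so $\bar a \sim_G \sigma(\bar a)$, and hence there is $g_{\bar a} \in G$ with $g_{\bar a}(\bar a) = \sigma(\bar a)$. Directing finite tuples by inclusion, the net $(g_{\bar a})$ converges to $\sigma$ in the pointwise topology, and closedness of $G$ forces $\sigma \in G$. The one subtle step is the normalization reduction, where one must check carefully that the normal core inherits both closedness and bounded index; once $G$ is normal, the rest is essentially immediate from the universality of $E_L$.
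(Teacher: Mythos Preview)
Your proof is correct and follows essentially the same route as the paper's: both reduce to the normal core, observe that the resulting orbit equivalence relation is bounded and invariant (hence coarsened by $E_L$), and then use a net converging to $\sigma$ together with closedness of $G$. The only cosmetic difference is that the paper indexes its net by enumerations of small models rather than by finite tuples, and is somewhat terser in justifying that the normal core has bounded index.
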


\begin{proof}
Let $\sigma_i$, $i<\lambda$, by a set of representatives of right costs of $G$ in $\aut(\C)$ (so $\lambda$ is bounded). Then 
$$G':= \bigcap_{\sigma \in \aut(\C)} G^\sigma = \bigcap_{i<\lambda} G^{\sigma_i}$$
is a closed, normal, bounded index subgroup  of $\aut(\C)$.

Consider any $\sigma\in\autf_L(\C)$ and any finite $\bar
a\subseteq\C$. The orbit equivalence relation of $G'$ on the set of
realizations of $\tp(\bar a/\emptyset)$ is a bounded invariant
equivalence relation, so it is coarser than $E_L$. Therefore, there
exists $\tau_{\bar a}\in G'$ with $\tau_{\bar a}(\bar a)=\sigma(\bar
a)$. 
%
%
This shows that $\sigma$ lies in the closure of $G'$, and hence $\sigma \in G'$ (as $G'$ is closed). In this way, we proved that $\autf_L(\C) \leq G' \leq G$.
\end{proof}

\begin{rem}\label{remark: stabilizers are closed}
For every $p \in S_X(\C)$, $\stab_{\aut(\C)}(p): = \{ \sigma \in \aut(\C): \sigma(p)=p\}$ is a closed subgroup of $\aut(\C)$.
\end{rem}

The next corollary follows immediately from Proposition \ref{proposition: boundedness of the index implies AutfL(C) is contained} and Remark \ref{remark: stabilizers are closed}.

\begin{cor}\label{corollary: boundedness of the orbit equivalent to Lascar invariance}
Let $p \in S_X(\C)$. Then the orbit $\aut(\C) p$ is of bounded size if and only if $p \in I_L$.
\end{cor}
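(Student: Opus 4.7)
The plan is to use the orbit-stabilizer correspondence to translate boundedness of the orbit into a bounded-index condition on the stabilizer, then apply Proposition \ref{proposition: boundedness of the index implies AutfL(C) is contained} together with Remark \ref{remark: stabilizers are closed} to extract membership in $I_L$.

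More concretely, for the forward direction, I would let $G := \stab_{\aut(\C)}(p)$ and recall that the orbit $\aut(\C)p$ is in bijection with the coset space $\aut(\C)/G$. Thus, assuming $|\aut(\C)p|$ is bounded amounts to saying that $G$ has bounded index in $\aut(\C)$. By Remark \ref{remark: stabilizers are closed}, $G$ is closed, so Proposition \ref{proposition: boundedness of the index implies AutfL(C) is contained} applies and yields $\autf_L(\C) \leq G$. But $\autf_L(\C) \leq G$ is exactly the statement that every Lascar strong automorphism fixes $p$, i.e. that $p$ is Lascar-invariant; equivalently, $p \in I_L$.

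For the reverse direction, suppose $p \in I_L$, so $\autf_L(\C) \leq \stab_{\aut(\C)}(p)$. Then the orbit $\aut(\C)p$ is a quotient of $\aut(\C)/\autf_L(\C) = \gal_L(T)$, and since $|\gal_L(T)| \leq 2^{|T|}$ (as noted at the end of Section \ref{section 1}), the orbit is of bounded size.

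There is essentially no obstacle: both implications are short, and the only real content lies in Proposition \ref{proposition: boundedness of the index implies AutfL(C) is contained} and the standard bound on $|\gal_L(T)|$, both of which are already available. The only thing to be slightly careful about is that the orbit map $\aut(\C) \to \aut(\C)p$ factors through the cosets of the stabilizer, so bounded orbit and bounded index of the stabilizer are genuinely the same thing.
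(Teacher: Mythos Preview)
Your proof is correct and matches the paper's approach exactly: the paper simply states that the corollary follows immediately from Proposition~\ref{proposition: boundedness of the index implies AutfL(C) is contained} and Remark~\ref{remark: stabilizers are closed}, and you have just spelled out those details (plus the easy converse via the bound on $|\gal_L(T)|$).
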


The next proposition is our characterization of boundedness of the minimal left ideals, and it yields an explicit bound on their size.

\begin{prop}\label{proposition: characterization of boundedness of minimal ideals}
The following conditions are equivalent.\\
i) The minimal left ideal ${\mathcal M}$ is of bounded size.\\
ii) For every $\eta \in {\mathcal M}$ the image $\im(\eta)$ is contained in $I_L$.\\
iii) For some $\eta \in EL$ the image $\im(\eta)$ is contained in $I_L$.\\
 Moreover, if  ${\mathcal M}$ is of bounded size, then $|{\mathcal M}|\leq \beth_3(|T|)$.
\end{prop}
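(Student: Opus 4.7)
The plan is to first establish the $EL$-invariance of $I_L$, then use it to derive the equivalences, and finally compute the explicit bound. The key structural fact is that $\xi[I_L] \subseteq I_L$ for every $\xi \in EL$. Indeed, $I_L$ is an $\aut(\C)$-subflow, so closed in $S_X(\C)$. Writing $\xi = \lim_i \sigma_i$ with $\sigma_i \in \aut(\C)$, for each $p \in I_L$ we have $\sigma_i(p) \in I_L$, and hence $\xi(p) = \lim_i \sigma_i(p) \in I_L$ by closedness.

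For (i)$\Rightarrow$(ii): if $\mathcal{M}$ is bounded and $\eta \in \mathcal{M}$, then the orbit $\aut(\C) \eta \subseteq \mathcal{M}$ is bounded; for each $p \in S_X(\C)$ the map $\sigma\eta \mapsto (\sigma\eta)(p) = \sigma(\eta(p))$ surjects onto $\aut(\C) \cdot \eta(p)$, so this orbit is bounded, and Corollary \ref{corollary: boundedness of the orbit equivalent to Lascar invariance} yields $\eta(p) \in I_L$. (ii)$\Rightarrow$(iii) is immediate. For (iii)$\Rightarrow$(ii), given $\eta \in EL$ with $\im(\eta) \subseteq I_L$ and an idempotent $u \in \mathcal{M}$, the element $\eta u \in \mathcal{M}$ satisfies $\im(\eta u) \subseteq I_L$. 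Since $\id \in \aut(\C) \subseteq EL$, the left ideal $EL \cdot (\eta u)$ contains $\eta u$ and is contained in $\mathcal{M}$, so $\mathcal{M} = EL \cdot (\eta u)$ by minimality. Hence for every $\xi \in EL$, $\im(\xi \eta u) = \xi[\im(\eta u)] \subseteq \xi[I_L] \subseteq I_L$.

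For (ii)$\Rightarrow$(i) with the explicit bound, fix an idempotent $u \in \mathcal{M}$, so $\mathcal{M} = ELu$ and $\im(u) \subseteq I_L$. The map $\Phi \colon \mathcal{M} \to I_L^{\im(u)}$ defined by $\Phi(\xi u) = \xi|_{\im(u)}$ is well-defined and injective, because $\xi_1 u = \xi_2 u$ iff $\xi_1|_{\im(u)} = \xi_2|_{\im(u)}$; its values lie in $I_L$ by the $EL$-invariance of $I_L$. Hence $|\mathcal{M}| \leq |I_L|^{|I_L|}$. To bound $|I_L|$: each $p \in I_L$ is determined by the set of pairs $(\varphi(\bar x, \bar y), q(\bar y))$ such that $\varphi(\bar x, \bar b) \in p$ for some (equivalently, every) $\bar b \models q$, as $q$ ranges over Lascar strong types on finite tuples. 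Reducing via Proposition \ref{proposition: unboundedly many sorts} to the case $l_S \leq 2^{|T|}$, the number of such formulas is at most $2^{|T|}$, and the number of Lascar strong types on finite tuples is at most $|S(\emptyset)| \cdot |\gal_L(T)| \leq 2^{|T|}$. Thus $|I_L| \leq 2^{2^{|T|}}$, so $|\mathcal{M}| \leq |I_L|^{|I_L|} = 2^{|I_L|} \leq \beth_3(|T|)$.

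The main conceptual move is the embedding $\mathcal{M} \hookrightarrow I_L^{\im(u)}$ obtained by restricting elements of $\mathcal{M} = ELu$ to $\im(u)$; this is the analogue for minimal left ideals of the map $F$ used in Corollary \ref{corollary: first part of main theorem 1} to bound the Ellis group, but with the concrete set $I_L$ replacing the abstract set $R$. The $EL$-invariance of $I_L$ is what keeps the image inside $I_L^{\im(u)}$ rather than merely $S_X(\C)^{\im(u)}$, and is the main non-routine input to the argument.
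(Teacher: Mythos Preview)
Your proof of the equivalences is correct, and your extra step (iii)$\Rightarrow$(ii) via the $EL$-invariance of $I_L$ is a nice observation the paper does not isolate. However, your route to the bound $\beth_3(|T|)$ is considerably more roundabout than the paper's. You embed $\mathcal{M}$ into $I_L^{\im(u)}$ and then bound $|I_L|$; this last step forces you to invoke Proposition~\ref{proposition: unboundedly many sorts} to reduce to $l_S \leq 2^{|T|}$, and that reduction needs a word of justification: it gives an isomorphism of Ellis semigroups, so $|\mathcal{M}|$ is preserved, but $I_L$ itself changes, so strictly speaking you must redo the embedding argument for the reduced $Y$ and then transfer only the numerical bound on $|\mathcal{M}|$ back. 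This is all fixable, but it is a detour.

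The paper's argument for (iii)$\Rightarrow$(i) with the bound is a single observation you are missing: if $\im(\eta)\subseteq I_L$, then every $\sigma\in\autf_L(\C)$ satisfies $\sigma\eta=\eta$ (since $\sigma$ fixes each $\eta(p)\in I_L$), so the orbit $\aut(\C)\eta$ has size at most $|\gal_L(T)|\leq 2^{|T|}$. Since $EL$ is compact Hausdorff, $|\cl(\aut(\C)\eta)|\leq 2^{2^{2^{|T|}}}=\beth_3(|T|)$, and $\mathcal{M}$, being a minimal subflow, sits inside this closure. This avoids both the embedding into $I_L^{I_L}$ and the reduction on $l_S$; in particular it makes transparent why the bound does not depend on $l_S$ (cf.\ Remark~\ref{remark: boundedly many sorts irrelevant}). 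Your embedding idea is natural --- it parallels Corollary~\ref{corollary: first part of main theorem 1} --- but for the minimal ideal the orbit-of-$\eta$ argument is sharper.
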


\begin{proof}
Suppose first that ${\mathcal M}$ is of bounded size. Then for every $\eta \in {\mathcal M}$ and for every $p \in S_X(\C)$ the orbit $\aut(\C)\eta(p)$ is of bounded size, so $\eta(p) \in I_L$ in virtue of Corollary \ref{corollary: boundedness of the orbit equivalent to Lascar invariance}.

Now, suppose that for some $\eta \in EL$ the image $\im(\eta)$ is contained in $I_L$. This means that the size of the orbit $\aut(\C) \eta$ is bounded by $|\gal_{L}(T)| \leq 2^{|T|}$, and so  $|\cl(\aut(\C) \eta)|\leq \beth_3(|T|)$. So the size of ${\mathcal M}$ is bounded by $\beth_3(|T|)$, because it is equal to the size of a minimal $\aut(\C)$-subflow 
contained in $\cl(\aut(\C) \eta)$. 
\end{proof}

The next corollary will be used later.

\begin{cor}\label{corollary: consequence of the characterization of boundedness}
The minimal left ideal ${\mathcal M}$ is of bounded size if and only if for every types $q_1,\dots,q_n \in S_X(\C)$ and formulas $\varphi_1(\bar x, \bar y_1), \dots, \varphi_n(\bar x, \bar y_n)$, for every $(\bar a_1, \bar b_1) \in E_L, \dots, (\bar a_n, \bar b_n) \in E_L$ there exists $\sigma \in \aut(\C)$ such that $$\sigma(\varphi_1(\bar x, \bar a_1) \wedge \neg \varphi_1(\bar x, \bar b_1)) \notin q_1, \dots, \sigma(\varphi_n(\bar x, \bar a_n) \wedge \neg \varphi_n(\bar x, \bar b_n)) \notin q_n.$$
\end{cor}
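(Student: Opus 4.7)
The plan is to translate the combinatorial condition into a statement about Lascar invariance of $\sigma^{-1}$-translates and then invoke Proposition \ref{proposition: characterization of boundedness of minimal ideals}(iii), which says ${\mathcal M}$ is of bounded size iff there exists $\eta \in EL$ with $\im(\eta) \subseteq I_L$. The key equivariance identity is that for a formula $\theta$ with parameters in $\C$ and $\sigma \in \aut(\C)$, one has $\sigma(\theta) \notin q$ iff $\theta \notin \sigma^{-1}(q)$. Recall that $p \in I_L$ iff for every formula $\varphi$ and every $(\bar a, \bar b) \in E_L$ the formula $\varphi(\bar x, \bar a) \wedge \neg \varphi(\bar x, \bar b)$ is not in $p$ (the reverse condition with $\bar a, \bar b$ swapped then follows, since $E_L$ is symmetric). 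So the corollary is asserting that ${\mathcal M}$ is bounded iff any finite tuple of types can be $\sigma^{-1}$-translated so as to satisfy any prescribed finite list of Lascar-invariance constraints.

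For $(\Rightarrow)$, pick $\eta \in EL$ with $\im(\eta) \subseteq I_L$ and a net $(\sigma_j)$ in $\aut(\C)$ with $\sigma_j \to \eta$. Given data $q_1, \ldots, q_n$, $\varphi_i$, and $(\bar a_i, \bar b_i) \in E_L$, each $\eta(q_i)$ is Lascar invariant and so omits $\varphi_i(\bar x, \bar a_i) \wedge \neg \varphi_i(\bar x, \bar b_i)$. Continuity of evaluation at each $q_i$, together with the fact that a directed set admits a common upper bound for finitely many cofinal thresholds, yields an index $j$ for which $\sigma_j(q_i)$ omits this formula for every $i = 1, \ldots, n$; the equivariance identity then gives the required witness $\sigma := \sigma_j^{-1}$.

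For $(\Leftarrow)$, the goal is to construct $\eta \in EL$ with $\im(\eta) \subseteq I_L$. Let $J$ be the directed set of pairs $\alpha = (F, G)$, where $F \subseteq S_X(\C)$ is finite and $G$ is a finite, swap-closed set of triples $(\varphi, \bar a, \bar b)$ with $(\bar a, \bar b) \in E_L$, ordered by componentwise inclusion; the hypothesis applied to all pairings of $F$ with $G$ furnishes $\sigma_\alpha \in \aut(\C)$ such that $\varphi(\bar x, \sigma_\alpha \bar a) \wedge \neg \varphi(\bar x, \sigma_\alpha \bar b) \notin p$ for every $p \in F$ and every $(\varphi, \bar a, \bar b) \in G$. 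Pass to a subnet of $(\sigma_\alpha^{-1})_\alpha$ converging in $EL$ (by compactness) to some $\eta$. For any $p \in S_X(\C)$, any formula $\varphi$, and any $(\bar a, \bar b) \in E_L$, once the subnet passes the index $(\{p\}, \{(\varphi, \bar a, \bar b), (\varphi, \bar b, \bar a)\})$, equivariance converts the hypothesis into the statement that $\sigma_\alpha^{-1}(p)$ omits both $\varphi(\bar x, \bar a) \wedge \neg \varphi(\bar x, \bar b)$ and its swap; type-omission being a closed condition, the limit $\eta(p)$ omits them both, hence $\eta(p) \in I_L$. Applying Proposition \ref{proposition: characterization of boundedness of minimal ideals} finishes the argument. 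The main subtlety is the direction of inversion: the hypothesis naturally constrains $\sigma^{-1}(q)$ rather than $\sigma(q)$, so in $(\Leftarrow)$ one must pass to the limit of $(\sigma_\alpha^{-1})$, and in $(\Rightarrow)$ one extracts $\sigma = \sigma_j^{-1}$; one must also close $G$ under swapping to upgrade the one-sided condition in the statement to the full biconditional characterizing membership in $I_L$.
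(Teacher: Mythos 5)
Your proof is correct and takes essentially the same route as the paper: both directions use Proposition \ref{proposition: characterization of boundedness of minimal ideals} to translate boundedness into the existence of $\eta\in EL$ with $\im(\eta)\subseteq I_L$, and both handle the $\sigma\leftrightarrow\sigma^{-1}$ inversion and a directed-set/convergent-subnet argument in the $(\Leftarrow)$ direction exactly as the paper does (the paper just phrases it more tersely). Your swap-closure of $G$ is harmless but superfluous, since $E_L$ is symmetric the one-sided omission condition quantified over all pairs already characterizes Lascar invariance.
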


\begin{proof}
Suppose first that ${\mathcal M}$ is of bounded size. Take any $\eta \in {\mathcal M}$. Then, by Proposition \ref{proposition: characterization of boundedness of minimal ideals}, for every $q \in S_X(\C)$, $\eta(q)$ is Lascar invariant. So, for any data as on the right hand side of the required equivalence we have that $\varphi_i(\bar x, \bar a_i) \wedge \neg \varphi_i(\bar x, \bar b_i) \notin \eta (q_i)$ for $i=1,\dots,n$. Hence, since $\eta$ is approximated by elements of $\aut(\C)$, there exists $\sigma \in \aut(\C)$ with the required property.

Now, assume that the right hand side holds. 
The $\sigma$ from our assumptions depends on $\bar
q:=(q_1,\dots,q_n),\ \bar\varphi:=(\varphi_1,\dots,\varphi_n),\ \bar
a:=(\bar a_1,\dots,\bar a_n)$ and $\bar b:=(\bar b_1,\dots,\bar b_n)$. So we may write
$\sigma$ as $\sigma_{\bar q,\bar\varphi,\bar a,\bar b}$. We order the
set of indexes $(\bar q,\bar\varphi,\bar a,\bar b)$ by:
$$((q_i)_{i\leq m},(\varphi_i)_{i \leq m},(\bar a_i)_{i \leq m},(\bar b_i)_{i \leq m})\leq ((q_i')_{i\leq n},(\varphi_i')_{i \leq n},(\bar a_i')_{i \leq n},(\bar b_i')_{i \leq n})$$
if and only if $(q_i,\varphi_i,\bar a_i,\bar b_i)_{i \leq m}$ is a subsequence of $(q_i',\varphi_i',\bar a_i',\bar b_i')_{i \leq n}$.
Taking the limit of a convergent subnet of the net $(\sigma^{-1}_{\bar
  q,\bar\varphi,\bar a,\bar b})$,
 we get an element $\eta' \in EL$ such that $\im(\eta') \subseteq I_L$. 
Therefore, ${\mathcal M}$ is of bounded size by Proposition \ref{proposition: characterization of boundedness of minimal ideals}.
\end{proof}

From now on, fix any monster models $\C_1 \succ \C_2$ of the theory $T$. (For the purpose of our main results, without loss of generality, we can always assume that $\C_1$ is a monster model with respect to the size of $\C_2$.) Let $EL_1=EL(S_X(\C_1))$ and $EL_2=EL(S_X(\C_2))$.
By $I_{L,\C_i}$ denote the set of all Lascar invariant types in $S_X(\C_i)$, for $i =1,2$.

The following remark is folklore.

\begin{rem}
Let $\pi_{12}\colon S_X(\C_1) \to S_X(\C_2)$ be the restriction
map. Then $\pi_{12}|_{I_{L,\C_1}} \colon  I_{L,\C_1} \to I_{L,\C_2}$
is a homeomorphism.
(For $p\in I_{L,\C_2}$, $(\pi_{12}|_{I_{L,\C_1}})^{-1}(p)$ is the
unique $M$-invariant extension of $p$ to a type in $S_X(\C_1)$, where
$M\prec\C_2$ is small.)
\end{rem}

From now on, $\pi_{12}$ will denote the above homeomoprhism from $I_{L,\C_1}$ to $I_{L,\C_2}$. 

\begin{prop}\label{proposition: M_1 bounded iff M_2 bounded}
The minimal left ideals in $EL_1$ are of bounded size if and only if
the minimal left ideals in $EL_2$ are of bounded size.
\end{prop}
\begin{proof}
($\rightarrow$) Suppose the minimal left ideals in $EL_2$ are of
  unbounded size. By Corollary \ref{corollary: consequence of the
    characterization of boundedness}, there are types $q_1,\dots,q_n\in
  S_X(\C_2)$, formulas $\varphi_1(\bar x,\bar y_1),\dots,\varphi_n(\bar
  x, \bar y_n)$ and tuples $(\bar a_1,\bar b_1),\dots,(\bar a_n,\bar
  b_n)\in E_L$ (in $\C_2$) such that for every $\sigma\in\aut(
\C_2)$,
$$\sigma(\varphi_i(\bar x,\bar a_i)\land\neg\varphi_i(\bar x,\bar
b_i))\in q_i\mbox{ for some } i=1,\dots,n.$$
This means that $$(\psi_1,\dots,\psi_n,p)\not\in c(q_1,\dots,q_n),$$
where $\psi_i(\bar x,\bar y)=\neg\varphi_i(\bar x,\bar
y_i)\lor\varphi_i(\bar x,\bar y_i'),\ \bar y=(\bar y_1,\bar
y_1',\dots,\bar y_n,\bar y_n')$ and $p(\bar y)=\tp(\bar a_1,\bar
b_1,\dots,\bar a_n,\bar b_n)$. By Lemma \ref{lemma: technical lemma 1}, there are types
$q_1',\dots,q_n'\in S_X(\C_1)$ with
$c(q_1,\dots,q_n)=c(q_1',\dots,q_n')$. The types $q_i'$, formulas
$\varphi_i$ and tuples $(\bar a_i,\bar b_i),i=1,\dots,n$, witness that
the right hand side of Corollary \ref{corollary: consequence of the
  characterization of boundedness} fails for $\C_1$. Hence the minimal
left ideals in $EL_1$ are of unbounded size.

($\leftarrow$) Suppose that the minimal left ideals in $EL_2$ are of bounded size.
To deduce that the same is true in $EL_1$, we have to check that the right hand side of Corollary \ref{corollary: consequence of the characterization of boundedness} holds for $\C_1$. So consider any $q_1,\dots,q_n \in S_X(\C_1)$, formulas $\varphi_1(\bar x, \bar y_1), \dots, \varphi_n(\bar x, \bar y_n)$, and tuples $(\bar a_1, \bar b_1) \in E_L, \dots, (\bar a_n, \bar b_n) \in E_L$ (where $\bar a_i$ corresponds to $\bar y_i$), and the goal is to find an appropriate $\sigma \in \aut(\C_1)$.  Choose a model $\C_2' \cong \C_2$ such that $\C_2' \prec \C_1$ and $\bar a_i, \bar b_i$ are contained in $\C_2'$ for all $i=1,\dots,n$. Let $q_1' = q_1|_{\C_2'}, \dots q_n'= q_n|_{\C_2'}$. By assumption, the minimal left ideals in $EL(S_X(\C_2'))$ are of bounded size, so Corollary \ref{corollary: consequence of the characterization of boundedness} yields $\sigma' \in \aut(\C_2')$ such that 
$$\sigma'(\varphi_1(\bar x, \bar a_1) \wedge \neg \varphi_n(\bar x, \bar b_1)) \notin q_1', \dots, \sigma'(\varphi_n(\bar x, \bar a_n) \wedge \neg \varphi_n(\bar x, \bar b_n)) \notin q_n'.$$
Then any extension $\sigma \in \aut(\C_1)$ of $\sigma'$ does the job.
\end{proof}

Proposition \ref{proposition: M_1 bounded iff M_2 bounded} gives us Theorem \ref{theorem: main theorem 2}(i).

\begin{cor}\label{corollary: main theorem 2 (i)}
Let $S$ be a product of an arbitrary (possibly unbounded) number of sorts, and $X$ be $\emptyset$-type-definable subset of $S$. Then the property that a minimal left ideal of the Ellis semigroup of the flow $S_X(\C)$ is of bounded size is absolute (i.e. does not depend on the choice of $\C$).
\end{cor}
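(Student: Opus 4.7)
The plan is to deduce this corollary directly by combining Proposition \ref{proposition: unboundedly many sorts} with Propositions \ref{proposition: M_1 bounded implies M_2 bounded} and \ref{proposition: M_2 bounded implies M_1 bounded}, which already settled the case of boundedly many factors (modulo a nesting assumption). All the substantive content is in those prior results, so the proof is a routine assembly; the only point deserving attention is that one must handle arbitrary, not necessarily nested, pairs of monster models.

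First I would invoke Proposition \ref{proposition: unboundedly many sorts} to produce, for the given $S$ and $X$, a product $S'$ of boundedly many sorts together with a $\emptyset$-type-definable subset $Y \subseteq S'$, chosen once and for all independently of the monster model. The ``Moreover'' clause of that proposition is crucial: the same $S', Y$ work for every choice of $\C$. For any monster $\C$ we then have an isomorphism $EL(S_X(\C)) \cong EL(S_Y(\C))$ of semigroups and of $\aut(\C)$-flows, and hence corresponding minimal left ideals of equal cardinality. Therefore boundedness of a minimal left ideal of $EL(S_X(\C))$ is equivalent to boundedness of a minimal left ideal of $EL(S_Y(\C))$, and it suffices to establish the corollary for $Y \subseteq S'$ with $S'$ having boundedly many factors.

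Next I would compare two arbitrary monster models $\C_1$ and $\C_2$ of $T$ by passing to a common elementary extension. Fix a monster model $\C$ sufficiently saturated and strongly homogeneous to accommodate elementary embeddings of both $\C_1$ and $\C_2$; after identifying each $\C_i$ with its image, we may assume $\C_1, \C_2 \prec \C$. Applying Propositions \ref{proposition: M_1 bounded implies M_2 bounded} and \ref{proposition: M_2 bounded implies M_1 bounded} to each of the nested pairs $\C \succ \C_1$ and $\C \succ \C_2$ in the flow $S_Y$, we obtain the chain of equivalences: a minimal left ideal of $EL(S_Y(\C_1))$ is of bounded size if and only if a minimal left ideal of $EL(S_Y(\C))$ is of bounded size, if and only if a minimal left ideal of $EL(S_Y(\C_2))$ is of bounded size. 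Transporting back through the isomorphisms $EL(S_X(\C_i)) \cong EL(S_Y(\C_i))$ from the previous step yields the desired equivalence for $S_X$.

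There is no real obstacle here; the content of the result is wholly in the prior propositions. The one pitfall to guard against is overlooking that Propositions \ref{proposition: M_1 bounded implies M_2 bounded} and \ref{proposition: M_2 bounded implies M_1 bounded} are formulated with $\C_1 \succ \C_2$, so an unadorned citation would not directly handle two incomparable monster models; introducing the common extension $\C$ eliminates that gap and completes the argument.
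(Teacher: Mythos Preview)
Your proposal is correct and follows the same approach as the paper, which simply cites Propositions \ref{proposition: M_1 bounded implies M_2 bounded}, \ref{proposition: M_2 bounded implies M_1 bounded}, and \ref{proposition: unboundedly many sorts} as jointly yielding the corollary. You are more explicit than the paper in spelling out the passage through a common elementary extension to handle two monster models that are not a priori nested; this is the standard maneuver and the paper leaves it implicit.
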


By Proposition \ref{proposition: characterization of boundedness of minimal ideals}, Corollary \ref{corollary: main theorem 2 (i)}, Proposition \ref{proposition: invariants for c isomorphic to invariants for S}, we get

\begin{cor}\label{corollary: explicit bound on S_alpha and S_c} Let $\bar \alpha$ be a short tuple, and $\bar c$ be an enumeration of $\C$. The property that the minimal left ideals in $EL(S_{\bar c}(\C))$ [resp. in $EL(S_{\bar \alpha}(\C))$] are of bounded size is absolute. Moreover, in each of these two cases, if the minimal left ideals are of bounded size, then this size is bounded by $\beth_3(|T|)$. 
\end{cor}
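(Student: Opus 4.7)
The plan is to derive this corollary by combining the general absoluteness result (Corollary \ref{corollary: main theorem 2 (i)}), the semigroup-level reductions of Section \ref{section 2}, and the characterization-cum-bound of Proposition \ref{proposition: characterization of boundedness of minimal ideals}, using Remark \ref{remark: boundedly many sorts irrelevant} to lift the latter to products with unboundedly many factors. So the argument is essentially a matter of routing the two special flows $S_{\bar c}(\C)$ and $S_{\bar\alpha}(\C)$ through the previously established framework.

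First I would treat the short-tuple case $S_{\bar\alpha}(\C)$. Here the product of sorts corresponding to $\bar\alpha$ has boundedly many factors, and $X:=\tp(\bar\alpha/\emptyset)$ is a $\emptyset$-type-definable subset of this product; moreover $S_{\bar\alpha}(\C)=S_X(\C)$. Absoluteness of the boundedness property is then precisely Corollary \ref{corollary: main theorem 2 (i)}, and the bound $\beth_3(|T|)$ on the size of a minimal left ideal (when bounded) is given by Proposition \ref{proposition: characterization of boundedness of minimal ideals}.

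For the monster-model case $S_{\bar c}(\C)$ I would use Proposition \ref{proposition: invariants for c isomorphic to invariants for S} to identify $EL(S_{\bar c}(\C))$ with $EL(S_S(\C))$ as semigroups and $\aut(\C)$-flows, where $S$ is the fixed product of all sorts of the language with each sort repeated $\aleph_0$ times. Crucially, this product $S$ is defined independently of $\C$, so it makes sense to compare $S_S(\C_1)$ and $S_S(\C_2)$ for any two monster models $\C_1, \C_2$. Under this isomorphism, minimal left ideals correspond to minimal left ideals of the same size, so absoluteness of the boundedness property of the minimal left ideals of $EL(S_{\bar c}(\C))$ follows from the same statement for $EL(S_S(\C))=EL(S_X(\C))$ with $X=S$ viewed trivially as a $\emptyset$-type-definable subset of itself, which is Corollary \ref{corollary: main theorem 2 (i)}. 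For the size bound, note that $S$ has unboundedly many factors, but by Remark \ref{remark: boundedly many sorts irrelevant} Proposition \ref{proposition: characterization of boundedness of minimal ideals} still applies and yields $\beth_3(|T|)$.

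There is no real obstacle here; the proof is a short assembly of earlier results, and the only point that requires a moment of care is making sure that the bound from Proposition \ref{proposition: characterization of boundedness of minimal ideals} is available in the unbounded-product setting needed for $S_{\bar c}(\C)$. This is exactly the content of Remark \ref{remark: boundedly many sorts irrelevant}, which was stated precisely for this purpose.
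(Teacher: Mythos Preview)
Your proof is correct and follows essentially the same route as the paper, which simply cites Proposition \ref{proposition: characterization of boundedness of minimal ideals}, Corollary \ref{corollary: main theorem 2 (i)}, and Proposition \ref{proposition: invariants for c isomorphic to invariants for S}. One small inaccuracy: the product $S$ in Proposition \ref{proposition: invariants for c isomorphic to invariants for S} (all sorts, each repeated $\aleph_0$ times) has at most $|T|$ factors, hence is \emph{bounded}, so your appeal to Remark \ref{remark: boundedly many sorts irrelevant} is not actually needed for the $\bar c$ case --- though invoking it does no harm.
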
 

To show the second part of Theorem \ref{theorem: main theorem 2}, we need some preparatory observations, which explain better the whole picture. 
Assume for the rest of this section that $I_L$ is not empty. Then
$I_L$ is an $\aut(\C)$-flow, hence it has its own Ellis semigroup $EL(I_L)$ which is a subflow of the $\aut(\C)$-flow $I_L^{I_L}$.

\begin{prop}\label{proposition: F for minimal flows}
Let $\bar F \colon EL \to S_X(\C)^{I_L}$ be the restriction map. Then:\\
i) $\bar F$ is a homomorphism of $\aut(\C)$-flows.\\
ii) $\im (\bar F) = EL(I_L) \subseteq I_L^{I_L}$, so from now on we treat $\bar F$ as a function going to $EL(I_L)$. Then $\bar F$ is an epimorphism of $\aut(\C)$-flows and of semigroups.\\
iii) $\bar F[{\mathcal M}]$ is a minimal subflow (equivalently, minimal left ideal) of $EL(I_L)$. Let $F = \bar F|_{\mathcal M}\colon {\mathcal M} \to \bar F[{\mathcal M}]$.\\
iv) If ${\mathcal M}$ is of bounded size, then $F \colon {\mathcal M} \to \im (F)$ is an isomorphism of $\aut(\C)$-flows and of semigroups.
\end{prop}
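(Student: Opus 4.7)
The plan is to check (i)--(iv) in sequence, using that $\bar F$ is essentially a coordinate projection so that topological and algebraic properties transfer easily; the substantive content lies in identifying $\im(\bar F)=EL(I_L)$ in (ii), and in proving injectivity of $F$ in (iv). For (i), continuity of $\bar F$ is immediate since $EL$ inherits the product topology from $S_X(\C)^{S_X(\C)}$, while equivariance is just $(\sigma\eta)|_{I_L}=\sigma\circ(\eta|_{I_L})$, straight from the definition of the $\aut(\C)$-action by left composition.

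For (ii) I would first verify that $\eta|_{I_L}$ actually takes values in $I_L$ for every $\eta\in EL$. Writing $\eta=\lim\sigma_i$ with $\sigma_i\in\aut(\C)$ and using normality of $\autf_L(\C)$ in $\aut(\C)$, for $\tau\in\autf_L(\C)$ and $p\in I_L$ one computes $\tau\eta(p)=\lim\tau\sigma_i(p)=\lim\sigma_i(\sigma_i^{-1}\tau\sigma_i)(p)=\lim\sigma_i(p)=\eta(p)$, since $\sigma_i^{-1}\tau\sigma_i\in\autf_L(\C)$ fixes the Lascar invariant $p$. The equality $\im(\bar F)=EL(I_L)$ then follows by a standard subnet extraction: one inclusion is clear from $\bar F(\eta)=\lim\sigma_i|_{I_L}$, and for the reverse any $\xi\in EL(I_L)$ equals $\lim\sigma_i|_{I_L}$ in $I_L^{I_L}$, so by compactness of $EL$ a convergent subnet of $(\sigma_i)$ produces some $\eta\in EL$ with $\bar F(\eta)=\xi$. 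The semigroup identity $\bar F(\eta_1\eta_2)=\bar F(\eta_1)\bar F(\eta_2)$ holds precisely because $\eta_2|_{I_L}$ already lands in $I_L$, making the composition on the right meaningful. For (iii), a continuous equivariant image of a minimal flow onto a subflow is necessarily a minimal subflow (any proper closed invariant subset of the image would pull back to such a subset of ${\mathcal M}$), and minimal subflows of any Ellis semigroup coincide with its minimal left ideals.

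The main work is in (iv), where the key input is Proposition \ref{proposition: characterization of boundedness of minimal ideals}: boundedness of ${\mathcal M}$ forces $\im(\xi)\subseteq I_L$ for every $\xi\in{\mathcal M}$. Given $\eta_1,\eta_2\in{\mathcal M}$ with $\eta_1|_{I_L}=\eta_2|_{I_L}$, for any $\xi\in{\mathcal M}$ and $p\in S_X(\C)$ we have $\xi(p)\in I_L$, whence $(\eta_1\xi)(p)=(\eta_2\xi)(p)$, so $\eta_1\xi=\eta_2\xi$ for all $\xi\in{\mathcal M}$. Let $u$ and $v$ be the idempotents of the maximal subgroups $u{\mathcal M}$ and $v{\mathcal M}$ containing $\eta_1$ and $\eta_2$ respectively, so $\eta_1 u=\eta_1$ and $v\eta_2=\eta_2$. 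Setting $\xi=u$ yields $\eta_1=\eta_1 u=\eta_2 u$, and since $v(\eta_2 u)=(v\eta_2)u=\eta_2 u$, we get $\eta_2 u\in v{\mathcal M}$; hence $\eta_1\in u{\mathcal M}\cap v{\mathcal M}$, forcing $u=v$ by disjointness of the partition of ${\mathcal M}$ into maximal subgroups, and then $\eta_2 u=\eta_2$ within the group $u{\mathcal M}$, so $\eta_1=\eta_2$. Being a continuous equivariant bijection from the compact space ${\mathcal M}$ onto a Hausdorff image, $F$ is then a homeomorphism, hence an $\aut(\C)$-flow isomorphism, and the semigroup isomorphism is inherited from (ii). I expect the only delicate point in the whole argument to be the idempotent-partition bookkeeping in this last step, but it is a routine application of the standard structure theory of minimal left ideals.
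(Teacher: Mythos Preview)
Your proof is correct and follows essentially the same approach as the paper. The one place where you work harder than necessary is (iv): the paper fixes a single idempotent $u\in{\mathcal M}$, notes $\im(u)\subseteq I_L$, and uses the general fact that $fu=f$ for \emph{every} $f\in{\mathcal M}$ (since ${\mathcal M}u={\mathcal M}$ by minimality) to conclude immediately that $f|_{I_L}=g|_{I_L}$ implies $f=fu=gu=g$. Your two-idempotent bookkeeping to force $u=v$ is valid but avoidable once you observe that right multiplication by any idempotent in ${\mathcal M}$ already fixes all of ${\mathcal M}$.
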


\begin{proof}
Point (i) is obvious. Point (ii) follows from (i) and compactness of the spaces in question, namely: $\im (\bar F) = \bar F[\cl(\aut(\C) \id_{S_X(\C)})]= \cl(\aut(\C) \id_{I_L})= EL(I_L)$. Point (iii) follows from (ii).
Let us show (iv). Take an idempotent $u \in {\mathcal M}$. By Proposition \ref{proposition: characterization of boundedness of minimal ideals}, $\im (u) \subseteq I_L$. We need to show that $F$ is injective. This follows by the same simple argument as in the paragraph following Proposition \ref{proposition: topological absoluteness}:  if $F(f)=F(g)$ for some $f,g \in {\mathcal M}$ (which implies that $f=fu$ and $g=gu$), then $f|_{I_L} = g|_{I_L}$, so, since $\im (u) \subseteq  I_L$, we get $f=fu=gu=g$.
\end{proof}

Coming back to the situation with two monster models $\C_1 \succ \C_2$, note that each of the spaces $I_{L,\C_1}$, $I_{L,\C_1}^{S_X(\C_1)}$, $I_{L,\C_1}^{I_{L,\C_1}}$, and $EL(I_{L,\C_1})$ is naturally an $\aut(\C_2)$-flow with the action of $\aut(\C_2)$ defined by: $\sigma x: = \sigma' x$, where $\sigma' \in \aut(\C_1)$ is an arbitrary extension of $\sigma \in \aut(\C_2)$. Using the fact that for every $\sigma \in \aut(\C_1)$ there is $\tau \in \aut(\C_1)$ which preserves $\C_2$ setwise (i.e. which is an extension of an automorphism of $\C_2$) and such that $\sigma /\autf_L(\C_1)= \tau /\autf_L(\C_1)$, we get that in each of the above four spaces, the $\aut(\C_1)$-orbits coincide with the $\aut(\C_2)$-orbits, so the minimal $\aut(\C_1)$-subflows coincide with the minimal $\aut(\C_2)$-subflows. Hence, $EL(I_{L,\C_1}):=\cl(\aut(\C_1) \id_{I_{L,\C_1}}) = \cl(\aut(\C_2) \id_{I_{L,\C_1}})$. If the minimal left ideals in $EL_1$ are of bounded size, then (by Proposition \ref{proposition: characterization of boundedness of minimal ideals}) they are contained in $I_{L,\C_1}^{S_X(\C_1)}$, so they are naturally minimal $\aut(\C_2)$-flows, and for every minimal left ideal ${\mathcal M_1}$ of $EL_1$ the restriction isomorphism $F_1 \colon {\mathcal M}_1 \to \im (F_1) \subseteq EL(I_{L,\C_1})$ of $\aut(\C_1)$-flows (see Proposition \ref{proposition: F for minimal flows}(iv)) is also an isomorphism of $\aut(\C_2)$-flows.
Let now $\pi_{12}$ denote the homeomorphism from  $I_{L,\C_1}$ to $I_{L,\C_2}$. It  induces a homeomorphism $\pi_{12}'\colon I_{L,\C_1}^{ I_{L,\C_1}} \to I_{L,\C_2}^{ I_{L,\C_2}}$. 

\begin{rem}
i) $\pi_{12}$ and $\pi_{12}'$ are both isomorphisms of $\aut(\C_2)$-flows and $\pi_{12}'$ is an isomorphism of semigroups.\\
ii) $\pi_{12}'[EL(I_{L,\C_1})] = EL(I_{L,\C_2})$.\\
iii) $\pi_{12}'$ maps the collection of all minimal left ideals in $EL(I_{L,\C_1})$ bijectively onto the collection of all minimal left ideals in $EL(I_{L,\C_2})$.
\end{rem}

\begin{proof}
Point (i) follows from the definition of the action of $\aut(\C_2)$ given in the above discussion. Point (ii) follows from (i), namely: $\pi_{12}'[EL(I_{L,\C_1})] = 
\pi_{12}'[\cl(\aut(\C_1) \id_{I_{L,\C_1}})]  =  \pi_{12}'[\cl(\aut(\C_2) \id_{I_{L,\C_1}})] = \cl(\aut(\C_2) \id_{I_{L,\C_2}}) = EL(I_{L,\C_2})$. Point (iii) follows from (i) and (ii).
\end{proof}

From now on, let $\pi_{12}'$ be the restriction of the old $\pi_{12}'$ to $EL(I_{L,\C_1})$. So now  $\pi_{12}' \colon EL(I_{L,\C_1}) \to EL(I_{L,\C_2})$ is an isomorphism of $\aut(\C_2)$-flows and of semigroups. 

The next proposition gives us Theorem \ref{theorem: main theorem 2}(ii) with some additional information.

\begin{prop}\label{proposition: main theorem 2 (ii)}
Assume that a minimal left ideal of the Ellis semigroup of $S_X(\C)$ is of bounded size. Then:\\
i) For every minimal left ideal $\mathcal{M}_{1}$ in $EL_1$ there exists a minimal left ideal ${\mathcal M}_{2}$ in $EL_2$ which is isomorphic to ${\mathcal M}_{1}$ as a semigroup.\\
ii)  For every minimal left ideal $\mathcal{M}_{2}$ in $EL_2$ there exists a minimal left ideal ${\mathcal M}_{1}$ in $EL_1$ which is isomorphic to ${\mathcal M}_{2}$ as a semigroup.\\
iii) All minimal left ideals in $EL_1$ are naturally pairwise isomorphic minimal $\aut(\C_2)$-flows and are isomorphic as $\aut(\C_2)$-flows to the minimal $\aut(\C_2)$-subflows of $EL_2$.
\end{prop}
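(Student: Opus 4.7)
The plan is to exploit Proposition \ref{proposition: F for minimal flows}(iv) together with the isomorphism $\pi_{12}' \colon EL(I_{L,\C_1}) \to EL(I_{L,\C_2})$ established in the discussion just above. Since boundedness of minimal left ideals is absolute (Corollary \ref{corollary: main theorem 2 (i)}), for each $i \in \{1,2\}$ and any minimal left ideal $\mathcal{M}_i$ of $EL_i$, the restriction map $F_i \colon \mathcal{M}_i \to F_i[\mathcal{M}_i]$ is a semigroup isomorphism onto a minimal left ideal of $EL(I_{L,\C_i})$. This effectively reduces the whole question to transporting minimal left ideals back and forth between $EL(I_{L,\C_1})$ and $EL(I_{L,\C_2})$ via $\pi_{12}'$.

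For (i), given a minimal left ideal $\mathcal{M}_1$ of $EL_1$, set $\mathcal{N} := \pi_{12}'[F_1[\mathcal{M}_1]]$, which by the remark preceding the proposition is a minimal left ideal of $EL(I_{L,\C_2})$. I then need to realize $\mathcal{N}$ as $F_2[\mathcal{M}_2]$ for some minimal left ideal $\mathcal{M}_2$ of $EL_2$; this is the one place where genuine work is required. Pick any $\eta \in \mathcal{N}$; using surjectivity of $\bar F_2 \colon EL_2 \to EL(I_{L,\C_2})$ (Proposition \ref{proposition: F for minimal flows}(ii)), lift it to $\zeta \in EL_2$ with $\bar F_2(\zeta) = \eta$, and take any minimal left ideal $\mathcal{M}_2 \subseteq EL_2 \cdot \zeta$. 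Then $\bar F_2[\mathcal{M}_2]$ is a closed left ideal of $EL(I_{L,\C_2})$ contained in $EL(I_{L,\C_2}) \cdot \eta \subseteq \mathcal{N}$, so by minimality $F_2[\mathcal{M}_2] = \bar F_2[\mathcal{M}_2] = \mathcal{N}$. The composition $F_2^{-1} \circ \pi_{12}' \circ F_1 \colon \mathcal{M}_1 \to \mathcal{M}_2$ is then the desired semigroup isomorphism. Part (ii) follows by the symmetric argument, starting with $\mathcal{M}_2$ and lifting $\pi_{12}'^{-1}[F_2[\mathcal{M}_2]]$ through $\bar F_1$.

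For (iii), the ingredients are already collected in the discussion preceding the proposition. Since any minimal left ideal of $EL_1$ sits inside $I_{L,\C_1}^{S_X(\C_1)}$, it is naturally an $\aut(\C_2)$-subflow, and on the relevant spaces the $\aut(\C_1)$- and $\aut(\C_2)$-orbits coincide, so minimality as an $\aut(\C_1)$-flow is equivalent to minimality as an $\aut(\C_2)$-flow. Any two minimal left ideals of $EL_1$ are pairwise isomorphic as $\aut(\C_1)$-flows (a general property of Ellis semigroups), hence also pairwise isomorphic as $\aut(\C_2)$-flows. Finally, the maps $F_1$, $\pi_{12}'$, $F_2^{-1}$ used in (i) are all isomorphisms of $\aut(\C_2)$-flows, so chaining them yields an isomorphism between any minimal left ideal of $EL_1$ and a minimal $\aut(\C_2)$-subflow of $EL_2$ (which coincides with a minimal left ideal of $EL_2$, since $EL_2$ is itself an $\aut(\C_2)$-ambit). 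The sole obstacle worth flagging is the lifting step in part (i): ensuring $\bar F_2[\mathcal{M}_2]$ is not merely contained in but actually equal to $\mathcal{N}$, which hinges on continuity of $\bar F_2$ together with minimality of $\mathcal{N}$.
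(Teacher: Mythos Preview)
Your proof is correct and follows essentially the same route as the paper's. The only cosmetic difference is in how you locate $\mathcal{M}_2$: the paper takes a minimal left ideal inside the full preimage $\bar F_2^{-1}[\pi_{12}'[\im(F_1)]]$, whereas you pick a single $\eta \in \mathcal{N}$, lift it, and take a minimal left ideal inside $EL_2 \cdot \zeta$; both arguments land on the same conclusion via minimality of $\mathcal{N}$.
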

\begin{proof}
First of all, by assumption and Corollary \ref{corollary: main theorem 2 (i)}, the minimal left ideals in $EL_1$ and the minimal left ideals in  $EL_2$ are all of bounded size.\\[1mm]
(i)  Let ${\mathcal M}_1$ be a minimal left ideal of $EL_1$. 
By Proposition \ref{proposition: F for minimal flows}, the restriction maps $\bar F_1 \colon EL_1 \to EL(I_{L,\C_1})$ and $\bar F_2 \colon EL_2 \to EL(I_{L,\C_2})$ are semigroup epimorphisms. Moreover, $F_1: = \bar F_1|_{{\mathcal M}_1} \colon {\mathcal M}_1 \to \im (F_1)$ is a semigroup isomorphism and $\im (F_1)$ is a minimal left ideal of $EL(I_{L,\C_1})$. By the above discussion, $\pi_{12}' \colon EL(I_{L,\C_1}) \to EL(I_{L,\C_2})$ is an isomorphism, so $\pi_{12}'[\im (F_1)]$ is a minimal left ideal of $EL(I_{L,\C_2})$. Then $\bar F_2^{-1}[\pi_{12}'[\im (F_1)]]$ is a left ideal of $EL_2$, which contains some minimal left ideal ${\mathcal M}_2$. Then, by the minimality of $\pi_{12}'[\im (F_1)]$, we get that $\bar F_2[{\mathcal M}_2] = \pi_{12}'[\im (F_1)]$, and, by Proposition \ref{proposition: F for minimal flows}, $F_2:= \bar F_2 |_{{\mathcal M}_2} \colon {\mathcal M}_2 \to \pi_{12}'[\im (F_1)]$ is a semigroup  isomorphism. Therefore, $F_2^{-1} \circ \pi_{12}'|_{\im (F_1)} \circ F_1 \colon {\mathcal M}_1 \to {\mathcal M}_2$ is a semigroup isomorphism, and we are done.\\[1mm]
(ii) This can be shown analogously to (i), but ``going in the opposite direction''. \\[1mm]
(iii) All minimal left ideals of $EL_1$ are pairwise isomorphic as $\aut(\C_1)$-flows. So, by the description of the natural $\aut(\C_2)$-flow structure on these  ideals, we get that they are also isomorphic as $\aut(\C_2)$-flows and that they are minimal $\aut(\C_2)$-flows. Of course, all minimal left ideals (equivalently, minimal subflows) of $EL_2$ are also isomorphic as $\aut(\C_2)$-flows. Then, we apply the proof of (i), noticing that the discussion preceding Proposition \ref{proposition: main theorem 2 (ii)} implies that $F_1$, $F_2$, and $\pi_{12}'|_{\im (F_1)}$ are all $\aut(\C_2)$-flow isomorphisms.
\end{proof}

Corollary \ref{corollary: main theorem 2 (i)} and Proposition \ref{proposition: main theorem 2 (ii)} imply Theorem \ref{theorem: main theorem 2}. By virtue of  Proposition \ref{proposition: invariants for c isomorphic to invariants for S}, we get the obvious counterpart of Theorem \ref{theorem: main theorem 2} for both $S_{\bar \alpha}(\C)$ and $S_{\bar c}(\C)$ in place of $S_X(\C)$, which answers Question \ref{question: main question 2}. An explicit bound on the size of the minimal left ideals is provided in Proposition \ref{proposition: characterization of boundedness of minimal ideals} and Corollary \ref{corollary: explicit bound on S_alpha and S_c}.

\section{The stable case}\label{section 5}

In the context of topological dynamics for definable groups, when the theory in question is stable, all notions (such as the Ellis semigroup, minimal ideals, the Ellis group) are easy to describe and coincide with well-known and important model-theoretic notions. This was a starting point to generalize some phenomena to wider classes of theories. 

In this section, we are working in the Ellis semigroup of the
$\aut(\C)$-flow $S_{\bar c}(\C)$, where $\C$ is a monster model  of a
{\em stable} theory and $\bar c$ is an enumeration of $\C$, and we
give very easy descriptions of the minimal left ideals and of the
Ellis group. The reason why we work with $S_{\bar c}(\C)$ is that by
the appendix of \cite{KrPiRz} we know that stability is equivalent to the existence of a left-continuous semigroup operation $*$ on $S_{\bar c}(\C)$ extending the action of $\aut(\C)$ (i.e. $\sigma(\tp(\bar c/\C)) * q = \sigma(q)$ for $\sigma \in \aut(\C)$), which implies  that $S_{\bar c}(\C) \cong EL(S_{\bar c}(\C))$ via the map $p \mapsto l_p$, where $l_p \colon S_{\bar c}(\C) \to S_{\bar c}(\C)$ is given by $l_p(q)=p*q$. For a short tuple $\bar \alpha$ in place of $\bar c$ this is not true (e.g. in the theory of an infinite set with equality, taking $\alpha$ to be a single element, we have that $|S_\alpha(\C)|=|S_1(\C)|=|\C|$ while $EL(S_1(\C))$ has size at least $|\aut(\C)| = 2^{|\C|}$). However, at the end of this section, as corollaries from our observations made for $\bar c$, we describe what happens in $EL(S_{\bar \alpha}(\C))$.

From now on, in this section, $T$ is a stable theory. 
For any (short or long) tuple $\bar a$ of elements of $\C$ and a  (small or large) set of parameters $B$ let
$$NF_{\bar a}(B) = \{ p \in S_{\bar a}(B) : p\;\, \mbox{does not fork over}\;\, \emptyset\}.$$ 
$NF_{\bar a}(\C)$ is clearly a subflow of $S_{\bar a}(\C)$. Note that
$NF_{\bar a}(\acl^{eq}(\emptyset))=S_{\bar a}(\acl^{eq}(\emptyset))$
is also an $\aut(\C)$-flow which is transitive.  Let $\bar \epsilon$
denote an enumeration of $\acl^{eq}(\emptyset)$. Then $NF_{\bar
  \epsilon}(\acl^{eq}(\emptyset)) = S_{\bar
  \epsilon}(\acl^{eq}(\emptyset))$ is an $\aut(\C)$-flow, and note
that $S_{\bar \epsilon}(\acl^{eq}(\emptyset))$ can be naturally
identified with the profinite group $\aut(\acl^{eq}(\emptyset))$ of
all elementary permutations of $\acl^{eq}(\emptyset)$. Notice that
$NF_{\bar c}(\C)$ is closed under $*$.

The key basic consequences of stability that we will be using are:
\begin{itemize}
\item each type over an algebraically closed set (e.g. a model) is stationary, i.e. it has a unique non-forking extension to any given superset,
\item $NF_{\bar a}(\C)$ is a transitive $\aut(\C)$-flow,
\item the $\aut(\C)$-orbit of each type in $S_{\bar a}(\C) \setminus NF_{\bar a}(\C)$ is of unbounded size.
\end{itemize}

Recall that $\autf_{Sh}(\C): = \aut(\C/\acl^{eq}(\emptyset))$, $E_{Sh}$ is the orbit equivalence relation of $\autf_{Sh}(\C)$ on any given product of sorts, and $\gal_{Sh}(T):= \aut(\C)/\autf_{Sh}(\C) \cong \aut(\acl^{eq}(\emptyset))$. In the stable context, $\autf_{Sh}(\C) =\autf_{KP}(\C)=\autf_{L}(\C)$, so the corresponding orbit equivalence relations coincide, and the corresponding Galois groups coincide, too.

\begin{rem}\label{remark: restriction is a homeomorphism}
Let $\bar a$ be an enumeration of a (small or large) algebraically closed subset of $\C$ (e.g. $\bar a =\bar c$ or $\bar a = \bar \epsilon$).\\
i)  The restriction map $r \colon NF_{\bar c}(\C) \to NF_{\bar a}(\C)$ is a flow isomorphism. In particular, $r$ induces a unique left-continuous semigroup operation (also denoted by $*$) on $NF_{\bar a}(\C)$ 
such that whenever $(\sigma_i)$ is a net in $\aut(\C)$ satisfying $\lim \sigma_i(\tp(\bar a/\C)) = p \in NF_{\bar a}(\C)$ and $q \in NF_{\bar a}(\C)$, then $p * q= \lim \sigma_i(q)$.\\
ii) The restriction map $r_\emptyset \colon NF_{\bar a}(\C) \to S_{\bar a}(\acl^{eq}(\emptyset))$ is a flow isomorphism, which induces a unique left-continuous semigroup operation on $S_{\bar a}(\acl^{eq}(\emptyset))$ (still denoted by $*$) which coincides with the action of $\aut(\C)$ (i.e. $\sigma(\tp(\bar a/\acl^{eq}(\emptyset))) * q = \sigma(q)$ for $\sigma \in \aut(\C)$ and $q \in S_{\bar a}(\acl^{eq}(\emptyset))$).\\
iii) The restriction map $R \colon NF_{\bar a}(\C) \to S_{\bar \epsilon}(\acl^{eq}(\emptyset))=\aut(\acl^{eq}(\emptyset))$ is an isomorphism of $\aut(\C)$-flows and of semigroups (where we take the obvious group structure on $\aut(\acl^{eq}(\emptyset)$). Thus, $NF_{\bar a}(\C)$ is a group. 
\end{rem}

\begin{proof}
i) In the first part, only injectivity of $r$ requires an
explanation. So consider $p,q \in  NF_{\bar c}(\C)$ such that
$r(p)=r(q)$. Let $\C'\succ \C$ be a bigger monster model in which we
will take all realizations. Choose $\bar a'$ realizing $r(p)=r(q)$ and
extend it to $\bar c'\models p$ and $\bar c''\models q$. Since $\bar
c'\equiv\bar c''$, we get $\bar c'\equiv_{\bar a'}\bar c''$. Since
$\acl^{eq}(\emptyset)\subseteq\bar a'$ we get that $\bar
c'\equiv_{\acl^{eq}(\emptyset)}\bar c''$. Both $p$ and $q$ do not fork
over $\emptyset$, hence $p=q$.

It is clear that $r$ and the original semigroup operation on $NF_{\bar
  c}(\C)$ induce a left-continuous semigroup operation on $NF_{\bar
  a}(\C)$ such that for any net $(\tau_j)$ in $\aut(\C)$ satisfying
$\lim \tau_j(\tp(\bar c/\C)) = p' \in NF_{\bar c}(\C)$ and any $q \in
NF_{\bar a}(\C)$ we have $r(p') * q= \lim \tau_j(q)$. Since each such a
net $(\tau_j)$ satisfies $\lim \tau_j(\tp(\bar a/\C)) = r(p')$, it is
enough to prove that for any net $(\sigma_i)$ in $\aut(\C)$ such that
$\lim \sigma_i(\tp(\bar a/\C)) = p \in NF_{\bar a}(\C)$ and any $q \in
NF_{\bar a}(\C)$, the limit $\lim \sigma_i(q)$ exists. This in turn
follows since $q$ is definable over
$\acl^{eq}(\emptyset)\subseteq\bar a$: if $\bar
a_0\subseteq\acl^{eq}(\emptyset)$ and $\psi(\bar y,\bar a_0)$ defines
$q|_{\varphi}$, then $(\lim\sigma_i(q))|_{\varphi}$ is defined by
  $\psi(\bar y,\bar a_1)$, where $\bar a_1=\lim\sigma_i(\bar a_0)$.

ii) The fact that $r_\emptyset$ is a flow isomorphism is immediate from stationarity of the types over algebraically closed sets.

For the other part, let $\sigma\in\aut(\C)$ and $q \in S_{\bar a}(\acl^{eq}(\emptyset))$. 
Take $p_0\in NF_{\bar  a}(\C)$ extending $\tp(\bar a/\acl^{eq}(\emptyset))$. Since the $\aut(\C)$-orbit of $\tp(\bar a/\C)$ is dense in $S_{\bar a}(\C)$, we can find a net $(\sigma_i)$ in $\aut(\C)$ such that $\lim\sigma_i(\tp(\bar a/\C))=\sigma(p_0)$. Then $\lim\sigma_i|_{\acl^{eq}(\emptyset)}=\sigma|_{\acl^{eq}(\emptyset)}$. Also, $\sigma(\tp(\bar a/\acl^{eq}(\emptyset))=r_{\emptyset}(\sigma(p_0))$.
%
Hence, by (i),
$$\sigma(\tp(\bar a/\acl^{eq}(\emptyset))*q=r_{\emptyset}(\sigma(p_0)*r_\emptyset^{-1}(q))=r_{\emptyset}(\lim\sigma_i(r_\emptyset^{-1}(q)))= \lim \sigma_i (q)=\sigma(q).$$

%
iii) The fact that $R$ is a flow isomorphism follows from (i) and (ii).  To see that $R$ is a semigroup isomorphism, one should check that the natural identification of $S_{\bar \epsilon}(\acl^{eq}(\emptyset))$ with $\aut(\acl^{eq}(\emptyset))$ is a homomorphism, where $S_{\bar \epsilon}(\acl^{eq}(\emptyset))$ is equipped with the semigroup operation from (ii). But this is obvious by (ii).
\end{proof}

\begin{prop}\label{proposition: unique minimal left ideal}
$NF_{\bar c}(\C)$ is the unique minimal left ideal in $S_{\bar c}(\C)$, and $|NF_{\bar c}(\C)| \leq 2^{|T|}$.
\end{prop}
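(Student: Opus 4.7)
The plan is to verify that $NF_{\bar c}(\C)$ is itself a minimal left ideal and then to show that any other minimal left ideal is forced to sit inside it (and hence equal it). The cardinality bound will follow directly from Remark~\ref{remark: restriction is a homeomorphism}(iii).

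First I would check that $NF_{\bar c}(\C)$ is a minimal left ideal of $S_{\bar c}(\C)$. It is closed, since non-forking over $\emptyset$ is type-definable, and it is $\aut(\C)$-invariant. Together with the listed basic consequence of stability that $NF_{\bar c}(\C)$ is a transitive $\aut(\C)$-flow, this makes it a minimal subflow. To verify the left-ideal property, I would take $p\in S_{\bar c}(\C)$, $q\in NF_{\bar c}(\C)$, and a net $(\sigma_i)$ in $\aut(\C)$ with $\sigma_i(\tp(\bar c/\C))\to p$. Then (as in Remark~\ref{remark: restriction is a homeomorphism}(i), whose proof for $\bar a=\bar c$ gives $p*q=\lim_i\sigma_i(q)$) every $\sigma_i(q)$ lies in $NF_{\bar c}(\C)$, so the limit does too by closedness.

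Next I would prove uniqueness. Let $\M$ be an arbitrary minimal left ideal of $S_{\bar c}(\C)$. Since all minimal left ideals of an Ellis semigroup have the same cardinality, $|\M|=|NF_{\bar c}(\C)|$, which is bounded by $2^{|T|}$ via Remark~\ref{remark: restriction is a homeomorphism}(iii). Proposition~\ref{proposition: characterization of boundedness of minimal ideals} then gives $\im(\eta)\subseteq I_L$ for every $\eta\in\M$. Under the semigroup isomorphism $p\mapsto l_p$ this reads $p*S_{\bar c}(\C)\subseteq I_L$. Note that $e:=\tp(\bar c/\C)$ is a two-sided identity of $S_{\bar c}(\C)$: taking $\sigma=\id$ in $\sigma(e)*q=\sigma(q)$ gives $e*q=q$, and any approximating net $\sigma_i(e)\to p$ yields $p*e=\lim_i\sigma_i(e)=p$. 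Hence $p=l_p(e)\in I_L$. The essential stable-theoretic input is then the equality $I_L\cap S_{\bar c}(\C)=NF_{\bar c}(\C)$: in a stable theory $\autf_L(\C)=\aut(\C/\acl^{eq}(\emptyset))$, and a global type is $\aut(\C/\acl^{eq}(\emptyset))$-invariant if and only if it is definable over $\acl^{eq}(\emptyset)$, if and only if (by stationarity of types over algebraically closed sets) it does not fork over $\emptyset$. Thus $p\in NF_{\bar c}(\C)$, so $\M\subseteq NF_{\bar c}(\C)$, and by the minimality established in the previous paragraph, $\M=NF_{\bar c}(\C)$.

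Finally, the cardinality bound $|NF_{\bar c}(\C)|\le 2^{|T|}$ is immediate from Remark~\ref{remark: restriction is a homeomorphism}(iii), which identifies $NF_{\bar c}(\C)$ with $\aut(\acl^{eq}(\emptyset))$: since $|\acl^{eq}(\emptyset)|\le|T|$, this automorphism group has at most $2^{|T|}$ elements. The only nontrivial conceptual step is the identification $I_L\cap S_{\bar c}(\C)=NF_{\bar c}(\C)$, which is precisely where stability is genuinely used; the rest is a formal assembly of Proposition~\ref{proposition: characterization of boundedness of minimal ideals}, the structural content of Remark~\ref{remark: restriction is a homeomorphism}, and the transitivity of the non-forking flow.
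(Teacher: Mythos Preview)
Your proof is correct and follows essentially the same strategy as the paper's: $NF_{\bar c}(\C)$ is a minimal subflow (by transitivity), its size is bounded by $2^{|T|}$ via Remark~\ref{remark: restriction is a homeomorphism}, and hence any minimal left ideal, having the same bounded size, must sit inside $NF_{\bar c}(\C)$. The paper's uniqueness step is more direct than yours: since any minimal left ideal $\M$ is bounded, every $p\in\M$ has bounded $\aut(\C)$-orbit (the orbit lies in $\M$), and the third listed consequence of stability immediately forces $p\in NF_{\bar c}(\C)$; your route through Proposition~\ref{proposition: characterization of boundedness of minimal ideals}, the identity element, and the identification $I_L=NF_{\bar c}(\C)$ is a valid but unnecessary detour, and your separate check that $NF_{\bar c}(\C)$ is a left ideal is redundant since minimal subflows and minimal left ideals coincide in an Ellis semigroup.
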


\begin{proof}
Minimality of $NF_{\bar c}(\C)$ is clear, as $NF_{\bar c}(\C)$ is a transitive $\aut(\C)$-flow.
By Remark \ref{remark: restriction is a homeomorphism},  $|NF_{\bar c}(\C)|= |S_{\bar \epsilon}(\acl^{eq}(\emptyset))| \leq 2^{|T|}$ is bounded. Hence, all minimal left ideals are of bounded size.
On the other hand, stability implies that the $\aut(\C)$-orbit of any $p \in S_{\bar c}(\C) \setminus NF_{\bar c}(\C)$ is unbounded. This shows uniqueness of the minimal left ideal.
\end{proof}

So let ${\mathcal M}:=NF_{\bar c}(\C)$ be the unique minimal left ideal of $S_{\bar c}(\C)$. Let $u \in {\mathcal M}$ be an idempotent. By Remark \ref{remark: restriction is a homeomorphism}(iii), ${\mathcal M}$ is a group, so ${\mathcal M} =u{\mathcal M}$ and $u$ is the unique idempotent in ${\mathcal M}$. Moreover, the restriction map $R \colon {\mathcal M} \to \aut(\acl^{eq}(\emptyset))$ is a group isomorphism, which explicitly shows absoluteness of the Ellis group of the flow $S_{\bar c}(\C)$.

Under the natural identification of $EL(S_{\bar c}(\C))$ with $S_{\bar c}(\C)$ described in the second paragraph of this section, the semigroup epimorphism  $\hat{f}\colon EL(S_{\bar c}(\C)) \to \gal_L(T)$ from \cite{KrPiRz} (recalled in Section \ref{section 1}) is given by $\hat{f}(p)= \sigma/\autf_{Sh}(\C')$ for any $\sigma \in \aut(\C')$ such that $\sigma(\bar c) \models p \in S_{\bar c}(\C)$, where $\C'\succ \C$ is a bigger monster model. 
As was recalled in Section \ref{section 1}, $f: = \hat{f}|_{u\mathcal M}$ is a group epimorphism onto $\gal_{Sh}(T)$. Using the natural identification of $\gal_{Sh}(T)$ with $\aut(\acl^{eq}(\emptyset))$, one gets that $R = \hat{f}|_{\mathcal M}$, so we have

\begin{cor}
$f\colon u{\mathcal M} \to \gal_{Sh}(T)$ is a group isomorphism.
\end{cor}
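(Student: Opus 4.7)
The proof is essentially unwinding the definitions of $f$ and $R$ on ${\mathcal M}$ and observing that they agree under the standard identification $\gal_{Sh}(T) \cong \aut(\acl^{eq}(\emptyset))$; then Remark \ref{remark: restriction is a homeomorphism}(iii) finishes the job. Let me describe the steps I would take.

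First, I would note that since ${\mathcal M}$ is a group by Remark \ref{remark: restriction is a homeomorphism}(iii), there is a unique idempotent $u \in {\mathcal M}$ and $u{\mathcal M} = {\mathcal M}$. Hence $f = \hat{f}|_{\mathcal M}$, and it suffices to show that $\hat{f}|_{\mathcal M}$ coincides with $R$ (which we already know to be a group isomorphism) once $\gal_{Sh}(T)$ is identified with $\aut(\acl^{eq}(\emptyset))$ in the natural way. Recall that this identification is: $\sigma/\autf_{Sh}(\C') \mapsto \sigma|_{\acl^{eq}(\emptyset)}$, which is well-defined and a group isomorphism because $\autf_{Sh}(\C') = \aut(\C'/\acl^{eq}(\emptyset))$ and because every elementary permutation of $\acl^{eq}(\emptyset)$ extends to an automorphism of $\C'$.

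Next, I would pick an arbitrary $p \in {\mathcal M} = NF_{\bar c}(\C)$ and compute both $\hat{f}(p)$ and $R(p)$. Choose a realization $\bar c' \models p$ in the bigger monster model $\C' \succ \C$, and choose $\sigma \in \aut(\C')$ with $\sigma(\bar c) = \bar c'$. By definition of $\hat{f}$, we have $\hat{f}(p) = \sigma/\autf_{Sh}(\C')$, which under the identification above corresponds to $\sigma|_{\acl^{eq}(\emptyset)} \in \aut(\acl^{eq}(\emptyset))$. On the other hand, writing $\bar \epsilon$ for the enumeration of $\acl^{eq}(\emptyset)$ inside $\bar c$, the restriction $R(p) \in S_{\bar \epsilon}(\acl^{eq}(\emptyset)) = \aut(\acl^{eq}(\emptyset))$ is by construction $\tp(\sigma(\bar \epsilon)/\acl^{eq}(\emptyset))$, which under the natural identification of $S_{\bar \epsilon}(\acl^{eq}(\emptyset))$ with $\aut(\acl^{eq}(\emptyset))$ is precisely $\sigma|_{\acl^{eq}(\emptyset)}$. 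Thus $\hat{f}(p)$ and $R(p)$ are the same element of $\aut(\acl^{eq}(\emptyset))$, proving $f = \hat{f}|_{\mathcal M} = R$.

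Since $R \colon {\mathcal M} \to \aut(\acl^{eq}(\emptyset))$ is a group isomorphism by Remark \ref{remark: restriction is a homeomorphism}(iii), so is $f$. There is no real obstacle here; the only thing to be careful about is consistently tracking the three identifications in play (the flow isomorphism ${\mathcal M} \cong NF_{\bar a}(\C) \cong S_{\bar\epsilon}(\acl^{eq}(\emptyset))$ of the remark, the bijection $S_{\bar\epsilon}(\acl^{eq}(\emptyset)) \cong \aut(\acl^{eq}(\emptyset))$, and the identification $\gal_{Sh}(T) \cong \aut(\acl^{eq}(\emptyset))$), and making sure they interact compatibly with the definitions of $R$ and $\hat{f}$.
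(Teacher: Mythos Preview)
Your proof is correct and follows exactly the same approach as the paper: the paper simply asserts (in the sentence preceding the corollary) that under the natural identification of $\gal_{Sh}(T)$ with $\aut(\acl^{eq}(\emptyset))$ one has $R = \hat{f}|_{\mathcal M}$, and then the corollary is immediate from Remark \ref{remark: restriction is a homeomorphism}(iii). You have just spelled out the verification of $R = \hat{f}|_{\mathcal M}$ in more detail than the paper does.
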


 From Remark \ref{remark: restriction is a homeomorphism}(iii), we get

\begin{cor}
The unique idempotent $u \in {\mathcal M}$ is the unique global non-forking extension of $\tp(\bar c/\acl^{eq}(\emptyset))$.
\end{cor}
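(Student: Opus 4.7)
The plan is to combine stationarity of types over algebraically closed sets (available since $T$ is stable) with the explicit description of the isomorphism $R\colon \mathcal{M} \to \aut(\acl^{eq}(\emptyset))$ from Remark~\ref{remark: restriction is a homeomorphism}(iii), which identifies $u$ as the unique element of $\mathcal{M}$ mapped to $\id$. Uniqueness of a global non-forking extension of $\tp(\bar c/\acl^{eq}(\emptyset))$ is immediate from stationarity, so the real content is to verify that $u$ itself is such an extension.

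First I would note that $u \in \mathcal{M} = NF_{\bar c}(\C)$, so $u$ does not fork over $\emptyset$, and \emph{a fortiori} does not fork over $\acl^{eq}(\emptyset)$. It therefore suffices to show that $u$ extends $\tp(\bar c/\acl^{eq}(\emptyset))$, i.e.\ that $u|_{\acl^{eq}(\emptyset)} = \tp(\bar c/\acl^{eq}(\emptyset))$. For this I would use that $R$ is a group isomorphism and that $u$ is the unique idempotent of $\mathcal{M}$, so $R(u)$ must be the identity of $\aut(\acl^{eq}(\emptyset))$. Unwinding the definition of $R$ (restriction to the subtuple $\bar \epsilon$ of $\bar c$ enumerating $\acl^{eq}(\emptyset)$, followed by further restriction to parameters from $\acl^{eq}(\emptyset)$, and the canonical identification $S_{\bar \epsilon}(\acl^{eq}(\emptyset)) \cong \aut(\acl^{eq}(\emptyset))$), the equality $R(u) = \id$ translates to the statement that the $\bar \epsilon$-restriction of $u$ to $\acl^{eq}(\emptyset)$ equals $\tp(\bar \epsilon/\acl^{eq}(\emptyset))$. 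In particular, for every $\epsilon \in \acl^{eq}(\emptyset)$, the formula $x_{i_\epsilon} = \epsilon$ (where $i_\epsilon$ is the index of $\epsilon$ in the enumeration $\bar c$) belongs to $u$.

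Finally I would argue that these formulas, together with the fact that $u$ extends $\tp(\bar c/\emptyset)$, pin down $u|_{\acl^{eq}(\emptyset)}$ completely: any formula $\varphi(\bar x, \bar a)$ with $\bar a$ from $\acl^{eq}(\emptyset)$ is equivalent, modulo the identifications $x_{i_{a_j}} = a_j$, to a formula $\varphi'(\bar x)$ over $\emptyset$, and membership of $\varphi(\bar x, \bar a)$ in $u|_{\acl^{eq}(\emptyset)}$ is determined by whether $\varphi'(\bar x) \in \tp(\bar c/\emptyset)$. The same substitution trick applied to $\tp(\bar c/\acl^{eq}(\emptyset))$ (using $a_j = c_{i_{a_j}}$) shows that the two types contain exactly the same formulas, so $u|_{\acl^{eq}(\emptyset)} = \tp(\bar c/\acl^{eq}(\emptyset))$. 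By stationarity, $u$ is the unique global non-forking extension of $\tp(\bar c/\acl^{eq}(\emptyset))$.

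There is no genuine obstacle here; the only mildly delicate point is the last identification of $u|_{\acl^{eq}(\emptyset)}$ with $\tp(\bar c/\acl^{eq}(\emptyset))$, but this reduces to the tautology that $\bar c$ is its own enumeration so that any parameter from $\acl^{eq}(\emptyset)$ appears as a coordinate of $\bar c$. Alternatively, one could start from the other end: let $p$ be the unique global non-forking extension of $\tp(\bar c/\acl^{eq}(\emptyset))$ guaranteed by stationarity, observe that $p \in NF_{\bar c}(\C) = \mathcal{M}$ and that $R(p) = \id$ (since $p$ contains $x_{i_\epsilon} = \epsilon$), and conclude $p = u$ by injectivity of $R$.
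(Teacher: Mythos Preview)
Your proof is correct and follows essentially the same approach as the paper, which simply says the corollary follows from Remark~\ref{remark: restriction is a homeomorphism}(iii). You have spelled out precisely the intended unwinding: $R(u)=\id$ because $R$ is a group isomorphism and $u$ is the identity of $\mathcal{M}$, and then $R(u)=\id$ together with $u\in S_{\bar c}(\C)$ forces $u|_{\acl^{eq}(\emptyset)}=\tp(\bar c/\acl^{eq}(\emptyset))$; stationarity does the rest. Your alternative argument at the end (take the unique non-forking extension $p$, observe $R(p)=\id$, conclude $p=u$) is slightly cleaner and is equally in the spirit of the paper's one-line deduction.
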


Now we give a description of the group operation $*$ on ${\mathcal M}=NF_{\bar c}(\C)$.  

\begin{prop}
Let $p,q,r \in NF_{\bar c}(\C)$. Take any $\bar c' \models q$. Then $p * q =r$ if and only if there exists $\sigma \in \aut(\C')$ such that $\sigma (\bar c) \models p$ and $\sigma(\bar c') \models r$.
\end{prop}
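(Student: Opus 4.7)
For the easier direction $(\Leftarrow)$, by Remark~\ref{remark: restriction is a homeomorphism}(iii) the restriction map $R\colon NF_{\bar c}(\C)\to\aut(\acl^{eq}(\emptyset))$ is an isomorphism of semigroups, so it suffices to show $R(r)=R(p)\circ R(q)$ inside $\aut(\acl^{eq}(\emptyset))$. Given $\sigma\in\aut(\C')$ with $\sigma(\bar c)\models p$ and $\sigma(\bar c')\models r$, the fact that $\acl^{eq}(\emptyset)$ is fixed setwise by $\sigma$ and the identifications from Remarks~\ref{remark: restriction is a homeomorphism}(ii)-(iii) give $R(p)=\sigma|_{\acl^{eq}(\emptyset)}$: indeed, the restriction of $\sigma(\bar c)$ to the $\bar\epsilon$-coordinates is $\sigma(\bar\epsilon)$, it realizes $R(p)$, and under the identification $S_{\bar\epsilon}(\acl^{eq}(\emptyset))\cong\aut(\acl^{eq}(\emptyset))$ this corresponds exactly to $\sigma|_{\acl^{eq}(\emptyset)}$. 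Analogously, letting $\bar c'_{\bar\epsilon}$ be the $\bar\epsilon$-subsequence of $\bar c'$, we have $R(q)(\bar\epsilon)=\bar c'_{\bar\epsilon}$, and $R(r)(\bar\epsilon)=\sigma(\bar c'_{\bar\epsilon})=\sigma(R(q)(\bar\epsilon))=(R(p)\circ R(q))(\bar\epsilon)$, because $R(q)(\bar\epsilon)\in\acl^{eq}(\emptyset)$. Hence $R(r)=R(p)\circ R(q)$, i.e.\ $r=p*q$.

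For $(\Rightarrow)$, assume $p*q=r$. Since $S_{\bar c}(\C)$ is an $\aut(\C)$-ambit with base point $\tp(\bar c/\C)$, there is a net $(\sigma_i)$ in $\aut(\C)$ with $\sigma_i(\tp(\bar c/\C))\to p$. Then, by Remark~\ref{remark: restriction is a homeomorphism}(i), $\sigma_i(q)\to p*q=r$. Fix extensions $\sigma_i'\in\aut(\C')$ of the $\sigma_i$; then $\tp(\sigma_i'(\bar c)/\C)\to p$ and $\tp(\sigma_i'(\bar c')/\C)\to r$. Passing to a subnet by compactness of $S_{\bar c\bar c'}(\C)$, the joint types $\tp(\sigma_i'(\bar c),\sigma_i'(\bar c')/\C)$ converge to some $t\in S_{\bar c\bar c'}(\C)$ whose projections onto the $\bar c$- and $\bar c'$-variables are $p$ and $r$ respectively. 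Moreover, each pair $(\sigma_i'(\bar c),\sigma_i'(\bar c'))$ realizes $\tp(\bar c,\bar c'/\emptyset)$, hence so does $t$.

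To finish, realize $t$ in $\C'$ by some tuple $(\bar d,\bar d')$, which is possible by the saturation of $\C'$ with respect to $\C$. Since $(\bar d,\bar d')\equiv_\emptyset(\bar c,\bar c')$, the strong homogeneity of $\C'$ yields $\sigma\in\aut(\C')$ with $\sigma(\bar c)=\bar d$ and $\sigma(\bar c')=\bar d'$. By the choice of $t$, then $\sigma(\bar c)\models p$ and $\sigma(\bar c')\models r$.

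The main obstacle is the $(\Rightarrow)$ direction, where one must pass from the ``net-theoretic'' convergence describing $*$ to the existence of a single automorphism of $\C'$ exhibiting the relation. The key is to encode the two convergences into one joint type $t$ over $\C$ (using compactness), and then use the saturation and strong homogeneity of $\C'$ to upgrade it to an automorphism. The direction $(\Leftarrow)$ is essentially a bookkeeping computation once one uses Remark~\ref{remark: restriction is a homeomorphism} to reduce to the group $\aut(\acl^{eq}(\emptyset))$.
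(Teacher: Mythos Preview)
Your proof is correct and follows essentially the same approach as the paper. For $(\Rightarrow)$ you spell out in detail the ``easy compactness argument'' the paper alludes to (net of automorphisms, pass to a convergent subnet of joint types, realize and use strong homogeneity of $\C'$); for $(\Leftarrow)$ you argue directly via the isomorphism $R$ of Remark~\ref{remark: restriction is a homeomorphism}(iii), which is exactly what the paper points to before offering an alternative computation that first invokes the forward direction.
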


\begin{proof}
$(\rightarrow)$ There is a net $(\sigma_i)$ in $\aut(\C)$ such that $\lim \sigma_i = p$, which formally means that $\lim \sigma_i(\tp(\bar c/\C)) =p$. By the left continuity of $*$, we get $\lim \sigma_i(q) =r$. Thus, an easy compactness argument yields the desired $\sigma$. (Note that this implication does not use the assumption that $p,q,r \in NF_{\bar c}(\C)$.)\\[1mm]
$(\leftarrow)$ This follows from Remark \ref{remark: restriction is a homeomorphism}(iii), but we give a direct computation. Take $\sigma$ satisfying the right hand side. By $(\rightarrow)$, we can find $\sigma' \in \aut(\C')$ such that $\sigma' (\bar c) \models p$ and $\sigma'(\bar c') \models p * q$. Since $\sigma(\bar c) \models p$, we get that $\sigma|_{\acl^{eq}(\emptyset)} = \sigma'|_{\acl^{eq}(\emptyset)}$. Hence, $r|_{\acl^{eq}(\emptyset)}= \tp(\sigma(\bar c')/\acl^{eq}(\emptyset)) =  \sigma(\tp(\bar c'/\acl^{eq}(\emptyset))) =  \sigma'(\tp(\bar c'/\acl^{eq}(\emptyset))) = \tp(\sigma'(\bar c')/\acl^{eq}(\emptyset)) = p * q|_{\acl^{eq}(\emptyset)}$. This implies that $r=p*q$, because $r,p*q \in NF_{\bar c}(\C)$. 
\end{proof}

As was recalled in the second paragraph of this section, there is a
semigroup isomorphism $l \colon S_{\bar c}(\C) \to EL(S_{\bar c}(\C))$
given by $l(p) = l_p$. By Proposition \ref{proposition: unique minimal left ideal}, $l[\M]$ is the unique minimal left ideal of $EL(S_{\bar c}(\C))$ and it is of bounded size.  In stable theories, Lascar invariant global
types are the types that do not fork over $\emptyset$. Hence, by
Proposition \ref{proposition: characterization of boundedness of
  minimal ideals}, for every $\eta \in  l[\M]$, $\im(\eta) \subseteq NF_{\bar c}(\C)$, but since $\M=NF_{\bar c}(\C)$ is a group and $l_p(q)=p*q$, we easily conclude that  $\im(\eta) = NF_{\bar c}(\C)$.

Now, consider a short tuple $\bar \alpha$. By the above conclusions and Remark  \ref{remark: very easy 2}, we get

\begin{cor}
There is a unique minimal left ideal ${\mathcal M}_{\bar \alpha}$ in $EL(S_{\bar \alpha}(\C))$, it coincides with the Ellis group, and for every $\eta \in {\mathcal M}_{\bar \alpha}$,
$\im(\eta)= NF_{\bar \alpha}(\C)$. The size of ${\mathcal M}_{\bar \alpha}$ is bounded by $2^{|T|}$.
\end{cor}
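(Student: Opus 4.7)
The plan is to deduce everything from the description of the Ellis semigroup of $S_{\bar d}(\C)$ for a suitable enumeration $\bar d$ of $\C$ containing $\bar\alpha$ as a subsequence (for instance, take $\bar d$ to be $\bar\alpha$ followed by $\bar c$), by pushing the picture forward through the restriction epimorphism of Remark \ref{remark: very easy 2}. That remark supplies a continuous, $\aut(\C)$-equivariant semigroup epimorphism $\hat{r}\colon EL(S_{\bar d}(\C))\to EL(S_{\bar\alpha}(\C))$ induced by the coordinate restriction $r\colon S_{\bar d}(\C)\to S_{\bar\alpha}(\C)$. Since $\bar d$ is itself an enumeration of $\C$, the discussion culminating in Proposition \ref{proposition: unique minimal left ideal} applies verbatim with $\bar d$ in place of $\bar c$ and yields that the unique minimal left ideal of $EL(S_{\bar d}(\C))$ is $l[NF_{\bar d}(\C)]$, that it is already a group, and that its cardinality is at most $2^{|T|}$.

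Next I would set $\M_{\bar\alpha}:=\hat{r}(l[NF_{\bar d}(\C)])$ and verify its structural properties. The image of a minimal subflow under a continuous equivariant surjection is a minimal subflow -- any proper subflow of the image would pull back under $\hat{r}$ to a proper closed invariant subset of $l[NF_{\bar d}(\C)]$ -- so $\M_{\bar\alpha}$ is a minimal left ideal of $EL(S_{\bar\alpha}(\C))$. Being the semigroup-homomorphic image of a group, $\M_{\bar\alpha}$ is itself a group, its unique idempotent $u$ acts as identity, and hence $\M_{\bar\alpha}=u\M_{\bar\alpha}$ coincides with the Ellis group. Uniqueness follows in the same spirit: if $\mathcal N$ is any minimal subflow of $EL(S_{\bar\alpha}(\C))$, then $\hat{r}^{-1}[\mathcal N]$ is nonempty, closed and $\aut(\C)$-invariant, hence contains the unique minimal subflow $l[NF_{\bar d}(\C)]$, which gives $\M_{\bar\alpha}\subseteq \mathcal N$ and, by minimality of $\mathcal N$, equality.

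For the image identification, I would first observe that since $\hat{r}$ is induced by the restriction $r$ and $r$ is $\aut(\C)$-equivariant, one has $\hat{r}(\zeta)(r(q'))=r(\zeta(q'))$ for every $\zeta\in EL(S_{\bar d}(\C))$ and $q'\in S_{\bar d}(\C)$; surjectivity of $r$ then yields $\im(\hat{r}(\zeta))=r[\im(\zeta)]$. Writing an arbitrary $\eta\in\M_{\bar\alpha}$ as $\hat{r}(l_p)$ for some $p\in NF_{\bar d}(\C)$ and using that $\im(l_p)=NF_{\bar d}(\C)$ (since $NF_{\bar d}(\C)$ is a group and $p*q$ ranges over all of it as $q$ does), this gives $\im(\eta)=r[NF_{\bar d}(\C)]=NF_{\bar\alpha}(\C)$, the last equality coming from the fact that restrictions of non-forking types are non-forking together with the existence of non-forking extensions in stable theories. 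Finally, $|\M_{\bar\alpha}|\leq |l[NF_{\bar d}(\C)]|=|NF_{\bar d}(\C)|\leq 2^{|T|}$. I do not foresee a genuine obstacle here: the argument is in essence a transport of Proposition \ref{proposition: unique minimal left ideal} through $\hat{r}$, and the only step requiring any care is the uniqueness argument, where one must confirm that the preimage under $\hat{r}$ of a subflow is again a closed invariant set (which is immediate from continuity and equivariance of $\hat{r}$).
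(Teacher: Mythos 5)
Your proof is correct and unpacks exactly the argument the paper compresses into ``by the above conclusions and Remark \ref{remark: very easy 2}'': transporting the picture for $S_{\bar d}(\C)$ (with $\bar d$ an enumeration of $\C$ containing $\bar\alpha$) through the semigroup epimorphism $\hat r$, using that $l[NF_{\bar d}(\C)]$ is the unique minimal left ideal there and already a group. All the details you supply -- uniqueness via pullback, $\im(\hat r(\zeta))=r[\im(\zeta)]$, $r[NF_{\bar d}(\C)]=NF_{\bar\alpha}(\C)$ via equivariance and transitivity of the latter -- are the ones the paper leaves implicit, and they check out.
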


\begin{cor}
If $\bar \alpha$ enumerates an algebraically closed set, then the unique minimal left ideal ${\mathcal M}_{\bar \alpha}$ in $EL(S_{\bar \alpha}(\C))$ is isomorphic as a group with $NF_{\bar \alpha}(\C)$ (with the semigroup structure provided by Remark \ref{remark: restriction is a homeomorphism}(i)) which is further isomorphic with $\aut(\acl^{eq}(\emptyset))$.
\end{cor}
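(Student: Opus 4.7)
The second isomorphism $NF_{\bar\alpha}(\C)\cong\aut(\acl^{eq}(\emptyset))$ is immediate from Remark \ref{remark: restriction is a homeomorphism}(iii) applied to $\bar a=\bar\alpha$ (this is where the hypothesis that $\bar\alpha$ enumerates an algebraically closed set is used). So the task reduces to producing a group isomorphism $\M_{\bar\alpha}\cong NF_{\bar\alpha}(\C)$.

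The plan is to define an ``evaluation at the identity'' map $\Phi\colon\M_{\bar\alpha}\to NF_{\bar\alpha}(\C)$ and prove it is a group isomorphism by factoring through the machinery already developed for $\bar c$. Let $u_{\bar c}$ and $u_{\bar\alpha}$ denote the identities of the groups $NF_{\bar c}(\C)$ and $NF_{\bar\alpha}(\C)$. By the corollary characterizing $u_{\bar c}$ as the unique global non-forking extension of $\tp(\bar c/\acl^{eq}(\emptyset))$, together with the analogous statement for $u_{\bar\alpha}$, stationarity of types over algebraically closed sets, and the fact that restriction preserves non-forking in stable theories, one checks that $r(u_{\bar c})=u_{\bar\alpha}$, where $r\colon NF_{\bar c}(\C)\to NF_{\bar\alpha}(\C)$ is the restriction map (a group isomorphism by Remark \ref{remark: restriction is a homeomorphism}(i), (iii)).

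Define $\Phi(\eta):=\eta(u_{\bar\alpha})$; this is well-defined since $\im(\eta)=NF_{\bar\alpha}(\C)$ for every $\eta\in\M_{\bar\alpha}$ by the previous corollary. To analyze $\Phi$, I will use the composition
$$g:=\hat r\circ l\colon NF_{\bar c}(\C)\xrightarrow{l}l[\M_{\bar c}]\xrightarrow{\hat r}\M_{\bar\alpha},$$
where $l$ is the semigroup isomorphism $S_{\bar c}(\C)\cong EL(S_{\bar c}(\C))$ recalled at the beginning of this section, and $\hat r\colon EL(S_{\bar c}(\C))\to EL(S_{\bar\alpha}(\C))$ is the restriction epimorphism from Remark \ref{remark: very easy 2}. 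By the uniqueness of minimal left ideals in both Ellis semigroups (Proposition \ref{proposition: unique minimal left ideal} and the corollary preceding the present one), $g$ is a surjective semigroup homomorphism. The key computation is
$$\Phi(g(p))=\hat r(l_p)(u_{\bar\alpha})=\hat r(l_p)(r(u_{\bar c}))=r(l_p(u_{\bar c}))=r(p*u_{\bar c})=r(p),$$
using the defining equivariance $\hat r(\eta)\circ r=r\circ\eta$ of $\hat r$ and the fact that $u_{\bar c}$ is a two-sided identity in the group $NF_{\bar c}(\C)$. Hence $\Phi\circ g=r$.

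Since $r$ is a bijection and $g$ is surjective, $g$ must be injective, hence bijective; and then $\Phi=r\circ g^{-1}$ is a bijection. Moreover, for any $\eta_i=g(p_i)\in\M_{\bar\alpha}$, $\Phi(\eta_1\eta_2)=\Phi(g(p_1p_2))=r(p_1p_2)=r(p_1)r(p_2)=\Phi(\eta_1)\Phi(\eta_2)$, so $\Phi$ is a group homomorphism and therefore a group isomorphism. The main technical point is the identification $r(u_{\bar c})=u_{\bar\alpha}$, which unlocks the key computation; once that is in hand, the rest is formal manipulation using the semigroup epimorphism $\hat r$ and the explicit identification of $EL(S_{\bar c}(\C))$ with $S_{\bar c}(\C)$ afforded by stability.
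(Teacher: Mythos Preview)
Your proof is correct. The approach differs from the paper's, however, so a brief comparison is in order.

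The paper argues more intrinsically: by the previous corollary every $\eta\in\M_{\bar\alpha}$ has $\im(\eta)=NF_{\bar\alpha}(\C)$, so restriction gives a semigroup isomorphism $\M_{\bar\alpha}\to EL(NF_{\bar\alpha}(\C))$; and by Remark~\ref{remark: restriction is a homeomorphism}(i),(iii) the left-translation map $p\mapsto l_p$ (with $l_p(q)=p*q$) is an isomorphism $NF_{\bar\alpha}(\C)\to EL(NF_{\bar\alpha}(\C))$. Composing the inverse of the latter with the former gives the desired isomorphism, and unwinding shows it is exactly your $\Phi(\eta)=\eta(u_{\bar\alpha})$.

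Your route instead pushes everything up to the $\bar c$ case, which has already been worked out in full, and descends via $\hat r$ and $r$. The advantage is that you only invoke facts already completely proved (the identification $S_{\bar c}(\C)\cong EL(S_{\bar c}(\C))$, the epimorphism $\hat r$ of Remark~\ref{remark: very easy 2}, the group isomorphism $r$ of Remark~\ref{remark: restriction is a homeomorphism}); nothing about $EL(NF_{\bar\alpha}(\C))$ needs to be verified separately. The paper's route is shorter but leaves a little more to the reader (e.g.\ why the restriction $\M_{\bar\alpha}\to EL(NF_{\bar\alpha}(\C))$ is surjective, which ultimately uses that $EL(NF_{\bar\alpha}(\C))$ is already a group). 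One minor remark: to apply Remark~\ref{remark: very easy 2} you implicitly assume $\bar\alpha$ is a subsequence of $\bar c$, which is harmless but worth noting. Also, your identification $r(u_{\bar c})=u_{\bar\alpha}$ follows immediately from the fact that $r$ is a group isomorphism, so the appeal to stationarity there is unnecessary.
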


\begin{proof}



By Remark \ref{remark: restriction is a homeomorphism} (i) and (iii), $NF_{\bar \alpha}(\C)$ is  a group isomorphic with $\aut(\acl^{eq}(\emptyset))$ and the assignment $p \mapsto l_p$ (where $l_p(q)=p*q$) yields an isomorphism from $NF_{\bar \alpha}(\C)$ to $EL(NF_{\bar \alpha}(\C))$.

Since $NF_{\bar \alpha}(\C)$ coincides with the set of Lascar invariant types in $S_{\bar \alpha}(\C)$, by the previous corollary together with Proposition \ref{proposition: F for minimal flows}, 
we get that the restriction of the domains from $S_{\bar\alpha}(\C)$ to $NF_{\bar\alpha}(\C)$ yields a monomorphism from ${\mathcal M}_{\bar \alpha} \subseteq NF_{\bar \alpha}(\C)^{S_{\bar \alpha}(\C)}$ to $EL(NF_{\bar \alpha}(\C)) \subseteq NF_{\bar \alpha}(\C)^{NF_{\bar \alpha}(\C)}$ whose image is a minimal left ideal in $EL(NF_{\bar \alpha}(\C))$ and so coincides with $EL(NF_{\bar \alpha}(\C))$ (as  $EL(NF_{\bar \alpha}(\C))$ is a group by the first paragraph of this proof).
%
\end{proof}

\section{The NIP case}\label{section 6}

Throughout this section, we assume that $\C$ is a monster model of a theory $T$ with NIP. Let $\kappa$ be the degree of saturation of $\C$. As usual, $\bar c$ is an enumeration of $\C$, and ${\bar \alpha}$ is a short tuple in $\C$. Let $S$ be an arbitrary product of sorts (with repetitions allowed), and $X$ be a $\emptyset$-type-definable subset of $S$. Let ${\mathcal M}$ be a minimal left ideal of $EL(S_X(\C))$.

In the first subsection, after giving some characterizations of when the minimal left ideals in $EL(S_X(\C))$ are of bounded size, we prove the main result of the first subsection which yields several characterizations (in various terms) of when the minimal left ideals of $EL(S_{\bar c}(\C))$ are of bounded size.  We also make some observations and state questions concerning boundedness of the minimal left ideals of $EL(S_{\bar \alpha}(\C))$. 

In the second subsection, we give a better bound (than the one from Corollaries \ref{corollary: first part of main theorem 1} and \ref{corollary: final corollary concerning the size of the Ellis group}) on the size of the Ellis group of the flow $S_X(\C)$, and, as a consequence -- of the flows $S_{\bar c}(\C)$ and $S_{\bar \alpha}(\C)$. The main point is that instead of the set $R$ obtained in Section \ref{section 3} via contents, under the NIP assumption we can just use types invariant over a model.

In the last subsection, we adapt the proof of \cite[Theorem 5.7]{ChSi} to show Theorem \ref{theorem theorem from ChSi}.  We also find a counterpart of the epimorphism $f$ described in Section \ref{section 1} which goes from the Ellis group of $S_{\bar \alpha}(\C)$ to a certain new Galois group introduced in \cite{DoKiLe}, and we 
give an example showing that the obvious counterpart of Theorem \ref{theorem theorem from ChSi} does not hold for this new epimorphism.

\subsection{Characterizations of boundedness of minimal left ideals}\label{subsection 6.1}

Recall that all the time we assume NIP.

\begin{fct}\label{fact: forking and boundedness in NIP} 
Let $p \in S_X(\C)$. 
Then, the following conditions are equivalent.\\
i) The $\aut(\C)$-orbit of $p$ is of bounded size.\\
ii) $p$ does not fork over $\emptyset$.\\
iii) $p$ is Kim-Pillay invariant (i.e. invariant under $\autf_{KP}(\C)$).\\
iv) $p$ is Lascar invariant.
\end{fct}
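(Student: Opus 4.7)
The plan is to close the cycle (i)$\Leftrightarrow$(iv)$\Rightarrow$(ii)$\Rightarrow$(iii)$\Rightarrow$(iv). The equivalence (i)$\Leftrightarrow$(iv) is independent of NIP and is exactly Corollary \ref{corollary: boundedness of the orbit equivalent to Lascar invariance}, while (iii)$\Rightarrow$(iv) is immediate from the inclusion $\autf_L(\C)\subseteq\autf_{KP}(\C)$: a type fixed by the larger group is fixed by the smaller one. Hence only (iv)$\Rightarrow$(ii) and (ii)$\Rightarrow$(iii) require substantive argument, and both use NIP crucially.

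For (iv)$\Rightarrow$(ii), I would show that Lascar invariance excludes dividing over $\emptyset$ and then invoke the NIP identity of forking and dividing. Assume toward a contradiction that some $\varphi(\bar x,\bar a)\in p$ divides over $\emptyset$ (we may restrict to a finite subtuple of $\bar x$, which is harmless even if $X$ has unboundedly many variables), witnessed by an $\emptyset$-indiscernible sequence $(\bar a_i)_{i<\omega}$ with $\bar a_0=\bar a$ such that $\{\varphi(\bar x,\bar a_i):i<\omega\}$ is $k$-inconsistent for some $k$. The standard description of $E_L$ via indiscernible sequences gives $(\bar a_i,\bar a_0)\in E_L$ for every $i$, so Lascar invariance of $p$ forces $\varphi(\bar x,\bar a_i)\in p$ for all $i$, contradicting inconsistency.

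For (ii)$\Rightarrow$(iii), I would appeal to the classical theorem that in NIP the global non-forking types over $\emptyset$ are exactly the $\bdd(\emptyset)$-invariant types, which coincide with the $\autf_{KP}(\C)$-invariant types because $\autf_{KP}(\C)=\aut(\C/\bdd(\emptyset))$. Concretely, non-forking of $p$ over $\emptyset$ yields, in NIP, a small model $M$ over which $p$ is invariant; then $M$-invariance is upgraded to $\bdd(\emptyset)$-invariance by the standard argument that any pair of $\bdd(\emptyset)$-conjugates over $\emptyset$ can be realized inside a Morley sequence of $p$ over $M$, so NIP rules out the alternations that $p$ would need to exhibit to separate them.

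The main obstacle is precisely this last implication: a self-contained proof essentially reproves the non-forking $=$ $\bdd(\emptyset)$-invariance theorem in NIP, so in the write-up I would cite it from the standard literature rather than reprove it, and use only the easy steps (i)$\Leftrightarrow$(iv), (iii)$\Rightarrow$(iv) and the dividing argument for (iv)$\Rightarrow$(ii) directly.
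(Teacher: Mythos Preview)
Your proposal is essentially correct and close to the paper's approach, but one justification is wrong and worth correcting. In your argument for (iv)$\Rightarrow$(ii) you write that after showing Lascar invariance excludes dividing over $\emptyset$ you will ``invoke the NIP identity of forking and dividing''. NIP does \emph{not} give forking $=$ dividing over $\emptyset$ in general; by Fact~\ref{fact: forking equals dividing from CaKa}(ii) this is equivalent to $\emptyset$ being an extension base, which is precisely the non-automatic condition characterized later in Theorem~\ref{theorem: main characterization theorem under NIP}. What you actually need is the elementary (NIP-free) fact that a complete \emph{global} type forks over $A$ iff it contains a formula dividing over $A$: if $\varphi(\bar x,\bar b)\in p$ forks, it implies a finite disjunction of dividing formulas with parameters in $\C$, and completeness of $p$ over $\C$ forces one disjunct into $p$. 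So your step is right, only the label is wrong.

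For comparison, the paper runs the cycle as (i)$\Rightarrow$(ii)$\Rightarrow$(iii)$\Rightarrow$(iv)$\Rightarrow$(i). Its (i)$\Rightarrow$(ii) is the contrapositive of the same idea: a dividing formula in $p$ yields, via a $\emptyset$-indiscernible sequence of length $\kappa$, at least $\kappa$ distinct $\aut(\C)$-conjugates of $p$. This avoids citing Corollary~\ref{corollary: boundedness of the orbit equivalent to Lascar invariance} and the $E_L$-description of indiscernibles, trading them for a one-line counting argument; your route through (i)$\Leftrightarrow$(iv) is equally valid but slightly more roundabout. Both proofs quote the literature for (ii)$\Rightarrow$(iii), as you do.
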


\begin{proof}
(i) $\rightarrow$ (ii). Suppose $p$ forks over $\emptyset$. Then there is $\varphi(\bar x, \bar a) \in p$ which divides over $\emptyset$. So there is a tuple $(a_i)_{i<\kappa}$ in $\C$ which is indiscernible over $\emptyset$ and such that the sequence $\langle \varphi(\bar x, \bar a_i): i <\kappa\rangle $ is $k$-inconsistent for some $k<\omega$. This implies that the orbit $\aut(\C)p$ is of size at least $\kappa$ which is unbounded.\\[1mm]
The implication (ii) $\rightarrow$ (iii) is Proposition 2.11 of \cite{HrPi}. The implications (iii) $\rightarrow$ (iv) and (iv) $\rightarrow$ (i) are obvious.
\end{proof}

An immediate corollary of this fact and Proposition \ref{proposition: characterization of boundedness of minimal ideals} is

\begin{cor}\label{corollary: characterization of boundedness of minimal ideals in terms of non-forking}
The following conditions are equivalent.\\
i) ${\mathcal M}$ is of bounded size.\\
ii) For every $\eta \in {\mathcal M}$ the image $\im(\eta)$ consists of types which do not fork over $\emptyset$.\\
iii) For some $\eta \in EL$ the image $\im(\eta)$ consists of types which do not fork over $\emptyset$.
\end{cor}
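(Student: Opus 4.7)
The plan is to read off the result directly from the non-NIP characterization of boundedness already established, substituting in the NIP-specific equivalence between Lascar invariance and non-forking over $\emptyset$. More precisely, Proposition \ref{proposition: characterization of boundedness of minimal ideals} tells us that ${\mathcal M}$ is of bounded size if and only if $\im(\eta) \subseteq I_L$ for every (equivalently, for some) $\eta$ in ${\mathcal M}$ (equivalently, in $EL$). So the work has already been done; the only remaining task is to translate the condition ``$\im(\eta) \subseteq I_L$'' into the language of non-forking.

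To do this, I would invoke Fact \ref{fact: forking and boundedness in NIP}, which under the NIP assumption identifies the following three classes of global types in $S_X(\C)$: those with bounded orbit under $\aut(\C)$, those not forking over $\emptyset$, and the Lascar invariant ones. Applied pointwise to $\im(\eta)$, this immediately rewrites the condition $\im(\eta) \subseteq I_L$ as ``$\im(\eta)$ consists of types non-forking over $\emptyset$''. Substituting into the equivalence (i) $\liff$ (ii) $\liff$ (iii) of Proposition \ref{proposition: characterization of boundedness of minimal ideals} yields the desired statement.

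Finally, since Proposition \ref{proposition: characterization of boundedness of minimal ideals} was proved under the assumption that $S$ has boundedly many factors, I would appeal to Remark \ref{remark: boundedly many sorts irrelevant} to note that this assumption was never actually used in that characterization, so the equivalence (i) $\liff$ (ii) $\liff$ (iii) holds for $S$ of arbitrary (possibly unbounded) length, as required here. No genuine obstacle is expected: the corollary is purely a synthesis of the earlier Proposition, Fact, and Remark, and the proof amounts to citing them in sequence.
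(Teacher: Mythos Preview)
Your proposal is correct and matches the paper's own proof essentially verbatim: the paper simply states that the corollary is an immediate consequence of Fact~\ref{fact: forking and boundedness in NIP}, Proposition~\ref{proposition: characterization of boundedness of minimal ideals}, and Remark~\ref{remark: boundedly many sorts irrelevant}, which is precisely the synthesis you describe.
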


\begin{prop}\label{proposition: easy characterization with many items}
The following conditions are equivalent.\\
i) ${\mathcal M}$ is of bounded size.\\
ii) For every natural number $n$, for every types $q_1,\dots,q_n \in S_X(\C)$ and for every formulas $\varphi_1(\bar x,\bar a_1),\dots, \varphi_n(\bar x, \bar a_n)$ (where $\bar x$ corresponds to $S$) which fork over $\emptyset$, there exists $\sigma \in \aut(\C)$ such that $\sigma(\varphi_i(\bar x,\bar a_i)) \notin q_i$ for all $i=1,\dots, n$.\\
iii) The same condition as (ii) but with ``forking'' replaced by ``dividing''.\\
iv) For every natural number $n$, for every type $q \in S_X(\C)$ and for every formulas $\varphi_1(\bar x,\bar a_1),\dots, \varphi_n(\bar x, \bar a_n)$ (where $\bar x$ corresponds to $S$) which fork over $\emptyset$, there exists $\sigma \in \aut(\C)$ such that $\sigma(\varphi_i(\bar x,\bar a_i)) \notin q$ for all $i=1,\dots, n$.\\
v) The same condition as (iv) but with ``forking'' replaced by ``dividing''.\\
vi) For every $q \in S_X(\C)$ the closure $\cl(\aut(\C)q)$ of the $\aut(\C)$-orbit of $q$ contains a type which does not fork over $\emptyset$. 
\end{prop}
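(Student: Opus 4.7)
The plan is to prove the equivalence along the cycle $(i) \Rightarrow (ii) \Rightarrow (iii) \Rightarrow (v) \Rightarrow (vi) \Rightarrow (i)$, with the side implications $(ii) \Rightarrow (iv) \Rightarrow (v)$ coming essentially for free: take $q_1 = \cdots = q_n = q$ to deduce (iv) from (ii), and use that every dividing formula forks over $\emptyset$ (so the quantification in (ii) over all forking formulas is stronger than that in (iii) over only dividing ones, and similarly for (iv) vs (v)). The pivot throughout will be Corollary \ref{corollary: characterization of boundedness of minimal ideals in terms of non-forking}, which under NIP identifies bounded-size minimal left ideals with the existence of an $\eta \in EL(S_X(\C))$ whose image lies inside the set $NF \subseteq S_X(\C)$ of types not forking over $\emptyset$.

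For $(i) \Rightarrow (ii)$, I would pick $\eta \in \mathcal M$; by Corollary \ref{corollary: characterization of boundedness of minimal ideals in terms of non-forking}, each $\eta(q_i)$ is non-forking, so $\varphi_i(\bar x, \bar a_i) \notin \eta(q_i)$. Writing $\eta$ as the limit of a net $(\sigma_j)$ in $\aut(\C)$ and using that only finitely many formulas appear, some $j$ satisfies $\neg \varphi_i(\bar x, \bar a_i) \in \sigma_j(q_i)$ for every $i$ simultaneously, and then $\sigma := \sigma_j^{-1}$ works. For $(v) \Rightarrow (vi)$, I would fix $q$ and consider inside the compact space $\cl(\aut(\C) q)$ the closed sets $[\neg \varphi(\bar x, \bar a)] := \{ q' \in \cl(\aut(\C) q) : \varphi(\bar x, \bar a) \notin q'\}$ as $\varphi(\bar x, \bar a)$ ranges over formulas dividing over $\emptyset$; condition (v) provides $\sigma^{-1}(q)$ avoiding any prescribed finite list of such formulas, giving the finite intersection property, so by compactness the full intersection contains a type $q^* \in \cl(\aut(\C)q)$ containing no dividing formula. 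Since any forking formula implies a finite disjunction of dividing formulas and $q^*$ is complete, $q^*$ contains no forking formula either, so $q^* \in NF$ as required.

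The hard part will be $(vi) \Rightarrow (i)$. By Fact \ref{fact: forking and boundedness in NIP}, $NF$ coincides with the set of Lascar-invariant types in $S_X(\C)$, and so it is closed and $\aut(\C)$-invariant. I would first observe that (vi) forces every minimal $\aut(\C)$-subflow $Y$ of $S_X(\C)$ to lie inside $NF$: picking any $y_0 \in Y$, (vi) yields $y^* \in \cl(\aut(\C) y_0) \cap NF = Y \cap NF$, whence $Y = \cl(\aut(\C) y^*) \subseteq NF$ by closedness and $\aut(\C)$-invariance of $NF$. Then I would invoke the standard Ellis-semigroup observation that for every minimal left ideal $\mathcal M \subseteq EL(S_X(\C))$ and every $q \in S_X(\C)$ the set $\mathcal M \cdot q$ is a minimal $\aut(\C)$-subflow of $S_X(\C)$ (it is closed as the continuous image of the compact set $\mathcal M$ under evaluation at $q$, a subflow because $\mathcal M$ is a left ideal, and minimal because $\mathcal M \cdot \eta = \mathcal M$ for every $\eta \in \mathcal M$). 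Consequently $\mathcal M \cdot q \subseteq NF$ for every $q$, i.e.\ $\im(\eta) \subseteq NF$ for every $\eta \in \mathcal M$, and Corollary \ref{corollary: characterization of boundedness of minimal ideals in terms of non-forking} then delivers (i). The main obstacle is precisely this passage from the pointwise existence in (vi) to the uniform conclusion $\im(\eta) \subseteq NF$, bridged by the minimal-subflow property of the set $\mathcal M \cdot q$.
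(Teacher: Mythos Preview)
Your proposal is correct and follows essentially the same route as the paper's proof: the same trivial implications, the same use of Corollary~\ref{corollary: characterization of boundedness of minimal ideals in terms of non-forking} for $(i)\Rightarrow(ii)$, a compactness/FIP argument for $(v)\Rightarrow(vi)$ (the paper phrases this as taking a subnet limit of automorphisms, which amounts to the same thing), and for $(vi)\Rightarrow(i)$ the observation that $\mathcal{M}\cdot q$ is a minimal subflow of $S_X(\C)$ (the paper says this as ``$\pi_q$ is a flow homomorphism, so $\im(\pi_q)$ is minimal'') combined with the closedness and invariance of the set of non-forking types.
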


\begin{proof}
(i) $\rightarrow$ (ii).  Consider any types $q_1,\dots,q_n \in S_X(\C)$ and formulas $\varphi_1(\bar x,\bar a_1),\dots, \varphi_n(\bar x, \bar a_n)$ which fork over $\emptyset$. Take $\eta \in {\mathcal M}$. By Corollary \ref{corollary: characterization of boundedness of minimal ideals in terms of non-forking}, $\eta(q_i)$ does not fork over $\emptyset$,  and so $\neg \varphi_i(\bar x,\bar a_i) \in \eta(q_i)$, for all $i=1,\dots,n$. Since $\eta$ is approximated by automorphisms, there is $\sigma' \in \aut(\C)$ such that $\neg \varphi_i(\bar x,\bar a_i) \in \sigma'(q_i)$ for $i=1,\dots,n$. Thus, $\sigma: = \sigma'^{-1}$ does the job.\\[1mm]
The implications (ii) $\rightarrow$ (iii), (ii) $\rightarrow$ (iv), (iii) $\rightarrow$ (v), (iv) $\rightarrow$ (v) are obvious.\\[1mm]
(v) $\rightarrow$ (vi). This follows immediately by taking the limit of a convergent subnet of the net of inverses of the automorphisms which we get for all possible finite sequences of formulas as in (v). (Here, the partial directed order on finite sequences of dividing formulas is given by being a subsequence.)\\[1mm]
(vi) $\rightarrow$ (i). Consider any $\eta \in {\mathcal M}$. By Corollary \ref{corollary: characterization of boundedness of minimal ideals in terms of non-forking}, it is enough to show that $\eta(q)$ does not fork over $\emptyset$ for all $q \in S_X(\C)$. So take any $q \in S_X(\C)$.

Let $\pi_q \colon {\mathcal M} \to S_X(\C)$ be given by
$\pi_q(\xi)=\xi(q)$. This is a homomorphism of $\aut(\C)$-flows, so
$\im (\pi_q)$ is a minimal subflow of $S_X(\C)$, hence
$\im(\pi_q)=\cl(\aut(\C)p)$ for every $p\in\im(\pi_q)$. By (vi), there
is $p\in\im(\pi_q)$ that does not fork over $\emptyset$. By invariance
and closedness of the collection of non-forking types, every type in
$\im(\pi_q)$ does not fork over $\emptyset$, in particular
$\pi_q(\eta)=\eta(q)$ does not fork over $\emptyset$. 
%
\end{proof}

We will say that a formula $\varphi(\bar x, \bar a)$ is {\em weakly invariant in $X$} if the collection of formulas $\{\sigma(\varphi(\bar x,\bar a)): \sigma \in \aut(\C)\} \cup X$ is consistent (where $X$ is treated as a partial type over $\emptyset$). Equivalently, this means that the collection of formulas   $\{\sigma(\varphi(\bar x,\bar a)): \sigma \in \aut(\C)\}$ extends to a type in $S_X(\C)$. 

\begin{cor}\label{corollary: bounded = weakly invariant do not fork}
${\mathcal M}$ is of bounded size if and only if each formula which is weakly invariant in $X$ does not fork over $\emptyset$.
\end{cor}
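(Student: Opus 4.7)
My plan is to deduce this corollary from Proposition \ref{proposition: easy characterization with many items}, specifically using condition (iv), since that condition packages exactly the information about formulas appearing in some type via the $\aut(\C)$-action. The key combinatorial observation to be used in the harder direction is that a finite disjunction of formulas forking over $\emptyset$ again forks over $\emptyset$ (each disjunct implies a finite disjunction of dividing formulas, so the whole disjunction does too).

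For the forward implication, I would argue directly: suppose ${\mathcal M}$ is of bounded size and let $\varphi(\bar x, \bar a)$ be weakly invariant in $X$. Then by definition there exists $q \in S_X(\C)$ with $\sigma(\varphi(\bar x, \bar a)) \in q$ for every $\sigma \in \aut(\C)$. If $\varphi(\bar x, \bar a)$ forked over $\emptyset$, then condition (iv) of Proposition \ref{proposition: easy characterization with many items} (applied with $n=1$) would produce some $\sigma \in \aut(\C)$ with $\sigma(\varphi(\bar x, \bar a)) \notin q$, a contradiction. Hence $\varphi(\bar x, \bar a)$ does not fork over $\emptyset$.

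For the converse, I would argue the contrapositive. Assume ${\mathcal M}$ is of unbounded size. Then condition (iv) of Proposition \ref{proposition: easy characterization with many items} fails, so there exist $q \in S_X(\C)$ and formulas $\varphi_1(\bar x, \bar a_1), \dots, \varphi_n(\bar x, \bar a_n)$, each forking over $\emptyset$, such that for every $\sigma \in \aut(\C)$ at least one of the formulas $\sigma(\varphi_i(\bar x, \bar a_i))$ belongs to $q$. Viewing $(\bar a_1, \dots, \bar a_n)$ as a single tuple $\bar a$, set
\[
\varphi(\bar x, \bar a) := \varphi_1(\bar x, \bar a_1) \vee \dots \vee \varphi_n(\bar x, \bar a_n).
\]
Then $\sigma(\varphi(\bar x, \bar a)) \in q$ for every $\sigma \in \aut(\C)$ (since $q$ is closed under weakening), hence $\{\sigma(\varphi(\bar x,\bar a)) : \sigma \in \aut(\C)\} \cup X \subseteq q$ is consistent, so $\varphi(\bar x, \bar a)$ is weakly invariant in $X$. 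By the disjunction-of-forking observation above, $\varphi(\bar x, \bar a)$ forks over $\emptyset$, which contradicts the assumed hypothesis and completes the proof.

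I do not expect a genuine obstacle: the reduction is essentially bookkeeping once one identifies condition (iv) as the right reformulation of boundedness. The only point worth being careful about is treating a formula with parameter tuples $\bar a_1, \dots, \bar a_n$ as a single formula $\varphi(\bar x, \bar a)$ with the concatenated parameter $\bar a$, so that ``forking of $\varphi$'' and ``weak invariance of $\varphi$'' refer to the same instance.
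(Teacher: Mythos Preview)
Your proof is correct and essentially follows the paper's approach. The converse direction is identical in spirit: the paper also reduces condition (iv) to the case $n=1$ by implicitly using that a finite disjunction of forking formulas forks, and then observes that a forking formula cannot be weakly invariant in $X$ (you phrase this as a contrapositive, the paper phrases it directly, but the content is the same). For the forward direction, the paper gives a slightly more self-contained argument via the flow map $\eta \mapsto \eta(p)$ and Corollary \ref{corollary: characterization of boundedness of minimal ideals in terms of non-forking}, noting that this implication does not require NIP; your route through (i)$\Rightarrow$(iv) of Proposition \ref{proposition: easy characterization with many items} is equally valid in the NIP setting of this section, just marginally less direct.
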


\begin{proof}
($\rightarrow$) This implication does not require NIP, although the
  NIP assumption greatly simplifies the proof. Suppose for a
  contradiction that $\pi(\bar x):=\{\sigma(\varphi(\bar x,\bar a)):
  \sigma \in \aut(\C)\}$ extends to a type $p \in S_X(\C)$, but
  $\varphi(\bar x, \bar a)$ forks over $\emptyset$. Note that for
  every $\sigma \in \aut(\C)$, $\pi(\bar x) \subseteq \sigma (p)$ and
  $\pi(\bar x)$ forks over $\emptyset$. Using NIP, by Proposition
  \ref{proposition: easy characterization with many items}, we get
  $\sigma\in\aut(\C)$ with $\sigma(\varphi(\bar x,\bar a))\not\in p$,
  a contradiction. Without NIP we proceed as follows.

  Let $[\pi(\bar x)]=\{ q \in S_X(\C): \pi(\bar x) \subseteq q\}$. 
Let $\pi_p \colon EL(S_X(\C)) \to S_X(\C)$ be given by
$\pi_p(\eta)=\eta(p)$. This is a homomorphism of
$\aut(\C)$-flows. Therefore, $\im (\pi_p) = \cl (\aut(\C)p) \subseteq
[\pi(\bar x)]$. So for every $\eta \in EL(S_X(\C))$, $\eta(p)$ forks
over $\emptyset$. By (1)$\rightarrow$(2) in Fact \ref{fact: forking
  and boundedness in NIP} (which does not use NIP), the orbit
$\aut(\C)\eta(p)$ is of unbounded size which implies that every
$\aut(\C)$-orbit in $EL(S_X(\C))$ is of unbounded size, and so is
$\M$, a contradiction.

($\leftarrow$)  We check that item (iv) from Proposition \ref{proposition: easy characterization with many items} holds. Since in (iv) we are talking about forking (and not about dividing), it is enough to show that for any $q \in S_X(\C)$ and a formula $\varphi(\bar x, \bar a)$ which forks over $\emptyset$ there is $\sigma \in \aut(\C)$ such that $\sigma(\varphi(\bar x, \bar a)) \notin q$. But this is clear, because by assumption, the fact that $\varphi(\bar x, \bar a)$ forks over $\emptyset$ implies that $\{\sigma(\varphi(\bar x,\bar a)): \sigma \in \aut(\C)\} \cup X$ is inconsistent.
\end{proof}

Note that each formula $\varphi(\bar x,\bar a)$ weakly invariant in
$X$ does not divide over $\emptyset$, and even $\varphi(\bar x,\bar a)
\cup X$ does not divide over $\emptyset$. Therefore, by the last
corollary, if for each formula $\varphi(\bar x,\bar a)$ such that
$\varphi(\bar x,\bar a) \cup X$ does not divide over $\emptyset$ we
have that $\varphi(\bar x,\bar a) \cup X$ does not fork over
$\emptyset$ (in such a case, we will say that {\em forking equals
  dividing on $X$}), then ${\mathcal M}$ is of bounded size. Does the
converse hold? 

\begin{ques}\label{question: bounded = forking equals dividing}
Is it true that ${\mathcal M}$ is of bounded size if and only if forking equals dividing on $X$?
\end{ques}

Now, we recall a few notions. A subset $D$ of a point-transitive $G$-flow $Y$ (i.e. $Y=\cl(Gy)$ for some $y \in Y$) is said to be: 
\begin{itemize}
\item {\em generic} (or {\em syndetic}) if finitely many translates of $D$ by elements of $G$ cover $Y$,
\item {\em weakly generic} if there is a non-generic $F \subseteq Y$ such that $D \cup F$ is generic.
\end{itemize}
The first notion is classical in topological dynamics and in model
theory. The second one was introduced by the second author as a
substitute for the notion of a generic set in the unstable
context. Recall that $p \in Y$ is called {\em almost periodic} if it
belongs to a minimal subflow of $Y$; it is {\em [weakly] generic} if
every open subset of $Y$ containing $p$ is [weakly] generic. The
second author  proved that the set of all weakly generic points is the
closure of the set of all almost periodic points, and he suggested
that in the context of a group $G$ definable in a model $M$ acting on
$S_G(M)$, the notion of a weak generic type may be the right
substitute of the notion of a generic type in an unstable context \cite{Ne1}. 

Let  us return to our context. From now on, we will consider the case where $X=\tp(\bar c/\emptyset)$ or $X=\tp(\bar \alpha/\emptyset)$. Then $S_X(\C)$ is a point-transitive $\aut(\C)$-flow (it is an $\aut(\C)$-ambit with the distinguished point $\tp(\bar c/\emptyset)$ or $\tp(\bar \alpha/\emptyset)$). We adapt the above general terminology to our context and say that a formula $\varphi(\bar x, \bar a)$ is {\em [weakly] generic in $X$} if  $[\varphi(\bar x,\bar a)] \cap S_X(\C)$ is a [weakly] generic subset of the $\aut(\C)$-flow $S_X(\C)$. Note that a formula $\varphi(\bar x, \bar a)$ is weakly invariant in $X$ (in our earlier terminology) if and only if $\neg \varphi(\bar x, \bar a)$ is not generic in $X$. A type $p \in S_X(\C)$ is {\em [weakly] generic} if it is so as an element of the $\aut(\C)$-flow $S_X(\C)$, which is equivalent to saying that every formula in $p$ is [weakly] generic in $X$. These notions were considered in this context already in \cite{NePe} under the name of ``[weakly] c-free'' instead of ``[weakly] generic''. Similarly, $p$ is said to be {\em almost periodic} if it is so as an element of the flow. 

Recall that a set $A$ is called an {\em extension base} if every type $p \in S(A)$ does not fork over $A$. This is equivalent to saying that each $p \in S(A)$ has a non-forking extension to any given superset of $A$. 
The following facts come from \cite{ChKa}. 

\begin{fct}\label{fact: forking equals dividing from CaKa}
Recall that we assume NIP.\\
i) Each model is an extension base (also without NIP).\\
ii) A set $A$ is an extension base if and only if forking equals dividing over $A$ (i.e. a formula $\varphi(\bar y, \bar a)$ divides over $A$ iff it
forks over A).
\end{fct}

Now, we will prove our main characterization result in the case of $X=\tp(\bar c/\emptyset)$. In particular, it contains Proposition \ref{proposition: characterization of boundedness in NIP theories}.

\begin{thm}\label{theorem: main characterization theorem under NIP}
Let $X = \tp(\bar c/\emptyset)$ (so ${\mathcal M}$ is a minimal left ideal of $EL(S_{\bar c}(\C)$). Then the following conditions are equivalent.\\
i) $\emptyset$ is an extension base.\\
ii) Forking equals dividing on $X$.\\
iii) The weakly generic formulas in $X$ do not fork over $\emptyset$.\\
iv) The almost periodic types in $S_{\bar c}(\C)$ have orbits of bounded size.\\
v) ${\mathcal M}$ is of bounded size.\\
vi) There is a type in $S_{\bar c}(\C)$ whose $\aut(\C)$-orbit is of bounded size; equivalently, there is a type in $S_{\bar c}(\C)$ which does not fork over $\emptyset$; equivalently, $\tp(\bar c/\emptyset)$ does not fork over $\emptyset$.
\end{thm}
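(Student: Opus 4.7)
My plan is to establish the cycle
\[(\mathrm{i})\Leftrightarrow(\mathrm{ii})\Leftrightarrow(\mathrm{vi})\Rightarrow(\mathrm{iv})\Rightarrow(\mathrm{v})\Rightarrow(\mathrm{vi}),\quad (\mathrm{iii})\Leftrightarrow(\mathrm{iv}),\]
so that the six conditions become equivalent. The equivalence $(\mathrm{i})\Leftrightarrow(\mathrm{ii})$ is Fact~\ref{fact: forking equals dividing from CaKa}(ii). For $(\mathrm{i})\Leftrightarrow(\mathrm{vi})$ I exploit that $\bar c$ enumerates $\C$: every type over $\emptyset$ is (by saturation) a restriction of $\tp(\bar c/\emptyset)$, so $\emptyset$ is an extension base iff $\tp(\bar c/\emptyset)$ has a non-forking extension to $\C$. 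The trivial bridge $(\mathrm{v})\Rightarrow(\mathrm{vi})$ follows at once from Corollary~\ref{corollary: characterization of boundedness of minimal ideals in terms of non-forking}(ii): any $\eta\in\mathcal{M}$ gives $\eta(\tp(\bar c/\C))$ as a non-forking type in $S_{\bar c}(\C)$.

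The block $(\mathrm{iii})\Leftrightarrow(\mathrm{iv})\Leftrightarrow(\mathrm{v})$ is handled by topological-dynamical tools together with Fact~\ref{fact: forking and boundedness in NIP}. For $(\mathrm{iii})\Leftrightarrow(\mathrm{iv})$ I invoke Newelski's theorem that the weakly generic types in $S_{\bar c}(\C)$ are exactly the closure of the almost periodic types: under (iii) every formula in an AP type is weakly generic hence non-forking, so the AP type is non-forking and by Fact~\ref{fact: forking and boundedness in NIP} has bounded orbit; conversely, under (iv) AP types are non-forking by NIP, the non-forking locus is closed and $\aut(\C)$-invariant so contains all weakly generic types, and a weakly generic clopen set $[\varphi]$ contains an AP point, yielding a non-forking type through $\varphi$. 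For $(\mathrm{iv})\Rightarrow(\mathrm{v})$ I verify condition~(vi) of Proposition~\ref{proposition: easy characterization with many items}: given $q\in S_{\bar c}(\C)$, take a Zorn-minimal subflow $N\subseteq\cl(\aut(\C)q)$; its points are AP, so by (iv) and NIP they are non-forking, witnessing that $\cl(\aut(\C)q)$ meets the non-forking locus.

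The main obstacle is $(\mathrm{vi})\Rightarrow(\mathrm{iv})$. Assume $\emptyset$ is an extension base and let $p$ be AP; suppose toward contradiction that some $\varphi(\bar x,\bar a)\in p$ divides over $\emptyset$. By Fact~\ref{fact: forking equals dividing from CaKa}(iii) I take an $\emptyset$-Morley sequence $(\bar a_i)_{i<\omega}$ starting at $\bar a$ with $\{\varphi(\bar x,\bar a_i)\}$ being $k$-inconsistent. Almost periodicity of $p$ forces a finite $F\subseteq\aut(\C)$ with $N=\cl(\aut(\C)p)\subseteq\bigcup_{f\in F}[\varphi(\bar x,f(\bar a))]$. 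Choosing $\sigma_i\in\aut(\C)$ with $\sigma_i(\bar a)=\bar a_i$ and pigeonholing on $F$ along the conjugates $\sigma_i p\in N$ yields a single $f^*\in F$ and an infinite $I\subseteq\omega$ such that $\varphi(\bar x,\sigma_i^{-1}f^*(\bar a))\in p$ for $i\in I$. The crux is then, using strong homogeneity of $\C$ to pre-choose the $\sigma_i$'s coherently with a Morley sequence of the pair $(\bar a,f^*(\bar a))$ over $\emptyset$, to invoke NIP and Ramsey to extract an indiscernible sub-sequence $(\bar b_{i_j})_{j<\omega}$ of the parameters $(\sigma_i^{-1}f^*(\bar a))_{i\in I}$ that still witnesses $k$-inconsistency of $\varphi$; since all $\varphi(\bar x,\bar b_{i_j})$ lie in the single type $p$, this contradicts $k$-inconsistency. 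Reconciling the AP covering with a dividing witness along the same indiscernible sub-sequence is the step I expect to be the main technical difficulty, as it requires a careful choice of $\sigma_i$ respecting both the syndetic constraint from AP and the Morley-sequence structure from Fact~\ref{fact: forking equals dividing from CaKa}(iii).
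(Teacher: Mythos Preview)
Your direct attack on $(\mathrm{vi})\Rightarrow(\mathrm{iv})$ can be made to work, but not as written. Moving $p$ by $\sigma_i$ produces parameters $\sigma_i^{-1}(f^*(\bar a))$ that you cannot control, and the circularity you flag ($f^*$ is determined by pigeonhole only \emph{after} the $\sigma_i$'s are chosen) is genuine; the Ramsey extraction you propose does not recover $k$-inconsistency from an arbitrary indiscernible subsequence of the $\sigma_i^{-1}(f^*(\bar a))$'s. The fix is to apply $\sigma_i$ to the \emph{covering} rather than to $p$: since $N$ is $\aut(\C)$-invariant, from $N\subseteq\bigcup_{f\in F}[\varphi(\bar x,f(\bar a))]$ one gets $p\in N=\sigma_i(N)\subseteq\bigcup_{f\in F}[\varphi(\bar x,\sigma_i(f(\bar a)))]$ for every $i$. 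Now apply Fact~\ref{fact: forking equals dividing from CaKa}(iii) to the \emph{concatenated} tuple $\bar a'=(f(\bar a))_{f\in F}$, choose $\sigma_i$ sending $\bar a'$ to the $i$-th term of the resulting indiscernible sequence, and observe that for each fixed $j$ the family $\{\varphi(\bar x,\sigma_i(f_j(\bar a)))\}_i$ is $k_j$-inconsistent (since each $\varphi(\bar x,f_j(\bar a))$ divides). Pigeonhole on $j$ then places a $k$-inconsistent family inside the single type $p$, a contradiction. No Ramsey is needed, and the circularity disappears because the finite set $F$ is fixed before the $\sigma_i$'s are chosen. With this correction your route gives a clean direct proof of $(\mathrm{i})\Rightarrow(\mathrm{iv})$, a genuine alternative to the paper's path.

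Your $(\mathrm{iv})\Rightarrow(\mathrm{iii})$, however, has a real gap: Newelski's theorem says the weakly generic \emph{points} are the closure of the almost periodic ones, but it does not directly say that every weakly generic \emph{clopen set} contains an almost periodic point --- that is a statement about sets, not points, and you give no argument for it. The paper sidesteps this by running the cycle as $(\mathrm{i})\to(\mathrm{ii})\to(\mathrm{iii})\to(\mathrm{iv})\to(\mathrm{v})\to(\mathrm{vi})\to(\mathrm{i})$, proving $(\mathrm{ii})\Rightarrow(\mathrm{iii})$ directly: if $\varphi$ is weakly generic (witnessed by non-generic $\psi$ with $\varphi\lor\psi$ generic) and forks, then the disjunction $\varphi'$ of the finitely many translates of $\varphi$ needed to cover (together with $\psi'$) also forks, hence by (ii) $\varphi'\cup X$ divides; so some finite conjunction of translates of $\varphi'$ is inconsistent with $X$, and a two-line covering argument then shows finitely many translates of $\psi$ cover $S_X(\C)$, contradicting non-genericity of $\psi$. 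Since you already have (ii), this is the cheapest way to bring (iii) into the equivalence.
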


\begin{proof}
(i) $\rightarrow$ (ii). By Fact \ref{fact: forking equals dividing from CaKa}(ii), point (i) implies that forking equals dividing over $\emptyset$, let alone forking equals dividing on $X$.\\[1mm]
(ii) $\rightarrow$ (iii). 
This is essentially \cite[Lemma 5.9]{NePe}.
Consider $\varphi(\bar x, \bar a)$ which is weakly generic in $X$. Take a formula $\psi(\bar x, \bar b)$ which is non-generic in $X$ and such that $\varphi(\bar x, \bar a) \lor \psi(\bar x, \bar b)$ is generic in $X$. Then there are $\sigma_1, \dots, \sigma_n \in \aut(\C)$ such that 
for $\varphi'(\bar x, \bar a'):=\sigma_1(\varphi(\bar x, \bar a)) \lor \dots \lor \sigma_n(\varphi(\bar x, \bar a))$ and $\psi'(\bar x, \bar b'):=\sigma_1(\psi(\bar x, \bar b)) \lor \dots \lor \sigma_n(\psi(\bar x, \bar b))$ one has 
$$[\varphi'(\bar x,\bar a') \lor \psi'(\bar x, \bar b')] \cap S_X(\C) =S_X(\C).$$

Suppose for a contradiction that $\varphi(\bar x, \bar a)$ forks over $\emptyset$. Then $\varphi'(\bar x, \bar a')$ also forks over $\emptyset$. 
By (ii), $\varphi'(\bar x, \bar a') \cup X$ divides over $\emptyset$, so there are $\tau_1, \dots, \tau_m \in \aut(\C)$ such that 
$$[\tau_1(\varphi'(\bar x, \bar a')) \wedge \dots \wedge \tau_m(\varphi'(\bar x, \bar a'))] \cap S_X(\C)=\emptyset.$$ 
Since $([\tau_i(\varphi'(\bar x,\bar a'))] \cup [\tau_i[\psi'(\bar x, \bar b')]) \cap S_X(\C) =S_X(\C)$ for all $i=1,\dots,m$, we get that $S_X(\C) \setminus \bigcup_{i \leq m} [\tau_i(\psi'(\bar x,\bar b'))] \subseteq \bigcap_{i \leq m}  [\tau_i(\varphi'(\bar x, \bar a')] \cap S_X(\C)= \emptyset$. Hence, $\psi(\bar x, \bar b)$ is generic in $X$, a contradiction.\\[1mm]
(iii) $\rightarrow$ (iv). Each almost periodic type is weakly generic
  \cite[Corollary 1.8]{Ne1}, hence it does not fork over $\emptyset$ by (iii), and so its orbit is of bounded size by Fact \ref{fact: forking and boundedness in NIP}.\\[1mm]
(iv) $\rightarrow$ (v). Since for every $q \in S_X(\C)$ the function $\pi_q \colon {\mathcal M} \to S_X(\C)$ defined by $\pi_q(\eta):=\eta(q)$ is a homomorphism of $\aut(\C)$-flows, we see that $\im (\pi_q)$ is a minimal subflow of $S_X(\C)$. Thus, we get that for every $\eta \in {\mathcal M}$, $\im(\eta)$ consists of almost periodic types, which by (iv) implies that each type in $\im (\eta)$ has bounded orbit. Hence, by Fact \ref{fact: forking and boundedness in NIP}, for every $\eta \in {\mathcal M}$, $\im(\eta)$ consists of types which do not fork over $\emptyset$, so ${\mathcal M}$ is of bounded size by Corollary \ref{corollary: characterization of boundedness of minimal ideals in terms of non-forking}.\\[1mm]
%
(v) $\rightarrow$ (vi). It is enough to take any $\eta \in {\mathcal M}$ and a type in the image of $\eta$. The orbit of this type is of bounded size by Proposition \ref{proposition: characterization of boundedness of minimal ideals}.\\[1mm]
(vi) $\rightarrow$ (i). Since $\bar c$ enumerates the whole monster model, the fact that some type in $S_{\bar c}(\C)$ does not fork over $\emptyset$ implies that every type in $S(\emptyset)$ (in arbitrary variables) does not fork over $\emptyset$, i.e. $\emptyset$ is an extension base. 
\end{proof}

Alternatively, one could prove (i) $\rightarrow$ (v) directly.
Namely, by (i) $\rightarrow$ (ii),  if $\emptyset$ is an extension base, then forking equals dividing on $X$. By the paragraph following Corollary \ref{corollary: bounded = weakly invariant do not fork}, we conclude that ${\mathcal M}$ is of bounded size.
 
Question \ref{question: bounded = forking equals dividing} is quite interesting. 
Note that Theorem \ref{theorem: main characterization theorem under NIP} shows that the answer is positive in the case when $X=\tp(\bar c/\emptyset)$.

By the above theorem and \cite[Corollary 2.10]{HrPi}, we immediately get

\begin{cor}\label{corollary: G-compactness}
If ${\mathcal M}$ is of bounded size, then the theory $T$ is G-compact.
\end{cor}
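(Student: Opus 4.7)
The plan is a direct two-step derivation from what has just been established. Assuming that ${\mathcal M}$ is of bounded size, I would first apply the implication (v) $\Rightarrow$ (i) of Theorem \ref{theorem: main characterization theorem under NIP} to conclude that $\emptyset$ is an extension base, i.e.\ every complete type over $\emptyset$ (in any variables) has a global non-forking extension.

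Next I would invoke \cite[Corollary 2.10]{HrPi}, which asserts that under NIP, whenever $A$ is an extension base, the Lascar strong type relation $E_L$ and the Kim--Pillay strong type relation $E_{KP}$ over $A$ coincide on the set of realizations of each complete type over $A$. Specializing this to $A = \emptyset$ and letting the relation run over all sorts, we obtain $\autf_L(\C) = \autf_{KP}(\C)$, so that $\gal_0(T) = \autf_{KP}(\C)/\autf_L(\C)$ is trivial and the canonical map $h \colon \gal_L(T) \to \gal_{KP}(T)$ is an isomorphism; this is exactly G-compactness of $T$.

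There is really no obstacle here, since all the content was absorbed into Theorem \ref{theorem: main characterization theorem under NIP} and the cited Hrushovski--Pillay corollary. The only point worth flagging is that extension basehood of $\emptyset$ must be understood uniformly across all sorts of the language (which is automatic in our setting, as we work with $\bar c$ enumerating the entire monster), so that the Hrushovski--Pillay conclusion $E_L = E_{KP}$ can indeed be applied at once on every $\emptyset$-type-definable set and thereby yield triviality of the whole group $\gal_0(T)$ rather than only of its restriction to some fixed sort.
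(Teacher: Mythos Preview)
Your proof is correct and follows essentially the same approach as the paper, which simply states that the corollary follows immediately from Theorem \ref{theorem: main characterization theorem under NIP} and \cite[Corollary 2.10]{HrPi}. Your write-up merely unpacks this one-line justification.
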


The proof of Theorem \ref{theorem: main characterization theorem under NIP} yields the next proposition.
\begin{prop}\label{proposition: only one direction for bar alpha}
Let $X = \tp(\bar \alpha/\emptyset)$ (so ${\mathcal M}$ is a minimal left ideal of $EL(S_{\bar \alpha}(\C)$). Consider the following conditions.\\
i) $\emptyset$ is an extension base.\\
ii) Forking equals dividing on $X$.\\
iii) The weakly generic formulas in $X$ do not fork over $\emptyset$.\\
iv) The almost periodic types in $S_{\bar \alpha}(\C)$ have orbits of bounded size.\\
v) ${\mathcal M}$ is of bounded size.\\
vi) $\tp(\bar \alpha/\emptyset)$ does not fork over $\emptyset$.\\
Then (i) $\rightarrow$ (ii) $\rightarrow$ (iii) $\rightarrow$ (iv) $\rightarrow$ (v) $\rightarrow$ (vi). 
\end{prop}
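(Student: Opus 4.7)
The plan is to observe that the implications (i) $\to$ (ii) $\to$ (iii) $\to$ (iv) $\to$ (v) established in Theorem \ref{theorem: main characterization theorem under NIP} never used anything specific to the choice $X=\tp(\bar c/\emptyset)$ beyond the fact that $X$ is a $\emptyset$-type-definable set and that $S_X(\C)$ is a point-transitive $\aut(\C)$-flow (which here is the $\aut(\C)$-ambit with distinguished point $\tp(\bar\alpha/\emptyset)$). So the strategy is simply to re-run those four implications verbatim in the new setting and then verify (v) $\to$ (vi) by itself.

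For (i) $\to$ (ii), Fact \ref{fact: forking equals dividing from CaKa}(ii) gives forking $=$ dividing over $\emptyset$, which restricts to the same statement on $X$. For (ii) $\to$ (iii), I would repeat the argument from Theorem \ref{theorem: main characterization theorem under NIP}: a weakly generic formula in $X$ admits a non-generic witness $\psi$ with $\varphi\vee\psi$ generic; if $\varphi$ forked, dividing (on $X$) of a finite disjunction $\varphi'$ would force a finite disjunction of conjugates of $\psi$ to cover $S_X(\C)$, contradicting non-genericity of $\psi$. For (iii) $\to$ (iv): every almost periodic type in $S_X(\C)$ is weakly generic, so by (iii) it does not fork over $\emptyset$, whence, by Fact \ref{fact: forking and boundedness in NIP}, its orbit is bounded. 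For (iv) $\to$ (v): for any $q \in S_X(\C)$ the map $\pi_q\colon\mathcal M \to S_X(\C)$, $\eta\mapsto\eta(q)$, is an $\aut(\C)$-flow morphism, so $\im(\pi_q)$ is a minimal subflow, hence consists of almost periodic types; by (iv) these have bounded orbits, so Corollary \ref{corollary: characterization of boundedness of minimal ideals in terms of non-forking} gives that $\mathcal M$ is of bounded size.

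The remaining step is (v) $\to$ (vi). Pick any $\eta\in\mathcal M$ and put $q=\eta(\tp(\bar\alpha/\emptyset))$. By Proposition \ref{proposition: characterization of boundedness of minimal ideals}, $q$ is Lascar invariant, hence has bounded $\aut(\C)$-orbit; Fact \ref{fact: forking and boundedness in NIP} then gives that $q$ does not fork over $\emptyset$. Since $q$ extends $\tp(\bar\alpha/\emptyset)$, the partial type $\tp(\bar\alpha/\emptyset)$ itself does not fork over $\emptyset$, which is (vi).

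No step here is genuinely hard; the real content is the observation that the chain of implications in Theorem \ref{theorem: main characterization theorem under NIP} is modular in $X$. The one place where the $\bar c$-case is strictly stronger than the $\bar\alpha$-case is the converse (vi) $\to$ (i): in the $\bar c$ setting, non-forking of $\tp(\bar c/\emptyset)$ automatically yields non-forking of \emph{every} $\emptyset$-type (so $\emptyset$ is an extension base), while for a short tuple $\bar\alpha$ this inference is unavailable, which is exactly why only one direction of the equivalence is claimed.
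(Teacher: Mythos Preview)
Your proposal is correct and matches the paper's approach exactly: the paper simply states that the proof of Theorem \ref{theorem: main characterization theorem under NIP} yields this proposition, and you have faithfully spelled out how each implication carries over to the $\bar\alpha$ setting. One tiny notational slip: in (v) $\to$ (vi) you write $q=\eta(\tp(\bar\alpha/\emptyset))$, but $\eta$ acts on $S_{\bar\alpha}(\C)$, so you mean $q=\eta(\tp(\bar\alpha/\C))$ (or any type in $\im(\eta)$); the argument is unaffected.
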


\begin{ques}\label{question: characterization for bar alpha}
Are Conditions (ii) - (v) in the above proposition equivalent? 
\end{ques}

To see that (v) does not imply (i), even under NIP, take a 2-sorted structure with sorts $S_1$ and $S_2$ such that there is no structure on $S_1$, there are no interactions between $S_1$ and $S_2$, and the structure on $S_2$ is such that $\emptyset$ is not an extension base. Then, taking any $\alpha \in S_1$, we get that 
the minimal left ideal of $EL(S_{\alpha}(\C))$ is trivial, but (i) does not hold.

It is more delicate to build an example showing that (vi) does not imply (v). This will be dealt with in the appendix.

\subsection{A better bound on the size of the Ellis group}\label{subsection 6.2}

Under the NIP assumption, we will give a better bound on the size of the Ellis group of the flow $S_X(\C)$ than the one in Corollaries \ref{corollary: first part of main theorem 1} and \ref{corollary: final corollary concerning the size of the Ellis group}. 
Instead of contents and the family $R$ of types used in Section \ref{section 3}, we will show that one can use the family of global types invariant over a given small model $M$. The families of global types invariant over $M$ considered over monster models $\C_1 \succ \C_2$ can be also used instead of the families $R_{\C_1}$ and $R_{\C_2}$  obtained in Corollary \ref{corollary: corollary of technical lemmas} to get absoluteness of the Ellis group (namely, using these families, one can still show Lemmas \ref{lemma: for every f there is g}, \ref{lemma: for every g there is f}, \ref{lemma: existence of v such that Im(u)=Im(v)}, and \ref{lemma: the image of Im(F1) equals Im(F2)}, to get Corollary \ref{corollary: absoluteness of the Ellis group}; the further material on $\tau$-topologies also goes through).

So fix a small model $M \prec \C$, and let $I_M$ be the family of all types in $S_X(\C)$ which are invariant over $M$. Note that by Fact \ref{fact: forking and boundedness in NIP}, $I_M$ coincides with the set of all types in $S_X(\C)$ which do not fork over $M$. 
Recall that ${\mathcal M}$ denotes a minimal left ideal of $EL(S_X(\C))$.
The key observation is the following.

\begin{prop}\label{proposition: Im(u) contained in invariant types}
There exists an idempotent $u \in {\mathcal M}$ such that $\im(u) \subseteq I_M$.
\end{prop}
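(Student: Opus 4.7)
The plan is to mirror the strategy used for (i)$\Rightarrow$(v) in Theorem \ref{theorem: main characterization theorem under NIP}, but with the extension base $\emptyset$ replaced by the model $M$. By Lemma \ref{lemma: u in place of eta} it is enough to produce some $\eta \in EL(S_X(\C))$ with $\im(\eta) \subseteq I_M$; under NIP, a global type lies in $I_M$ precisely when it does not fork over $M$ (this is the version of Fact \ref{fact: forking and boundedness in NIP} over $M$ already noted in the paragraph defining $I_M$). So the task becomes: find $\eta \in EL(S_X(\C))$ such that $\eta(p)$ does not fork over $M$ for every $p \in S_X(\C)$.

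I would establish this via a net-and-compactness argument in the spirit of Lemma \ref{lemma: Im(eta) contained in R}. The reduction is to the following finitary claim: for every finite tuple $p_1,\dots,p_n \in S_X(\C)$ and every finite collection of formulas $\varphi_{i,j}(\bar x,\bar a_{i,j})$ forking over $M$, there exists $\sigma \in \aut(\C)$ with $\varphi_{i,j}(\bar x,\bar a_{i,j}) \notin \sigma(p_i)$ for all $i,j$. Indexing such $\sigma$'s by the directed set of finite pieces of witness data $((p_i)_i,(\varphi_{i,j},\bar a_{i,j})_{i,j})$ ordered by inclusion and passing to a convergent subnet in $EL(S_X(\C))$ yields the required $\eta$, because ``$\varphi \notin \xi(p)$'' is a closed condition on $\xi$ and every individual witness $(p,\varphi)$ is eventually captured. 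An application of Lemma \ref{lemma: u in place of eta} then produces an idempotent $u \in \mathcal{M}$ with $\im(u) \subseteq \im(\eta) \subseteq I_M$.

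For the finitary claim I would invoke Fact \ref{fact: forking equals dividing from CaKa}: by part (i), $M$ is an extension base, so by part (ii) each $\varphi_{i,j}(\bar x,\bar a_{i,j})$ divides over $M$. Setting $\bar a := (\bar a_{i,j})_{i,j}$ and applying part (iii) to $\tp(\bar a/M)$ produces an $M$-indiscernible sequence $(\bar a^{(k)})_{k<\omega} = ((\bar a_{i,j}^{(k)})_{i,j})_{k<\omega}$ of realizations such that for every formula with parameter $\bar a$ dividing over $M$ the corresponding sequence along $(\bar a^{(k)})$ is $k_0$-inconsistent for some finite $k_0$. In particular, for each pair $(i,j)$ there is a bound $k_{i,j}$ such that no single $\bar x$ satisfies $\varphi_{i,j}(\bar x,\bar a_{i,j}^{(k)})$ for $k_{i,j}$ many indices $k$. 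Fixing realizations $\bar a_i'' \models p_i$, the ``bad'' set $\{k : \models \varphi_{i,j}(\bar a_i'',\bar a_{i,j}^{(k)})\}$ has size less than $k_{i,j}$; since there are only finitely many pairs $(i,j)$, the union of these bad sets is finite, and any $k$ outside it yields $\neg\varphi_{i,j}(\bar x,\bar a_{i,j}^{(k)}) \in p_i$ for every $(i,j)$ simultaneously. $M$-indiscernibility gives $(\bar a_{i,j}^{(k)})_{i,j} \equiv_\emptyset (\bar a_{i,j})_{i,j}$, so by strong homogeneity of $\C$ some $\sigma \in \aut(\C)$ sends the former tuple to the latter, and this $\sigma$ witnesses the finitary claim.

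The key technical point---and what I expect to be the main obstacle---is coordinating all the witnesses $(p_i,\varphi_{i,j})$ with a single automorphism: handling each $\varphi_{i,j}$ with its own $M$-indiscernible sequence would leave a nontrivial coherence problem between the different sequences. The remedy is to apply Fact \ref{fact: forking equals dividing from CaKa}(iii) once to the joint type $\tp(\bar a/M)$, so that a single $M$-indiscernible sequence serves every formula at once and the coordination collapses to the elementary observation that a finite union of finite index sets is finite.
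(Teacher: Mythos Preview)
Your proposal is correct and follows essentially the same route as the paper's proof: both reduce via Lemma \ref{lemma: u in place of eta} to finding some $\eta$ with $\im(\eta)\subseteq I_M$, set up a compactness/net argument reducing to a finitary claim, and prove that claim by applying Fact \ref{fact: forking equals dividing from CaKa}(iii) to the joint type $\tp(\bar a/M)$ of all parameters so that a single $M$-indiscernible sequence witnesses dividing for every formula at once. The only cosmetic difference is in extracting the good index: the paper packages the pigeonhole via a shift automorphism $\sigma$ of the indiscernible sequence and takes $\sigma^{-m}$ for large $m$, while you argue directly that the union of finitely many finite ``bad'' index sets is finite---these are the same argument.
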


\begin{proof}
By Lemma \ref{lemma: u in place of eta}, it is enough to show that there exists $\eta \in EL(S_X(\C))$ such that $\im(\eta) \subseteq I_M$. For a type $p \in S_X(\C)$ and a formula $\varphi(\bar x,\bar a)$ which forks over $M$, let 
$$X_{p,\varphi}:= \{ \eta \in EL(S_X(\C)): \neg \varphi(\bar x,\bar a) \in \eta(p)\}.$$ 
It is is enough to show that the intersection of all possible sets
$X_{p,\varphi}$ is non-empty, as then any element $\eta$ in this
intersection will do the job. By compactness of $EL(S_X(\C))$, it
remains to show that the family of sets of the form $X_{p,\varphi}$
has the finite intersection property.  So consider any types
$p_1,\dots,p_n \in S_X(\C)$ and any formulas $\varphi_1(\bar x,\bar
a_1), \dots, \varphi_n(\bar x,\bar a_n)$ which all fork over $M$, and
let $\varphi(\bar x,\bar a)$ be the disjunction of them. 
By Fact \ref{fact: forking equals dividing from CaKa}, 
$\varphi$ divides over $M$, so choose an $M$-indiscernible sequence
$(\bar b_j)_{j<\omega}$ witnessing this. For each $i=1,\dots,n$ there
are at most finitely many $j$ with $\varphi(\bar x,\bar b_j)\in
p_i$. So chosse $j$ such that $\varphi(\bar x,\bar b_j)\not\in p_i$
for $i=1,\dots,n$. Let $\sigma\in\aut(\C)$ map $\bar b_j$ to $\bar
a$. Then $\sigma$, regarded as an element of $EL(S_X(\C))$, belongs to
$X_{p_1,\varphi_1}\cap\cdots\cap X_{p_n\varphi_n}$.
%
\end{proof}

Recall the following fact (see e.g. \cite[Lemma 2.5]{HrPi}). 
\begin{fct}\label{fact: few invariant types}
	If $T$ is NIP and $A\subseteq \C$, then there are at most $2^{|T|+|\bar x|+|A|}$ types in $S_{\bar x}(\C)$ which are invariant over $A$.
\end{fct}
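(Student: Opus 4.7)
The plan is to show that each global $A$-invariant type in $S_{\bar x}(\C)$ is determined by a bounded amount of Borel data over $A$, and then to count the possibilities for this data.

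First I would observe that, by $A$-invariance, each $p \in S_{\bar x}(\C)$ invariant over $A$ is determined by the family of sets
\[
	D_p^\varphi = \{\,q(\bar y) \in S_{\bar y}(A) : \varphi(\bar x,\bar b) \in p \text{ for some (equivalently, every) } \bar b \models q\,\},
\]
indexed by $L$-formulas $\varphi(\bar x',\bar y)$ whose free variables come from $\bar x$ through a finite subtuple $\bar x'$ together with a finite auxiliary tuple $\bar y$. Well-definedness of $D_p^\varphi$ as a subset of $S_{\bar y}(A)$ is immediate from $A$-invariance, and $p$ is recovered from $(D_p^\varphi)_\varphi$ by $\varphi(\bar x,\bar b) \in p \iff \tp(\bar b/A) \in D_p^\varphi$.

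The crucial step, where NIP enters, is that each $D_p^\varphi$ is a Borel subset of the compact Hausdorff space $S_{\bar y}(A)$. This is the standard \emph{Borel definability of invariant types under NIP}: the proof proceeds by picking an $A$-invariant extension $q^*$ of $q$ and a Morley sequence $(\bar b_i)_{i<\omega}$ of $q^*$ over $A$, and then using NIP to bound the number of alternations of the truth value of ``$\varphi(\bar x, \bar b_i) \in p$'' along $i<\omega$. This bounded alternation converts, by an elementary induction, into a description of $D_p^\varphi$ at finite level of the Borel hierarchy above the clopens.

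Finally I would count. The space $S_{\bar y}(A)$ has a basis of clopens of cardinality at most $|T|+|A|$, so it has weight at most $|T|+|A|$ and thus at most $(|T|+|A|)^{\aleph_0} \leq 2^{|T|+|A|}$ Borel subsets. There are at most $|T|+|\bar x|$ relevant formulas $\varphi(\bar x',\bar y)$, hence by injectivity of $p \mapsto (D_p^\varphi)_\varphi$,
\[
	|\{\,p \in S_{\bar x}(\C) : p \text{ is } A\text{-invariant}\,\}| \leq \bigl(2^{|T|+|A|}\bigr)^{|T|+|\bar x|} = 2^{|T|+|\bar x|+|A|}.
\]
The main obstacle is the Borel definability statement: the rest is a topological weight count, but the passage from ``$A$-invariant'' to ``Borel over $A$'' is where NIP does real work, via the controlled behavior of $\varphi(\bar x,\bar b) \in p$ as $\bar b$ runs through a Morley sequence.
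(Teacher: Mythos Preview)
Your overall strategy is correct and matches the standard argument (the paper itself gives no proof, simply citing \cite{HrPi}, Lemma 2.5, which proceeds exactly via Borel definability followed by a count of Borel sets). However, your sketch of the Borel definability step is confused: since $p$ is $A$-invariant and all terms $\bar b_i$ of a Morley sequence in $q^*$ have the same type over $A$, the truth value of ``$\varphi(\bar x,\bar b_i)\in p$'' is \emph{constant} in $i$, so there is no alternation to bound and nothing is gained. Moreover, an arbitrary $q\in S_{\bar y}(A)$ need not have any global $A$-invariant extension when $A$ is not a model.

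The correct argument runs the Morley sequence in the other variable: take $(\bar a_i)_{i<\omega}$ a Morley sequence in $p$ over $A$ (which exists precisely because $p$ is $A$-invariant) and use NIP to bound the number of alternations of $\varphi(\bar a_i,\bar b)$ as $i$ varies; the eventual truth value coincides with ``$\varphi(\bar x,\bar b)\in p$'', and writing this out via the alternation formulas $\alt_n$ and the types $p^{(n)}|_A$ yields the finite-level Borel description of $D_p^\varphi$ (this is exactly what the paper invokes later as Proposition 2.6 of \cite{HrPi}). With that correction, your counting argument goes through unchanged.
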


As usual, $l_S$ denotes the length of the product $S$.
Choose any idempotent $u$ satisfying the conclusion of Proposition \ref{proposition: Im(u) contained in invariant types}.

\begin{cor}\label{corollary: better bound}
The function $F \colon u{\mathcal M} \to I_M^{I_M}$ given by $F(h)= h |_{I_M}$ is a group isomorphism onto the image of $F$, so the size of the Ellis group of the flow $S_X(\C)$ is bounded by $|I_M|^{|I_M|}$. In particular, the size of the Ellis group is bounded by 
$2^{2^{l_S+ |T|}}$. In the case when $l_S \leq 2^{|T|}$, this is bounded by $\beth_3(|T|)$, and when $l_S \leq |T|$, the bound equals  $2^{2^{|T|}}$.
\end{cor}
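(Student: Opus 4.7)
The plan is to repeat the argument of Corollary \ref{corollary: first part of main theorem 1} verbatim, replacing the set $R$ (built via contents) by the set $I_M$ of global $M$-invariant types. The two key inputs are Proposition \ref{proposition: Im(u) contained in invariant types}, which under NIP provides an idempotent $u \in {\mathcal M}$ with $\im(u) \subseteq I_M$, and Fact \ref{fact: few invariant types}, which bounds $|I_M|$ under NIP.

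Fix such an idempotent $u$. First I would check that $F$ actually takes values in $I_M^{I_M}$. For any $h \in u{\mathcal M}$, since $u$ is the identity of the group $u{\mathcal M}$, one has $h = uh$, so
\[
	h(p) = u(h(p)) \in \im(u) \subseteq I_M
\]
for every $p \in S_X(\C)$. In particular $h$ sends $I_M$ into $I_M$, and so $F(h) = h|_{I_M}$ is indeed an element of $I_M^{I_M}$. That $F$ preserves composition is immediate since the relevant function values lie in $I_M$. For injectivity I would copy the argument from Corollary \ref{corollary: first part of main theorem 1}: if $h_1|_{I_M} = h_2|_{I_M}$, then, because $\im(u) \subseteq I_M$, one has $h_1 u = h_2 u$; and as $u$ is the identity of the group $u{\mathcal M}$, this gives $h_1 = h_2$.

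For the explicit bounds, choose $M \prec \C$ of cardinality $|T|$ by L\"owenheim--Skolem. Fact \ref{fact: few invariant types} then yields $|I_M| \leq 2^{|T|+l_S+|M|} = 2^{\max(l_S,|T|)}$, hence
\[
	|u{\mathcal M}| \leq |I_M|^{|I_M|} \leq 2^{2^{\max(l_S,|T|)}}.
\]
This specializes to $\beth_3(|T|)$ when $l_S \leq 2^{|T|}$ and to $2^{2^{|T|}}$ when $l_S \leq |T|$, as required.

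I do not expect a real obstacle here: all of the substance is already packed into Proposition \ref{proposition: Im(u) contained in invariant types} and Fact \ref{fact: few invariant types}, and the rest is bookkeeping carried over from Corollary \ref{corollary: first part of main theorem 1}. The one subtlety meriting attention is confirming that $F$ lands in $I_M^{I_M}$ rather than merely in $S_X(\C)^{I_M}$ (which is what a priori one gets from $\im(u)\subseteq I_M$ alone); this is precisely the place where the identity property $h = uh$ for $h \in u{\mathcal M}$ enters and promotes the containment $\im(u)\subseteq I_M$ to $\im(h)\subseteq I_M$ for every $h\in u{\mathcal M}$.
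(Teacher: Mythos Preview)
Your proposal is correct and follows exactly the paper's approach: the paper's proof simply cites Proposition \ref{proposition: Im(u) contained in invariant types} for the first part and then picks $M$ of cardinality at most $|T|$ and invokes Fact \ref{fact: few invariant types} for the bounds. You have spelled out a bit more detail (in particular the verification that $\im(h)\subseteq I_M$ for all $h\in u{\mathcal M}$ via $h=uh$), but the argument is the same.
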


\begin{proof}
The first part follows from Proposition \ref{proposition: Im(u) contained in invariant types} and Lemma \ref{lemma: hM=uM}(3). For the second part, take $M$ of cardinality at most $|T|$. Then 
$|I_M| \leq 2^{l_S+|T|}$ by Fact \ref{fact: few invariant types}, and we finish using the first part.
\end{proof}

By Corollary \ref{corollary: the size for alpha bounded by the size for c}, Propositions \ref{proposition: invariants for c isomorphic to invariants for S} and \ref{proposition: unboundedly many sorts}, and Corollary \ref{corollary: better bound}, we get

\begin{cor}
The size of the Ellis group of the flow $S_X(\C)$ is bounded by $\beth_3(|T|)$. The sizes of the Ellis groups of the flows $S_{\bar c}(\C)$ and $S_{\bar \alpha}(\C)$ are bounded by ${2^{2^{|T|}}}$.
\end{cor}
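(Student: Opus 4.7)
The plan is to deduce the corollary by combining Corollary \ref{corollary: better bound} with the three reduction results from Section \ref{section 2} (Corollary \ref{corollary: the size for alpha bounded by the size for c}, Proposition \ref{proposition: invariants for c isomorphic to invariants for S}, and Proposition \ref{proposition: unboundedly many sorts}). Corollary \ref{corollary: better bound} already supplies the bound $2^{2^{\max(l_S,|T|)}}$ for any product $S$ of boundedly many sorts, so the task reduces to controlling $l_S$ in each of the three cases by feeding the right auxiliary product into that corollary.

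For the general flow $S_X(\C)$, I would invoke Proposition \ref{proposition: unboundedly many sorts} to obtain a product $S'$ of at most $2^{|T|}$ sorts and a $\emptyset$-type-definable $Y\subseteq S'$ whose Ellis group is isomorphic (as a group with the $\tau$-topology) to that of $S_X(\C)$; applying Corollary \ref{corollary: better bound} with $l_{S'}\leq 2^{|T|}$ then yields the bound $\beth_3(|T|)$. For $S_{\bar c}(\C)$, Proposition \ref{proposition: invariants for c isomorphic to invariants for S} identifies its Ellis group with that of $S_S(\C)$, where $S$ is the product of all sorts of the language each repeated $\aleph_0$ times; since the language has at most $|T|$ sorts, $l_S\leq |T|\cdot\aleph_0=|T|$, and Corollary \ref{corollary: better bound} delivers $2^{2^{|T|}}$. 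Finally, for $S_{\bar \alpha}(\C)$ with $\bar\alpha$ a short tuple, Corollary \ref{corollary: the size for alpha bounded by the size for c} bounds the size of its Ellis group by that of $S_{\bar c}(\C)$, and hence by $2^{2^{|T|}}$.

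There is essentially no obstacle here: the corollary is a pure bookkeeping consequence of Corollary \ref{corollary: better bound} together with the reductions from Section \ref{section 2}. The only subtlety is to verify in each case that the chosen product lands in the desired branch of Corollary \ref{corollary: better bound} (namely $l_S\leq 2^{|T|}$ for the $\beth_3(|T|)$ bound, or $l_S\leq |T|$ for the $2^{2^{|T|}}$ bound), which is immediate from the respective reduction.
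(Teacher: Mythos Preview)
Your proposal is correct and matches the paper's approach exactly: the paper simply cites Corollary \ref{corollary: the size for alpha bounded by the size for c}, Propositions \ref{proposition: invariants for c isomorphic to invariants for S} and \ref{proposition: unboundedly many sorts}, and Corollary \ref{corollary: better bound}, and you have spelled out precisely how these combine. The only point worth noting is that your argument is in fact slightly more detailed than what the paper writes, but the content is identical.
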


As was mentioned at the beginning of this subsection, in the NIP case, one can also simplify the proof of absoluteness of the Ellis group from Section \ref{section 3} by omitting technical lemmas \ref{lemma: technical lemma 1}, \ref{lemma: technical lemma 2}, \ref{lemma: technical lemma 3} and Corollary \ref{corollary: technical corollary}, and then proceeding with $R_{\C_1}$ and $R_{\C_2}$ replaced by types invariant over $M$ (and using Proposition \ref{proposition: Im(u) contained in invariant types}). We leave the details as an exercise.

\subsection{The Ellis group conjecture for groups of automorphisms}\label{subsection 6.3}

Recall that  the Ellis group conjecture was formulated by the second
author in the case of a group $G$ definable in a model $M$. It says
that a certain natural epimorphism $\Phi$ (more precisely, taking the
coset of a realization of the given type) from the Ellis group of the
flow $S_{G,\ext}(M)$ (of all external types over $M$) to
$G^*/{G^*}^{00}_M$ is an isomorphism, at least under some reasonable
assumptions. In general, this turned out to be false,
e.g. $G:=\Sl_2(\R)$ treated as a group definable in the field of reals
(so an NIP structure) is a counter-example \cite{GiPePi}. Much more
counter-examples can be obtained via the obvious observation that
the epimorphism $\Phi$ factors through $G^*/{G^*}^{000}_M$ (see
\cite{KrPi}), namely we get a counter-example whenever ${G^*}^{000}_M
\ne {G^*}^{00}_M$.  On the other hand, \cite[Theorem 5.7]{ChSi}
confirms the Ellis group conjecture for definably amenable groups definable in NIP theories. 

Here, we study an analogous problem for the group $\aut(\C)$ in place
of the definable group $G$. Take the notation from Section
\ref{section 1}, so ${\mathcal M}$ is a minimal left ideal of
$EL:=EL(S_{\bar c}(\C))$, $u \in {\mathcal M}$ is an idempotent, and
we have maps $\hat{f}:EL\to\gal_L(T),\ f:u\M\to\gal_L(T)$ and $h:\gal_L(T)\to\gal_{KP}(T)$. We get the epimorphism $\hat{F}:= h \circ \hat{f} \colon EL \to \gal_{KP}(T)$. Let $F=\hat{F}|_{u{\mathcal M}} \colon u{\mathcal M} \to \gal_{KP}(T)$. This is also an epimorphism.
A natural counterpart of the Ellis group conjecture in our context says that $F$ is an isomorphism. It is clearly false for all non G-compact theories (as then $h$ is not injective). But we will prove it under the assumption of boundedness of the minimal left ideals in $EL$, and this is the precise content of Theorem \ref{theorem theorem from ChSi}.

\begin{reptheorem}{theorem theorem from ChSi}
Assume NIP. If ${\mathcal M}$ is of bounded size, then $F$ is an isomorphism.
\end{reptheorem}

Note that boundedness of the minimal left ideals in $EL$ exactly corresponds to the definable amenability assumption for definable NIP groups, because both conditions are equivalent to the existence of 
a type (in the type space appropriate for each of  the two contexts) with bounded orbit
(see \cite[Theorem 3.12]{ChSi} for the proof of this in the definable group case). In fact, 
the first author has proved that boundedness of the minimal left ideals in $EL$ is equivalent to the appropriate version of amenability of $\aut(\C)$ (which he calls {\em relative definable amenability}), but this belongs to a separate topic.

The proof of Theorem \ref{theorem theorem from ChSi} will be an adaptation of \cite[Theorem 5.7]{ChSi}.
%

Before the proof, let us recall some notation. Suppose that $p \in S(\C)$ is invariant over $A$. Then a sequence $(a_i)_{i<\lambda}$ is called a {\em Morley sequence in $p$ over $A$} if $a_i \models p|_{Aa_{<i}}$ for all $i$. Such a sequence is always indiscernible over $A$, and the type of a Morley sequence of length $\lambda$ in $p$ over $A$ does depend on the choice of this sequence and is denoted by $p^{(\lambda)}|_A$.

\begin{lem}[Counterpart of Proposition 5.1 of \cite{ChSi}]\label{lemma: 5.1 from ChSi}
Let $\varphi(\bar x, \bar a)$ be a formula, $\bar \alpha$ any (short or long) tuple in $\C$, and let 
$p \in S_{\bar \alpha}(\C)$ 
be a type whose $\aut(\C)$-orbit is of bounded size. Put $U_\varphi:=\{ \sigma/\autf_{KP}(\C) : \varphi(\bar x, \sigma(\bar a)) \in p\}$. Then $U_\varphi$ is constructible (namely, a Boolean combination of closed sets).
\end{lem}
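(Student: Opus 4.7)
The plan is to adapt the strategy of \cite[Proposition~5.1]{ChSi} to the setting of $\aut(\C)$ acting on $S_{\bar c}(\C)$. First, by the hypothesis and Fact~\ref{fact: forking and boundedness in NIP}, boundedness of the $\aut(\C)$-orbit of $p$ gives that $p$ is Lascar (hence Kim--Pillay) invariant and does not fork over $\emptyset$; in particular $p$ is invariant over every small model of $T$. Fix then a small model $M$ containing $\bar a$, let $\bar m$ enumerate $M$, and denote by $\bar\nu \colon S_{\bar m}(M) \to \gal_{KP}(T)$ the continuous surjection defining the topology on $\gal_{KP}(T)$ (the composition of $\nu_2$ from Section~\ref{section 1} with $h$). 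The preimage $Z := \bar\nu^{-1}(U_\varphi) \subseteq S_{\bar m}(M)$ is well defined precisely because $p$ is KP-invariant; if $\bar a'$ is the subtuple of $\bar m$ corresponding to $\bar a$, then $Z$ is the preimage, under the restriction $S_{\bar m}(M) \to S_{\bar a'}(M)$, of the set $A := \{q \in S_{\bar a'}(M) : \varphi(\bar x, \bar a_*) \in p \text{ for some (equivalently, every) } \bar a_* \models q\}$, where the equivalence uses $M$-invariance of $p$.

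The next step is to show that $A$ is constructible in $S_{\bar a'}(M)$. This is where the NIP assumption enters, through the classical alternation bound for invariant types: for any realization $\bar c^* \models p$ in a larger monster and any $M$-indiscernible sequence $(\bar a_i)_i$ of realizations of a fixed $M$-type arising from an $M$-invariant extension of $\tp(\bar a/M)$, the truth value of $\varphi(\bar c^*, \bar a_i)$ alternates at most $\mathrm{alt}(\varphi) < \omega$ times. Following standard NIP arguments (as in \cite[Section~2]{HrPi} and subsequent developments by Shelah and Simon), this combinatorial bound yields an explicit description of $A$ as a finite Boolean combination of closed subsets of $S_{\bar a'}(M)$; these atomic closed pieces are of the form ``on some particular Morley initial segment of length bounded by $\mathrm{alt}(\varphi)$, $\varphi$ holds with a prescribed pattern.'' By continuity of restriction, $Z$ itself is then constructible in $S_{\bar m}(M)$.

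The final and most delicate step is to descend this constructibility across the quotient $\bar\nu$ to obtain that $U_\varphi = \bar\nu(Z)$ is constructible in $\gal_{KP}(T)$. I will use that $\bar\nu$ is a continuous surjection between compact Hausdorff spaces, hence is a closed map. The key reduction is to rewrite the Boolean representation of $Z$ using only $\bar\nu$-saturated closed sets: once this is done, applying $\bar\nu$ to such a representation yields a Boolean combination of closed subsets of $\gal_{KP}(T)$, thanks to the identity $\bar\nu(C \setminus D) = \bar\nu(C) \setminus \bar\nu(D)$ valid for $\bar\nu$-saturated closed $C, D$. The hard part will be verifying this saturation step rigorously. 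Here the KP-invariance of $p$ is essential: the defining condition of $A$ depends only on the KP strong type of $\bar a_* \models q$, so each atomic closed piece in the Boolean representation of $A$ may be replaced by its $E_{KP}$-saturation without altering $A$, and this saturation remains closed because $\bar\nu$ is a closed map. I expect the precise implementation of this substitution to require an inductive argument on the complexity of the Boolean combination, mirroring the corresponding delicate point in \cite{ChSi} where the role of $G^{00}$ is played in our setting by $\autf_{KP}(\C)$; this descent across the quotient is the main obstacle and the place where the argument must be imported and adapted most carefully.
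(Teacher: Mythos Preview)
Your overall strategy mirrors the paper's, but the ``saturation step'' you flag as the main obstacle is genuinely unjustified as written, and the paper's proof handles exactly this point differently.

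The gap is this: from the fact that $A$ (equivalently $U_\varphi$) is $E_{KP}$-saturated, it does \emph{not} follow that the individual closed pieces in a given Boolean decomposition of $A$ can be replaced by their $E_{KP}$-saturations without altering $A$. Your alternation pieces are built from Morley sequences over the model $M$, so they are $\aut(\C/M)$-invariant but have no reason to be $\autf_{KP}(\C)$-invariant; and there is no general mechanism that turns an arbitrary Boolean combination of non-saturated closed sets into one built from saturated closed sets while preserving the result. The inductive argument you allude to does not appear in \cite{ChSi} either: there, as here, the point is arranged so that no post-hoc saturation is needed.

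The paper's fix is to choose the base for the Morley sequences correctly from the outset. Since $p$ is KP-invariant it is invariant over $\bdd^{heq}(\emptyset)$, so $p^{(n)}|_{\bdd^{heq}(\emptyset)}$ makes sense. Defining the alternation sets via
\[
A_n = \{\bar b : (\exists \bar x_0,\dots,\bar x_{n-1})\bigl(p^{(n)}|_{\bdd^{heq}(\emptyset)}(\bar x_0,\dots,\bar x_{n-1}) \wedge \alt_n(\bar x_0,\dots,\bar x_{n-1};\bar b) \wedge \varphi(\bar x_{n-1},\bar b)\bigr)\}
\]
(and analogously $B_n$) makes each of them \emph{automatically} $\autf_{KP}(\C)$-invariant, because $\autf_{KP}(\C)$ fixes $\bdd^{heq}(\emptyset)$ pointwise. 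Then the preimages of $A_n$ and $B_n$ in $\aut(\C)$ under $\sigma\mapsto\sigma(\bar a)$ are already unions of $\autf_{KP}(\C)$-cosets, so the Boolean combination $S=\bigcup_{n<N} A_n\cap B_{n+1}^c$ pushes forward directly to $\gal_{KP}(T)$; closedness of the images follows from type-definability of $A_n$, $B_n$. No descent-across-a-quotient argument of the kind you sketch is required.
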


\begin{proof}
Let $S=\{ \bar b: \varphi(\bar x, \bar b) \in p\}$, $V=\{\sigma \in \aut(\C) : \sigma(\bar a) \in S\}$, $\pi\colon \aut(\C) \to \aut(\C)/\autf_{KP}(\C)$ be the quotient map, and $\rho \colon \aut(\C) \to \C$ be given by $\rho(\sigma)=\sigma(\bar a)$. Then $V=\rho^{-1}[S]$ and $U_\varphi = \pi[V]$.

By assumption and Fact \ref{fact: forking and boundedness in NIP}, $p$
is invariant under $\autf_{KP}(\C)$ and does not fork over
$\emptyset$. Choose a small model $M\prec\C$ and let $\bar d$ realize
$p'^{(\omega)}|_M$, where $p'$ is the restriction of $p$ to the finitely many variables occurring in $\varphi(\bar x, \bar a)$; from now on, $\bar x$ is this finite tuple of variables. Let 
$$\alt_n(\bar x_0,\dots, \bar x_{n-1};\bar y) = \bigwedge_{i<n-1} \neg(\varphi(\bar x_i, \bar y) \leftrightarrow \varphi(\bar x_{i+1}, \bar y)).$$
By NIP and the argument from Proposition 2.6 of \cite{HrPi}, there is $N<\omega$ such that
$$S=\bigcup_{n<N} A_n \cap B_{n+1}^c,$$
where $A_n$ is the collection of all tuples $\bar b$ in $\C$ for which
$$(\exists \bar x_0,\dots, \bar x_{n-1}) (\bar x_0\dots \bar
x_{n-1}\equiv_{KP}\bar d|_n \wedge \alt_n(\bar x_0,\dots, \bar x_{n-1};\bar b) \wedge \varphi(\bar x_{n-1},\bar b)),$$
and $B_n$ is the collection of all tuples $\bar b$ in $\C$ for which
$$(\exists \bar x_0,\dots, \bar x_{n-1}) (\bar x_0\dots \bar
x_{n-1}\equiv_{KP}\bar d|_n \wedge \alt_n(\bar x_0,\dots, \bar x_{n-1};\bar b) \wedge \neg\varphi(\bar x_{n-1},\bar b)).$$

We see that each $A_n$ and $B_n$ is invariant under $\autf_{KP}(\C)$, so $\rho^{-1}[A_n]$ and $\rho^{-1}[B_n]$ are unions of $\autf_{KP}(\C)$-cosets, i.e. they are unions of cosets of $\ker(\pi)$. Therefore,
$$U_\varphi= \pi[\rho^{-1}[\bigcup_{n<N} A_n \cap B_{n+1}^c]] =
\pi[\bigcup_{n<N} \rho^{-1}[A_n] \cap \rho^{-1}[B_{n+1}]^c] =$$
$$ \bigcup_{n<N} \pi[\rho^{-1}[A_n] \cap \rho^{-1}[B_{n+1}]^c] =  \bigcup_{n<N} \pi[\rho^{-1}[A_n]] \cap \pi[\rho^{-1}[B_{n+1}]]^c.$$
So it remains to check that  $\pi[\rho^{-1}[A_n]]$ and $ \pi[\rho^{-1}[B_n]]$ are closed. 

Using the observation that $\rho^{-1}[A_n]$ and $\rho^{-1}[B_n]$ are unions of cosets of $\ker(\pi)$, we get
$$\pi^{-1}[\pi[\rho^{-1}[A_n]]] = \rho^{-1}[A_n] = \{ \sigma \in \aut(\C): \sigma(\bar a) \in A_n\},$$
$$\pi^{-1}[\pi[\rho^{-1}[B_n]]] = \rho^{-1}[B_n] = \{ \sigma \in \aut(\C): \sigma(\bar a) \in B_n\}.$$
%
By \cite[Lemma 4.10]{LaPi}, \cite[Fact 2.3(i)]{CLPZ}, and the fact that the topology on $\gal_{KP}(T)$ is the quotient topology induced from $\gal_L(T)$, since $A_n$ and $B_n$ are type-definable, we conclude that $\pi[\rho^{-1}[A_n]]$ and $ \pi[\rho^{-1}[B_n]]$ are closed.
\end{proof}

As usual, $\C' \succ \C$ is a monster model with respect to $\C$. Recall that $\hat{F}:= h \circ \hat{f} \colon EL \to \gal_{KP}(T)$ is given by $\hat{F}(\eta) = \sigma'/\autf_{KP}(\C')$, where $\sigma' \in \aut(\C')$ is such that $\sigma'(\bar c) \models \eta(\tp(\bar c/\C))$. Enumerate $S_{\bar c}(\C)$ as $\langle \tp({\bar c}_k/\C) : k<\lambda\rangle$ for some cardinal $\lambda$ and some tuples ${\bar c}_k\equiv \bar c$, where ${\bar c}_0=\bar c$. Let $q_k=\tp(\bar c_k/\C)$ for $k<\lambda$. By \cite[Remark 2.3]{KrPiRz}, it is true that for any $k<\lambda$, $\hat{F}(\eta) = \sigma'/\autf_{KP}(\C')$, where $\sigma' \in \aut(\C')$ is such that $\sigma'(\bar c_k) \models \eta(q_k)$. Let $\pi_k \colon EL \to S_{\bar c}(\C)$ be given by $\pi_k(\eta)=\eta(q_k)$.

\begin{lem}[Counterpart of Theorem 5.3 of \cite{ChSi}]\label{lemma: 5.2 from ChSi}
Assume that $\eta \in EL$, $p \in S_{\bar c}(\C)$ and $k<\lambda$ are
such that $\pi_k(\eta)=p$ and the $\aut(\C)$-orbit of $p$ is of
bounded size. Let $C = \cl(\aut(\C) \eta) \subseteq EL$, and
$\varphi(\bar x, \bar a)$ be a formula over $\C$. Put 
$$E_\varphi := \hat{F}[\pi_k^{-1}[[\varphi(\bar x, \bar a)]] \cap C] \cap \hat{F}[\pi_k^{-1}[[\neg \varphi(\bar x, \bar a)]] \cap C].$$
Then $E_\varphi$ is  closed with empty interior.
\end{lem}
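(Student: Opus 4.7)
The strategy is to mirror the argument of \cite[Proposition 5.3]{ChSi}. The essential new technical fact is that $\hat F\colon EL \to \gal_{KP}(T)$ is continuous in the natural compact Hausdorff topologies. To verify this, I would write $\hat F = F_0 \circ \pi_0$, where $\pi_0\colon EL \to S_{\bar c}(\C)$ is the continuous evaluation map $\eta \mapsto \eta(\tp(\bar c/\C))$, and $F_0\colon S_{\bar c}(\C) \to \gal_{KP}(T)$, sending $q = \tp(\sigma'\bar c/\C)$ to $[\sigma']_{KP}$ (for $\sigma' \in \aut(\C')$), factors as a (continuous) restriction map $S_{\bar c}(\C) \to S_{\bar m}(M)$ for a small $M \prec \C$ enumerated by a subtuple $\bar m$ of $\bar c$, composed with the canonical quotient $S_{\bar m}(M) \twoheadrightarrow \gal_{KP}(T)$ defining the logic topology.

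Setting $g_0 := \hat F(\eta)$, we have $\hat F(\sigma\eta) = [\sigma]_{KP}\cdot g_0$ for every $\sigma \in \aut(\C)$. Define
\[V := \{[\sigma]_{KP} : \sigma\eta \in \pi_k^{-1}[[\varphi(\bar x,\bar a)]]\} = \{[\sigma]_{KP} : \varphi(\bar x,\sigma^{-1}(\bar a)) \in p\}.\]
Since $p$ has bounded $\aut(\C)$-orbit, Lemma \ref{lemma: 5.1 from ChSi} shows that $U_\varphi := \{[\sigma]_{KP} : \varphi(\bar x,\sigma(\bar a)) \in p\}$ is constructible in $\gal_{KP}(T)$; as $V$ is the image of $U_\varphi$ under the (homeomorphic) inversion map of $\gal_{KP}(T)$, $V$ is likewise constructible. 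Completeness of $p$ yields the analogous set associated with $\neg\varphi$, which equals $V^c$.

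The main step is to prove
\[\hat F[C \cap \pi_k^{-1}[[\varphi(\bar x,\bar a)]]] = \cl(V) \cdot g_0,\]
and the corresponding identity with $\neg\varphi$ in place of $\varphi$. For $\subseteq$: given $\mu \in C \cap \pi_k^{-1}[[\varphi]]$, write $\mu = \lim \sigma_i\eta$; since $\pi_k^{-1}[[\varphi]]$ is clopen in $EL$, we may pass to a subnet with $\sigma_i\eta \in \pi_k^{-1}[[\varphi]]$, so $[\sigma_i]_{KP} \in V$, and by continuity of $\hat F$, $\hat F(\mu) = \lim [\sigma_i]_{KP}\, g_0 \in \cl(V)\,g_0$. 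For $\supseteq$: given $h = \lim [\sigma_i]_{KP}$ with $[\sigma_i]_{KP} \in V$, we have $\sigma_i\eta \in \pi_k^{-1}[[\varphi]]$; passing to a subnet with $\sigma_i\eta \to \mu \in EL$, we get $\mu \in C \cap \pi_k^{-1}[[\varphi]]$ (both closed), and continuity of $\hat F$ combined with continuity of multiplication in $\gal_{KP}(T)$ yields $\hat F(\mu) = h\,g_0$.

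Intersecting gives $E_\varphi = (\cl V \cap \cl V^c) \cdot g_0 = \partial V \cdot g_0$. Right translation by $g_0$ being a homeomorphism, $E_\varphi$ is closed; for empty interior, write $V$ as a finite Boolean combination of closed sets $F_1,\ldots,F_n$, so that $\partial V \subseteq \bigcup_i \partial F_i$. Each $\partial F_i = F_i \setminus \mathrm{int}(F_i)$ is closed with empty interior (an open subset would contradict maximality of $\mathrm{int}(F_i)$), and a finite union of closed nowhere dense sets in the compact Hausdorff (hence Baire) space $\gal_{KP}(T)$ is nowhere dense, so $E_\varphi$ has empty interior. The principal delicate point will be establishing the continuity of $\hat F$ cleanly via the logic topology on $\gal_{KP}(T)$; once that is in place, the remainder of the proof is a transparent transfer of \cite[Proposition 5.3]{ChSi}.
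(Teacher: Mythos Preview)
Your proof is correct and follows essentially the same route as the paper's. The paper cites \cite[Remark 2.6]{KrPiRz} for the continuity of $\hat F$ (where you sketch it via the logic topology), normalizes by replacing $\eta$ within its $\aut(\C)$-orbit so that $g_0=\id$ (where you carry the factor $g_0$ throughout), and only proves the inclusion $E_\varphi \subseteq \partial U_\varphi^{-1}$ (where you obtain the exact equality $\hat F[C\cap \pi_k^{-1}[[\varphi]]]=\cl(V)\cdot g_0$, hence $E_\varphi=\partial V\cdot g_0$); these are cosmetic differences, and your version is in fact slightly sharper. One small remark: the Baire property is not needed for the final step, since a \emph{finite} union of closed nowhere dense sets is nowhere dense in any topological space.
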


\begin{proof}
Since $\hat{F}$ is continuous (by Remark 2.5 of \cite{KrPiRz}), we get that $E_{\varphi}$ is closed. 

Replacing $\eta$ by some element from the $\aut(\C)$-orbit of $\eta$, we can assume that $p = \tp(\sigma(\bar c_k)/\C)$ for some $\sigma \in \autf_{KP}(\C')$. 
Then $\hat{F}(\eta)=\sigma/\autf_{KP}(\C') = \id/\autf_{KP}(\C')$.  

By assumption and Fact \ref{fact: forking and boundedness in NIP}, $p$ is $\autf_{KP}(\C)$-invariant, so we can extend it uniquely to an $\autf_{KP}(\C')$-invariant type $\bar p \in S_{\bar c}(\C')$.

Let $U_\varphi$ be the set defined in Lemma \ref{lemma: 5.1 from ChSi}
(but for the type $\bar p$ in place of $p$, and working in $\C'$). We will show that $E_\varphi \subseteq \partial
U_\varphi^{-1}$, 
where $\partial A$ is the topological border of $A$: $\partial A = \cl(A)\cap \cl(A^c)$. This will finish the proof, as the topological border of a constructible set always has empty interior.

Consider any $g \in E_\varphi$ and an open neighborhood $V$ of $g$. By the definition of $E_\varphi$, there are $q,q' \in C$ such that $\hat{F}(q)=\hat{F}(q')=g$ and $q \in \pi_k^{-1}[[\varphi(\bar x,\bar a)]]$, $q' \in \pi_k^{-1}[[\neg\varphi(\bar x,\bar a)]]$. Then $q$ and $q'$ belong to the open set $\hat{F}^{-1}[V]$. So there are $\sigma_1,\sigma_2 \in \aut(\C)$ such that $\sigma_1 \eta \in \hat{F}^{-1}[V] \cap \pi_k^{-1}[[\varphi(\bar x,\bar a)]]$ and  $\sigma_2 \eta \in \hat{F}^{-1}[V] \cap \pi_k^{-1}[[\neg\varphi(\bar x,\bar a)]]$.

Take any $\bar \sigma_1 \in \aut(\C')$ extending $\sigma_1$. 
We have $\bar \sigma_1 \bar p \supset \sigma_1p=\sigma_1\pi_k(\eta) = \pi_k(\sigma_1 \eta) \ni \varphi(\bar x, \bar a)$, and so $\varphi (\bar x, \bar \sigma_1^{-1}(\bar a)) \in \bar p$, hence $\hat{F}(\sigma_1)= \bar \sigma_1/\autf_{KP}(\C')\in U_\varphi^{-1}$. On the other hand, we clearly have $\hat{F}(\sigma_1\eta) \in V$. Since $\hat{F}$ is a semigroup homomorphism and $\hat{F}(\eta)$ is the neutral element, we conclude that 
$$\hat{F}(\sigma_1)=\hat{F}(\sigma_1 \eta) \in V \cap U_\varphi^{-1}.$$
Similarly (and using the fact that $\bar p$ is $\autf_{KP}(\C')$-invariant),
$$\hat{F}(\sigma_2)=\hat{F}(\sigma_2 \eta) \in V \cap U_{\neg \varphi}^{-1}=V \cap (U_{\varphi}^{-1})^c.$$
As $V$ was an arbitrary open neighborhood of $g$, we get that $g \in \partial U_\varphi^{-1}$.
\end{proof}

Now, we have all the tools to prove Theorem  \ref{theorem theorem from ChSi} (which is a counterpart of Theorem 5.7 of \cite{ChSi}).



\begin{proof}[Proof of Theorem \ref{theorem theorem from ChSi}]



Recall that $F=\hat{F}|_{u\M}\colon u\M\to\gal_{KP}(T)$ is an epimorphism.
We need to show that $\ker(F) =\{u\}$. So take any $\eta \in \ker(F)$. It is enough to show that $r\eta = r$ for some $r \in {\mathcal M}$, because then $\eta= (ur)^{-1} ur \eta = (ur)^{-1}ur = u$. But this is equivalent to finding $r \in {\mathcal M}$ such that $\pi_k(r\eta)=\pi_k(r)$ for all $k<\lambda$.

Let ${\mathcal F}$ be the filter of comeager subsets of
$\gal_{KP}(T)$. Since $\gal_{KP}(T)$ is compact Hausdorff, it is a
Baire space, so $\mathcal F$ is a proper filter. Let ${\mathcal F}'$ be an ultrafilter extending ${\mathcal F}$. For any $g \in \gal_{KP}(T)$ let $r_g \in {\mathcal M}$ be such that $\hat{F}(r_g)=g$. Put
$$r:= \lim_{{\mathcal F}'} r_g.$$
Then $r \in {\mathcal M}$, and we will show that $\pi_k(r\eta)=\pi_k(r)$ for all $k$, which completes the proof. 

Suppose for a contradiction that $\pi_k(r\eta) \ne \pi_k(r)$ for some $k$. Then $\varphi(\bar x) \in  \pi_k(r\eta)$ and $\neg \varphi(\bar x) \in  \pi_k(r)$ for some formula $\varphi(\bar x)$ with parameters.  Thus,
$$P:= \left\{ g \in \gal_{KP}(T): r_g\eta \in \pi_k^{-1}[[\varphi(\bar x)]]\;\, \mbox{and}\;\, r_g \in \pi_k^{-1}[[\neg\varphi(\bar x)]]\right\} \in {\mathcal F}'.$$

Since $\eta \in {\mathcal M}$ and ${\mathcal M}$ is of bounded size, so is the $\aut(\C)$-orbit of $\pi_k(\eta)$, and we can apply Lemma  \ref{lemma: 5.2 from ChSi}. To be consistent with the notation from this lemma, put $C:={\mathcal M}=\cl(\aut(\C)\eta)$.  
By Lemma \ref{lemma: 5.2 from ChSi}, $E_{\varphi}^c \in {\mathcal F'}$, so we can find $g \in P \cap E_{\varphi}^c$.
Put $S=\hat{F}^{-1}[E_{\varphi}^c]$. Then $S$ is open in $EL$ and $r_g \in S$. Moreover, $r_g\eta$ belongs to the open set $\pi_k^{-1}[[\varphi(\bar x)]]$ and  $r_g$ belongs to the open set $\pi_k^{-1}[[\neg \varphi(\bar x)]]$.

Since $r_g$ is approximated by automorphisms of $\C$, $r_g=r_gu$, and the semigroup operation in $EL$ is left continuous, we get that there is $\sigma \in \aut(\C)$ such that 
$$\sigma \eta \in \pi_k^{-1}[[\varphi(\bar x)]], \; \sigma u \in \pi_k^{-1}[[\neg \varphi(\bar x)]], \;\sigma \in S.$$

We conclude that
$$\hat{F}(\sigma \eta)=\hat{F}(\sigma)F(\eta)=\hat{F}(\sigma) \in E_{\varphi}^c\;\, \mbox{and}\;\, \hat{F}(\sigma u)=\hat{F}(\sigma)F(u)=\hat{F}(\sigma) \in E_{\varphi}^c.$$
But 
$$\sigma \eta \in C \cap \pi_k^{-1}[[\varphi(\bar x)]]\;\, \mbox{and}\;\, \sigma u \in C \cap \pi_k^{-1}[[\neg\varphi(\bar x)]]$$
implies that $\hat{F}(\sigma) =\hat{F}(\sigma \eta)=\hat{F}(\sigma u) \in E_{\varphi}$, which is a contradiction.
\end{proof}

We have seen in Corollary \ref{corollary: G-compactness} that boundedness of ${\mathcal M}$ implies that the theory $T$ is G-compact. Note that this also follows immediately from Theorem \ref{theorem theorem from ChSi} (i.e. the epimorphism $h\colon \gal_L(T) \to \gal_{KP}(T)$ from Section \ref{section 1} is an isomorphism).
As was mentioned in the introduction, although by Section \ref{section
  3} we know that the Ellis group $u{\mathcal M}$ is always bounded
and absolute, taking any non G-compact theory, we get that $F$ is not
injective (i.e. the counterpart of the Ellis group conjecture is false in general, even in NIP theories). Corollary \ref{corollary: f iso implies G-compactness} shows even more, namely that if $T$ is non G-compact, then the epimorphism $f \colon u{\mathcal M} \to \gal_L(T)$ is not an isomorphism. It is thus still  interesting to look closer at the Ellis group $u{\mathcal M}$ -- an essentially new model-theoretic invariant of an arbitrary theory.\\

Now, we take the opportunity and describe a natural counterpart of the epimorphisms $f$ and $F$ for a short tuple $\bar \alpha$ in place of ${\bar c}$. But first note that in general there is no chance to find an epimorphism from the Ellis group of $S_{\bar \alpha}(\C)$ to $\gal_{KP}(T)$ for the same reason as (v) $\nrightarrow$ (i) in Proposition \ref{proposition: only one direction for bar alpha}: Take a 2-sorted structure with sorts $S_1$ and $S_2$ such that there is no structure on $S_1$, there are no interactions between $S_1$ and $S_2$, and the structure on $S_2$ is such that the corresponding $\gal_{KP}$ is non-trivial. Then, taking any $\alpha \in S_1$, we get that the Ellis group of the flow $S_{\alpha}(\C)$ is trivial, whereas $\gal_{KP}(T)$ is non-trivial, so there is no epimorphism. In order to resolve this problem, we need to use a certain localized version of the notion of Galois group proposed in \cite{DoKiLe}. 

Let $p_1=\tp(\bar \alpha/\emptyset)$. Following the notation from \cite{DoKiLe}, we define $\gal_L^{\fix,1}(p_1)$ as the quotient of the group of all elementary permutations of $p_1(\C)$ by the subgroup $\autf_L^{\fix,1}(p_1(\C))$ of all elementary permutations of $p_1(\C)$ fixing setwise the $E_L$-class of each realization of $p_1$ (but not necessarily of each tuple of realizations of $p_1$, and that is why we write superscript 1). $\gal_L^{\fix,1}(p_1)$ can be, of course, identified with the quotient of $\aut(\C)$ by the group $\autf_{L,p_1}^{\fix,1}(\C)$ of all automorphisms  fixing setwise the $E_L$-class of each realization of $p_1$. The group $\gal_{KP}^{\fix,1}(p_1)$ is defined analogously. It is easy that these groups do not depend on the choice of the monster model. We have the following diagram of natural epimorphisms.
\begin{figure}[H]
		\centering
		\begin{tikzcd}
			\gal_L(T) \arrow[r,"h"]\arrow[d]&\gal_{KP}(T)\arrow[d] \\
			\gal_L^{\fix,1}(p_1)\arrow[r,"h_1"] & \gal_{KP}^{\fix,1}(p_1)
		\end{tikzcd}
	\end{figure}

In particular, on $\gal_{L}^{\fix,1}(p_1)$ and $\gal_{KP}^{\fix,1}(p_1)$ we can take the quotient topologies coming from the vertical maps, and it is easy to check that $\gal_{L}^{\fix,1}(p_1)$ is a quasi-compact topological group, $\gal_{KP}^{\fix,1}(p_1)$ is a compact topological group, $h_1$ is a topological quotient mapping (i.e. a subset of $\gal_{KP}^{\fix,1}(p_1)$ is closed if and only if its preimage is closed),
but there is no reason why $\ker(h_1)$ should be the closure of the identity in $\gal_L^{\fix,1}(p_1)$ (it certainly contains it).
On can easily check that both groups $\gal_L^{\fix,1}(p_1)$ and $\gal_{KP}^{\fix,1}(p_1)$ do not depend on the choice of $\C$ as topological groups.

Now, we will define a counterpart of the map $\hat{f}$ recalled in Section \ref{section 1}. We enumerate $S_{\bar \alpha}(\C)$ as $\langle \tp({\bar \alpha}_k/\C) : k<\lambda\rangle$ for some cardinal $\lambda$ and some tuples ${\bar \alpha}_k\equiv \bar \alpha$, where ${\bar \alpha}_0=\bar \alpha$. Let $q_k=\tp(\bar \alpha_k/\C)$ for $k<\lambda$.
Let $\C'\succ \C$ be a monster model with respect to $\C$. 
As in \cite[Proposition 2.3]{KrPiRz}, using compactness and the density of $\aut(\C)$ in $EL(S_{\bar \alpha}(\C))$, we easily get

\begin{rem}\label{remark: Proposition 2.3 from KrPiRz}
For every $\eta \in EL(S_{\bar \alpha}(\C))$ there is $\sigma' \in \aut(\C')$ such that for all $k<\lambda$, $\eta(\tp(\bar \alpha_k/\C))= \tp(\sigma'(\bar \alpha_k)/\C)$. More generally, given any sequence $\langle \bar \beta_k: k \in I \rangle$, where $I$ is any set of size bounded with respect to $\C'$ and $\{ \tp(\bar \beta_k/\C) : k \in I \} \subseteq S_{\bar \alpha}(\C)$,  for every $\eta \in EL(S_{\bar \alpha}(\C))$ there is $\sigma' \in \aut(\C')$ such that for all $k\in I$, $\eta(\tp(\bar \beta_k/\C))= \tp(\sigma'(\bar \beta_k)/\C)$.  
\end{rem} 

By this remark and the definition of $\gal_L^{\fix,1}(p_1)$ (which here will be computed using $\C'$), one easily gets that $\hat{f_1}\colon EL(S_{\bar \alpha}(\C)) \to \gal_L^{\fix,1}(p_1)$ given by
$$\hat{f}_1(\eta)=\sigma'|_{p_1(\C')} /\autf_L^{\fix,1}(p_1(\C')),$$
where $\sigma' \in \aut(\C')$ is such that $\sigma'(\bar \alpha_k) \models \eta(q_k)$ for all $k<\lambda$, is a well-defined function. (To see that the value $\hat{f}_1(\eta)$ does not depend on the choice of $\sigma'$, one should use the obvious fact that among $\bar \alpha_k$, $k<\lambda$, there are representatives of all $E_L$-classes on $p_1(\C')$.) Note that the difference in comparison with $\hat{f}$ is that here we have to use all types from $S_{\bar \alpha}(\C)$ and not just one type $\tp(\bar \alpha/\C)$. One can check that the definition of $\hat{f}_1$ does not depend on the choice of the tuples $\bar \alpha_k \models q_k$ for $k<\lambda$.

We also leave as an easy exercise to check that if $\bar \alpha$ contains a (small) model or $\bar \alpha$ is replaced by $\bar c$, then $\gal_L^{\fix,1}(p_1)$ is isomorphic to $\gal_L(T)$, and for $\bar \alpha$ replaced by $\bar c$, the above $\hat{f}_1$ can be identified with $\hat{f}$. The same remark applies to $\gal_{KP}^{\fix,1}(p_1)$ and the epimorphism $\hat{F}_1\colon EL(S_{\bar \alpha}(\C)) \to \gal_{KP}^{\fix,1}(p_1) $ (defined before Question \ref{question: Newelski's conjecture for bar alpha} below).

\begin{prop}\label{proposition: f1 is an epimorphism}
$\hat{f}_1$ is a continuous semigroup epimorphism.
\end{prop}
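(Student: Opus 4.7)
The strategy is to exhibit $\hat{f}_1$ as a quotient of a known continuous semigroup epimorphism. Reordering $\bar c$ if necessary (and invoking Proposition \ref{proposition: same content isomorphic ellis groups}), we may assume that $\bar \alpha$ is a subsequence of $\bar c$. Then Remark \ref{remark: very easy 2} gives a continuous semigroup epimorphism $\hat r\colon EL(S_{\bar c}(\C))\twoheadrightarrow EL(S_{\bar \alpha}(\C))$, Section \ref{section 1} (together with the discussion of the topology on $\gal_L(T)$ via $\nu_1$ and $\nu_2$) supplies a continuous semigroup epimorphism $\hat f\colon EL(S_{\bar c}(\C))\twoheadrightarrow \gal_L(T)$, and the definition of the quotient topology on $\gal_L^{\fix,1}(p_1)$ makes the canonical projection $\pi\colon \gal_L(T)\twoheadrightarrow \gal_L^{\fix,1}(p_1)$ continuous. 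The plan is to prove the commutativity identity
\[
\hat f_1\circ \hat r \;=\; \pi\circ\hat f,
\]
and to deduce the three required properties of $\hat f_1$ from the corresponding properties of the right-hand side together with the fact that $\hat r$ is a topological quotient map.

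The heart of the argument is the verification of the displayed identity. Given $\eta\in EL(S_{\bar c}(\C))$, for each $k<\lambda$ use the strong homogeneity of $\C$ to extend $\bar\alpha_k\equiv\bar\alpha$ to an enumeration $\bar c_k\equiv\bar c$ of $\C$ in which $\bar\alpha_k$ sits in the coordinates occupied by $\bar\alpha$ inside $\bar c$. Because $|\{\bar c_k:k<\lambda\}|\leq |S_{\bar\alpha}(\C)|$ is bounded with respect to the bigger monster $\C'$, the extended form of Remark \ref{remark: Proposition 2.3 from KrPiRz} supplies a single $\sigma'\in\aut(\C')$ such that $\sigma'(\bar c_k)\models \eta(\tp(\bar c_k/\C))$ for all $k<\lambda$. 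From the definition of $\hat r$ it then follows that $\hat r(\eta)(q_k)=\tp(\sigma'(\bar\alpha_k)/\C)$ for every $k$, so the same $\sigma'$ witnesses the value of $\hat f_1(\hat r(\eta))$ and yields $\hat f_1(\hat r(\eta))=\sigma'|_{p_1(\C')}/\autf_L^{\fix,1}(p_1(\C'))$. Under the identification of $\gal_L^{\fix,1}(p_1)$ with $\aut(\C')/\autf_{L,p_1}^{\fix,1}(\C')$ mentioned in the text preceding the proposition, this coincides with $\pi(\hat f(\eta))=\sigma'/\autf_{L,p_1}^{\fix,1}(\C')$, establishing the identity.

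Once the identity holds the remaining conclusions are automatic. Surjectivity of $\hat f_1$ follows from surjectivity of $\pi\circ\hat f$ (as both $\hat f$ and $\pi$ are onto) together with surjectivity of $\hat r$. The semigroup-homomorphism property transfers in the standard way: given $\eta_1,\eta_2\in EL(S_{\bar \alpha}(\C))$, pick lifts $\tilde\eta_1,\tilde\eta_2\in EL(S_{\bar c}(\C))$ via $\hat r$ and compute $\hat f_1(\eta_1\eta_2)=\hat f_1(\hat r(\tilde\eta_1\tilde\eta_2))=\pi\hat f(\tilde\eta_1)\pi\hat f(\tilde\eta_2)=\hat f_1(\eta_1)\hat f_1(\eta_2)$. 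For continuity, note that both Ellis semigroups are closed subspaces of products of Hausdorff type spaces and hence are compact Hausdorff, so the continuous surjection $\hat r$ is automatically a topological quotient map; since $\hat f_1\circ\hat r=\pi\circ\hat f$ is continuous, the universal property of quotient maps forces $\hat f_1$ to be continuous as well. The main technical point is the one used in the middle paragraph: guaranteeing that a single $\sigma'\in\aut(\C')$ can simultaneously realize the action of $\eta$ on the entire (bounded-in-$\C'$ but unbounded-in-$\C$) family of types $\{\tp(\bar c_k/\C):k<\lambda\}$.
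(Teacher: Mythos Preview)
Your proof is correct and takes a genuinely different route from the paper. The paper argues each of the three properties directly: surjectivity via lifting cosets of $\autf_L(\C')$ to automorphisms extending elements of $\aut(\C)$, continuity by unwinding the quotient topology on $\gal_L^{\fix,1}(p_1)$, and the homomorphism property by a somewhat delicate hands-on construction of witnesses $\sigma_1',\sigma_2'\in\aut(\C')$ (this last step uses Remark \ref{remark: Proposition 2.3 from KrPiRz} applied to a doubly-indexed family $(\bar\beta_{k,j})_{k<\lambda,j\leq\lambda}$ chosen so that $\sigma_1'\sigma_2'$ simultaneously witnesses $\hat f_1(\eta_1\eta_2)$ while $\sigma_1'$ witnesses $\hat f_1(\eta_1)$). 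Your approach instead exhibits $\hat f_1$ as a factor of the already-established map $\hat f$ through the quotient $\hat r$, and reads off all three properties from the commutative square; this is cleaner and makes the relationship between $\hat f$ and $\hat f_1$ explicit, at the cost of invoking the larger flow $S_{\bar c}(\C)$ and the version of Proposition~2.3 from \cite{KrPiRz} for long tuples (which you correctly flag as the main technical point). Two small remarks: the ``reordering $\bar c$'' step is slightly imprecise if $\bar\alpha$ has repeated entries --- cleaner is to work with the tuple $\bar d$ of Remark \ref{remark: very easy 2} directly, noting that the definition and properties of $\hat f$ transfer verbatim; and to read off $\hat f(\eta)=\sigma'/\autf_L(\C')$ from your $\sigma'$ you are implicitly using \cite[Remark 2.4]{KrPiRz} (that any $\bar c_k\equiv\bar c$ can serve as witness), which is fine but worth making explicit.
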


\begin{proof}
The fact that $\hat{f}_1$ is onto is completely standard. It follows from the fact that for every $\sigma' \in \aut(\C')$ the coset $\sigma' \autf_L(\C')$ contains some $\sigma'' \in \aut(\C')$ which extends an automorphism $\sigma \in \aut(\C)$. 

Now, let us check continuity. Let $C \subseteq \gal_L^{\fix,1}(p_1)$ be closed. By the definition of the topology, $\{ \tp(\sigma'((\bar \alpha_k)_{k<\lambda})/\C) : \sigma'|_{p_1(\C')}/\autf_L^{\fix,1}(p_1(\C')) \in C\}$ is closed. Hence, $D:= \{ \langle \tp(\sigma'(\bar \alpha_k)/\C) : k < \lambda \rangle : \sigma'|_{p_1(\C')}/\autf_L^{\fix,1}(p_1(\C')) \in C\}$ is closed in $S_{\bar \alpha}(\C)^{S_{\bar \alpha}(\C)}$. On the other hand, $\hat{f}_1^{-1}[C] = EL(S_{\bar \alpha}(\C)) \cap D$. So $\hat{f}_1^{-1}[C]$ is closed. 

It remains to check that $\hat{f}_1$ is a homomorphism, which is slightly more delicate in comparison with the proof of the same statement for $\hat{f}$.
Take $\eta_1,\eta_2 \in EL(S_{\bar \alpha}(\C))$.  For each $j<\lambda$ take a unique $k_j<\lambda$ such that $\eta_2(q_j)=q_{k_j}$. 
By Remark \ref{remark: Proposition 2.3 from KrPiRz}, there is $\sigma_2' \in \aut(\C')$ such that 
$$\sigma_2'(\bar \alpha_j) \models \eta_2(q_j)=q_{k_j}$$ 
for all $j<\lambda$. 
Also by Remark \ref{remark: Proposition 2.3 from KrPiRz}, but applied to the sequence $(\bar \beta_{k,j})_{k<\lambda ,j\leq\lambda}$ (whose entries satisfy the types $q_k$, $k<\lambda$) defined by 
$$
\bar \beta_{k,j}:=\left\{
\begin{array}{ll}
\sigma_2'(\bar \alpha_j) & \mbox{when}\;\, j<\lambda\;\, \mbox{and}\;\, k=k_j\\ 
\bar \alpha_k & \mbox{otherwise}, 
\end{array}
\right.
$$
we can find $\sigma_1' \in \aut(\C')$ such that 
$$\sigma_1'(\bar \beta_{k,j}) \models \eta_1(q_k)$$
for all $k<\lambda$ and $j\leq \lambda$.

Then $(\sigma_1' \sigma_2')(\bar \alpha_j) =\sigma_1'(\beta_{k_j,j}) \models \eta_1(q_{k_j})=\eta_1(\eta_2(q_j)) =(\eta_1\eta_2)(q_j)$
for all $j<\lambda$. Hence,
$$\hat{f}_1(\eta_1\eta_2) = (\sigma_1' \sigma_2')|_{p_1(\C')}/\autf_L^{\fix,1}(p_1(\C')).$$
From the definition of $\hat{f}_1$, we also get
$$\hat{f}_1(\eta_2) = \sigma_2'|_{p_1(\C')}/ \autf_L^{\fix,1}(p_1(\C')).$$
By the last two formulas, in order to finish the proof that $\hat{f}_1$ is a homomorphism, it remains to check that
$$\hat{f}_1(\eta_1) = \sigma_1'|_{p_1(\C')}/ \autf_L^{\fix,1}(p_1(\C')),$$
%
but this follows from the fact that $\sigma_1'(\bar \alpha_k) = \sigma_1'(\bar \beta_{k,\lambda}) \models \eta_1(q_k)$ for all $k<\lambda$.
\end{proof}

Let ${\mathcal M}_1$ be a minimal left ideal in $EL(S_{\bar \alpha}(\C))$ and $u_1 \in {\mathcal M}_1$ an idempotent. Let $f_1=\hat{f}_1|_{u_1{\mathcal M}_1} \colon u_1{\mathcal M}_1 \to \gal_L^{\fix,1}(p_1)$. Since $u_1{\mathcal M}_1=u_1EL(S_{\bar \alpha}(\C)) u_1$, Proposition \ref{proposition: f1 is an epimorphism} implies that $f_1$ is a group epimorphism. Let 
$$\hat{F}_1 = h_1 \circ \hat{f}_1 \colon EL(S_{\bar \alpha}(\C)) \to  \gal_{KP}^{\fix,1}(p_1)\;\, \mbox{and}\;\, F_1=h_1 \circ f_1 \colon u{\mathcal M}_1 \to \gal_{KP}^{\fix,1}(p_1).$$ 
A natural counterpart of the Ellis group conjecture in this context says that $F_1$  is an isomorphism. It is false in general (by taking any example where $E_L$ differs from $E_{KP}$ on $p_1(\C)$). 
However, one could ask whether it is true assuming that ${\mathcal M}_1$ is of bounded size. (Note that then $E_L$ coincides with $E_{KP}$ on $p_1(\C)$ by Proposition \ref{proposition: only one direction for bar alpha} and Corollary 2.10 of \cite{HrPi}).

\begin{ques}\label{question: Newelski's conjecture for bar alpha}
Is it true that if ${\mathcal M}_1$ is of bounded size, then $F_1$ is an isomorphism?
\end{ques}

Actually, most of the above proof of Theorem \ref{theorem theorem from ChSi} can be easily adjusted to the context of Question \ref{question: Newelski's conjecture for bar alpha}. The only problem that appears is that (as we will see in Example \ref{example: counter-example to the variant of the theorem from ChSi}(6)) it is not the case that each type $p \in S_{\bar \alpha}(\C)$ (in a NIP theory) which does not fork over $\emptyset$ is invariant under $\autf_{KP,p}^{\fix,1}(\C)$ (the group of automorphisms fixing setwise the $E_{KP}$-class of each realization of $p:=\tp(\bar\alpha/\emptyset)$). This affects the proof of Lemma \ref{lemma: 5.1 from ChSi} and the final part of the proof of Lemma \ref{lemma: 5.2 from ChSi} (namely, we do not have $U_{\neg \varphi}=U_{\varphi}^c$). We finish with an example showing that the answer to Question \ref{question: Newelski's conjecture for bar alpha} is negative. We get even more, namely that the map $f_1 \colon u_1{\mathcal M}_1 \to \gal_L^{\fix,1}(p_1)$ is not an isomorphism although ${\mathcal M}_1$ is finite. In this example, we will compute a minimal left ideal of $EL(S_{\bar \alpha}(\C))$ and the Ellis group. The notation in this example is not fully compatible with the one used so far (e.g. $f_1$ will denote something else, and the role of $p_1$ will be played by $p$.)

\begin{ex}\label{example: counter-example to the variant of the theorem from ChSi}
Let $\mathbb{Q}_{l}$ and $\mathbb{Q}_{r}$ be two disjoint copies of the rationals.
Consider the 2-sorted structure $M$ with the sorts $S_1 := \mathbb{Q} \times  \mathbb{Q}$ and $S_2 :=  \mathbb{Q}_{l} \cup  \mathbb{Q}_{r}$, the equivalence relation $E$ on $S_2$ with two classes $\mathbb{Q}_{l}$ and $\mathbb{Q}_{r}$, 
the order $\leq_{S_2}$ on $S_2$ which is the standard order on each of the sets $\mathbb{Q}_{l}$ and $\mathbb{Q}_{r}$ (and no element of  $\mathbb{Q}_{l}$ is comparable with an element of $\mathbb{Q}_{r}$), and the binary relation $R$ on the product $S_1 \times S_2$ defined by
$$
R(a,b) \iff 
\left\{ \begin{array}{lll} 
a \in (-\infty,b] \times \mathbb{Q} & \mbox{for} & b \in \mathbb{Q}_{l},\\
a \in  \mathbb{Q} \times (-\infty,b] & \mbox{for} & b \in \mathbb{Q}_{r}.
\end{array} \right.
$$
Let $T=\Th(M)$ and $\C \succ M$ be a monster model. $T$ is
interpretable in the theory of $(\mathbb{Q},\leq)$, hence it is
$\aleph_0$-categorical and has NIP. Also, $M$ is saturated.  

Of course, $\mathbb{Q}_l$ as a set is definable over $\mathbb{Q}_l$ treated as an imaginary element (i.e. the ``left'' class of the $\emptyset$-definable equivalence relation $E$).

Consider $\leq_1$ and $\leq_2$ on $S_1$ which are given by the standard orders in the first and in the second coordinate, respectively. Then 
$$
\begin{array}{ll}
a_1 \leq_1 a_2 \iff (\exists y \in \mathbb{Q}_{l}) (R(a_1,y) \wedge \neg R(a_2,y)) \lor (\forall y \in \mathbb{Q}_{l}) (R(a_1,y) \Leftrightarrow R(a_2,y)),\\
a_1 \leq_2 a_2 \iff (\exists y \in \mathbb{Q}_{r}) (R(a_1,y) \wedge \neg R(a_2,y)) \lor (\forall y \in \mathbb{Q}_{r}) (R(a_1,y) \Leftrightarrow R(a_2,y)).
\end{array}
$$
So  $\leq_1$ and $\leq_2$ are both definable over the imaginary element $\mathbb{Q}_{l}$, and they are both dense, linear preorders. Let $\sim_i$ be given by $x_1 \leq_i x_2 \wedge x_2 \leq_i x_1$ and $\leq_i'$ be the induced linear order on $S_1/\!\!\sim_i$, for $i=1,2$.

Let $f_1 \colon S_1 \to \mathbb{Q}_l$ and $f_2 \colon S_1 \to \mathbb{Q}_r$ be given by $f_1(x) = \min \{ y \in \mathbb{Q}_l : R(x,y)\}$ and $f_2(x) = \min \{ y \in \mathbb{Q}_r : R(x,y)\}$. 
These functions are definable over the imaginary element $\mathbb{Q}_{l}$. Moreover, $x_1 \leq_i x_2 \iff f_i(x_1) \leq_i f_i(x_2)$ for $i=1,2$. Therefore, $f_1$ and $f_2$ induce isomorphisms $f_1' \colon (S_1/\!\!\sim_1,\leq_1') \to (\mathbb{Q}_l,\leq_{S_2})$ and  $f_2' \colon (S_1/\!\!\sim_2,\leq_2') \to (\mathbb{Q}_r,\leq_{S_2})$. Note that $R$ is definable using $\leq_{S_2}$, $f_1$ and $f_2$: 
$$R(x,y) \iff (y \in \mathbb{Q}_l \wedge f_1(x) \leq_{S_2} y) \lor (y \in \mathbb{Q}_r \wedge f_2(x) \leq_{S_2} y).$$
 
Let $\alpha = (0,0) \in S_1$.
The following statements are true.
\begin{enumerate}
\item In each of the sorts $S_1$ and $S_2$, there is a unique complete type over $\emptyset$ (in the theory $T$); thus $S_{S_1}(\C) = S_{\alpha}(\C)$. In fact, $\aut(M)$ acts transitively on $S_1$ and on $S_2$, and even the group of automorphisms fixing setwise each of the two classes of $E$ acts transitively on $S_1$. From now on, let $p=\tp(\alpha/\emptyset)$ be the unique type in $S_{S_1}(\emptyset)$.
\item Each automorphism of $\C$ which switches the two classes of $E$ also switches the orders $\leq_1$ and $\leq_2$. The automorphisms of $\C$ which preserve classes of $E$ also preserve $\leq_1$ and $\leq_2$.
\item There is no non-trivial, bounded, invariant equivalence relation on $S_1$. So $E_L|_{S_1}=E_{KP}|_{S_1}=E_{Sh}|_{S_1} = \; \equiv\!\!\!|_{S_1}$ is the total relation on $S_1$. Thus, $\autf_{L}^{\fix,1}(p(\C)) = \autf_{KP}^{\fix,1}(p(\C))=\aut(p(\C))$ and $\gal_L^{\fix,1}(p)=\gal_{KP}^{\fix,1}(p)$ is the trivial group.
\item The quantifier-free $\{\leq_1,\leq_2\}$-type over $\emptyset$ of any finite tuple in $S_1$ generates a complete type over the name of ${\mathbb Q}_l$ in the language of $T$.
\item For every $\epsilon_1,\epsilon_2\in\{0,1\}$ the set of formulas
  $Z_{\epsilon_1,\epsilon_2}:=\{(x\leq_1 a)^{\epsilon_1}\land(x\leq_2
  a)^{\epsilon_2}:a\in S_1\}$ generates a type
  $p_{\epsilon_1\epsilon_2}\in S_{S_1}(\C)$ that does not fork over
  $\emptyset$ (here $\varphi^0=\varphi$
  and $\varphi^1=\neg\varphi$). These are the only non-forking types
  in $S_{S_1}(\C)$. $p_{00}$ and $p_{11}$ are invariant under
  $\aut(\C)$, and the types $p_{01}$ and $p_{10}$ form a single orbit. 
\item  The types $p_{01},p_{10}$ are $\autf_{KP}(\C)$-invariant (as non-forking types in a NIP theory), but they are not $\autf_{KP,p}^{\fix,1}(\C)$-invariant.
\item There is $\eta_0 \in  EL(S_\alpha(\C))$ such that $\im(\eta_0)=\{p_{00},p_{11},p_{01},p_{10}\}$.
\item Each element $\eta \in EL(S_\alpha(\C))$ acts trivially on $p_{00},p_{11},p_{01},p_{10}$ or acts trivially on $p_{00}$ and $p_{11}$ and switches $p_{01}$ and $p_{10}$, and there exists $\eta$ which switches $p_{01}$ and $p_{10}$. Therefore, ${\mathcal M}_1:=EL(S_\alpha(\C)) \eta_0$ is a minimal left ideal in $EL(S_\alpha(\C))$, has two elements, and coincides with the Ellis group. Thus, the Ellis group of $S_\alpha(\C)$ has two elements, whereas $\gal_{KP}^{\fix,1}(p)$ is trivial (by (3)), so the answer to Question \ref{question: Newelski's conjecture for bar alpha} is negative.
\end{enumerate}
\end{ex}
\begin{proof}
(1) Whenever $g \colon S_2 \to S_2$ is an $\{E,\leq_{S_2}\}$-automorphism such that $g_l:=g|_{\mathbb{Q}_l} \colon \mathbb{Q}_l \to  \mathbb{Q}_l$ and $g_r:=g|_{\mathbb{Q}_r} \colon \mathbb{Q}_r \to  \mathbb{Q}_r$, then for $f \colon S_1 \to S_1$ given by $f(x_1,x_2)=(g_l(x_1),g_r(x_2))$ we see that $f \cup g$ is an automorphism of $M$. Similarly, whenever $g \colon S_2 \to S_2$ is an $\{E,\leq_{S_2}\}$-automorphism such that $g_l:=g|_{\mathbb{Q}_l} \colon \mathbb{Q}_l \to  \mathbb{Q}_r$ and $g_r:=g|_{\mathbb{Q}_r} \colon \mathbb{Q}_r \to  \mathbb{Q}_l$, then for $f \colon S_1 \to S_1$ given by $f(x_1,x_2)=(g_r(x_2),g_l(x_1))$ we see that $f \cup g$ is an automorphism of $M$. Having this, transitivity of $S_1$ and $S_2$ follows easily from the homogeneity of $(\mathbb{Q},\leq)$ and the existence of the obvious $\{E,\leq_{S_2}\}$-automorphism of $S_2$ switching the two $E$-classes.

(2) is obvious by the descriptions of $\leq_1$ and $\leq_2$ in terms of $\mathbb{Q}_l$, $\mathbb{Q}_r$ and $R$.
%

(3) First notice that if $a_0=(x_0,y_0),a_1=(x_1,y_1)\in S_1^M$, then
  there are $a_n=(x_n,y_n)\in S_1^M,n\geq 2$ such that the sequence
  $(a_n)$ is indiscernible in $M$. Indeed, it is enough to choose
  $x_n,y_n$ so that the sequences $(x_n),(y_n)$ are either constant
  or strictly monotonous. By saturation of $M$, we see that $E_L$ is
  total on $S_1$.

(4) It is easy to check it for tuples from  $S_1^M$ (see the proof of (1)), which is enough.

(5) 
By (4), for every $\epsilon_1,\epsilon_2\in\{0,1\}$ and $a_1,\dots,a_n \in S_1$ the formula 
$$\bigwedge_i (x\leq_1 a_i)^{\epsilon_1}\land (x\leq_2 a_i)^{\epsilon_2}\land \neg x\sim_1 a_i\land\neg x\sim_2 a_i$$ 
generates a complete type over $a_1,\dots,a_n$ and the name of $\mathbb{Q}_l$.
So the sets
  $Z_{\epsilon_1,\epsilon_2}$ generate complete types over $\C$. By
  (2), $p_{00}$ and $p_{11}$ are invariant and $p_{01}$ and $p_{10}$
  are either preserved or switched by any automorphism, so they do not
  fork over $\emptyset$.   Now, suppose $q \in S_{S_1}(\C) \setminus \{p_{00},p_{11},p_{01},p_{10}\}$. Then $q$ contains the formula $x \leq_{1} a_1  \wedge a_2 \leq_1 x$ or the formula $x \leq_{2} a_1  \wedge a_2 \leq_2 x$ for some $a_1,a_2 \in S_2$ (computed in $\C$). But each of these formulas is easily seen to divide over $\emptyset$.

(6) By (2), any automorphism which switches the $E$-classes also switches $p_{01}$ and $p_{10}$, but by (3), $\autf_{KP,p}^{\fix,1}(\C)=\aut(\C)$.

(7)
By (4), for any $a_1, \dots,a_n, l \in S_1$ (computed in $\C$),  we can choose $\sigma_{\bar a,l} \in \aut(\C)$ which preserve the orders $\leq_1$ and $\leq_2$ and such that $\sigma_{\bar a, l}(a_i) \leq_1 l$ and $\sigma_{\bar a, l}(a_i) \leq_2 l$ for all $i=1,\dots,n$. Then $(\sigma_{\bar a, l})$ is a net with the directed order on the indexes given by: $(a_1, \dots,a_n, l) \leq (a_1', \dots,a_n', l')$ if and only if $\bar a$ is a subsequence of $\bar a'$ and $l'\leq_1 l$ and $l' \leq_2 l$. Now, the limit of a convergent subnet will be the desired $\eta_0$.

(8) The first sentence follows from (5). This implies that the image of each element in ${\mathcal M}_1$ equals $\{p_{00},p_{11},p_{01},p_{10}\}$, ${\mathcal M}_1$  is minimal  and has two elements. Hence, any idempotent $u_1 \in {\mathcal M}_1$ fixes $p_{00},p_{11},p_{01},p_{10}$, so taking $\eta \in EL(S_{\alpha}(\C))$ such that $\eta(p_{01})=p_{10}$, we get that $u_1\eta u_1 (p_{01})=p_{10}$, and hence  $u_1{\mathcal M}_1$ has two elements and coincides with ${\mathcal M}_1$.
\end{proof}

\section{Appendix: an example}

In this appendix, we give an example showing that in general (vi) does not imply (v) in Proposition \ref{proposition: only one direction for bar alpha}: we construct an NIP structure $\C$ and a type $p$ over $\emptyset$ such that $p$ does not fork over $\emptyset$, however $EL(S_p(\C))$ has an unbounded minimal ideal. In fact the theory of $\C$ will be $\omega$-categorical.

\subsection{Ordered ultrametric spaces}

An {\em ordered ultrametric space} is a totally ordered set $(M,\leq)$ equipped with a distance function $d:M^{2} \to D$, where $(D,\unlhd,0)$ is a totally ordered set with minimal element $0$ such that:
\begin{itemize}
	\item $d(x,y)=0 \iff x=y$;
	\item $d(x,y)=d(y,x)$;
	\item $x<y<z \Longrightarrow d(x,z)=\max_{\unlhd} \{d(x,y),d(y,z)\}$.
\end{itemize}

As an example of such a space, take $(K,v,\leq)$ an ordered valued field with convex valuation ring. Consider the distance $d(x,y)=\val(x-y)$ taking value in $(\Gamma,\geq,\infty)$: the value group equipped with the reverse order and $\infty$ playing the role of $0$. This is an ordered ultrametric space.

Model theoretically, we represent ordered ultrametric spaces as two sorted structures $(M,D)$ in the language $L=\{0,\leq,\unlhd,d\}$, where $0$ is a constant symbol for the minimal element of $D$, $\leq$ is the order on $M$, $\unlhd$ is the order on $D$ and $d:M^2\to D$ is the distance function.
Let $\mathcal C$ be the class of finite ordered ultrametric spaces in the language $L$ (so both sorts are finite).
\begin{prop}
	The class $\mathcal C$ has the joint embedding and amalgamation properties.
\end{prop}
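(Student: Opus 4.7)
My plan is to handle JEP and AP by direct constructions, with JEP being essentially immediate and AP requiring a careful merge argument respecting the interaction between the linear order on $M$ and the ultrametric.

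For JEP, given $B_1, B_2 \in \mathcal{C}$, I would take $C$ with $M_C = M_{B_1} \sqcup M_{B_2}$, placing $M_{B_1}$ entirely below $M_{B_2}$, take $D_C$ to be any common linear extension of $D_{B_1}$ and $D_{B_2}$ (identified over their minima $0$) with a fresh $\unlhd$-maximum $*$ adjoined on top, and set $d(x,y) = *$ for every cross-pair together with the original distances inside each $B_i$. The ultrametric axiom is immediate: for any triple $x < y < z$ not entirely inside a single $B_i$, the extreme pair $(x, z)$ crosses the partition, so $d(x, z) = *$, which dominates both $d(x, y)$ and $d(y, z)$.

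For AP, given $A \hookrightarrow B_1, B_2$, I would take $C$ to be the pushout $B_1 \cup_A B_2$ on the $M$-sort and the pushout $D_{B_1} \cup_{D_A} D_{B_2}$ on the $D$-sort, extended to a linear order and possibly enriched with a few fresh values. The order on $M_C$ is built gap by gap. For each gap $(a^-, a^+)$ of $A$ in $C$, each in-gap element $b$ carries two invariants $\alpha(b) := d(a^-, b)$ and $\beta(b) := d(b, a^+)$, and the axioms in each $B_i$ force $\alpha$ to be non-decreasing, $\beta$ non-increasing, and $\max_\unlhd\{\alpha(b),\beta(b)\} = \delta := d(a^-, a^+)$ along each $B_i$-side of the gap. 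The key combinatorial lemma I would prove is that any $x \in B_1 \setminus A$ and $y \in B_2 \setminus A$ in the same gap are comparable under the partial order ``$\alpha$-increasing and $\beta$-decreasing'' on $D^2$: if $\alpha(x) \lhd \alpha(y)$ then $\alpha(x) \lhd \delta$ forces $\beta(x) = \delta \unrhd \beta(y)$, and the remaining cases follow symmetrically or trivially. This lets me merge the two in-gap sequences into a single sequence preserving both monotonicities.

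The distances in $C$ are then defined as follows: within each $B_i$, use the original distance; for a cross pair $b_1 \in B_1 \setminus A$, $b_2 \in B_2 \setminus A$ separated by some $a \in A$, set $d(b_1, b_2) := \max_\unlhd\{d(b_1, a), d(a, b_2)\}$, which one checks is independent of the choice of separator $a$ by the ultrametric axioms inside $B_1$ and $B_2$; for a cross pair in the same $A$-gap, the values of $\alpha$ and $\beta$ dictate the distance ($d = \alpha(\max)$ when the $\alpha$-values differ, $d = \beta(\min)$ when the $\beta$-values differ, with a free choice $\unlhd \min\{\alpha,\beta\}$ when both coincide, possibly using a fresh value in $D_C$). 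The verification of the ultrametric axiom for every triple $x < y < z$ in $C$ is a case analysis on the distribution of $\{x, y, z\}$ among $B_1 \setminus A$, $A$, $B_2 \setminus A$ and among the $A$-gaps. The \textbf{main obstacle} is the in-gap mixed triples, where one uses the $\alpha$-/$\beta$-monotonicity and $\max_\unlhd\{\alpha, \beta\} = \delta$ to carry out the arithmetic; cross-gap cases reduce via the ``separated by $a$'' formula to the axioms inside each $B_i$.
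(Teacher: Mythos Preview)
Your approach is essentially the paper's: both amalgamate the $D$-sorts freely, then handle the $M$-sort gap by gap using the distances to the endpoints, with cross-gap distances computed via a separating element of $A$. The paper's split of each gap into $B_0=\{z:d(a^-,z)<\delta\}$ and $B_1=\{z:d(a^-,z)=\delta\}$ is precisely your observation that $\max_\unlhd\{\alpha,\beta\}=\delta$, so one of $\alpha,\beta$ equals $\delta$; the paper then orders by $\alpha$ on the $B_0$-side and by $\beta$ on the $B_1$-side, which is just a case split of your $(\alpha,\beta)$-merge.

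There is, however, a genuine gap in your treatment of a cross-pair $(b_1,b_2)$ in the same gap with $\alpha(b_1)=\alpha(b_2)=\alpha_0$ and $\beta(b_1)=\beta(b_2)=\beta_0$. The distance here is \emph{not} a free choice $\unlhd\min\{\alpha_0,\beta_0\}$. First, the merge must not interleave on a fixed $(\alpha_0,\beta_0)$-level: if $b_1<b_1'$ in $B_1\setminus A$ both carry $(\alpha_0,\beta_0)$ and you place some $b_2\in B_2\setminus A$ with the same $(\alpha_0,\beta_0)$ strictly between them, the ultrametric forces $d(b_1,b_1')=\max\{d(b_1,b_2),d(b_2,b_1')\}$, so your two ``free'' values are pinned to the already-given $d(b_1,b_1')$ in $B_1$; with several such pairs imposing different values, no consistent assignment exists. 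Second, even after blocking all of $B_1$ before all of $B_2$ on each level (which the paper does implicitly via the rule ``$d(x,b)\le d(x,c)\Rightarrow b<c$''), the cross-distances are still bounded below by all internal distances within each block; the clean fix is to set every such cross-distance equal to $\min\{\alpha_0,\beta_0\}$, which is exactly the paper's choice. You should also say a word about the two tails (elements below $\min M(A)$ or above $\max M(A)$), where only one of $\alpha,\beta$ is available; the same merge works with the single invariant.
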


\begin{proof}
	Since all structures contain the structure with $M$ empty and $D=\{0\}$, we only need to show amalgamation. Let $A,B,C\in \mathcal C$ with embedding $f:A \to B$ and $g: A \to C$ and identify the images of $f$ and $g$ with $A$. We seek a structure $E$ which completes the square. Let $M(A)$ and $D(A)$ be the two sorts of $A$ and same for $B$ and $C$. Without loss, $D(B)$ and $D(C)$ both have at least two elements. First amalgamate $D(B)$ and $D(C)$ freely into $D(E)$ (so no element of $D(B)\setminus D(A)$ coincides with an element of $D(C)\setminus D(A)$ and extend the order in an arbitrary way). To amalgamate the $M$ sorts, assume first that $M(A)$ is empty. Define then $M(E)$ as the disjoint union of $M(B)$ and $M(C)$ ordered so that all elements of $M(B)$ are before all elements of $M(C)$. Now if $a\in M(A)\subseteq M(C)$ and $b\in M(B)\subseteq M(C)$, set $d(a,b) = m$, where $m=\max_{\unlhd} D(E)$. This makes $E=(M(E),D(E))$ into an ordered ultrametric space.

Assume now that $M(A)$ is not empty. Let $x<y$ be two consecutive points in $A$ and set $d=d(x,y)$. Let $B'$ be the points of $M(B)$ strictly between $x$ and $y$. Define further $B_0=\{z\in B':d(x,z)<d\}$ and $B_1=B'\setminus B_0$. Note that $x<B_0<B_1<y$ according to the order on $M(B)$ and for any $z\in B_0$, $z'\in B_1\cup\{y\}$, we have $d(z,z')=d$. Define the same way $C',C_0$ and $C_1$, so that $x<C_0<C_1<y$.
	
	We explain how to amalgamate $B_0$ and $C_0$. Let $b\in B_0$ and $c\in C_0$. If $d(x,b)\leq d(x,c)$, set $b<c$ and $d(b,c)=d(x,c)$. If $d(x,c)<d(x,b)$, then set $c<b$ and $d(c,b)=d(x,b)$. We amalgamate $B_1$ and $C_1$ similarly by considering $d(\cdot,y)$: if $d(b,y)\leq d(c,y)$, set $c<b$ and $d(c,b)=d(c,y)$ and symmetrically if $d(c,y)<d(b,y)$. Finally, if $b\in B_0\cup C_0$ and $c\in B_1\cup C_1$, then set $b<c$ and $d(b,c)=d$.
Now, if $b\in M(B)$ and $c\in M(C)$ have a point $x\in M(A)$ strictly between them, say $b<x<c$, set $b<c$ and $d(b,c)=\max\{d(b,x),d(x,c)\}$. One can check that this does not depend on the choice of $x$. Finally, if $b,c$ and both greater than the greatest point $x$ of $M(A)$, then one amalgamates them with the same rules as for $B_0$ and $C_0$ above (and symmetrically if $b,c$ and smaller than the minimum of $M(A)$). It is now straightforward to check that this does define an ordered ultrametric space $C$ as required.
\end{proof}

\subsection{Trees and betweenness relation}

By a meet-tree, we mean a partially ordered set $(T,\leq)$ such that, for every element $a\in T$, the set $\{x\in T: x\leq a\}$ is linearly ordered by $\leq$ and every two-element subset $\{a,b\}$ of $T$, has a greatest lower bound $a\wedge b$. This theory has a model companion: the theory $T_{d}$ of dense meet-trees which is $\omega$-categorical and admits elimination of quantifiers in the language $\{\leq, \wedge\}$.

Given a meet-tree $(T,\leq,\wedge)$, we define the tree-betweenness relation $B(x,y,z)$ which holds for $(a,b,c)\in T^3$ if and only if $b$ is in the path linking $a$ to $c$, that is: \[ B(a,b,c) \iff ((a\wedge c) \leq b \leq a)\vee ((a\wedge c)\leq b\leq c). \]

By an interval of $(T,B)$, we mean a set of the form $[a,b]:=\{x\in T : B(a,x,b)\}$ for some $a,b\in T$. Such an interval is equipped with a natural definable linear ordering where $c$ is less than $d$ if $B(a,c,d)$ (if we represent the interval as $[b,a]$ instead of $[a,b]$, we obtain the opposite ordering, with $b$ as the first point). 
Given three distinct points $a,b,c\in T$, the intersection $[a,b]\cap [b,c] \cap [a,c]$ has a unique point;
call this point the {\em meet} of $(a,b,c)$ and denote it by $\bigwedge (a,b,c)$. If some two of $a,b,c$ are equal, then define $\bigwedge(a,b,c)$ as being equal to their common value. Fix some $a\in T$. Given $b,c\in T$, write $b\mathrel{E_a}c$ if $a \notin [b,c]$. Then $E_a$ is an equivalence relation on $T\setminus \{a\}$. The equivalence classes of $E_a$ are called \emph{cones at $a$}.

For the purposes of this example, a (tree) betweenness structure is the reduct to $(B,\bigwedge)$ of a meet-tree $(T,\leq, \wedge)$. If $(T,\leq ,\wedge)\models T_d$, then the reduct $(T,B,\bigwedge)$ to the betweenness relation admits elimination of quantifiers in the language $(B, \bigwedge)$ and its automorphism group acts transitively on pairs of distinct elements.

We note some basic properties of trees and betweenness relations (the reader is encouraged to make drawings to follow the statements).

\begin{itemize}
	\item If $(T,B,\bigwedge)$ is a tree betweenness structure and $a\in T$ is any point, then one can define a meet-tree structure with $a$ as minimal element by setting $b\leq c \iff B(a,b,c)$. Taking the betweenness relation associated to this tree yields back $B$.
	\item Let $(T,B,\bigwedge)$ be a tree betweenness structure and let $T_0\subset T$ be a finite set. Then $\{a\in T:  a=\bigwedge(c,d,e)$ for some $c,d,e\in T_0\}$ is a substructure of $T$. In particular, the substructure generated by $T_0$ has size at most $|T_0|^3$.
	\item Let $T_0 \subset T$ be a finite \emph{substructure} and $a\in T$ a point. The possibilities for $\qftp(a/T_0)$ are as follows:
	\begin{enumerate}
		\item $T_0\cup \{a\}$ is a substructure. This splits into three cases:
		\begin{enumerate}
			\item $a\in T_0$;
			\item $a$ lies in an interval of $T_0$: there are $b,c\in T_0$ such that $a\in [b,c]$. The knowledge of a minimal such interval completely determines $\qftp(a/T_0)$.
			\item $a$ does not lie in an interval of $T_0$, then there is a unique $b\in T_0$ such that $[a,b]$ contains no other point of $T_0$ and for every $c\in T_0$, $b\in [a,c]$ ($b$ is the point in $T_0$ \emph{closest} to $a$). The knowledge of $b$ determines $\qftp(a/T_0)$.
		\end{enumerate}
		\item $T_0 \cup \{a\}$ is not a substructure. Then there are $b,c\in T_0$ such that $a_* := \bigwedge(a,b,c)$ is a new element. Then $a_* \in [b,c]$, so its type belongs to the case (1b) above. Also $\qftp(a/T_0a_*)$ belongs to case (1c) above, with $a_*$ being the closest element to $a$ in the tree $T_0\cup \{a_*\}$. In particular $a_* \in [b,a]$ for every $b\in T_0$ and $T_0 \cup \{a,a_*\}$ is a substructure.

		In this case, $\qftp(a/T_0)$ is determined by $\qftp(a_*/T_0)$.
	\end{enumerate}
	Note that it follows from this analysis that there are at most $|T_0|+|T_0|^2+|T_0|+|T_0|^2= O(|T_0|^2)$ quantifier-free one types over $T_0$.
\end{itemize}

\subsection{The example}

We work in a language $L$ with two sorts $ M$ and $ D$. In addition to those sorts, we have in our language a ternary relation $B(x,y,z)$ and a ternary function symbol $\bigwedge(x,y,z)$, all on the main sort $ M$, a function $d: M^2\to D$, a binary relation $\unlhd$ on $ D$ and a constant $0$ of sort $ D$.
Let $\mathcal C_0$ be the class of finite $L$-structures $(M,D;B,\bigwedge,d,\unlhd,0)$, where:
\begin{itemize}
	\item $B$ is a tree-betweenness relation on $M$ and $\bigwedge$ is the meet in that structure;
	\item $\unlhd$ is a linear order on $D$ with minimal element $0$;
	\item $d: M^2\to  D$ is such that for any two elements $x,y\in  M$, the interval $[x,y]$ equipped with $d$ is an ordered ultrametric space as described above (this condition is invariant under reversing the ordering on $[x,y]$).
\end{itemize}
Note that the third bullet implies that $d$ is an ultrametric distance on $ M$: for any $x,y,z\in M$, $d(x,y)\leq \max \{d(y,z),d(x,z)\}$.

\begin{lem}
		The class $\mathcal C_0$ is a Fra\" iss\'e class.

\end{lem}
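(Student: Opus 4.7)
The plan is to check the four Fraïssé conditions: countably many isomorphism types of finite structures (immediate since the language is finite), the hereditary property (every defining axiom---the betweenness/meet axioms on $M$, the order axioms on $D$, and the ordered-ultrametric identity on intervals---is preserved under substructures), joint embedding, and amalgamation. Since $\mathcal C_0$ contains the empty structure, JEP reduces to AP, so the real task is amalgamation.

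Given $A,B,C \in \mathcal C_0$ with $A$ embedded as a substructure of both $B$ and $C$, I would build the amalgam $E$ in three stages. First, amalgamate the linear orders $(D(B),\unlhd,0)$ and $(D(C),\unlhd,0)$ over $(D(A),\unlhd,0)$ in any fashion (linear orders with a common sub-order trivially amalgamate). Second, form the \emph{free} tree-amalgam on the main sort: $M(E) = M(B) \cup_{M(A)} M(C)$ endowed with the unique betweenness structure in which, for $b \in M(B)\setminus M(A)$ and $c \in M(C)\setminus M(A)$, the path $[b,c]$ passes through the projections $\pi_A(b), \pi_A(c) \in M(A)$ (the closest points of $M(A)$ to $b$ and $c$ in $M(B)$ and $M(C)$, respectively) and through the $M(A)$-segment $[\pi_A(b), \pi_A(c)]$; equivalently, if we root at any point of $M(A)$, then $b \wedge c = \pi_A(b) \wedge \pi_A(c) \in M(A)$. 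Third, extend the distance function to cross-pairs via
$$d(b,c) \;=\; \max_{\unlhd}\bigl\{d(b,\pi_A(b)),\ d(\pi_A(b),\pi_A(c)),\ d(\pi_A(c),c)\bigr\},$$
with the convention $d(x,\pi_A(x))=0$ when $x\in M(A)$; all existing distances are retained whenever both arguments lie in $M(B)$ or both in $M(C)$.

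The verification that $E \in \mathcal C_0$ reduces to showing that every interval $[x,y]$ of $M(E)$, equipped with $d$, is an ordered ultrametric space. Any such interval decomposes as $[x,\pi_A(x)]_B \cup [\pi_A(x),\pi_A(y)]_A \cup [\pi_A(y),y]_C$ (with the obvious collapses when an endpoint lies in $M(A)$ or the projections coincide), and the ultrametric identity for consecutive points $p_1<p_2<p_3$ on $[x,y]$ follows by a case split on which of the three pieces each $p_i$ occupies: if all three lie in a single piece, the identity is inherited from $B$, $A$, or $C$, and otherwise expanding the defining formula for $d$ and using the ordered-ultrametric identities inside each touched piece gives $d(p_1,p_3)=\max_{\unlhd}\{d(p_1,p_2),d(p_2,p_3)\}$ after collecting the nested maxes.

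The main obstacle is the geometric bookkeeping around the free tree amalgamation: one must verify that the projections $\pi_A(b)$ and $\pi_A(c)$ are well-defined (each is the unique point $a \in M(A)$ with $[b,a']=[b,a]\cup[a,a']$ for every $a'\in M(A)$, which exists because $M(A)$ is finite and non-empty in the non-trivial case) and that the concatenated description of cross intervals really is the path structure in the amalgam. Once this is in hand, all newly introduced distances lie in $D(A) \cup D(B) \cup D(C) \subseteq D(E)$, the case analysis closes out the ordered-ultrametric check, and $E$ amalgamates $B$ and $C$ over $A$ inside $\mathcal C_0$.
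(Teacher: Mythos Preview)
Your free tree amalgamation breaks down precisely at the most delicate case, which is also where the paper's proof does all of its real work. The projection $\pi_A(b)$---``the unique point $a \in M(A)$ with $[b,a']=[b,a]\cup[a,a']$ for every $a'\in M(A)$''---does not exist when $b$ lies in an interval $[x,y]$ with $x,y \in M(A)$ (type (1b) in the paper's classification of one-types over a finite betweenness substructure). In that situation $[b,x]\cap[b,y]=\{b\}$, so no point of $M(A)$ can serve as $\pi_A(b)$. Worse, there is no ``free'' choice available: if $b\in[x,y]$ in $M(B)$ and $c\in[x,y]$ in $M(C)$, then in \emph{any} amalgam preserving the betweenness relations $B(x,b,y)$ and $B(x,c,y)$, both $b$ and $c$ land on the linearly ordered segment $[x,y]$, so you are forced to decide whether $b<c$ or $c<b$ along that segment and to assign $d(b,c)$ compatibly with the ordered ultrametric identity against all other points already on $[x,y]$. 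Your max-of-three-distances formula cannot produce this, and the choice of order is genuinely constrained by the existing distances.

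This is exactly why the paper first proves an amalgamation proposition for finite ordered ultrametric spaces and then invokes it interval-by-interval: for each pair of consecutive points $x,y\in M(A)$, the points of $M(B)$ and $M(C)$ lying strictly between $x$ and $y$ form two ordered ultrametric spaces over the common base $\{x,y\}$, and the earlier proposition amalgamates them. Only after this step are the remaining elements (those of types (1c) and (2), including the new meet points $a_*$ which themselves land inside intervals of $M(A)$) glued on by placing them in fresh cones---which is the part your projection idea does handle correctly. So your outline covers roughly half the argument; the missing half is the ordered-ultrametric amalgamation on intervals, and without it the construction does not go through.
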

\begin{proof}
	 Tree-betweenness relations are closed under substructures, as follows for instance from the first bullet above. Hence $\mathcal C_0$ is a universal class. 
	
As previously, we can skip joint embedding and only check amalgamation. Assume $E,F,G\in \mathcal C_0$ are substructures with $E\subseteq F$ and $E\subseteq G$. Without loss, $M(F)$ and $M(G)$ are not empty. We first amalgamate the sorts $D$ freely as in the case of ordered ultrametric spaces. We then amalgamate the main sorts. If $M(E)$ is empty, then arbitrarily fix point $a\in M(F)$ and $a'\in M(G)$ and identify them, adding a point to $M(E)$ that maps to both. So we can assume that $M(E)$ is non-empty. Next, it is enough to consider the case where $M(F)$ is generated over $M(E)$ by a single element $a$ and similarly $M(G)$ is generated over $M(E)$ by a single element $b$ (then induct on the size of $M(F)\setminus M(E)$ and $M(G)\setminus M(E)$).

We will use the list of (quantifier-free) types in betweenness relations presented above. If either $a$ or $b$ is of class (2), define $a_*$ and $b_*$ as done there. Otherwise, set $a_*=a$ (resp. $b_*=b$). Assume that the types of both $a_*$ and $b_*$ over $M(E)$ are of class (1b) in that list and both land in the same minimal interval $[x,y]$ of $M(E)$. Then that interval, with distance $d$ is an ordered ultrametric space, hence we can amalgamate those points as explained above. Assume next that both types of $a_*$ and $b_*$ are of class (1c) and have the same closest point $x$ in $M(E)$. Amalgamate $a_*$ and $b_*$ over $M(E)$ so that $\neg(a_* \mathrel{E_{x}}b_*)$ holds and set $d(a_*,b_*)=\max\{d(a_*,x),d(b_*,x)\}$. In all other cases, there is a unique way to extend the tree structure to $M(E)\cup \{a_*,b_*\}$. Having done this, there is $x\in M(E)$ in the interval $[a_*,b_*]$ and we can (and must) set $d(a_*,b_*)=\max_{\unlhd}\{d(a_*,x),d(b_*,x)\}$.

Let $E'$ be the resulting structure. Then $M(F)$ and $M(G)$ are generated over $E'$ by $a$ and $b$ respectively and the types of $a$ and $b$ over $M(E')$ are of class (1). We can therefore repeat the procedure to amalgamate them. This finishes the proof.

\end{proof}

Let $\mathcal U$ be the Fra\" iss\'e limit of $\mathcal C_0$ and $\C$ a monster model of it. It is an $\omega$-categorical structure which admits elimination of quantifiers in the language $L$. 

\begin{lem}
	The structure $\mathcal U$ is NIP.
\end{lem}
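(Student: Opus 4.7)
The plan is to use quantifier elimination in $L$ and analyze atomic formulas one at a time. Since NIP is preserved under Boolean combinations, and since by a standard reduction it suffices to check NIP for formulas $\varphi(x;\bar y)$ with a single free variable $x$, I would reduce to proving NIP for each atomic $L$-formula in one free variable $x$.

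First, I would dispatch the easy cases. If $x$ is of sort $D$, then every atomic formula reduces to a formula in the dense linear order $(D, \unlhd, 0)$ with $d$-terms treated as parameters, which is NIP. If $x$ is of sort $M$ and the formula involves $x$ only through $B$ or $\bigwedge$, then it is a formula of the reduct to the meet-tree $(M, B, \bigwedge)$; this reduct is the theory of dense meet-trees, a well-known NIP (indeed distal) theory whose one-variable atomic formulas define arcs and cones of bounded VC-dimension.

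The heart of the proof is the distance atomic formulas of the form $d(x, b) \unlhd t$ and $d(x, b_1) \unlhd d(x, b_2)$ (and corresponding equalities) with $x$ of sort $M$. The key step is to establish a \emph{global} ultrametric inequality $d(a, c) \unlhd \max_\unlhd\{d(a, b), d(b, c)\}$ for every $a, b, c \in M$. The argument is to let $m = \bigwedge(a, b, c)$, which lies in each of the intervals $[a, b]$, $[b, c]$, and $[a, c]$, and to apply the ordered-ultrametric identity on each: on $[a, c]$ one gets $d(a, c) = \max\{d(a, m), d(m, c)\}$; on $[a, b]$ one gets $d(a, m) \unlhd d(a, b)$; on $[b, c]$ one gets $d(m, c) \unlhd d(b, c)$. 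Granted this, the family of closed balls $\{x : d(x, b) \unlhd c\}$ (for $b \in M$, $c \in D$) is laminar --- any two balls are either disjoint or comparable by inclusion --- hence has VC-dimension at most $2$ and is NIP; the family of open balls is NIP for the same reason. For the formula $d(x, b_1) \unlhd d(x, b_2)$, the ultrametric isosceles property shows that the defining set is the complement of the open ball around $b_2$ of radius $d(b_1, b_2)$, again NIP; the remaining distance atomic formulas reduce to balls or complements of balls by analogous arguments.

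The main technical obstacle will be the careful verification of the global ultrametric (which requires a short case analysis depending on whether $m$ coincides with one of $a$, $b$, $c$) together with a complete enumeration of atomic distance formulas to confirm they all define sets in one of the NIP families above. Once these pieces are in place, every atomic formula in a single free variable defines a set in a family of finite VC dimension, and NIP for $\Th(\mathcal U)$ follows.
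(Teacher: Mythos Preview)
Your strategy---quantifier elimination plus checking NIP for each atomic formula in a single free variable---is legitimate and genuinely different from the paper's. The paper instead uses the type-counting criterion (a theory is NIP iff for some $k$ there are at most $|A|^k$ complete $1$-types over every finite $A$): it closes a finite parameter set under $\bigwedge$ (which at most cubes its size) and under $d$, notes that over the resulting substructure $A$ the betweenness-type of a new point $c$ has $O(|A|^2)$ possibilities, and then observes that the distances $d(c,a)$ for $a\in A$ are all determined, via the global ultrametric, by a single pair $(a_0,d(c,a_0))$ with $d(c,a_0)$ $\unlhd$-minimal. This gives polynomially many quantifier-free $1$-types and hence NIP, without ever enumerating atomic formulas.

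There is a real gap in your execution, though. Because $\bigwedge$ is a ternary \emph{function} symbol on $M$, atomic formulas include things like
\[
d\bigl(\bigwedge(x,y_1,y_2),\,y_3\bigr)\unlhd z
\qquad\text{and}\qquad
d\bigl(\bigwedge(x,y_1,y_2),\,\bigwedge(x,y_3,y_4)\bigr)\unlhd d(y_5,y_6),
\]
and arbitrarily deep nestings thereof; your enumeration only covers $d(x,b)\unlhd t$ and $d(x,b_1)\unlhd d(x,b_2)$. The first displayed formula, for fixed parameters, cuts out the set of $x$ whose tree-projection onto $[b_1,b_2]$ lands in a certain $d$-ball around the projection of $b_3$; such a set is \emph{not} a $d$-ball in $M$ (a point may branch off $[b_1,b_2]$ near the right spot yet lie arbitrarily far in $d$-distance), so the claim that ``the remaining distance atomic formulas reduce to balls or complements of balls by analogous arguments'' is not correct as stated. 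One can rescue the approach by showing, inductively on term depth, that every atomic instance defines a set in a uniformly NIP family built from $d$-balls and tree-definable sets, but this is substantial extra work you have not sketched---and it is exactly the complication that the paper's type-counting argument sidesteps.
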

\begin{proof}
Recall that a theory is NIP if and only if for every formula $\phi(x,\bar y)$, there is $k$ such that the number of $\phi$-types over any finite set $A$ is bounded by $|A|^k$. We will in fact check that there are polynomially many 1-types over finite sets. So let $A\subset \C$ be a finite set and $c\in \C$. As the subtree generated by $M(A)$ has size polynomial in $A$, we can assume that $M(A)$ is a subtree.  Then we can close $A$ under the distance function and hence assume that $A$ is a substructure.

If $c$ is of sort $D$, then by quantifier elimination, there are at most $2|D(A)|+1$ possibilities for $\tp(c/A)$.

Next assume that $c$ is in the main sort $M$. The subtree generated by $c$ over $M(A)$ has size either $|M(A)|+1$ or $|M(A)|+2$. If the type of $c$ over $M(A)$ is of class (2) above, then $\tp(c/M(A))$ is determined by $\tp(c_*/M(A))$ and $\tp(c/M(A)c_*)$. Each of those types is of class (1) and we can therefore reduce to this case and assume that $M(A)\cup \{c\}$ is already a subtree. Let $D'$ be the set of distances of pairs of elements in $M(A)\cup \{c\}$. Then $D'\setminus D$ contains at most one element. By the previous paragraph, there are polynomially many possibilities for the type of that element over $D$. Adding that element to $A$, we can assume that for all $a,b\in M(A)\cup \{c\}$, $d(a,b)\in D(A)$.

We know from the analysis of types above that there are only polynomially many 1-types over $A$ in the reduct to the tree. As for the distance: take $a\in M(A)$ for which $d(a,c)$ is minimal. Then the data of $a$ and $d(a,c)$ determines all distances $d(b,c)$, $b\in M(A)$, using the ultrametric identity. Indeed, if $d(a,b)\triangleright d(a,c)$, then $d(b,c)=d(a,b)$ and otherwise $d(b,c)=d(a,c)$.

This discussion shows that there are polynomially many possibilities for $\tp(c/A)$ and we conclude that $\mathcal U$ is NIP.
\end{proof}

Note that by quantifier elimination (and $\omega$-categoricity), the automorphism group of $\mathcal U$ acts transitively on $D(\mathcal U)\setminus \{0\}$ and on pairs of distinct points of $M(\mathcal U)$. It follows that for each $a\in M(\mathcal U)$, the stabilizer of $a$ acts transitively on the cones at $a$. Let $\mathcal B$ be the imaginary sort of $d$-balls of the form $\{x\in M: d(x,a)<r\}$ for some $a\in M$ and $r\in D\setminus \{0\}$. Let $\in$ be the definable binary relation in $M\times \mathcal B$ representing membership of an element in a ball and let $r:\mathcal B \to D$ give the radius of a ball (definable as $r(\mathfrak b)=\max_{\unlhd}\{d(a,b):a,b\in \mathfrak b\}$). There is a unique type $p(x)$ over $\emptyset$ of an element of $\mathcal B$ (since the automorphism group of $\mathcal U$ acts transitively on pairs $(a,r)\in M(\mathcal U)\times D(\mathcal U)$).

By quantifier elimination, there is a unique type $r_0(t)$ in $S(\C)$ of sort $D$ containing $t\triangleright d$ for all $d\in D(\C)$. Pick any $a\in M(\C)$ and let $p_0(x)$ be the type over $\C$ of a ball $\mathfrak b$ containing $a$ with radius $r(\mathfrak b)\models r_0$. This does not depend on the choice of $a$ since such a ball contains all points in $M(\C)$. The type $p_0$ is thus invariant under $\aut(\C)$.

Let $\mathcal Q$ be the set of global extensions $q(x)$ of $p(x)$ which satisfy $r(x)\triangleleft d$ for some $d\in D(\C)\setminus \{0\}$. Pick a type $q\in \mathcal Q$ and $d\in D(\C)\setminus \{0\}$ such that $q\vdash r(x)\triangleleft d$. Let $\mathfrak b\models q$. Assume first that the ball coded by $\mathfrak b$ contains a point $a\in M(\C)$. One can find unboundedly many points in $\C$ at distance at least $d$ from each other. For each $\sigma \in \aut(\C)$, the ball coded by $\sigma(q)$ can contain at most one of those points. Since the automorphism group of $\C$ acts transitively on the main sort, $q$ has unbounded $\aut(\C)$-orbit. If now $\mathfrak b$ does not contain a point in $M(\C)$, pick some $a\in M(\C)$. Then all elements in the ball coded by $\mathfrak b$ lie in the same cone at $a$ and therefore $\mathfrak b$ determines a cone at $a$. If this cone contains an $M(\C)$-point, then $q$ has unbounded $\aut(\C)$-orbit, as there are unboundedly many cones at $a$ and $\aut(\C/a)$ acts transitively on them. Assume that this is not the case and take $a'\in M(\C)$, $a'\neq a$. Then for any point $d$ in the ball $\mathfrak b$ (in some larger monster model), $a$ lies in the interval $[a',d]$. It follows that the ball $\mathfrak b$ is included in the cone at $a'$ defined by $a$ and we are reduce to the previous case by changing $a$ to $a'$. Hence, in all cases, $q$ forks over $\emptyset$. Let $d_0\in D(\C)\setminus\{0\}$. The formula $r(x)\triangleleft d_0$ is weakly invariant as there is $q\in \mathcal Q$ with $q(x)\vdash r(x)\triangleleft d$ for all $d\in D(\C)\setminus \{0\}$, but that formula forks over $\emptyset$. By Corollary \ref{corollary: bounded = weakly invariant do not fork}, the minimal ideal of $EL(S_p(\C))$ is not bounded. However, $p$ does not fork over $\emptyset$ since $p_0$ is an invariant extension of it.

\section*{Acknowledgments}
	
We would like to thank to the referee for very careful reading and all the comments and suggestions which helped us to improve presentation.

\end{document}